\documentclass[11pt,reqno]{amsart}
\usepackage[T1]{fontenc}
\usepackage[utf8]{inputenc}
\usepackage{newtxtext,newtxmath}
\usepackage{microtype,mathtools,mathrsfs,enumitem,booktabs,longtable,array,amscd}
\DeclareFontFamily{U}{rsfs}{\skewchar\font127}
\DeclareFontShape{U}{rsfs}{m}{n}{<-6.5>rsfs5 <6.5-8>rsfs7 <8->rsfs10}{}
\newcolumntype{P}[1]{>{\raggedright\arraybackslash}p{#1}}
\usepackage{aliascnt,needspace}
\usepackage{xcolor}
\usepackage[colorlinks=true,linkcolor=blue!45!black,citecolor=blue!45!black,
 urlcolor=blue!45!black,unicode=true]{hyperref}
\usepackage[nameinlink,noabbrev,capitalise]{cleveref}
\numberwithin{equation}{section}
\newtheorem{theorem}{Theorem}[section]
\newaliascnt{proposition}{theorem}\newtheorem{proposition}[proposition]{Proposition}\aliascntresetthe{proposition}
\newaliascnt{lemma}{theorem}\newtheorem{lemma}[lemma]{Lemma}\aliascntresetthe{lemma}
\newaliascnt{corollary}{theorem}\newtheorem{corollary}[corollary]{Corollary}\aliascntresetthe{corollary}
\theoremstyle{definition}
\newaliascnt{definition}{theorem}\newtheorem{definition}[definition]{Definition}\aliascntresetthe{definition}
\theoremstyle{remark}
\newaliascnt{remark}{theorem}\newtheorem{remark}[remark]{Remark}\aliascntresetthe{remark}
\newaliascnt{example}{theorem}\newtheorem{example}[example]{Example}\aliascntresetthe{example}
\crefname{proposition}{proposition}{propositions}\Crefname{proposition}{Proposition}{Propositions}
\crefname{lemma}{lemma}{lemmas}\Crefname{lemma}{Lemma}{Lemmas}
\crefname{corollary}{corollary}{corollaries}\Crefname{corollary}{Corollary}{Corollaries}
\crefname{definition}{definition}{definitions}\Crefname{definition}{Definition}{Definitions}
\crefname{remark}{remark}{remarks}\Crefname{remark}{Remark}{Remarks}
\crefname{example}{example}{examples}\Crefname{example}{Example}{Examples}
\setlist[enumerate]{label=\textup{(\roman*)},leftmargin=2.2em,itemsep=3pt}
\allowdisplaybreaks[2]
\newcommand{\Q}{\mathbf Q}\newcommand{\Z}{\mathbf Z}\newcommand{\R}{\mathbf R}\newcommand{\C}{\mathbf C}
\newcommand{\Qbar}{\overline{\mathbf Q}}\newcommand{\Gm}{\mathbf G_m}\newcommand{\Ga}{\mathbf G_a}
\newcommand{\A}{\mathcal A}\newcommand{\G}{\mathcal G}\newcommand{\HH}{\mathcal H}
\newcommand{\PP}{\mathcal P}\newcommand{\OO}{\mathcal O}\newcommand{\TT}{\mathbb T}
\newcommand{\MM}{\mathscr M}\newcommand{\BB}{\mathscr B}\newcommand{\DD}{\mathscr D}
\newcommand{\Gr}{\operatorname{gr}}\newcommand{\MT}{\operatorname{MT}}
\newcommand{\Lie}{\operatorname{Lie}}\newcommand{\im}{\operatorname{im}}\newcommand{\Hom}{\operatorname{Hom}}
\newcommand{\End}{\operatorname{End}}\newcommand{\Pic}{\operatorname{Pic}}
\newcommand{\rank}{\operatorname{rank}}
\newcommand{\Supp}{\operatorname{Supp}}\newcommand{\divi}{\operatorname{div}}
\newcommand{\trdeg}{\operatorname{trdeg}}\newcommand{\Ru}{R_{\!u}}
\newcommand{\tors}{\mathrm{tors}}
\newcommand{\Zar}{\mathrm{Zar}}
\newcommand{\ol}[1]{\overline{#1}}\newcommand{\ddc}{dd^c}
\newcommand{\eps}{\varepsilon}\newcommand{\bL}{\overline L}\newcommand{\bQ}{\overline Q}
\newcommand{\bN}{\overline N}\newcommand{\bT}{\overline T}\newcommand{\bD}{\overline D}
\newcommand{\bH}{\overline H}\newcommand{\bO}{\overline{\OO}}
\newcommand{\bTheta}{\overline\Theta}
\newcommand{\hab}{H_{\mathrm{ab}}}\newcommand{\htor}{H_{\mathrm{tor}}}
\newcommand{\htot}{H_{\mathrm{sa}}}
\DeclareMathOperator{\GSp}{GSp}

\title[Torsion and small points in semiabelian schemes]{Torsion, Betti rank, and small points in semiabelian schemes}
\author{Khai-Hoan Nguyen-Dang}
\address{Morningside Center of Mathematics, Chinese Academy of Sciences, Beijing, China}
\email{khaihoann@gmail.com}
\date{September 7, 2026}
\subjclass[2020]{Primary 11G10, 14K15; Secondary 11G35, 14G40, 14G35, 32G20}
\keywords{Relative Manin--Mumford, relative Bogomolov, semiabelian schemes,
Betti maps, adelic heights, generalized Jacobians,
polynomial Pell equations}
\hypersetup{pdftitle={Torsion, Betti rank, and small points in semiabelian schemes},
 pdfauthor={Khai-Hoan Nguyen-Dang},pdfsubject={Relative torsion, small canonical heights, and semiabelian Betti geometry}}
\begin{document}
\begin{abstract}
Let $\G\to S$ be a semiabelian scheme over a smooth complex algebraic
base, with abelian relative dimension $g$ and toric rank $r$.
We prove that a dominating irreducible subvariety not contained in a proper
relative special subvariety has Zariski-dense fibrewise torsion if and
only if its mixed Betti rank is $2(g+r)$.  In particular, its dimension
is at least $g+r$.  For every prime, primary torsion is then dense even
among points of maximal rank.  The exceptional loci include generalized
Ribet loci.  Over algebraic numbers, we prove that generic small points
force full mixed rank whenever the abelian Betti rank is maximal.
This yields a transfer from abelian relative Bogomolov dimension theorems
to arbitrary toric extensions, together with toric height gaps above
abelian torsion and order-linear gluing-height bounds for nodal Pell
equations.  The proof combines algebraic quotients of marked $1$-motives,
quadratic nef compensation, a central-monodromy calculation, and
algebraicity of mixed Betti strata.
\end{abstract}
\maketitle
\tableofcontents

\section{Introduction}\label{sec:introduction}

The Manin--Mumford theorem, proved by Raynaud, is an important and remarkable theorem in arithmetic geometry. It says that an irreducible subvariety
of an abelian variety in characteristic zero contains a Zariski-dense set of
torsion points precisely when it is a translate of an abelian subvariety by
a torsion point \cite{Raynaud1983}. After that Laurent proved the corresponding theorem
for tori, and Hindry established its semiabelian varieties
\cite{Laurent1984,Hindry1988}; McQuillan placed this rigidity in the broader
setting of division points \cite{McQuillan1995}.  The Bogomolov theorems of
Ullmo and Zhang strengthen the abelian statement arithmetically: outside a
proper closed subset of a subvariety that is not a torsion coset, the
canonical height is bounded away from zero \cite{Ullmo1998,Zhang1998}.

In a family, torsion is imposed separately in each fibre.  A section can
therefore have infinitely many torsion specializations without being a
torsion section, and the question is how these specializations constrain
its variation.  The work of Masser--Zannier on simultaneous torsion in
elliptic families gave a concrete form to this question
\cite{MasserZannier2008}.  Its connection with transcendental uniformization
is made especially effective by the Pila--Zannier method
\cite{PilaZannier2008}: logarithms expressed in a local period basis turn
torsion into a rationality condition.  The resulting Betti map, studied by
Andr\'e--Corvaja--Zannier and Gao, links the differential rank of a section
to algebraic quotients and unlikely intersections
\cite{AndreCorvajaZannier2020,Gao2020,GaoCorrigendum2021}.

For abelian schemes, Gao--Habegger have proved the relative
Manin--Mumford theorem over $\C$ \cite[Theorems~1.1 and~1.3]{GaoHabegger2026}.
If $\A\to S$ has relative dimension $g$ and an irreducible subvariety
$X\subseteq\A$ satisfies
\[
  \ol{\bigcup_{n\in\Z}[n]X}^{\Zar}=\A,
\]
then fibrewise torsion is Zariski dense in $X$ if and only if its generic
real Betti rank is $2g$.  In particular, density forces $\dim X\ge g$.
This theorem treats arbitrary-dimensional subvarieties and arbitrary
complex coefficients.  The issue considered here is the additional
geometry and arithmetic introduced by a toric extension.

\subsection{Relative Manin--Mumford theorem for semiabelian schemes}
Let
\[
  0\longrightarrow\mathcal T\longrightarrow\G
    \longrightarrow\A\longrightarrow0
\]
be a semiabelian scheme, with abelian relative dimension $g$, toric rank
$r$, and total relative dimension $d=g+r$.  After a finite torus-splitting
refinement, its extension class is given by sections
$q_1,\ldots,q_r$ of $\A^\vee$.  A lift of an abelian point $p$ consists of
nonzero elements of the Poincar\'e fibres $\PP_{\A}(p,q_i)$.  Thus the
multiplicative coordinates depend on both $p$ and the extension
parameters: the problem does not separate into an abelian problem and an
independent torus problem \cite{BertrandEdixhoven2020,GuHuang2024}.

This dependence also changes the exceptional loci.  A Ribet section may
generate the geometric generic semiabelian fibre and nevertheless have
dense torsion specializations over a base of smaller dimension than that
fibre \cite{BMPZ2016,BertrandEdixhoven2020}.  The appropriate exceptions
are consequently not just torsion translates of subgroup schemes.  We use
the relative special subvarieties of Gu--Huang: after finite level and
polarization refinements, these are components of pullbacks of special
subvarieties of the universal semiabelian mixed Shimura variety
\cite[Definition~6.13]{GuHuang2024}.  They include generalized Ribet loci.
Definition~\ref{def:special} gives the precise convention, and
Example~\ref{ex:ribet} shows why generic cyclic generation alone is
insufficient.  This formulation is compatible with Pink's
unlikely-intersection framework \cite{Pink2005}, without assuming its
conjectural arithmetic statements.

We write $\G_{\tors}$ for the set of fibrewise torsion points, and call a
dominating subvariety \emph{relatively nonspecial} if it is not contained
in a proper relative special subvariety.  Properness is relative to the
marked-point fibre: a condition on the abelian moduli alone need not give
a proper locus in the family.  Our first main result is the following.

\Needspace{11\baselineskip}
\begin{theorem}[Relative Manin--Mumford]\label{thm:main}
Let $S$ be a smooth irreducible complex quasi-projective variety, let
$\pi:\G\to S$ be a semiabelian scheme of relative dimension $d=g+r$,
and let $X\subseteq\G$ be an irreducible closed subvariety dominating $S$.
Suppose that
\[
  \ol{X(\C)\cap\G_{\tors}}^{\Zar}=X.
\]
Then either $\dim X\ge d$, or $X$ is contained in a proper special
subvariety of $\G$ in the sense of Definition~\ref{def:special}.
\end{theorem}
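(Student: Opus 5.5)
The plan is to follow the Gao--Habegger strategy, adapted to the mixed (semiabelian) Betti map, and to reduce to the statement ``dense torsion forces full mixed Betti rank $2d$''. Then $\dim_{\R}X\ge 2d$ gives $\dim X\ge d$.

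\emph{Step 1: setup and the Betti map.} After a finite étale base change and level/polarization refinement (which changes neither the conclusion nor the notion of special subvariety), split the torus so that the extension is given by sections $q_1,\dots,q_r$ of $\A^\vee$. Over a simply connected open $\Delta\subseteq S^{\mathrm{an}}$, choose a local period basis of the mixed Hodge structure $H_1$ of the fibres, of weights $-2$ and $-1$. This gives a real-analytic mixed Betti map
\[
b_\Delta:\G_\Delta\longrightarrow \R^{2g}\times(\R/\Z)^{r}\times\R^{r},
\]
whose fibres over rational points of the compact part, with zero radial (logarithmic-modulus) component, are exactly the fibrewise torsion points. The abelian part is the Gao--Habegger Betti map on $\A$, and the toric coordinates come from the logarithms on the Poincaré fibres $\PP_{\A}(p,q_i)$, which depend on both $p$ and $q_i$. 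I will use the algebraicity of mixed Betti strata, which the abstract indicates is established in the paper: the locus $X^{\deg}(t)=\{x\in X:\rank_{\R} d b_\Delta|_X(x)<t\}$ is Zariski closed in $X$.

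\emph{Step 2: rank $<2d$ forces specialness.} Suppose the generic rank of $db_\Delta|_X$ is $<2d$. I would pass to the Zariski closure of the image of $X$ in the universal mixed Shimura variety and apply the mixed Ax--Schanuel theorem (Gao's, for mixed Shimura varieties of Kuga type). Positive-dimensional fibres of $b_\Delta|_X$ are then contained in weakly special subvarieties. As in Gao's degeneracy criterion, the union of these weakly special subvarieties is swept out by finitely many families. This exhibits $X$ inside a proper subvariety that is special after adding the torsion density. The torsion density is used Pila--Zannier style, as follows. If $\dim X<d$ but the rank defect is not explained by a quotient, count rational points of the torsion set in the definable set $b_\Delta(X\cap \text{fundamental set})$. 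Polynomially many torsion points of order $N$ come from Galois orbits. This step needs a Galois lower bound for torsion in semiabelian families, of the kind proved by Masser/David combined with the Kummer theory of the toric part. Pila--Wilkie then gives semialgebraic blocks, and Ax--Schanuel turns the blocks into weakly special, hence (by the density of torsion) special, subvarieties containing $X$. Here ``proper'' must be checked relative to the marked-point fibre, as the paper warns.

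\emph{Step 3: the central toric contribution.} The main obstacle is the case where the abelian rank is full ($2g$ after projection to $\A$) but the toric directions drop rank. This is the Ribet phenomenon. I would handle it through the central-monodromy calculation: the monodromy acting on the weight $-2$ part through the unipotent radical is controlled by the pairing of $p$ with $q_i$. A drop in toric rank then forces a bilinear relation $\langle \phi(p),q\rangle=$ torsion for some homomorphism $\phi$ between the abelian parts. That relation is precisely the defining condition of a generalized Ribet locus, i.e.\ a proper special subvariety in the sense of Definition~\ref{def:special}. I would use the algebraic quotient of the marked $1$-motive to pass from the analytic relation to an algebraic one. If the abelian rank itself drops, then $X$ lies over a proper special subvariety of $\A$ by Gao--Habegger, and I would pull that subvariety back along $\G\to\A$.
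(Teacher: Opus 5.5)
Your outer reduction agrees with the paper: show that dense torsion forces full mixed rank, with Gao--Habegger supplying abelian rank $2g$ via generation. The step you call the main obstacle is where the argument fails. In Step~3 you claim that a toric rank drop, through a central-monodromy calculation, forces a Ribet relation. No such implication holds without arithmetic input. Since $\ell(X)\le\dim X$, the toric rank drops automatically whenever $\dim X<g+r$. For example, take a nonisotrivial elliptic scheme over a curve, extended by $\Gm$ with a nontorsion parameter $q$. A general section of this extension has abelian real rank $2$ and mixed rank $\ell=1<2$, and it is relatively nonspecial.

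Monodromy in fact points the other way. Under $K\subseteq P$ and abelian rank $2g$, a deficient center $M\cap U\ne U$ can only come from a constant multiplicative quotient whose value is not a root of unity (Lemma~\ref{lem:constant-character}). A single torsion value excludes this, so $M\cap U=U$ (Proposition~\ref{prop:full-center}). The contradiction must therefore come from the upper bound $\dim(M\cap U)\le\ell-g$ of Proposition~\ref{prop:jet-bound}. That bound is proved by polarizing the radial column condition $\binom{0}{u}\in\im\binom{A}{D}$ and moving the holomorphic first jet by central monodromy, with the antiholomorphic data frozen. The column condition is exactly where torsion density enters. It comes from the arithmetic all-character support theorem (Proposition~\ref{prop:all-character-support}, via quadratic nef compensation and the Yuan--Zhang fundamental inequality) together with Lemma~\ref{lem:radial}. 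Your proposal has no replacement for this input. Moreover, the paper notes after Proposition~\ref{prop:radial-affine} that radial tangency by itself produces no rational subtorus or Ribet locus.

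Step~2 does not close the gap either.
\begin{enumerate}
\item A Galois lower bound uniform in the torsion order needs bounded height of the base points. The abelian height inequality of Dimitrov--Gao--Habegger does not give this once the abelian marked image has dimension $>g$, which is possible when $r\ge2$. A semiabelian substitute must also control the extension parameters. The paper obtains such a bound only when $\Delta=L^gT_0^k>0$ (Lemma~\ref{lem:bounded-height}) and treats $\Delta=0$ by a different argument (Lemma~\ref{lem:zero-leading}).
\item The passage from real semialgebraic blocks in $\{\operatorname{Im}c=0\}$ to complex-algebraic information is asserted, not argued.
\item Positive-dimensional Betti fibres do not place $X$ in a proper special subvariety. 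Pulling a relatively nonspecial torsion-dense family back along $B\times C\to B$ creates such fibres while keeping $K\subseteq P$. Such fibres only yield a quotient problem of smaller base dimension (Proposition~\ref{prop:rank-reduction}), which must feed a minimal-counterexample induction.
\item Galois-orbit counting only works over $\Qbar$. You invoke closedness of Betti strata but never use it for the passage to $\C$. The paper makes that passage by the transcendence-degree induction of Proposition~\ref{prop:field-step}, together with Corollary~\ref{cor:rank-criterion}.
\end{enumerate}
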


Theorem~\ref{thm:main} establishes the dimension statement formulated
in Gu--Huang's revised relative Manin--Mumford conjecture
\cite[Conjecture~6.16]{GuHuang2024}, with proper special containment
understood in the pullback sense of Definition~\ref{def:special}.
It extends the abelian dimension bound to arbitrary toric extensions
while retaining the generalized Ribet loci among the exceptions.
The surface results of Bertrand--Masser--Pillay--Zannier and
Bertrand--Schmidt address important earlier cases
\cite{BMPZ2016,BertrandSchmidt2019}; the argument here does not reduce a
higher-dimensional base to a curve carrying a dense torsion set.  Instead,
it detects every relative toric direction through the central monodromy
of a marked $1$-motive.

To state the accompanying rank criterion, choose local period coordinates
$(\tau,z,w,t)$ and write
\[
  z=a+\tau b,\qquad a,b\in\R^g,\qquad c=t-wb\in\C^r.
\]
The mixed Betti map is $\beta=(a,b,c)$, with target
$\R^{2g}\times\C^r$.  In the integral Tate normalization, torsion means
that $a,b,c$ are real and rational modulo the period lattice.  Its fibres
are complex leaves, and its real differential rank is even.  We write
$\ell(X)=\tfrac12\max\rank_\R d\beta$ on the smooth locus, so that
$\ell(X)\le\min\{\dim X,g+r\}$.

\begin{corollary}[Torsion and mixed Betti rank]\label{cor:main-rank}
Let $S,\G,X$ be as in Theorem~\ref{thm:main}, without initially assuming
torsion density, and suppose that $X$ is relatively nonspecial.
The following conditions are equivalent:
\begin{enumerate}
\item $X(\C)\cap\G_{\tors}$ is Zariski dense in $X$;
\item the mixed Betti map has generic real rank $2(g+r)$;
\item for every rational prime $\ell_0$, the set
$X(\C)\cap\G[\ell_0^\infty]$ is Zariski dense in $X$.
\end{enumerate}
Here $\G[\ell_0^\infty]=\bigcup_{j\ge0}\G[\ell_0^j]$ is read
set-theoretically.  Density for some fixed prime is another equivalent
condition.  When these conditions hold, the primary torsion points of
maximal mixed rank are themselves Zariski dense, for every prime.
\end{corollary}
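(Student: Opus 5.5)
The plan is to prove the cycle (iii)$\Rightarrow$(i)$\Rightarrow$(ii)$\Rightarrow$(iii), together with the strengthened density claim, by working throughout with the mixed Betti map $\beta=(a,b,c)$ on a simply connected open subset of $X^{\mathrm{sm}}$ lying over a period chart.

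\textbf{(iii)$\Rightarrow$(i).} This is immediate, since $\G[\ell_0^\infty]\subseteq\G_{\tors}$. The same inclusion shows that density for a single prime already gives (i).

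\textbf{(i)$\Rightarrow$(ii).} Suppose $\ell(X)<g+r$. Because $X$ is relatively nonspecial, the "proper special" alternative in Theorem~\ref{thm:main} is excluded. The task is therefore to sharpen the dimension conclusion $\dim X\ge d$ into the rank conclusion $\ell(X)=d$. I would do this in two steps.
\begin{enumerate}
\item Use the algebraicity of the mixed Betti strata: the locus where $\rank d\beta<2d$ is algebraic. By the standard Pila--Zannier counting argument, which the proof of Theorem~\ref{thm:main} runs, a Zariski-dense torsion set on $X$ with $\ell(X)<d$ forces $X$ to be foliated by positive-dimensional algebraic leaves along which $\beta$ is constant.
\item Pass to the quotient by this foliation. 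The algebraic quotient of the marked $1$-motive then gives a proper quotient $\G\to\G'$ (or a proper sub-$1$-motive) through which $X$ is special. Concretely, the abelian part of the degeneracy comes from the Gao--Habegger case. The toric part, where the rank of $c$ drops, is detected by the central-monodromy computation, which shows that a proper relative special subvariety contains $X$.
\end{enumerate}
This contradicts nonspeciality, so $\ell(X)=d$. I expect this step to be the main obstacle: one must show that a drop in the rank of the $c$-component alone, with $(a,b)$ of full rank, produces a generalized Ribet-type special locus rather than a mere subgroup scheme. This is exactly where central monodromy of the unipotent radical, acting by translation on $c$ through $wb$, must be invoked.

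\textbf{(ii)$\Rightarrow$(iii), with maximal mixed rank.} Fix a prime $\ell_0$. Since $\rank_\R d\beta=2d$ is the full dimension of the target $\R^{2g}\times\C^r$, the map $\beta$ is a submersion at a generic point $x$, hence open near $x$. The points $\beta^{-1}\bigl((\ell_0^{-j}\Z)^{2g}\times(\ell_0^{-j}\Z)^{r}\bigr)$ are exactly the $\ell_0^j$-torsion points in the chart, using the integral Tate normalization; the imaginary part of $c$ is forced to zero by torsion. Because $\bigcup_j\ell_0^{-j}\Z$ is dense in $\R$, the image of any nonempty open subset of $X^{\mathrm{sm}}$ meets $\beta$ of a primary torsion point. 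Applying this inside the open set of maximal-rank points shows that maximal-rank $\ell_0$-primary torsion points meet every Euclidean-open subset of that dense open set. Since Zariski-closed proper subsets have empty interior, these points are Zariski dense. This yields (iii) and the final assertion.

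The only care needed is that $\beta$ is well defined only up to the monodromy action. This does not matter locally, since rational points of $\ell_0$-power denominator are preserved by the integral monodromy.
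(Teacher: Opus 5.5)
\textbf{Gap in (ii)$\Rightarrow$(iii).} Your argument for this step fails as soon as $r>0$. Torsion forces $\operatorname{Im}c=0$, so the primary-torsion labels $\Z[1/\ell_0]^{2g}\times\Z[1/\ell_0]^{r}$ lie in $\R^{2g}\times\R^{r}$. This subspace has real codimension $r$ in the target $\R^{2g}\times\C^{r}$, so these labels are not dense there. The image of a small open set under the submersion $\beta$ need not meet that subspace. For example, on $\Gm$ with the identity coordinate the rank is full everywhere, yet a small disc around $2$ contains no root of unity. Hence your claim that maximal-rank primary torsion meets every Euclidean open subset is false; the paper explicitly warns against this appeal to density in the whole mixed Betti target.

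Even near a full-rank point where all Poincar\'e norms are one, you only get density inside $\{u=0\}$, which has empty interior. You would then need its uniqueness-set property (Proposition~\ref{prop:unit-density}), and you have not shown that such a point exists. Proposition~\ref{prop:primary-density} instead uses the submersion only for the $2g$ abelian labels:
\begin{enumerate}
\item the algebraic loci $B_n=\{[n]p=0\}$, $n=\ell_0^j$, are analytically dense in a full-rank open set $\Omega$;
\item on a component $Y$ of $B_n$ meeting $\Omega$, one has $T_xY=\ker A_x$, and $D|_{\ker A}$ has rank $r$, so the algebraic map $[n]h\colon Y\to\Gm^r$ is dominant;
\item $\mu_{\ell_0^\infty}^r$ is Zariski dense in $\Gm^r$, so its preimage is Zariski dense in an algebraic open $Y^\circ$, on which full rank is certified by algebraic vertical differentials;
\item the Zariski closure then contains an analytically dense union of such components, hence $\Omega$, hence everything by irreducibility.
\end{enumerate}

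\textbf{Misplaced contradiction in (i)$\Rightarrow$(ii).} The saturated quotients $\G\to\G/\HH_N$ do not make $X$ special. Proposition~\ref{prop:saturated} shows that they \emph{preserve} relative nonspecialness and torsion density, so no contradiction with nonspecialness arises from them. The paper's deduction (Corollary~\ref{cor:rank-criterion}) is dimensional:
\begin{enumerate}
\item Theorem~\ref{thm:main} gives $m\ge d$, so $\ell<d$ forces $\ell<m$;
\item weak mixed Ax--Schanuel at a very general point of a Betti-leaf germ gives $N\lhd P$ with $\dim Y\ge h_N+(m-\ell)$;
\item the quotient problem of Proposition~\ref{prop:rank-reduction} then has $m'\le\ell-h_N<d-h_N=d'$, while remaining nonspecial and torsion dense;
\item this contradicts Theorem~\ref{thm:main} applied to the quotient.
\end{enumerate}

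Three further points in your sketch are off. The leaf germs in $X$ are analytic, and on their Zariski closures $Y$ it is the quotient classifying map, not $\beta$, that is constant. No counting argument or algebraicity of the rank-drop locus enters at this stage. Finally, the central-monodromy bound you invoke for the toric part needs the radial column condition. The paper obtains that condition only over number fields, from the arithmetic support theorem, so it cannot be used directly over $\C$ here.
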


The converse from full rank to primary-torsion density holds without
relative nonspecialness.  Its proof in
Proposition~\ref{prop:primary-density} is algebraic after one use of the
abelian Betti submersion: on a component where $[\ell_0^j]p=0$, the
multiple $[\ell_0^j]h$ maps dominantly to the torus, and primary roots of
unity pull back Zariski densely.  This avoids an incorrect appeal to
density of torsion in the whole mixed Betti target.

For a simple model, take a fixed elliptic curve $E$, the base
$B=E\times\Gm$, the constant family $(E\times\Gm)\times B$, and the
section $h(x,\zeta)=(x,\zeta)$.  Locally
$x=a+\tau b$ and $\zeta=e^{2\pi i c}$, so $\beta=(a,b,c)$ has real rank
four.  Torsion is $E_{\tors}\times\mu_\infty$: it is Zariski dense in
$B$, but analytically dense only in $E(\C)\times\{ |\zeta|=1\}$.
There are three real torsion coordinates, although the full Betti target
has dimension four.  In general the corresponding numbers are $2g+r$
and $2g+2r$.  This distinction also explains the exponent $2g+r$ in the
local exact-order counts of Corollary~\ref{cor:local-count}.  Those counts
hold at transverse unit-norm points; they do not assert a global degree
formula or a uniform threshold for the occurrence of exact orders.

For singular bases we apply the theorem on a smooth dense open and take
the closure of the containing special locus.  For a subvariety not
dominating the given base, we first replace the base by the normalization
of the closure of its image.  The theorem concerns torsion density, not
the more general subgroup-intersection assertion of
\cite[Conjecture~6.15]{GuHuang2024}.

\subsection{Small points and arithmetic consequences}
Over $\Qbar$, canonical heights measure a second form of arithmetic
abundance.  Fix a torus-splitting refinement and a symmetric relatively
ample polarization $\Theta$ of the abelian quotient.  For a point $x$
with abelian projection $p(x)$, put
\begin{equation}\label{eq:intro-height}
 \begin{gathered}
  \htot(x)=\hab(x)+\htor(x),\qquad
  \hab(x)=\widehat h_\Theta(p(x)),\\
  \htor(x)=\sum_v n_v\sum_{i=1}^r|u_{i,v}(x)|.
 \end{gathered}
\end{equation}
Here $u_{i,v}$ is the negative logarithm of the canonically rigidified
Poincar\'e norm in the $i$th toric pushout, and $n_v$ is the normalized
local degree.  Section~\ref{sec:canonical-height} constructs this height
and proves its exact functoriality.  It is nonnegative and vanishes
precisely at fibrewise torsion points.  On a split product it is the
N\'eron--Tate height plus twice the sum of the coordinate Weil heights.
A sequence is called \emph{generic} if each proper closed subvariety
contains only finitely many of its terms.

The main arithmetic result supplies the extension directions once the
abelian Betti map has maximal rank.

\begin{theorem}[Small points and mixed rank]\label{thm:smallrank}
Let $B/\Qbar$ be smooth, irreducible, and quasi-projective, let $\G/B$ be
a semiabelian scheme of relative dimension $g+r$, and let $h$ be a
relatively nonspecial algebraic section, with abelian projection $p$.
Assume $\rank_\R db_p=2g$.  If there is a generic sequence
$b_n\in B(\Qbar)$ with $\htot(h(b_n))\to0$, then
\[
  \rank_\R d\beta=2(g+r),\qquad \dim B\ge g+r.
\]
Equivalently, under the same nonspecialness and abelian-rank hypotheses,
if the mixed rank is less than $2(g+r)$, there are $\eta>0$ and a proper
closed $Z\subsetneq B$ such that
$\htot(h(b))\ge\eta$ for every $b\in B(\Qbar)\setminus Z$.
The analogous statement holds for a dominating subvariety with its
tautological marked point.
\end{theorem}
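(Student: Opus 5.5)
We prove the contrapositive. Assume the mixed Betti map of $h$ has generic real rank $2\ell<2(g+r)$, while $\rank_\R db_p=2g$, and $h$ is relatively nonspecial. The aim is to produce a proper closed $Z$ and a uniform gap $\eta>0$ for $\htot$ off $Z$. The equivalence of the two formulations is then formal: a generic sequence can meet $Z$ only finitely often. The dimension bound $\dim B\ge g+r$ follows from $\ell(X)\le\min\{\dim X,g+r\}$ once full rank is known. The statement for a dominating subvariety reduces to the section case by base change along $X\to S$ with the tautological point.

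\emph{Step 1 (algebraic quotient from rank defect).} Since the abelian Betti rank is already maximal, the defect lies entirely in the $c$-coordinates. The generic rank of $d\beta$ is $2g+2\ell'$ with $\ell'<r$, so the complex map $b\mapsto c$ restricted to the fibres of $(a,b)$ has rank $\ell'<r$ generically. I would apply the algebraicity of mixed Betti strata together with the algebraic-quotient statement for marked $1$-motives. This should yield, after a finite étale base change, a nontrivial character $\chi\colon\mathcal T\to\Gm$ and an algebraic quotient $\G\to\G'$ with one-dimensional toric part. In the quotient, the toric Betti coordinate $\chi(c)$ is locally a function of $(a,b)$ alone.

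\emph{Step 2 (central monodromy forces splitting or specialness).} The central-monodromy calculation shows the following. If $\chi(c)$ is locally constant along the abelian Betti leaves, then monodromy of the marked $1$-motive acts trivially on the corresponding unipotent direction. Hence the $\Gm$-coordinate of the image of $h$ is algebraic in $p$ and in the extension parameter $q_\chi$. This forces $h$ into a generalized Ribet locus, which is a proper relative special subvariety, unless the coordinate reduces to a fixed algebraic function $\phi(p,q_\chi)$. Here $\phi$ is a rational section of the Poincaré bundle determined by $p$ and $q_\chi$. Relative nonspecialness excludes the Ribet case. So we obtain an algebraic identity expressing $\chi\circ h$ as a fixed rational section of $\PP_{\A}(p,q_\chi)$ multiplied by an extra unit $f\in\OO(B')^\times$ that is not torsion.

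\emph{Step 3 (height gap via quadratic nef compensation).} The local functions $u_{\chi,v}(h(b))$ now differ from the canonical Poincaré Green function at $(p(b),q_\chi(b))$ by $\log|f(b)|_v$. I would bound $\sum_v n_v|u_{\chi,v}|$ from below using quadratic nef compensation. The Poincaré height pairing $\langle p,q_\chi\rangle$ is dominated by $\hab(p)^{1/2}\,\widehat h(q_\chi)^{1/2}$ up to $O(1)$ on the base. Hence if $\hab(h(b))$ and $\htor(h(b))$ were both below $\eta$, the Weil height of $f(b)$ would be $O(\eta^{1/2})$. Since $f$ is a nonconstant, nontorsion unit, its small-height points on $B'$ lie in a proper closed subset. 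This follows from Bogomolov for $\Gm$ applied to $f\colon B'\to\Gm$, or from the Northcott/Zhang inequality when $f$ is constant and non-root-of-unity. Taking $Z$ to be the image of this locus, together with the non-étale locus and the exceptional set from Step 1, gives the gap $\eta$.

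The main obstacle is Step 2, namely converting the rank defect into either an honest special subvariety or an explicit algebraic unit $f$. One has to check that the $\Z$-coefficient central monodromy of the marked $1$-motive is exactly the obstruction. I would first try the case $r=1$, where the central part of monodromy is a single unipotent parameter and the dichotomy can be read off from the Pell-type equation. I would then treat general $r$ by inducting through rank-one toric quotients.
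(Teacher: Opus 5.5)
\textbf{There is a genuine gap.} The two steps that carry the content, Steps~1 and~3, both fail.

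\textbf{Step 1.} You assert that a defect $\ell<g+r$ of the mixed rank produces a rational character $\chi$ with $\chi(c)$ locally a function of $(a,b)$. This is false. Take $g=0$, $r=2$, the constant family $\Gm^2\times B$, and $h$ the inclusion of $B=\{x_1+x_2=1\}\subset\Gm^2$. This $h$ is relatively nonspecial and $\ell=m=1<2$. Yet $\chi(c)$ locally constant would put $B$ in a coset of a subtorus. Here the theorem is exactly the Bogomolov theorem for tori, which the paper quotes directly, and no one-character reduction exists.

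The tools you invoke cannot produce $\chi$ in general. Proposition~\ref{prop:betti-strata} and the quotients of Propositions~\ref{prop:saturated} and~\ref{prop:rank-reduction} only see positive-dimensional intersections of $B$ with Betti leaves, i.e.\ the case $\ell<m$. There they lower the base dimension rather than isolate a toric character; this is how the paper disposes of $\ell<m$ inside a minimal-counterexample argument (Lemma~\ref{lem:small-quotient}). In the decisive immersive case $\ell=m<g+r$ the generic leaf intersection is discrete and no quotient arises. Compare the remark after Proposition~\ref{prop:radial-affine}: lower-rank plaques are not classified by subtori or Ribet loci.

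\textbf{Step 2.} This step is also unsupported. What a missing central monodromy direction actually gives (Lemma~\ref{lem:constant-character}) is a character whose marked value is a \emph{constant} $c$. Relative nonspecialness shows only that $c$ is not a root of unity; it does not give containment in a Ribet locus.

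\textbf{Step 3.} This step fails independently, for two reasons. First, a nonconstant unit $f$ on $B'$ maps dominantly to $\Gm$, so $f^{-1}(\mu_\infty)$ is Zariski dense in $B'$. Smallness of $h_{\mathrm{Weil}}(f(b))$ therefore gives no proper exceptional set. Only a constant non-root of unity yields a gap, by Kronecker; that is the paper's Lemma~\ref{lem:small-center}, which proves $M\cap U=U$ rather than the theorem. Second, your Cauchy--Schwarz bound needs $\widehat h(q_\chi(b_n))$ to stay bounded, which is not available a priori. The paper has to prove such a bound (Lemma~\ref{lem:bounded-height}) by a bigness argument that works only when $\Delta=L^gT_0^k>0$.

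\textbf{The missing idea} is how the immersive case is really excluded. The nef classes $\bL+\eps\bT_w+R\eps^2\bQ$ are used not for a pointwise lower bound. Instead, via the fundamental inequality, they show $\bN_{\eps,w,R}^{m+1}=O(\eps^{k+2})$. Hence every mixed measure $\omega^g\wedge\ddc|u_{\chi_1}|\wedge\cdots\wedge\ddc|u_{\chi_k}|$ is supported on $\{u=0\}$, with the case $\Delta=0$ handled by Lemma~\ref{lem:zero-leading}. This gives the radial condition $u\in D(\ker A)$ (Lemma~\ref{lem:radial}). The polarized first-jet argument of Proposition~\ref{prop:jet-bound} turns it into $\dim(M\cap U)\le\ell-g=m-g$. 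Together with $M\cap U=U$, this forces $r\le m-g$, contradicting $m<g+r$.

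Your formal reductions are fine: the equivalence of the two formulations, $\dim B\ge\ell$, and passage to the tautological section. None of the core argument above appears in the proposal.
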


The hypothesis on the abelian rank is essential to the scope of this
statement.  Theorem~\ref{thm:bogomolov-transfer} gives a precise way to
supply it: an abelian Bogomolov dimension theorem for a class stable under
isogenies and quotients implies the corresponding small-points and
mixed-rank criterion for arbitrary semiabelian extensions of that class.
The proof first applies a normal quotient to any deficient abelian rank,
and then invokes Theorem~\ref{thm:smallrank}.  K\"uhne's theorem for
products of elliptic families and the isotrivial result of
Dimitrov--Gao--Habegger supply the two unconditional classes treated in
Corollaries~\ref{cor:elliptic-bogomolov} and~\ref{cor:isotrivial-bogomolov}
\cite[Theorem~1]{KuehneRBC2023}, \cite[Proposition~4.1]{DGH2022}.
The semiabelian extension may be nonsplit and may vary even when its
abelian quotient is constant.  No general abelian relative Bogomolov
conjecture is assumed in either of these two cases, or proved here.

For an arbitrary abelian quotient, exact torsion in that quotient gives
a useful substitute for an abelian small-points theorem.
Theorem~\ref{thm:toric-gap} proves that if $X$ is relatively nonspecial
and $\ell(X)<g+r$, then outside a proper closed subset
\[
  p(x)\text{ torsion}\quad\Longrightarrow\quad\htor(x)\ge\eta
\]
for some $\eta>0$.  There is no dimension restriction on $X$ beyond the
rank deficiency.  If $p(x)$ has order $N$ and
$[N]x=(t_1,\ldots,t_r)$ in the split torus, exact linear scaling gives
\[
  2\sum_i h_{\mathrm{Weil}}(t_i)=N\htor(x)\ge\eta N.
\]
For a nodal curve, the $t_i$ are ratios of values of a rational function
at the pairs of points identified to form the nodes.  The Pell
application in Section~\ref{sec:pell} therefore measures an obstruction
invisible in the ordinary Jacobian: the function exists on the
normalization, but its gluing ratios have heights growing at least
linearly with its divisor order.

These are statements about specializations in moving families.  The
absolute semiabelian Bogomolov theorem is due to David--Philippon;
K\"uhne later proved strong equidistribution and gave another proof
\cite{DavidPhilippon2000,KuehneSmall2022}, following earlier work of
Chambert-Loir in the almost-split case
\cite{ChambertLoir1999,ChambertLoir2000}.
The explicit fixed generalized Jacobian in
Proposition~\ref{prop:explicit-pell} also falls under these absolute
results.  Its role here is to verify the relative hypotheses in concrete
polynomials and exhibit the difference between ordinary and nodal Pell
equations, not to claim a new absolute Bogomolov case.

\subsection{Ingredients and method of the proof}
We pass from $X$ to a smooth marked classifying image $B$ in a universal
semiabelian family and use its tautological section $h$.  This removes
parameters along which the marked family does not change.  The rational
realization $E$ of $[\Z\xrightarrow h\G_B]$ has weights $0,-1,-2$.
Write $P$ for its generic Mumford--Tate group, $M$ for connected algebraic
monodromy, and $U\simeq\Ga^r$ for the central Tate translations.  Relative
nonspecialness says that $P$ contains all marked-point translations
$K=\Hom(\Q e_0,W_{-1}E)$.

Two estimates account for the dimension bound.  Maximal abelian Betti
rank and the mixed fixed-part theorem give full abelian relative
monodromy.  Normality and the Poincar\'e Heisenberg bracket then show
that a missing central direction would produce a constant multiplicative
quotient whose marked value is not a root of unity.  A torsion value, or
a generic sequence of toric height tending to zero, excludes that
quotient.  Thus
\begin{equation}\label{eq:strategy-full-center}
  M\cap U=U.
\end{equation}
The arithmetic argument gives the opposing estimate, in the immersive
case $\ell(B)=m:=\dim B$,
\begin{equation}\label{eq:strategy-center-bound}
  \dim(M\cap U)\le m-g.
\end{equation}
Together these imply $m\ge g+r$.

The quotient construction makes the immersive reduction algebraic.
For a connected rational normal subgroup $N\lhd P$, we take
$W_N=(N-1)E$, the smallest subspace on whose quotient $N$ acts trivially.
Its saturation includes the images of the extension parameters, not
just the visible marked-point translations.  We algebraize $W_N$ as a
semiabelian subgroup over the original algebraically closed field.
Weak mixed Ax--Schanuel then turns a positive-dimensional Betti leaf
into a quotient problem of smaller base dimension, while the quotient
homomorphism preserves torsion and sequences of canonical height tending
to zero \cite{GaoKlingler2024,Chiu2025}.
The same argument proves the normal-quotient rank formula
\[
 \ell(B)=\min_{N\lhd P}
 \left(\dim\ol{q_N(B)}^{\Zar}
       +\tfrac12\dim_\Q V_{p,N}+\dim(N\cap U)\right)
\]
of Corollary~\ref{cor:normal-quotient-rank-formula}.  Here $q_N$ is the
normal mixed Shimura quotient and $V_{p,N}$ is the marked abelian
projection of its weight-$-1$ Lie algebra.  The final term records the
weight-$-2$ directions absent from the weight-$-1$ normal-function
formula of Gao--Zhang \cite[Theorem~1.2]{GaoZhang2026}.

A one-character calculation explains the arithmetic construction.
Write $q=\lambda a$ under a principal polarization, and set
$\bL=p^*\bTheta$, $\bQ=a^*\bTheta$.  The section $h$ trivializes the
Poincar\'e pullback, giving an adelic divisor $\bD$ with Green function
$u=-\log\|h\|$.  The theta identity gives, for each positive integer $n$,
\[
 \frac1{n^2}(np\pm a)^*\bTheta
   =\bL\pm\frac1n\bD+\frac1{n^2}\bQ.
\]
Both sides are compared using this fixed trivialization on $B$.
Taking the maximum of the two nef bundles on dominating models yields
\[
 \bL+\frac1n\max(\bD,-\bD)+\frac1{n^2}\bQ\quad\text{nef}.
\]
Thus the absolute logarithmic norm enters to first order, whereas the
compensating extension-parameter height enters only to second order.
Lemma~\ref{lem:compensation} carries out this calculation simultaneously
for any finite collection of characters, with one compensation constant
as their weights vary.

The resulting arithmetic intersection polynomial forces all relevant
mixed Chern measures to be supported where every logarithmic norm
vanishes.  A positive leading intersection first gives bounded
extension-parameter height; if that intersection vanishes, positivity
and comparison of character divisors make the measures themselves zero.
Both cases are needed.  The arithmetic framework and the total-mass
identity are those of Yuan--Zhang \cite{YuanZhang2026}; Guo proves a
more general local total-mass theorem \cite{Guo2025}.

On an abelian Betti leaf, the logarithmic norms are the real parts of a
holomorphic vector $F=-2\pi i(t-wb)$.  The all-character support condition
forces this real vector into the leafwise complex tangent image.
We polarize the resulting minors, hold the antiholomorphic period data
fixed, and continue the holomorphic first jet around monodromy.  A
central translation changes only the additional radial column.  Subtracting
the two column conditions gives
\[
  (M\cap U)_\C\subseteq D(\ker A),\qquad
  \dim(M\cap U)\le\ell(B)-g,
\]
where $A=dz-d\tau\,b$ and $D=dt-dw\,b$.
Proposition~\ref{prop:jet-bound} isolates this geometric implication.
The separation of holomorphic and antiholomorphic variables follows the
method of \cite[\S5.2]{AndreCorvajaZannier2020} and
\cite[\S9]{KuehneRBC2023}; its use here is the central-translation
calculation, not a new polarization principle or an Ax--Schanuel theorem
with derivatives.

Finally, passage from algebraic numbers to arbitrary complex
coefficients requires algebraicity of the mixed Betti strata.
We place the leaf equations in one algebraic family in the period
torsor and apply the flat-leaf intersection results of Baldi--Urbanik
\cite[Propositions~3.10 and~8.1]{BaldiUrbanik2025}, whose
principal-bundle input is developed in \cite{BCFN2026}.
Subtracting the dimension of the period-frame stabilizer converts their
torsor inequality into finitely many closed normal-quotient fibre loci.
Proposition~\ref{prop:betti-strata} identifies these with the Betti strata.
The resulting transcendence-degree induction follows the abelian
strategy of \cite[\S10]{GaoHabegger2026}; compare also
\cite[Appendix~A]{CorvajaTsimermanZannier2026}.
The weight-$-2$ orbit calculation is supplied here, rather than inferred
from the weight-$-1$ strata theorem of \cite{GaoZhang2026}.

\subsection{Organization}

Section~\ref{sec:universal} develops the special geometry and Betti map,
including the primary-torsion converse and local counts.
Section~\ref{sec:quotients} constructs the quotients and proves the
monodromy statements.  Sections~\ref{sec:adelic}
and~\ref{sec:central-bound} establish arithmetic support and the
small-points theorem.  Section~\ref{sec:strata} proves the strata theorem
and completes the complex torsion theorem.  The final two sections give
the Bogomolov transfer, toric height gaps, and Pell applications.
Throughout, a variety is reduced and of finite type over its stated field;
we use smooth dense opens when passing to period maps.

\section{Special geometry and Betti maps}\label{sec:universal}
\subsection{Universal families and marked $1$-motives}\label{sec:examples}
Choose a neat level and, when needed, a symmetric theta bundle after a further
finite refinement.  We denote the universal principally polarized abelian
scheme by $\mathscr U\to\mathscr A_g$.  For $r\ge0$ put
\[
 \BB_{g,r}=(\mathscr U^\vee)^r_{/\mathscr A_g}.
\]
The universal semiabelian family
\begin{equation}\label{eq:universal-family}
 \MM_{g,r}\longrightarrow\BB_{g,r}
\end{equation}
is the fibre product of the $r$ Poincar\'e torsors over the \emph{common}
abelian-point coordinate.  Its points are written
\[
 (A,p,q_1,\ldots,q_r,\xi_1,\ldots,\xi_r),
 \qquad \xi_i\in\PP_A(p,q_i)^\times.
\]
In particular, this is not a product of $r$ independent abelian-point coordinates.
The total space is a mixed Shimura variety, at the chosen level
\cite{Pink1990,BertrandEdixhoven2020,GuHuang2024}.

The exceptional loci must be defined relative to the family, rather than only inside an individual fibre.

\begin{definition}\label{def:special}
An irreducible subvariety $Z\subseteq\G$ dominating a normal base $S$ is called
special if, after a finite \'etale refinement supplying a torus basis,
polarization data and level, it is an irreducible component of the pullback of
a special subvariety of the appropriate universal semiabelian mixed Shimura
variety.  We use finite isogeny correspondences to compare polarization types.
This is the pullback convention of \cite[Definition~6.13]{GuHuang2024}.

An irreducible subvariety is \emph{relatively nonspecial} if it is not contained
in any proper special subvariety dominating its base.
\end{definition}

Multiplication by $n$ on a semiabelian scheme in characteristic zero is finite
\'etale and surjective.  A finite isogeny therefore preserves and reflects
fibrewise torsion: if its kernel is killed by $e$, then an $n$-torsion image has
an $en$-torsion preimage.  Finite \'etale changes of base preserve the torsion
condition.  They also preserve density on every dominating component, since
finite \'etale maps are open.  After an arbitrary finite generically \'etale
cover, we first remove its branch locus.

A torus scheme becomes split after a finite \'etale cover: its character lattice
is a locally constant integral lattice, and the continuous image of the
profinite fundamental group in the discrete group $\mathrm{GL}_r(\Z)$ is finite.
For the polarization reduction, choose an isogeny $\alpha:A\to A'$ to a
principally polarized abelian variety at the generic point.  After finite
extension, lift each extension parameter through
$\alpha^\vee:(A')^\vee\to A^\vee$.  If the lifted extension is $G'$, the
identity of extension classes identifies $G$ with $\alpha^*G'$, and gives a
finite isogeny $G\to G'$.  All these data spread over a nonempty open of the
base.  This explains the direction of the isogeny on the semiabelian family;
one does not push an arbitrary extension through an abelian isogeny without
first lifting its extension parameters.

Images of special subvarieties, and irreducible components of inverse images
under finite level maps or isogeny correspondences, are special.  This follows
from the description by mixed Shimura subdata and rational Hecke translates
\cite{Pink1990}.  Equivalently, the rational homology map defining the isogeny
is a Hodge tensor over the common special closure; after clearing denominators
and refining the level, its graph is a finite mixed Shimura correspondence.
Such maps preserve dimensions, so properness in the geometric generic fibre
is preserved as well.

Proper special containment is compatible with the refinements just described.  We record the descent statement for later use.

\begin{lemma}\label{lem:descent-special}
Suppose a section or a dominating subvariety acquires proper special containment
after one of the finite refinements above, or after passage to its tautological
section.  Then the original subvariety has proper special containment.
\end{lemma}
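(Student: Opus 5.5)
The plan is to treat the finite refinements one at a time, since each is a finite correspondence of a specific type, and then handle passage to the tautological section separately. There are three kinds of refinement: finite \'etale base change $S'\to S$ (torus splitting, level), isogeny correspondences $G\to G'$ (polarization reduction), and finite level maps on the universal family. In each case, suppose $X'$, the refined subvariety (a component of the preimage of $X$, or its image under the isogeny), lies in a proper special $Z'$ dominating $S'$. Set $Z$ to be the image of $Z'$ in $\G$; for an isogeny, $Z$ is an irreducible component of the preimage of $Z'$ containing $X$. By the paragraph preceding the lemma, images and components of inverse images of special subvarieties under finite level maps and isogeny correspondences are special, so $Z$ is special and contains $X$. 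Finite \'etale maps are open and closed on the relevant opens, so $Z$ still dominates $S$. The base change $S'\to S$ is only generically \'etale after removing the branch locus, so one also takes Zariski closures over the normal base $S$. The special property passes to the closure because it is defined by pullback from the universal mixed Shimura variety, whose special subvarieties are closed.

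Next I would check that properness is preserved. Every map involved is finite and surjective onto its image, so it preserves dimensions of geometric generic fibres over the base. Hence $\dim Z_{\bar\eta}=\dim Z'_{\bar\eta'}<\dim\G'_{\bar\eta'}=\dim\G_{\bar\eta}$, which makes $Z$ proper relative to the marked-point fibre. I expect this to be the main delicate point, because properness is measured relative to the family and not merely as a subvariety of $\G$. A locus defined by abelian moduli conditions alone could become all of $\G$ after descent. This cannot happen, because the dimension equality holds fibrewise over the (finite) map of bases, and because $Z'$ is proper in the geometric generic fibre, the refined base being dominant.

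For the tautological section, the setting is a dominating $X\subseteq\G$ with base change $\G_X=\G\times_S X\to X$ and diagonal section $h$. If $h(X)$ is contained in a proper special subvariety $Z'$ of $\G_X$, I would push $Z'$ forward along the projection $\G_X\to\G$. Equivalently, both $\G_X$ and $\G$ are pulled back from the same universal family $\MM_{g,r}$, and the classifying map of $h$ factors through that of $X$. So containment of $h$ in a pulled-back special subvariety $\Sigma\subseteq\MM_{g,r}$ means that $X$ maps into $\Sigma$, and a component of the pullback of $\Sigma$ to $\G$ contains $X$. For properness, $\Sigma$ meets the generic fibre over the image of the base in a proper subset, which is what Definition~\ref{def:special} requires for $\G$ as well. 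Finally, compositions of finitely many refinements are handled by induction on the number of steps.
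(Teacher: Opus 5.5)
Your proposal follows essentially the paper's route: you carry the containing special locus through the finite level or isogeny correspondence and get properness from equal relative dimensions and finiteness, and for the tautological section you note that the same universal special subvariety already contains $X$ before the base change. The one step you leave implicit is why $\Sigma$ meets the generic fibre properly, and the paper makes it explicit: the family over the tautological base is irreducible, so if the pullback of $\Sigma$ to $\G$ were the whole family, its pullback after the change would be the whole irreducible family, leaving no proper containing component.
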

\begin{proof}
Choose a common level.  A containing special locus is a component of the pullback
of a universal special subvariety $T$.  The original subvariety is contained in
the pullback of the image of $T$ under the finite level or isogeny
correspondence.  An irreducible component containing it is special.

For a tautological base change, the same universal $T$ works before the base
change.  If its pullback before the change were the whole family, its pullback
after the change would also be the whole family, which is irreducible over an
irreducible base.  This contradicts the existence of a proper containing
component.  For finite correspondences, properness follows from equality of
source and target relative dimensions and finiteness.  Closures across the
removed open set are taken in the same universal special pullback.
\end{proof}

Passing from a subvariety to its marked classifying image removes irrelevant parameter directions without losing torsion density.

\begin{proposition}\label{prop:universal-reduction}
To prove \cref{thm:main}, it is enough to treat the following situation.  The
base $B$ is a smooth irreducible locally closed subvariety of $\MM_{g,r}$, the
family over $B$ is pulled back from \eqref{eq:universal-family}, and $h$ is its
tautological section.  Its torsion values are dense.  If the original
subvariety is relatively nonspecial and has dimension less than $d$, then the
new problem is relatively nonspecial and $m:=\dim B<d$.
\end{proposition}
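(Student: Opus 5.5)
We reduce \cref{thm:main} in three moves: finite refinements, passage to the classifying image, and transfer of the hypotheses. Let $X\subseteq\G$ be irreducible, dominating $S$, with dense fibrewise torsion, relatively nonspecial, and $\dim X<d$; otherwise there is nothing to prove. First, after a finite étale refinement of $S$ (removing a branch locus if needed) we split the torus, fix level, and use the polarization reduction to replace $\G$ by an isogenous family $\G'$ with principally polarized abelian quotient and extension parameters $q_1,\dots,q_r$. We replace $X$ by a dominating component of its preimage, or image, in the refined family. By the remarks before \cref{lem:descent-special}, finite isogenies and finite étale base changes preserve and reflect fibrewise torsion. Torsion density survives on every dominating component because finite étale maps are open, and dimension is unchanged. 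By \cref{lem:descent-special}, proper special containment of the new subvariety descends to the original, so relative nonspecialness is preserved.

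Second, we pass to the tautological section. Replacing the base by $X$ itself, the pullback $\G_X\to X$ carries the diagonal section $x\mapsto x$, and the torsion values of this section are exactly $X\cap\G_{\tors}$, hence dense. The refined data give a classifying morphism $\phi:X\to\MM_{g,r}$ sending $x$ to $(A_{\pi(x)},p(x),q_i(\pi(x)),\xi_i(x))$, and $\G_X=\phi^*\MM_{g,r}$ over $X$, where $\MM_{g,r}$ is viewed as the family over $\BB_{g,r}$ in the sense of \eqref{eq:universal-family}. We set $B$ to be a smooth dense open of the closure of $\phi(X)$, shrinking so that $\phi$ is smooth over it. Let $h$ be the tautological section of the pulled-back family over $B$. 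Torsion is pulled back from $\MM_{g,r}$, because a point is fibrewise torsion iff its image in the universal family is, and $\phi$ is dominant onto $B$. Hence the torsion values of $h$ are dense in $B$. Also $m=\dim B\le\dim X<d$.

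Third, and this is the main point, we check relative nonspecialness of $(B,h)$. Suppose $h(B)$ lies in a proper special subvariety $Z$ dominating $B$, given as a component of the pullback of a universal special $T$. Pulling $T$ back along $\phi$ gives a special subvariety of $\G_X$, and a component of it contains the tautological section of $X$. This pullback is proper. Otherwise it would be all of $\G_X$, and since $\phi$ is dominant onto $B$ and $\G_X$ is the pullback of the family over $B$, the pullback to $B$ would also be everything, contradicting properness of $Z$. This is the tautological base change case of \cref{lem:descent-special}. Applying that lemma again, $X$ would have proper special containment in $\G$, contrary to assumption. The delicate part is ensuring properness is measured in the marked-point fibre rather than in the abelian moduli alone. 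Here we use that $Z$ dominates $B$ and that both families have relative dimension $d$ over their bases. Closures across removed opens are handled inside the same universal special pullback, as in \cref{lem:descent-special}.
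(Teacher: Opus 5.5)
Your proposal is correct and follows essentially the same route as the paper's proof: apply the finite refinements, pass to a smooth dense open $B$ of the closure of the classifying image with its tautological section, obtain torsion density and $\dim B\le\dim X$ from images, and deduce relative nonspecialness from \cref{lem:descent-special}. The only difference is that you spell out the properness check for the pullback along the classifying map, which the paper leaves implicit in that lemma.
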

\begin{proof}
Apply the preceding refinements and the classifying map to $X$.  Let $B$ be a
smooth dense open of the irreducible closure of its image.  The family is the
pullback of the universal family and the image point defines $h$.  Images of
dense torsion sets are dense in the image closure, and $\dim B\le\dim X$.
If $h$ had proper special containment, \cref{lem:descent-special} would provide
proper special containment for $X$.  Restricting to the smooth open does not
change density.
\end{proof}

Let $E$ be the rational homological realization of $[\Z\xrightarrow h\G_B]$.
It is an admissible graded-polarizable variation of mixed Hodge structures,
with
\[
 \Gr^W_0E=\Q(0),\qquad \Gr^W_{-1}E=V=H_1(\A/B,\Q),
 \qquad \Gr^W_{-2}E=\TT=\Q(1)^r.
\]
We use admissibility on the whole smooth base, including points where
$h$ meets the identity.  The relative exponential sequence gives a
variation on all of $B$ whose fibres are Deligne's realizations
\cite[Construction~10.1.3]{Deligne1974}.  Here is the extension argument
that makes the admissibility assertion precise.  If $h$ is not identically
zero, on the dense open $B^\circ$ where $0$ and $h$ are disjoint it is
identified with
\[
 H_1\bigl(\G_{B^\circ}/B^\circ,\{0,h\};\Q\bigr).
\]
This is a geometric relative-cohomology variation, in homological
notation, and is admissible; geometric admissibility is provided by
\cite{ElZein1986}, also used in
\cite[\S3.1, footnote~3]{GaoZhang2026}.
Choose a smooth compactification $\overline B$ with normal-crossing
boundary.  Proposition~1.10.1 of \cite[p.~995]{Kashiwara1986} applies with
$X=\overline B$, $X^*=B$ and $Y$ the closure of $B\setminus B^\circ$:
a variation already defined on $B$ is admissible relative to
$\overline B$ if its restriction to that dense open is admissible
relative to $\overline B$.  Thus admissibility extends across the
collision locus.  If $h=0$ identically, the realization is
$\Q(0)\oplus H_1(\G_B/B,\Q)$ and geometric admissibility applies directly.
In particular, restriction to every test disc in the compactification
has quasi-unipotent monodromy, a relative monodromy filtration, and the
locally free extended Hodge-graded pieces specified in
\cite[\S\S1.8--1.9]{Kashiwara1986}.
For smooth $1$-motives over algebraic curves these properties are also
established directly in \cite[Lemma~5]{Andre1992}.

The weight-graded pieces are polarized, and all subquotients below are
taken in the admissible category.  The mixed fixed-part theorem and
normality are used in the form of
\cite[Theorem~1 and its proof]{Andre1992}; see also
\cite[Theorem~4.19]{SteenbrinkZucker1985}.
After a finite \'etale cover we arrange that algebraic monodromy is
connected.  All subsequent constructions may be restricted to a common
smooth dense open.

With a rational weight splitting, the ambient group is represented by
\begin{equation}\label{eq:matrix-group}
 \begin{pmatrix}
 \mu(a)I_r&q&t\\0&a&p\\0&0&1
 \end{pmatrix},
 \qquad a\in\GSp(V),\quad q\in\Hom(V,\TT),\quad
 p\in V,\quad t\in\TT.
\end{equation}
When $g=0<r$, the pure factor is interpreted as the Tate group
$\Gm$, acting by its scalar character on $\TT$, and all abelian
blocks are empty.  The case $g=r=0$ is trivial and is omitted from
these matrix conventions.  Its unipotent radical has bracket
\begin{equation}\label{eq:heisenberg}
 [(p,q),(p',q')]=q(p')-q'(p)\in\TT.
\end{equation}
The kernel of the map forgetting $\Gr^W_0E$ is the vector group
\[
 K=\Hom(\Q e_0,W_{-1}E)=V_p\oplus U,
 \qquad U=\TT\otimes\Ga.
\]
The group $U$ is central in the unipotent radical of the ambient
matrix group, and hence in the relevant unipotent radicals below.
It need not be central in the entire Mumford--Tate group: the pure
similitude factor acts on it through its Tate character.  The
splitting $K=V_p\oplus U$ and the displayed projections are always
read in compatible rational weight splittings.
We identify additive rational vector groups with their rational Lie
algebras when writing weight-graded subspaces.
Write $P=\MT(E)$ and $P_0=\MT(W_{-1}E)$ for the generic groups, in a common
rational fibre.  The map $P\to P_0$ is surjective by the tensor description of
Mumford--Tate groups.

The following criterion translates the relative exceptional-locus condition into a condition on the generic Mumford--Tate group.

\begin{lemma}\label{lem:full-relative}
In the universal-section setting, relative nonspecialness is equivalent to
\begin{equation}\label{eq:full-relative}
 K\subseteq P,
 \qquad\text{or, equivalently,}\qquad
 P=\operatorname{preimage}(P_0).
\end{equation}
It implies that $p_{\bar\eta}$ generates $\A_{\bar\eta}$.
\end{lemma}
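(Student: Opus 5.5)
The plan is to prove the two formulations of \eqref{eq:full-relative} equivalent first, and then to match them with relative nonspecialness through the mixed Shimura description of special subvarieties.

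\emph{Equivalence of the two formulations.} The kernel of the ambient projection forgetting $\Gr^W_0E$ is exactly $K$. The map $P\to P_0$ is surjective, so $P$ is contained in the preimage of $P_0$. That preimage is an extension of $P_0$ by $K$, so it equals $P$ precisely when $P\cap K=K$. This step is formal.

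\emph{Nonspecial implies $K\subseteq P$.} I will argue by contraposition. Suppose $K\not\subseteq P$. Since $P$ is the generic Mumford--Tate group of $E$, the image of $B$ lies in the special subvariety of $\MM_{g,r}$ attached to the mixed Shimura subdatum with group $P$, or rather its image in the universal datum. That special subvariety is the pullback to the family over the closure of the image of $B$ in the base of $\MM_{g,r}\to\BB_{g,r}$. Its fibres over a point of that base are orbits of $P\cap K$, which are translates of a subgroup of dimension $\dim(P\cap K)<\dim K=d$. The family fibres have dimension $d$, so the special subvariety is proper in the family while still containing $h(B)$. This contradicts relative nonspecialness. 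The one point needing care is that the special subvariety's fibre over the base really is governed by $P\cap K$. For that I will use Pink's description: the unipotent radical of the subdatum, intersected with $K$, gives the fibre directions.

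\emph{$K\subseteq P$ implies nonspecial.} Again by contraposition, suppose $h(B)\subseteq Z$, where $Z$ is a proper special subvariety dominating $B$. By \cref{lem:descent-special} and the definition, I may take $Z$ to be a component of the pullback of a universal special $T$ with group $Q$, at a common level. Then $P\subseteq Q$, because the Mumford--Tate group of any point is contained in the group of every special subvariety containing it. Since $Z$ dominates $B$ and is proper in fibres, $Q\cap K\subsetneq K$ after comparison of dimensions. This contradicts $K\subseteq P\subseteq Q$.

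\emph{Generation.} Project $K\subseteq P$ to weight $-1$: the image $P'$ of $P$ in $\MT(\Gr^W_{\ge-1}E)$ contains $V_p=V$. If $p_{\bar\eta}$ lay in a proper abelian subscheme up to torsion, say $A'$, then the marked point $e_0$ would map into the rational subspace $H_1(A')\otimes\Q$ plus a fixed direction. That condition is a Hodge tensor, so $P'$ would preserve it, and the translations in $P'$ would lie in $H_1(A')\subsetneq V$. This is a contradiction.

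The main obstacle I expect is the second step: identifying the fibre dimension of the special subvariety with $\dim(P\cap K)$. This requires handling the weight $-2$ part, where the $U$-directions are entangled with $q$ through the Heisenberg bracket \eqref{eq:heisenberg}. The subtlety is that $P\cap K$ is a group even though $P$ need not split. I will handle this by applying Pink's fibre description to the unipotent radical $W_P$, and by noting that $K$ is normal in the ambient group, so that $W_P\cap K$ is normal in $P$.
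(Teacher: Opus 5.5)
Your proposal is correct and follows the paper's proof. The fibres of the special closure over the classifying base are orbits of $P\cap K$, so $P\cap K\ne K$ gives a proper containing special pullback, while $K\subseteq P\subseteq Q$ forces any containing special pullback to contain full fibres. For generation you use the Hodge sub-object spanned by $H_1(A')$ and a lift of $e_0$, where the paper uses the dual quotient $1$-motive with torsion marked point; the two are equivalent.

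One bookkeeping point needs fixing. You write $\dim K=d$, but $\dim_\Q K=2g+r$. The strict inequality should be stated as the paper does: a proper rational Hodge subgroup of $K$ has complex fibre dimension less than $g+r$, because its weight $-1$ part contributes half its rational dimension and its weight $-2$ part contributes its full dimension. This uses that the graded pieces of $P\cap K$ are Hodge substructures.
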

\begin{proof}
The special closures of the section image and of its classifying base are the
mixed Shimura loci attached to $P$ and $P_0$.  Their generic relative domain
fibre is the orbit of $P\cap K$.  A proper rational Hodge subgroup of $K$ has
complex fibre dimension strictly less than $g+r$: its weight-$-1$ dimension
contributes half its rational dimension, and its weight-$-2$ dimension
contributes its full dimension.  Thus $P\cap K\ne K$ gives proper relative
special containment.  Conversely, if $K\subseteq P$, the special closure
contains the whole relative fibre over the special closure of the classifying
base.  A proper special pullback containing the section would contradict this
full fibre.  This proves the equivalence.

If a multiple of $p_{\bar\eta}$ lay in a proper abelian subvariety, that
subvariety and the containment would spread after a finite cover.  The
quotient marked-point $1$-motive would have torsion marked point.  Its
Mumford--Tate group would consequently have zero relative translation kernel,
contrary to the image of the full $V_p$.  Equivalently, this is the proper
subgroup pullback in the abelian quotient.
\end{proof}

It follows from Gao--Habegger that, for a relatively nonspecial torsion-dense
section,
\begin{equation}\label{eq:abelian-rank}
 m\ge g,\qquad \rank_{\R}db_p=2g.
\end{equation}
Here $b_p$ is the local abelian Betti map.  Generic cyclic density of $p$ implies
$\ol{\Z p(B)}^{\Zar}=\A_B$, so the generation hypothesis in their theorem is
satisfied.  Generic cyclic density of $h$ alone must not be substituted for
\eqref{eq:full-relative}.

The specialness criterion can be tested on the following Ribet example.
It also shows why extra parameters and generic cyclic generation must be
kept separate from Betti rank.

\begin{example}\label{ex:ribet}
Let $E/\Qbar$ be an elliptic curve without complex multiplication, let
$A=E^2$ with its product principal polarization, and put
\[
 J=\begin{pmatrix}0&1\\-1&0\end{pmatrix}.
\]
Then $J^\dagger=-J$.  Take $B=A$, let $q:B\to A^\vee$ be the identity under
the polarization, and let $\HH_q/B$ be the Poincar\'e extension of
$A\times B$ by $\Gm$.  The normalized Ribet section for $J$ has projection
$p=2Jq$ and has torsion value at every torsion value of $q$; for $q\in A[n]$
its order divides $n^2$.  These facts follow from
\cite[Propositions~3.1 and~3.3]{BertrandEdixhoven2020}.

The two generic elliptic coordinates of $p$ satisfy no nonzero integral
relation, so $p$ is cyclically dense in $A$ over the geometric generic field.
The extension class $q$ is nontorsion.  A proper connected subgroup of
$\HH_q$ surjecting onto $A$ would have finite toric kernel and would split the
extension after an abelian isogeny, forcing $q$ to be torsion.  Thus the
Ribet section is generically cyclically dense in the whole three-dimensional
semiabelian fibre.

Nevertheless, the base has dimension two and the fibre dimension is three.
The section is contained in a proper generalized Ribet special locus, in
accordance with \cite{GuHuang2024}.  This shows why an ambient dimension-two
rigidity statement over curves cannot be extended to higher-dimensional bases.
\end{example}

\begin{example}
Pull back the preceding example to $B\times C$ for an arbitrary positive-
dimensional projective variety $C$, without changing the section or the family
in the $C$-direction.  Torsion remains dense, but the extra base directions
contribute no Betti rank.  A dimension argument on an arbitrary parameter
space must therefore either pass to the classifying image or control these
rank-deficient directions by a quotient.  Merely cutting a general curve
through a dense torsion set does not guarantee infinitely many torsion points
on that curve.
\end{example}

\begin{remark}
Gu--Huang classify the relevant special loci and deduce their revised relative
statement from mixed arithmetic Zilber--Pink.  No such conjectural implication
is used as a theorem in this paper.  The separate geometric input
used in Section~\ref{sec:strata} concerns flat-leaf intersections in an
algebraic period torsor carrying an admissible variation.  Conflating these two forms of Zilber--Pink would make
the claimed unconditional argument circular.
\end{remark}

\subsection{Period coordinates and normal orbits}\label{sec:betti}
On a simply connected analytic chart, choose period coordinates
\[
 (\tau,z,w,t)\in\mathfrak H_g\times\C^g\times
 \operatorname{Mat}_{r,g}(\C)\times\C^r.
\]
The $i$th Poincar\'e coordinate is $e^{2\pi i t_i}$.  We use the rescaling of
the algebraic coordinates of \cite[Section~4]{BertrandEdixhoven2020} for which
both the top row and the top-right entry have been divided by $2\pi i$.
A fixed nonzero scalar change does not affect the ranks below.  However,
the assertion that torsion has \emph{real rational} labels uses precisely
the chosen integral Tate normalization; it is not invariant under an
arbitrary complex rescaling.  We retain this normalization throughout.
For later comparison with a principal polarization, if $z_{a_i}$ is a
column logarithm of $a_i$ and $q_i=\lambda a_i$, our dual row coordinate is
\begin{equation}\label{eq:dual-coordinate-dictionary}
  w_i=-z_{a_i}^{\mathsf t}.
\end{equation}
This is the principal case of the dual-coordinate convention in
\cite[(2.18)]{BurgosHolmesDeJong2018}.

There are unique real columns $a,b$ with
\[
 z=a+\tau b,\qquad b=(\operatorname{Im}\tau)^{-1}\operatorname{Im}z.
\]
Put $c=t-wb$.  The mixed Betti map has local labels
\begin{equation}\label{eq:betti-labels}
 \beta=(a,b,c)\in\R^{2g}\times\C^r.
\end{equation}
These labels need not be global single-valued functions; their fibres form a
well-defined foliation.  The leaf with labels $(a,b,c)$ has equations
\begin{equation}\label{eq:leaf}
 z=a+\tau b,\qquad t=c+wb.
\end{equation}
It is complex algebraic in this period-coordinate chart.  The integral lattice
acts by rational affine transformations on the labels.  Fibrewise torsion
means that the labels are rational and real, modulo that lattice.  In
particular, a fixed-order torsion locus is locally a finite union of leaves.
This follows either from the explicit lattice action or by applying
multiplication to the three graded pieces of the marked-point $1$-motive.

Let $B$ be a smooth subvariety of the universal total space, and write
\begin{equation}\label{eq:forms-AD}
 A=dz-d\tau\,b,\qquad D=dt-dw\,b.
\end{equation}
At a fixed point these forms are complex-linear on the holomorphic
tangent space; their coefficients vary real-analytically because
$b$ does.  As real differentials, $A=da+\tau db$ and $D=dc+wdb$.  Since
$\operatorname{Im}\tau$ is invertible,
\[
 A(v)=0\Longleftrightarrow da(v)=db(v)=0.
\]
Hence the real kernel of $d\beta$ is exactly the complex kernel of
$\binom{A}{D}$.  On a nonempty rank-regular analytic open we define
\begin{equation}\label{eq:mixed-rank}
 \ell(B)=\rank_\C\begin{pmatrix}A\\D\end{pmatrix}
 =\tfrac12\rank_\R d\beta.
\end{equation}
Thus $0\le\ell(B)\le\min\{\dim B,g+r\}$.  More precisely,
$\ell(B)$ is the maximum of the complex ranks in
\eqref{eq:mixed-rank}; the real-analytic minors show that this maximum
is attained on a nonempty analytic open and off a proper real-analytic
rank locus on the connected smooth base.  No Zariski openness of that
locus is assumed.

The foliation is preserved by
$P_{\mathrm{univ}}(\R)^+U(\C)$.  It is useful to record what is required here:
labels may change, but two points on the same leaf remain on the same leaf.
For a real $q$-translation,
\[
 w\longmapsto w+x_1\tau+x_2,\qquad
 t\longmapsto t+x_1z,
\]
and therefore $c\mapsto c+x_1a-x_2b$.  For a real $p$-translation,
\[
 z\longmapsto z+y_1-\tau y_2,\qquad
 t\longmapsto t-wy_2,
\]
so $a\mapsto a+y_1$, $b\mapsto b-y_2$, and $c$ is unchanged.
A complex central translation adds a constant to $c$.
For clarity, the pure symplectic invariance can be checked on generators.
A symmetric real matrix $B_0$ gives
\[
  (\tau',z',w',t')=(\tau+B_0,z,w,t),\qquad
  (a',b',c')=(a-B_0b,b,c).
\]
For $H\in\mathrm{GL}_g(\R)$, one has
\[
  (\tau',z',w',t')=(H\tau H^{\mathsf t},Hz,wH^{\mathsf t},t),
  \quad (a',b',c')=(Ha,H^{-\mathsf t}b,c).
\]
The inversion generator acts by
\[
  (\tau',z',w',t')=
  (-\tau^{-1},\tau^{-1}z,w\tau^{-1},t-w\tau^{-1}z).
\]
Here $(a',b')=(b,-a)$ and
$c'=t-w\tau^{-1}(a+\tau b)+w\tau^{-1}a=t-wb=c$.
Thus each generator maps a leaf to a leaf.  Positive similitudes rescale
the Tate coordinates.  These formulas agree with the period action in
\cite[Section~4]{BertrandEdixhoven2020}.  Real group invariance of the
foliation and preservation of the rational torsion-label lattice are
different statements; the latter uses the rational or integral subgroup.

Normality makes the mixed Betti rank uniform along the weakly special orbits that occur in the proof.

\begin{lemma}\label{lem:orbit-rank}
Let $(P_1,\DD_1)$ be a mixed Shimura subdatum of the universal datum, and let
$N\lhd P_1$ be a connected rational normal subgroup defining a weakly special
orbit $T$.  Denote by $V_{p,N}$ the $p$-projection of
$\Gr^W_{-1}\Lie N$, and put $U_N=N\cap U$.  The mixed Betti map has constant
complex rank
\begin{equation}\label{eq:orbit-rank}
 h_N=\tfrac12\dim_\Q V_{p,N}+\dim U_N
\end{equation}
on the orbit.  Consequently its intersections with leaves have complex
dimension $\dim T-h_N$.
\end{lemma}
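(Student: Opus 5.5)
Near a point of $T$ we will write the orbit in period coordinates as $T=\Gamma\backslash N(\R)^+U_N(\C)\cdot x$, intersected with the $P_1$-orbit, and compute $d\beta$ on its tangent space. Since the fibres of $\beta$ are exactly the complex kernel of $\binom{A}{D}$ by \eqref{eq:mixed-rank}, the task is to identify the image of $T_xT$ under $(A,D)$ and show that its dimension is $h_N$, independent of the point. The rest of the tangent space, coming from the pure part of $N$ and from $P_1$ modulo $N$, will be absorbed into the leaf directions.

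\textbf{Step 1: reduce to the unipotent part.} The foliation is invariant under $P_{\mathrm{univ}}(\R)^+U(\C)$, as recorded above using the explicit generator formulas. Since $N\lhd P_1$, the orbit is homogeneous under $N(\R)^+U_N(\C)$ together with the normalizer action. Left translation by an element of $P_1(\R)^+$ therefore carries $T$ to itself, or to an orbit of the same type, and maps leaves to leaves. So the rank is constant along $T$, and it suffices to compute it at one point.

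\textbf{Step 2: compute at the chosen point.} Decompose $\Lie N$ by weights. The pure part $\Gr^W_0\Lie N$ moves only $\tau$, together with the induced transport of $(z,w,t)$. The formulas for symmetric, $\mathrm{GL}_g$ and inversion generators show that pure motions send the labels $(a,b,c)$ by real-linear maps preserving rank. Modulo the unipotent directions they are tangent to leaves, so they contribute nothing new. The weight-$-1$ directions $(p,q)\in\Gr^W_{-1}\Lie N$ act by the real translations computed above:
\begin{itemize}
\item a $p$-translation changes $(a,b)$ by $(y_1,-y_2)$ and leaves $c$ fixed;
\item a $q$-translation changes only $c$, by $x_1a-x_2b$.
\end{itemize}
The real span of the $(a,b)$-changes is $V_{p,N}\otimes\R$, which is a complex subspace for the Hodge structure, of complex dimension $\tfrac12\dim_\Q V_{p,N}$. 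The weight-$-2$ directions $U_N(\C)$ add constants to $c$, giving complex dimension $\dim U_N$.

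\textbf{Step 3: exclude extra $c$-directions.} We must show that the $q$-directions, and the $c$-components of the $p$-directions, add nothing beyond $U_N$ modulo leaves. Normality gives $[(p,q),(p',q')]=q(p')-q'(p)\in U_N$ by \eqref{eq:heisenberg}. This is the step I expect to be the main obstacle. The $c$-shift $x_1a-x_2b$ of a $q$-translation is the bracket of $q$ with the marked point's real coordinates. Whether it lies in $U_N$ depends on whether the marked point of $x$ can be moved into $V_{p,N}$ by $P_1$-normality.

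The first approach is to use $N\lhd P_1$ with $P_1\supseteq K\cap P_1$. Conjugating a $q$-direction of $N$ by the $p$-translation in $P_1$ corresponding to the point's own labels produces $q(p_x)$, which lies in $\Lie N\cap U=U_N$. Hence the $c$-shift lies in $U_N\otimes\C$. If $P_1$ lacks that translation, then the labels of $x$ are constant along the orbit in the corresponding direction, and the shift is already absorbed by the leaf equation $t=c+wb$.

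\textbf{Conclusion.} The image of $(A,D)$ on $T_xT$ is then exactly $(V_{p,N}\otimes\R)\oplus (U_N\otimes\C)$, of complex dimension $h_N$. Constant rank and the fibre-dimension formula then give intersections with leaves of complex dimension $\dim T-h_N$.
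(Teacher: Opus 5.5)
\textbf{Verdict: there is a genuine gap.} Your Step 1 is sound, and the paper uses the same transport. The group $P_1(\R)^+U_1(\C)$ preserves the foliation, normalizes $N$ and is transitive on $\DD_1$, so it carries the $N$-orbit through any one point, with its leaf structure, to the $N$-orbit through any other point. You need the factor $U_1(\C)$ for transitivity, not $P_1(\R)^+$ alone. The missing idea is to then \emph{choose} the point where the computation is trivial.

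The paper proceeds in three moves:
\begin{enumerate}
\item It conjugates by a rational unipotent element so that the splitting of $E$ is compatible with a rational Levi of $P_1$. By normality, $\Lie N$ is then graded for that Levi's weight cocharacter.
\item It computes at a point of the Levi reference locus, where $z=w=t=0$, hence $a=b=0$ and $c=0$.
\item At that point the pure and the $q$-only directions visibly fix the labels (the generator formulas, and $c\mapsto c+x_1a-x_2b$). The $p$-directions give $V_{p,N,\C}/F^0V_{p,N,\C}$ by strictness of $F^\bullet$, and $U_N(\C)$ gives the central block.
\end{enumerate}
Your final description of the image is correct at that reference point. Elsewhere it is only a linear transform of it: for instance, a graph in the $c$-coordinate over $V_{p,N,\R}$.

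Working at an unspecified point instead, your Steps 2--3 fail.
\begin{enumerate}
\item \emph{Grading.} Decomposing $\Lie N$ by weights needs a weight cocharacter normalizing $N$, which the universal splitting need not provide. Take the special locus where $p$ equals a fixed nonzero torsion value $v_0$, with $q$ and $\xi$ free. Then $P_1$ contains no $p$-translations. The subgroup $N=u_{v_0}\Hom(V,\TT)u_{v_0}^{-1}$ is normal in $P_1$, with $U_N=0$ and $h_N=0$. Each element of $N$ couples a $q$-translation with a central translation determined by $q(v_0)$, and this cancels the nonzero shift $x_1a-x_2b$. Your weight components assign a nonzero $c$-direction to $N$. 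The compensating central term lies outside $U_N$, so Step 3's claim that the shift lies in $U_N\otimes\C=0$ is false there.
\item \emph{Pure directions.} Pure directions do move the labels when $b\neq0$; for example $a\mapsto a-B_0b$. So ``tangent to leaves modulo unipotent directions'' is unproved.
\item \emph{The bracket in Step 3.} Even in a compatible splitting, normality applied to $n=(p_n,q_n)\in\Gr^W_{-1}\Lie N$ and to a real lift $(v_x,q')\in\Lie P_1\otimes\R$ of the point's labels gives
\[
q_n(v_x)-q'(p_n)\in\Lie U_N,
\]
not $q_n(v_x)\in\Lie U_N$. The fallback, that the shift is ``absorbed by the leaf equation,'' is not an argument.
\end{enumerate}

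All of this disappears if you compute only at the reference point and let Step 1 transport the result.
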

\begin{proof}
Choose a rational Levi of $P_1$ and a compatible splitting of the
representation $E$.  Conjugating the embedding of the datum and the period
coordinates by the same rational unipotent element does not change the
foliation or any of the dimensions being computed.  The domain attached
to the Levi is a split reference locus in $\DD_1$; the remaining points
are obtained using $\Ru(P_1)(\R)U_1(\C)$
\cite[\S\S1.16--1.18 and 2.19]{Pink1990}.
At a point of this reference locus the extension coordinates are
$z=w=t=0$ in the compatible splitting.  Since $N$ is normal, its Lie
algebra is stable under the weight cocharacter of this Levi, so its
tangent contribution can be computed weight by weight.
At this reference point $b=0$, so the two differential blocks are
$dz$ and $dt$.  The weight-zero and $q$-only tangent directions map to
zero.  Projection of the weight-$-1$ Hodge structure is strict for
$F^\bullet$, and hence its $p$-image modulo $F^0$ is precisely
$V_{p,N,\C}/F^0V_{p,N,\C}$.  The weight-$-2$ tangent space maps
identically to the central block.  With the compatible splitting this gives
\begin{equation}\label{eq:split-tangent-image}
 \im_\C\left(\begin{pmatrix}A\\D\end{pmatrix}\bigg|_{T_xT}\right)
 \simeq (V_{p,N,\C}/F^0V_{p,N,\C})\oplus U_{N,\C}.
\end{equation}
The first summand has dimension $\frac12\dim_\Q V_{p,N}$, since its
Hodge types are $(-1,0)$ and $(0,-1)$; the second has dimension
$\dim U_N$.  This proves \eqref{eq:orbit-rank} at the reference point.

The group $P_1(\R)^+U_1(\C)$ is transitive on $\DD_1$, preserves the foliation,
and normalizes $N$.  It therefore transports the tangent computation to every
$N$-orbit through every point of the domain.  One may write these orbits as
$N(\R)^+\Ru(N)(\C)x$ or as $N(\R)^+U_N(\C)x$: the real weight-$-1$ space
already surjects onto its complex quotient by $F^0$.  This proves constancy
and the stated leaf dimension.
\end{proof}

The algebraic equations of a leaf turn the preceding tangent computation into a bound for algebraic closures.

\begin{corollary}\label{cor:leaf-closure}
Let $Z$ be an irreducible complex analytic germ contained in a weakly special
orbit $T$ and in a Betti leaf.  Its Zariski closure in algebraic period
coordinates satisfies
\begin{equation}\label{eq:lift-closure-bound}
 \dim Z^{\Zar}\le\dim T-h(T).
\end{equation}
For compatible weakly special orbits $T\subseteq T'$, one has
\begin{equation}\label{eq:leaf-dim-monotone}
 \dim T-h(T)\le\dim T'-h(T').
\end{equation}
\end{corollary}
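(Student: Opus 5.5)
The plan is to prove both inequalities by working inside the algebraic period coordinates, where the leaf equations \eqref{eq:leaf} and the orbit $T$ are both algebraic.

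For \eqref{eq:lift-closure-bound}, fix the leaf $\Lambda$ with labels $(a,b,c)$ containing $Z$. By \eqref{eq:leaf}, $\Lambda$ is the complex algebraic subvariety $\{z=a+\tau b,\ t=c+wb\}$ of the period chart. The orbit $T$ is the $N(\R)^+U_N(\C)$-orbit described in the proof of \cref{lem:orbit-rank}; I would replace it by its algebraic model in the period coordinates, namely the orbit of the complex group $N(\C)$ (or the Zariski closure of $T$ there), denoted $T^{\Zar}$. Since $\Lambda$ is Zariski closed, $Z^{\Zar}\subseteq \Lambda\cap T^{\Zar}$, so it suffices to bound the dimension of every component of $\Lambda\cap T^{\Zar}$ through $Z$ by $\dim T-h(T)$.

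I would do this with the tangent computation \eqref{eq:split-tangent-image} in the following steps.
\begin{enumerate}
\item The tangent space of $\Lambda$ at a point is the kernel of $\binom{A}{D}$ once $b$ is frozen at the leaf value. Frozen at that value, these forms are holomorphic algebraic, not merely real-analytic.
\item On the real orbit $T$, the map $\binom{A}{D}$ has rank $h(T)$ by \cref{lem:orbit-rank}.
\item For the algebraic extension $T^{\Zar}$, the same weight-by-weight computation at the reference point, transported by the transitive group $P_1(\C)$ acting algebraically and normalizing $N$, shows that the frozen-$b$ forms restricted to $T_yT^{\Zar}$ still have rank $h(T)$. The reason is that the image is again $(V_{p,N,\C}/F^0)\oplus U_{N,\C}$, up to the complex group translate.
\end{enumerate}
This gives $\dim T_y(\Lambda\cap T^{\Zar})\le \dim T-h(T)$ at smooth points, using $\dim T^{\Zar}=\dim T$ as complex dimensions. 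The dimension bound then follows at a generic point of each component.

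The main obstacle is step (iii). The labels $b$ are real, so the leaf is algebraic only for fixed labels, and the tangent image on the complexified orbit might a priori be larger than on the real one. I would first try to handle this via the equivalent parametrization $N(\R)^+\Ru(N)(\C)x$ from the end of the proof of \cref{lem:orbit-rank}. That parametrization shows $T$ is already a complex-analytic orbit of an algebraic group action at the level of the relevant coordinates, so no enlargement occurs. Failing that, I would verify the rank directly using the explicit generator formulas for $p$-, $q$- and central translations listed before \cref{lem:orbit-rank}.

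For \eqref{eq:leaf-dim-monotone}, let $T\subseteq T'$ correspond to normal subgroups $N\subseteq N'$. By \cref{lem:orbit-rank}, $\dim T-h(T)$ and $\dim T'-h(T')$ are the dimensions of the fibres of the Betti map restricted to $T$ and to $T'$. The fibre of $\beta|_T$ through $x$ is $T\cap\Lambda$, which is contained in $T'\cap\Lambda$, the fibre of $\beta|_{T'}$ through $x$. Both fibre dimensions are constant by \cref{lem:orbit-rank}, so comparing them at a common point gives the inequality.
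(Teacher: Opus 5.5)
Your reduction and your proof of \eqref{eq:leaf-dim-monotone} match the paper. You intersect the algebraic leaf $\Lambda$ with the complex algebraic orbit $\check T$ in which $T$ is analytically open, and pass to a component through $Z$. For monotonicity you use $T\cap\Lambda\subseteq T'\cap\Lambda$ together with the constant fibre dimensions of \cref{lem:orbit-rank}.

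The gap is step~(iii), on which your proof of \eqref{eq:lift-closure-bound} rests as written. It is not needed, and as stated it is false. Take the torsion section $T=\{z=a_0+\tau b_0\}$ of the universal elliptic curve, with $N$ a conjugate of $\mathrm{SL}_2$, so that $h(T)=0$. A leaf $z=a+\tau b$ with real labels and $b\ne b_0$ meets $\check T$ at the real value $\tau=(a_0-a)/(b-b_0)$. There the frozen form $dz-b\,d\tau$ is nonzero on $T_y\check T$, so its rank is $1$, not $h(T)$. Your transport by $P_1(\C)$ only computes the rank along the translate of a leaf through the reference point. That translate need not be the given leaf through $y$; in this example $P_1$ contains no $p$-translations at all. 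Likewise, writing $T=N(\R)^+\Ru(N)(\C)x$ only shows that $T$ is open in $N(\C)x$, not that the two coincide: here $T$ is a half-plane inside $N(\C)x\simeq\mathbf P^1$.

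The missing observation is to compute the dimension at a point of $Z$, not at a generic point of the component that may lie outside $T$. Let $C$ be the irreducible component of $\Lambda\cap\check T$ containing the germ $Z$. Since $T$ is open in $\check T$, near any $z_0\in Z$ the set $C$ lies in $\Lambda\cap T$. At such points the frozen label is the actual $b=(\operatorname{Im}\tau)^{-1}\operatorname{Im}z$. So \cref{lem:orbit-rank}, i.e.\ your step~(ii), makes $\Lambda\cap T$ a submanifold of dimension $\dim T-h(T)$ near $z_0$. An irreducible complex algebraic variety has the same local dimension at every point, hence $\dim C\le\dim T-h(T)$. Then $Z^{\Zar}\subseteq\overline C$ gives the bound. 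Components of $\Lambda\cap\check T$ that do not meet $T$ cannot contain $Z$; this is the paper's remark that boundary components are irrelevant.
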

\begin{proof}
The orbit $T$ is analytically open in the appropriate complex algebraic orbit
in the compact dual.  Intersect that algebraic orbit with the algebraic
leaf equations \eqref{eq:leaf}, and take the irreducible component containing
the germ $Z$.  Since it meets $T$, its dimension is the local dimension
$\dim T-h(T)$ computed in \cref{lem:orbit-rank}.  Its closure contains
$Z^{\Zar}$, proving \eqref{eq:lift-closure-bound}.  Possible components lying
entirely in the boundary are irrelevant to this argument.  Finally, a leaf
intersection in $T$ is contained in the corresponding leaf intersection in
$T'$, proving \eqref{eq:leaf-dim-monotone}.
\end{proof}

\subsection{Primary torsion and local counts}
The complex central coordinates prevent a direct analytic-density
argument in the entire Betti target.  Algebraicity gives a different
route: first specialize the abelian projection to primary torsion, and
then multiply into the torus.  This proves a converse for Zariski density
without an arithmetic input.

\begin{proposition}\label{prop:primary-density}
Let $B$ be a smooth irreducible complex quasi-projective variety, let
$0\to\mathcal T\to\G\to\A\to0$ be a semiabelian scheme over $B$ of
relative dimension $g+r$, and let $h$ be an algebraic section.  If its
mixed Betti map has real rank $2(g+r)$ at some point, then, for every
rational prime $\ell_0$, the set
\[
 \{b\in B(\C):h(b)\in\G_b[\ell_0^\infty],\ 
                   \rank_\R d\beta_b=2(g+r)\}
\]
is Zariski dense in $B$.  No relative nonspecialness assumption is
required.  If the data are defined over an algebraically closed
$F_0\subseteq\C$, the same density holds using $b\in B(F_0)$.
\end{proposition}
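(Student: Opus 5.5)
Following the hint in the introduction, the plan is to specialize the abelian projection to primary torsion first, and then multiply into the torus.

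\emph{Step 1: the open set $B^\circ$ and the abelian submersion.} Let $B^\circ\subseteq B$ be the locus where $\rank_\R d\beta=2(g+r)$. It is nonempty by hypothesis, and open by lower semicontinuity of rank. On $B^\circ$ the abelian block $(a,b)$ of $\beta$ is a real submersion onto an open subset of $\R^{2g}$. For $j\ge0$, the $\ell_0^j$-torsion locus of the abelian projection $p$ is locally the preimage of the finite set of labels in $\ell_0^{-j}\Z^{2g}$ modulo the lattice. Over $B^\circ$ it is therefore a smooth closed complex analytic submanifold of complex codimension $g$. It is also algebraic: it is the preimage $Y_j=p^{-1}(\A[\ell_0^j])$, a union of components of an algebraic subvariety. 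Since $\bigcup_j\ell_0^{-j}\Z$ is dense in $\R$, the union $\bigcup_j(Y_j\cap B^\circ)$ is analytically dense in $B^\circ$. As $B^\circ$ is a nonempty Euclidean open subset of the irreducible $B$, any algebraic subvariety containing $\bigcup_jY_j\cap B^\circ$ is all of $B$.

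\emph{Step 2: primary torsion along one component.} Fix a component $Y$ of $Y_j$ that meets $B^\circ$. On $Y$ the section $[\ell_0^j]h$ lies in the torus $\mathcal T_Y$; after a finite étale splitting this is a morphism $\phi:Y\to\Gm^r$. In the leaf coordinates, the differential of $\phi$ at a point of $Y\cap B^\circ$ is $\ell_0^j\,D|_{TY}$ up to the factor $2\pi i$. Along $Y$ we have $da=db=0$, so $D=dc$, and $TY=\ker A$. Full rank of $\binom{A}{D}$ on $B^\circ$ then forces $D|_{\ker A}$ to be surjective onto $\C^r$. Hence $\phi$ is submersive at such points and dominant on $Y$.

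Points of $Y$ mapping into $\mu_{\ell_0^\infty}^r$ are exactly the points $b$ with $h(b)\in\G_b[\ell_0^\infty]$. So I need the preimage of the primary roots of unity under a dominant morphism to be Zariski dense in $Y$. I would argue as follows. If a proper closed $Z\subsetneq Y$ contained all these preimages, then $\phi(Y\setminus Z)$, which contains a nonempty Zariski open set of $\Gm^r$ when $\dim Z<\dim Y$ fibrewise-generically, would avoid $\mu_{\ell_0^\infty}^r$. This is impossible, because $\mu_{\ell_0^\infty}^r$ is Zariski dense in $\Gm^r$. The delicate point is fibres: $Z$ might contain whole fibres over a dense set. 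Here submersivity does the work. Near a point $y_0\in Y\cap B^\circ$, $\phi$ is an analytic submersion, so $\phi^{-1}(\mu_{\ell_0^\infty}^r)$ contains analytic fibres through points arbitrarily close to any point near $y_0$, since $\mu_{\ell_0^\infty}$ is dense in the unit circle. The catch is that $\phi(y_0)$ need not lie on the unit torus.

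\emph{Step 3: getting near the unit torus (main obstacle).} I fix this by first choosing $y_0$ with $|\phi(y_0)|=1$, i.e.\ $\operatorname{Im}c\equiv0$ there. Such points exist in $B^\circ\cap\bigcup_jY_j$ near any point of $B^\circ$, since $\beta$ is a submersion onto an open subset of $\R^{2g}\times\C^r$ and the real-$c$ slice is hit. Then the real-analytic preimage of the unit torus in $Y$ near $y_0$ is a real submanifold of real codimension $r$. On it, the preimages of $\mu_{\ell_0^\infty}^r$ are dense complex submanifolds of complex codimension $r$, accumulating on a totally real-type set of real dimension $2\dim Y-r\ge\dim_\R Y$. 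Such a set is not contained in any proper complex analytic subvariety, hence not in any proper algebraic one. This is the step I expect to need the most care: showing the accumulation set is Zariski dense. I would use that it is generic, i.e.\ not contained in a complex hypersurface, because it contains a real hypersurface-type slice transverse to the complex directions.

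\emph{Step 4: conclusion.} Combining Steps 1–3, the union over $j$ of these points, intersected with the open set $B^\circ$, is Zariski dense in $B$, and these points satisfy the rank condition. For $F_0$-points, every locus above is defined over $F_0$. The set of $F_0$-points with the required property is Zariski dense in each $\phi^{-1}(\zeta)\cap Y\cap B^\circ$ for $\zeta$ a primary root of unity, because $F_0$-points are dense in varieties over algebraically closed $F_0$ and the rank condition is open.
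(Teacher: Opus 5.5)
\textbf{The gap is in Step~3.} You claim that unit-norm points ($\operatorname{Im}c=0$) exist near any point of $B^\circ$ because $\beta$ is a submersion. A submersion only maps a small neighbourhood of $x$ onto a small neighbourhood of $\beta(x)$, and that neighbourhood misses $\{\operatorname{Im}c=0\}$ whenever $\operatorname{Im}c(x)\ne0$. For the identity section of $\Gm\times\Gm\to\Gm$ the mixed rank is full everywhere, yet a neighbourhood of $2$ contains no unit-norm point.

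What your argument actually needs is a full-rank unit-norm point $y_0$ on \emph{each} component $Y$ meeting $B^\circ$, and nothing in the proposal produces one. The underlying obstruction, which you did not isolate, is that full mixed rank is only known on the analytic open $B^\circ$; its Zariski openness is not available. So Zariski density of $\phi^{-1}(\mu_{\ell_0^\infty}^r)$ in $Y$ does not by itself give Zariski density of its full-rank points. This is why the argument of Step~2, run on all of $Y$, falls short. The fibre issue you raised is not the problem.

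\textbf{The missing idea is to detect full rank algebraically on $Y$.} Along $Y_j$ the sections $[\ell_0^j]p$ and $e$ agree. Their differential difference $\delta$ is therefore an algebraic bundle map $T_B|_{Y_j}\to\Lie(\A/B)|_{Y_j}$, equal to $\ell_0^jA$ in a period chart up to a change of frame.

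Let $Y^\circ\subseteq Y$ be the Zariski open where $\rank\delta=g$ and $\rank d\phi=r$. There $Y$ is smooth with $TY=\ker A$, and $d\log\phi=2\pi i\,\ell_0^jD|_{\ker A}$ has rank $r$. Hence every point of $Y^\circ$ has full mixed rank. Moreover $Y^\circ\neq\emptyset$, since it contains your point of $Y\cap B^\circ$.

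On $Y^\circ$ the Chevalley argument you began in Step~2 is already complete. Take a proper closed $Z\subsetneq Y^\circ$. The open set $Y^\circ\setminus Z$ is dense in the irreducible $Y^\circ$, so $\phi(Y^\circ\setminus Z)$ is constructible and dense. It therefore contains a nonempty Zariski open of $\Gm^r$, which meets the Zariski-dense set $\mu_{\ell_0^\infty}^r$. No fibre-dimension condition enters.

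Equivalently, $\phi(Y^\circ)$ meets the unit torus, and this would supply the point $y_0$ your Step~3 requires; given such a point, your uniqueness-set argument is fine.

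Working on $Y^\circ$ also makes the $F_0$-assertion purely algebraic, because $Y^\circ$ and the fibres $\phi^{-1}(\zeta)\cap Y^\circ$ are $F_0$-defined. Your version instead needs $F_0$-points to meet the analytic open $B^\circ$, which Zariski density of $F_0$-points alone does not guarantee.
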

\begin{proof}
The case $g+r=0$ is immediate.  Otherwise split the torus by a finite
\'etale cover; both rank and density on a dominating component descend.
No isogeny is needed.  We use the actual integral
period lattice, so that primary orders are not altered by a
polarization change.  On a simply connected analytic open, write its
generators in logarithmic coordinates $(Z,T)$ as
\[
 (\Omega_1,C_1),\quad(\Omega_2,C_2),\quad(0,I_r).
\]
The first two blocks project to an integral basis of the abelian period
lattice, and the third is the integral Tate basis.  Choose the first
abelian block invertible and put
\[
 z=\Omega_1^{-1}Z,\quad t=T-C_1z,\quad
 \tau=\Omega_1^{-1}\Omega_2,\quad w=C_2-C_1\tau.
\]
The generators become $(I_g,0),(\tau,w),(0,I_r)$.  The imaginary part of
$\tau$ is invertible, and the formulas
$z=a+\tau b$, $c=t-wb$, $A=dz-d\tau b$, $D=dt-dw b$ apply.  A
principal polarization is unnecessary for these formulas.  Empty
abelian or toric blocks cause no change to the argument.

Choose an analytic open $\Omega$ on which the mixed rank is $2(g+r)$.
The two row blocks have sizes $g$ and $r$, so
\[
 \rank_\C A=g,\qquad \rank_\C(D|_{\ker A})=r.
\]
Thus the abelian Betti map $(a,b)$ is a real submersion.  Fix $\ell_0$.
For $n=\ell_0^j$, let
\[
 B_n=\{b\in B:[n]p(b)=0\},\qquad p=\pi_\A h.
\]
These are closed algebraic loci.  Since $\Z[1/\ell_0]^{2g}$ is dense
in $\R^{2g}$, their union meets $\Omega$ in an analytically dense set.

Take $x\in B_n\cap\Omega$ and let $Y$ be the irreducible component of
$(B_n)_{\mathrm{red}}$ through $x$.  Locally, its branch is given by
$z=a+\tau b$ for fixed $a,b\in n^{-1}\Z^g$.  The differential is $A$,
so the branch is smooth at $x$ and $T_xY=\ker A_x$.  On all of $Y$,
$[n]h$ takes values in the split torus and defines an algebraic map
\[
 f_n:Y\longrightarrow\Gm^r.
\]
Subtracting the abelian period $n(a+\tau b)$ shows that its local toric
logarithm is $n(t-wb)=nc$, modulo the integral toric periods.  Hence
\[
 f_n^*(d\log X_i)|_{T_xY}=2\pi i\,nD_i|_{T_xY}.
\]
It follows that $f_n$ is dominant.  We now retain full rank on an
algebraic open of $Y$, without assuming that the full mixed-rank locus
in $B$ is algebraic.  Along $B_n$ the sections $s_n=[n]p$ and the
identity section $e$ agree, so their vertical differential difference
is the algebraic bundle map
\begin{equation}\label{eq:torsion-vertical-differential}
 \delta_n=(ds_n-de)|_{B_n}:
 T_B|_{B_n}\longrightarrow\Lie(\A/B)|_{B_n}.
\end{equation}
It is vertical because both differentials induce the identity on
$T_B$.  In the period chart it is $nA$, up to an invertible change of
vertical frame.  On the open where its rank is $g$, the zero locus
$B_n$ is smooth of codimension $g$ and its tangent bundle is $\ker A$.
Intersect that open in $Y$ with the open where $df_n$ has rank $r$;
call the resulting nonempty algebraic open $Y^\circ$.  It contains $x$.
The logarithmic formula above gives, for every $y\in Y^\circ$,
\[
 \rank_\C A_y=g,\qquad
 \rank_\C(D_y|_{\ker A_y})=r.
\]
Thus every point of $Y^\circ$ has full mixed rank, and
$f_n|_{Y^\circ}$ is dominant.

The subgroup $\mu_{\ell_0^\infty}^r$ is Zariski dense in $\Gm^r$:
in one variable it is infinite, and the product assertion follows by
applying a Laurent polynomial successively in its variables.  Its
inverse image under $f_n|_{Y^\circ}$ is Zariski dense in $Y^\circ$,
because the image of every nonempty Zariski open of $Y^\circ$ contains
a nonempty Zariski open of the target.  At each point of this inverse
image, $h$ is killed by a power of $\ell_0$ and the mixed rank is full.

The Zariski closure of the full-rank primary-torsion specializations therefore
contains every component of every $B_{\ell_0^j}$ meeting $\Omega$.
It contains an analytically dense subset of $\Omega$ and is analytically
closed; thus it contains $\Omega$, and irreducibility gives the result.

If the family is defined over an algebraically closed subfield
$F_0\subseteq\C$, the same density holds for its $F_0$-points.
Indeed $B_n$, its irreducible components, the two algebraic rank
conditions defining $Y^\circ$, and the fibres of $f_n$ over primary
torus torsion are all defined over $F_0$ after the same finite splitting
cover.  The $F_0$-points of each nonempty such fibre are Zariski dense;
thus they give the same closure argument.
\end{proof}

On a fixed torsion locus, there is an intrinsic interpretation of the algebraic rank
conditions used above.  For a positive integer $n$, let
$Z_n=\{[n]h=e\}$ have its natural closed subscheme structure in $B$,
where $e$ is the identity section of $\G/B$.  The two sections agree
along $Z_n$, so their differential difference is an algebraic vertical
bundle map
\begin{equation}\label{eq:full-torsion-differential}
 \Delta_n=\bigl(d([n]h)-de\bigr)|_{Z_n}:
 T_B|_{Z_n}\longrightarrow\Lie(\G/B)|_{Z_n}.
\end{equation}
In an integral period chart at a point of $Z_n$, choose the fixed
lattice branch indexed by $k,l\in\Z^g$ and $m_0\in\Z^r$.  A logarithm
of $[n]h$ near the identity is
\[
 \bigl(nz-k-\tau l,\ nt-wl-m_0\bigr).
\]
At the torsion point the abelian Betti column satisfies $b=l/n$, so
the differential of this logarithm is
$n\binom AD$, up to an invertible holomorphic change of vertical frame.
A base-dependent change of frame adds no term at this point, because
the logarithm itself vanishes there.  Consequently
\begin{equation}\label{eq:torsion-transverse-open}
 Z_n^{\mathrm{tr}}
 =\{x\in Z_n:\rank\Delta_{n,x}=g+r\}
 =\{x\in Z_n:\rank_\R d\beta_x=2(g+r)\}
\end{equation}
is an algebraic open subset of $Z_n$.  The Jacobian criterion makes it
smooth of codimension $g+r$ in $B$ when nonempty, with tangent space
$\ker\Delta_{n,x}$.  In critical dimension its points are isolated
and reduced, with local length one.  Since $[n]$ is \'etale, this is
also the local intersection multiplicity of $h$ with the corresponding
torsion section.  The construction works over the field of definition
of the family.  It asserts algebraicity on each fixed-order locus,
not on the entire maximal-rank locus of $B$.

The preceding proposition supplies full-rank torsion points Zariski
densely, hence full-rank unit-norm points.  This is a global algebraic
assertion: such a point need not lie in a prescribed analytic
neighborhood.  For example, on $\Gm$ the identity coordinate has full
rank everywhere, whereas a small neighborhood of $2$ contains no
torsion.  At a specified full-rank unit-norm point we have the following
local statement.  Here and below $u=2\pi\operatorname{Im}c$, and we
retain the smooth marked-family setting of
Proposition~\ref{prop:primary-density}.

\begin{proposition}\label{prop:unit-density}
Let $\G/B$ and $h$ be a smooth marked semiabelian family as above.
If $\rank_\R d\beta_x=2(g+r)$ and $u(x)=0$, then in a sufficiently
small neighborhood of $x$ the locus $V=\{u=0\}$ is smooth of real
codimension $r$.  For every prime $\ell_0$, primary-torsion
specializations are analytically dense in $V$.  Moreover $V$ is a
local uniqueness set for holomorphic functions.
\end{proposition}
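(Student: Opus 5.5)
Since $\rank_\R d\beta_x=2(g+r)$, the map $\beta=(a,b,c)$ is a real submersion at $x$ onto $\R^{2g}\times\C^r$, so near $x$ it is an open chart for all $2(g+r)$ real coordinates, provided $\dim_\C B\ge g+r$. If $\dim B>g+r$ the fibres are complex leaves of positive dimension. The plan is to read every assertion through this submersion. We have $u=2\pi\operatorname{Im}c$, and $\operatorname{Im}c$ is $r$ of the real target coordinates, so $u$ is a submersion at $x$. Hence $V=\{u=0\}=\beta^{-1}(\R^{2g}\times\R^r)$ is, near $x$, a smooth real submanifold of codimension $r$. Since $u(x)=0$, the point $x$ lies on $V$. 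This proves the first assertion.

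For density, recall from the leaf description \eqref{eq:leaf} and the integral Tate normalization that $h(y)$ is torsion of order dividing $n$ exactly when $(a,b,\operatorname{Re}c,\operatorname{Im}c)(y)\in(n^{-1}\Z)^{2g+r}\times\{0\}$, modulo the lattice. The lattice acts by rational affine maps, which preserve $\Z[1/\ell_0]$-points. Thus $\ell_0$-primary torsion points of $V$ are exactly $\beta|_V^{-1}$ of $\Z[1/\ell_0]^{2g+r}$. One should check carefully that the affine action of the integral period lattice sends $\ell_0$-power denominators to $\ell_0$-power denominators; the translation formulas recorded before \cref{lem:orbit-rank} show the action has integral coefficients in $a,b$. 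Now $\beta|_V:V\to\R^{2g+r}$ is a submersion near $x$, hence an open map. The preimage of the dense set $\Z[1/\ell_0]^{2g+r}$ under an open map is dense: any nonempty open $W\subseteq V$ has open image, which meets the dense set. This gives analytic density of primary torsion specializations in $V$.

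For the uniqueness-set claim, the key point is that $V$ is a real hypersurface-type set of real codimension $r\le\dim_\C B$ whose tangent space contains no complex subspace of complex dimension exceeding the leaf directions. Concretely, I would show $T_xV+iT_xV=T_xB$. Since $d\beta_x$ is complex-linearly $\binom{A}{D}$ of full rank $g+r$, the conditions cutting out $T_xV$ are $\operatorname{Im}(D\text{-component adjusted})=0$, that is $r$ real equations $\operatorname{Im}\lambda_i(v)=0$ for $\C$-linearly independent functionals $\lambda_i$ (after the invertible change using $\ker A$). Such a subspace is generic, or totally real in the transverse direction, and its complex span is all of $T_xB$: if $v\in T_xB$, choose real parts appropriately, since $\operatorname{Im}\lambda(v)=\operatorname{Im}\lambda(v_1)+\operatorname{Re}\lambda(v_2)$ for $v=v_1+iv_2$. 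Then I would invoke the standard fact that a holomorphic function vanishing on a generic real-analytic (indeed $C^1$) submanifold vanishes identically near it. This follows by choosing local holomorphic coordinates $(\zeta,\zeta')$ with $\lambda_i=\zeta_i$ at $x$, writing $V$ as a graph $\operatorname{Im}\zeta=\phi(\operatorname{Re}\zeta,\zeta')$ with $d\phi(x)=0$, and applying the one-variable identity theorem slice by slice, or by noting that all derivatives of $f$ along $T V$ vanish and hence, by $\C$-linearity, along $T_xV+iT_xV$. The main obstacle is this genericity verification, specifically handling the passage from $D$ on all of $T_xB$ to its restriction to $\ker A$. The identity theorem argument itself is routine.
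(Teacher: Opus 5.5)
Your treatment of smoothness and density is the paper's own argument: $\beta$ is a submersion, $V=\beta^{-1}(\R^{2g}\times\R^r)$, and $\beta|_V$ is a submersion onto an open subset of $\R^{2g+r}$, in which the primary-torsion labels $\Z[1/\ell_0]^{2g+r}$ are dense.

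For the uniqueness assertion you take a genuinely different route, and it is correct. The paper never treats $V$ as a submanifold of $B$. It restricts to an abelian Betti plaque, where $a,b$ are constant, so $c=t-wb$ is holomorphic of rank $r$ there (because $D|_{\ker A}$ has rank $r$). In plaque coordinates $(c,\zeta)$, the slice of $V$ is then literally $\{c\in\R^r\}$. The iterated one-variable identity theorem applies directly, and the plaques fill a neighbourhood of $x$.

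You instead prove that $V$ is generic and then use infinite-order vanishing. The step you flag as the main obstacle is short. On $\ker A$ one has $db=0$, so $dc=D$ there and $d(\operatorname{Im}c)_x=\operatorname{Im}D$ on $\ker A$. Since $D|_{\ker A}$ surjects onto $\C^r$, for any $v\in T_xB$ you can pick $k\in\ker A$ with $\operatorname{Im}D(k)=-d(\operatorname{Im}c)_x(v)$; then $v+k\in T_xV$. The same surjectivity gives $\ker A\subseteq T_xV+iT_xV$. Hence $T_xV+iT_xV=T_xB$. Equivalently, your complex functionals $\lambda_i$ agree with $D_i$ on $\ker A$ and are therefore $\C$-independent. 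The same computation holds at every $y\in V$ near $x$, since the rank conditions are open.

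Of your two closing arguments, use the second one. There $df$ vanishes on $T_yV+iT_yV=T_yB$ at all $y\in V$ near $x$; you iterate on the holomorphic partial derivatives and conclude from the Taylor series at $x$. The slice-by-slice one-variable argument needs more care when $r\ge2$. The slice $\operatorname{Im}\zeta=\phi(\operatorname{Re}\zeta,\zeta')$ is a curved totally real submanifold, not a product of real lines in coordinate directions.

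That flatness is exactly what the paper's plaque coordinates buy. The defining function $\operatorname{Im}c$ is not the imaginary part of a holomorphic function on $B$, since $b$ is only real-analytic, but it is one on each plaque. Your approach, in return, isolates the intrinsic reason, namely genericity of $V$, and would apply verbatim to any generic $C^1$ submanifold.
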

\begin{proof}
The map $\beta$ is a submersion near $x$.  Its inverse image of the
real central locus is therefore smooth, and its restriction to $V$
is a submersion onto an open subset of $\R^{2g+r}$.  The dense subset
$\Z[1/\ell_0]^{2g+r}$ consists of primary-torsion labels, proving the
density assertion.

On an abelian Betti plaque, $a,b$ are fixed and $c=t-wb$ is holomorphic
of rank $r$, since $D|_{\ker A}$ has that rank.  Choose holomorphic
plaque coordinates $(c,\zeta)$.  The condition defining $V$ is
$c\in\R^r$, with $\zeta$ unrestricted.  A holomorphic function vanishing
there vanishes on the plaque by the identity theorem in each
$c$-variable.  The submersion theorem provides these real slices on all
nearby plaques.  The function consequently vanishes on a neighborhood
in the full base, as asserted.
\end{proof}

In critical dimension the integral Betti coordinates turn torsion into
a rectangular lattice problem.  M\"obius inversion separates exact orders
from orders dividing a given integer.  The normalization throughout is
the actual integral lattice, not an isogeny-normalized rational lattice.

\begin{corollary}\label{cor:local-count}
In the setting of Proposition~\ref{prop:unit-density}, assume
$\dim_\C B=g+r>0$ and put $s=2g+r$.  Choose an integral Betti chart
on which $\beta$ is a real-analytic diffeomorphism onto its image.
Order the real target coordinates as
$(a,b,\operatorname{Re}c,\operatorname{Im}c)$: the first $s$
coordinates are the real torsion labels and the last $r$ are the
imaginary central labels.  Choose bounded rectangular boxes
$I\subset\R^s$ and $J\subset\R^r$, with $0\in J$, such that
$I\times J$ is contained in the chart image.  Put
$\Omega_I=\beta^{-1}(I\times J)$.  Then
\begin{equation}\label{eq:local-dividing-count}
 A_I(n):=\#\{b\in\Omega_I:[n]h(b)=0\}
 =\operatorname{vol}(I)n^s+O_I(n^{s-1}).
\end{equation}
For exact orders, let
$E_I(n)=\#\{b\in\Omega_I:\operatorname{ord}(h(b))=n\}$.
Writing $\mu_{\mathrm{Mob}}$ for the M\"obius function and
\[
 J_s(n)=n^s\prod_{q\mid n,\ q\ \mathrm{prime}}(1-q^{-s}),
\]
one has
\begin{equation}\label{eq:local-exact-order-count}
 E_I(n)=\operatorname{vol}(I)J_s(n)
 +O_I\!\left(n^{s-1}\sum_{a\mid n}
              \frac{|\mu_{\mathrm{Mob}}(a)|}{a^{s-1}}\right).
\end{equation}
For $s\ge3$, the error in \eqref{eq:local-exact-order-count} is
$O_I(n^{s-1})$.  For $s=2$ it is $O_I(n\log(2n))$, and for $s=1$
it is $O_I(2^{\omega_{\mathrm{pr}}(n)})$, where
$\omega_{\mathrm{pr}}(n)$ counts the distinct prime divisors of $n$.
For each fixed prime $\ell_0$ and $j\ge1$,
\begin{equation}\label{eq:local-exact-primary-count}
 \begin{split}
 E_I(\ell_0^j)
 &=\operatorname{vol}(I)(1-\ell_0^{-s})\ell_0^{js}\\
 &\quad+O_{I,\ell_0}\bigl(\ell_0^{j(s-1)}\bigr).
 \end{split}
\end{equation}
If $\operatorname{vol}(I)>0$, every sufficiently large positive integer
occurs as an exact torsion order in $\Omega_I$, for every $s\ge1$.
All the counted points are simple transverse torsion intersections.
Such charts exist whenever $\dim_\C B=g+r>0$ and the mixed rank is full
somewhere; the threshold for exact orders depends on the chosen chart.
\end{corollary}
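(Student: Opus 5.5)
The plan is to turn the count into a lattice-point count inside a box, using the chart. On $\Omega_I$ the map $\beta$ is a real-analytic diffeomorphism onto $I\times J$. By the leaf description \eqref{eq:leaf} and the integral Tate normalization, $[n]h(b)=0$ holds exactly when the real torsion labels $(a,b,\operatorname{Re}c)\in\R^s$ lie in $n^{-1}$ times the relevant integral lattice and $\operatorname{Im}c=0$. The lattice action on labels is rational affine, and in the integral chart it is integral. So after the chart normalization the condition reads $\beta(b)\in(n^{-1}\Z^s+\lambda_0)\times\{0\}$ for a fixed offset $\lambda_0$ coming from the chosen branch. Since $0\in J$ and $\beta$ is injective on $\Omega_I$, the map $\beta$ restricts to a bijection
\[
 \{b\in\Omega_I:[n]h(b)=0\}\longleftrightarrow I\cap n^{-1}(\Z^s+n\lambda_0).
\]
One point needs care. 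The integral Betti labels of a torsion point of order dividing $n$ lie in $n^{-1}\Z^s$ exactly, and not merely after an isogeny. This is why I would use the integral normalization of Proposition~\ref{prop:primary-density}. With that in place, \eqref{eq:local-dividing-count} is the standard count of $n^{-1}\Z^s$-points in a bounded rectangular box: each side of length $L_k$ contributes $nL_k+O(1)$ points, and multiplying the sides gives $\operatorname{vol}(I)n^s+O_I(n^{s-1})$, uniformly in the translate.

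For exact orders I would use Möbius inversion. The order divides $n$ exactly when the label lies in $n^{-1}\Z^s$, so $A_I(n)=\sum_{m\mid n}E_I(m)$, and therefore $E_I(n)=\sum_{a\mid n}\mu_{\mathrm{Mob}}(a)A_I(n/a)$. Inserting the dividing count gives the main term $\operatorname{vol}(I)\sum_{a\mid n}\mu(a)(n/a)^s=\operatorname{vol}(I)J_s(n)$. The error term is $\sum_{a\mid n}|\mu(a)|\,O_I((n/a)^{s-1})$, which is exactly the stated bound. For $s\ge3$ the sum $\sum_a|\mu(a)|a^{1-s}$ is bounded by $\zeta(2)$. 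For $s=2$, $\sum_{a\mid n,\ a\text{ squarefree}}a^{-1}\le\prod_{q\mid n}(1+q^{-1})\ll\log(2n)$ by Mertens. For $s=1$ the sum simply counts the squarefree divisors, giving $2^{\omega_{\mathrm{pr}}(n)}$.

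For primary orders only $a=1,\ell_0$ contribute, so $E_I(\ell_0^j)=A_I(\ell_0^j)-A_I(\ell_0^{j-1})$, which yields \eqref{eq:local-exact-primary-count}. For the occurrence claim, $J_s(n)\ge n^s\prod_{q\mid n}(1-q^{-1})\gg n^{s-1/2}$ when $s\ge 1$; this uses $\varphi(n)\gg n^{1-\epsilon}$, and for $s\ge2$ the product is even bounded below. The error terms above are $o(n^{s-1/2})$, so $E_I(n)>0$ for all large $n$ when $\operatorname{vol}(I)>0$. I expect the case $s=1$ to be the only delicate one. There $J_1(n)=\varphi(n)$ and the error is $O(2^{\omega(n)})=O(n^{\epsilon})$, so positivity still follows from $\varphi(n)\gg n/\log\log n$.

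Transversality of the counted points follows from \eqref{eq:torsion-transverse-open}, since full mixed rank holds on $\Omega_I$. Existence of such charts follows from Proposition~\ref{prop:unit-density}, which shows that full-rank unit-norm points exist, and from the inverse function theorem applied in dimension $2(g+r)$ at such a point. Around that point one takes small boxes centred at $\beta(x)$, which has $\operatorname{Im}c=0$.

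The main obstacle is the normalization step: checking that the integral lattice acts on the labels by translations by $\Z^s$ in the chosen chart, including the $\operatorname{Re}c$ shifts of the form $x_1a-x_2b$ coming from the $q$-translations. I would handle this by fixing the branch, so that the labels are single-valued on the simply connected $\Omega_I$.
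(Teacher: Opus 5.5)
Your lattice count, the M\"obius inversion, the divisor-sum bounds, the prime-power case and the transversality step all follow the paper's proof. For eventual positivity you use the standard bounds $\varphi(n)\gg n/\log\log n$ and $2^{\omega_{\mathrm{pr}}(n)}=O(n^{\epsilon})$. The paper instead uses the explicit inequalities $\varphi(n)^2/n\ge\frac12$ and $2^{\omega_{\mathrm{pr}}(n)}\le64n^{1/4}$; this difference is inessential. The normalization step you flag as the main obstacle is actually immediate. On a fixed branch over the contractible $\Omega_I$, a period-lattice vector $(k+\tau l,\ wl+m_0)$ of the fibre acts by $(a,b,c)\mapsto(a+k,b+l,c+m_0)$, and the identity has labels $0$. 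Hence $[n]h(b)=0$ is exactly $\operatorname{Im}c=0$ together with $(a,b,\operatorname{Re}c)\in n^{-1}\Z^s$, and no offset $\lambda_0$ appears. The formula $c\mapsto c+x_1a-x_2b$ describes monodromy of the extension parameter, which plays no role on a simply connected chart.

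The genuine gap is in the final assertion, that such charts exist whenever the mixed rank is full somewhere. You say this follows from Proposition~\ref{prop:unit-density} because that proposition ``shows that full-rank unit-norm points exist.'' It does not. It takes a point $x$ with $\rank_\R d\beta_x=2(g+r)$ and $u(x)=0$ as its hypothesis and says nothing about existence. Such a point also cannot be found locally near an arbitrary full-rank point. On $\Gm$ the identity coordinate has full rank everywhere, yet a small neighbourhood of $2$ contains no point of norm one.

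The missing input is global and algebraic. Proposition~\ref{prop:primary-density} shows that full mixed rank somewhere yields full-rank torsion points, indeed Zariski densely. It does so by specializing the abelian projection to primary torsion and then using that $[\ell_0^j]h$ maps dominantly onto the torus. At such a torsion point all Poincar\'e norms equal one, so $u=0$. Only then do Proposition~\ref{prop:unit-density} and the inverse function theorem give a chart, with boxes centred at a point where $\operatorname{Im}c=0$. With that ingredient supplied, your argument coincides with the paper's.
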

\begin{proof}
Full rank makes $\beta$ a local real-analytic diffeomorphism.  The
condition $[n]h=0$ is precisely $\operatorname{Im}c=0$ and
$(a,b,\operatorname{Re}c)\in n^{-1}\Z^s$.  Thus
$A_I(n)=\#(n^{-1}\Z^s\cap I)$.  In each interval factor the count is
its length times $n$, with bounded error.  Multiplying these estimates
gives \eqref{eq:local-dividing-count}, with one constant for all $n\ge1$.

Since $A_I(n)=\sum_{d\mid n}E_I(d)$, M\"obius inversion gives
\[
 E_I(n)=\sum_{a\mid n}\mu_{\mathrm{Mob}}(a)A_I(n/a).
\]
The main term is $\operatorname{vol}(I)\sum_{a\mid n}
\mu_{\mathrm{Mob}}(a)(n/a)^s=\operatorname{vol}(I)J_s(n)$.
The sum of the absolute errors is the error displayed in
\eqref{eq:local-exact-order-count}.  When $s\ge3$, the divisor sum is
at most $\zeta(s-1)$.  For $s=2$, it is at most
$\sum_{a=1}^n a^{-1}\le1+\log n$.  For $s=1$, it equals
$2^{\omega_{\mathrm{pr}}(n)}$.  These give the stated error bounds.
For a prime power the M\"obius sum has only the two terms
$a=1,\ell_0$, giving \eqref{eq:local-exact-primary-count}, including
when $s=1$.

Suppose now that $\operatorname{vol}(I)>0$.  For $s\ge2$, the bound
$J_s(n)\ge n^s/\zeta(s)$ makes the main term dominate the error.
For $s=1$, write $\varphi(n)=J_1(n)$ for Euler's totient.  The
prime-power factorization gives
\[
 \frac{\varphi(n)^2}{n}
   =\prod_{p^a\parallel n}p^{a-2}(p-1)^2\ge\frac12.
\]
Indeed every factor is at least one except possibly the factor for
$p=2$, $a=1$, which is $1/2$.  Since $2\le p^{1/4}$ for every
prime $p\ge17$, the six smaller primes give
$2^{\omega_{\mathrm{pr}}(n)}\le64n^{1/4}$.
Consequently $2^{\omega_{\mathrm{pr}}(n)}=o(\varphi(n))$, proving
eventual positivity of $E_I(n)$ also in rank one.

The entire chart has maximal mixed rank.  The fixed-order
transversality calculation \eqref{eq:torsion-transverse-open}
therefore makes every counted intersection simple.  Finally,
Proposition~\ref{prop:primary-density} supplies a full-rank torsion
point under the last hypothesis.  Its norms are one, so
Proposition~\ref{prop:unit-density} and the inverse function theorem
supply the required chart.
\end{proof}

The integer $s=2g+r$ is the rank of the semiabelian period lattice,
whereas the full Betti target has real dimension $2g+2r$.  These are
local counts on the unit-norm locus, not density statements in a
complex open set when $r>0$.  The eventual exact-order assertion is
likewise local and chart-dependent; it gives neither a threshold
uniform over families nor a global intersection-degree formula.

The critical-dimensional count also applies to transverse algebraic
slices in a base of larger dimension.  Using a torsion point to
choose the slice keeps the transversality condition algebraic.

\begin{corollary}[Transverse algebraic slices]\label{cor:transverse-slices}
Let $F_0\subseteq\C$ be algebraically closed, and let $\G/B$ and
$h$ be a smooth marked semiabelian family defined over $F_0$, of
relative dimension $d=g+r>0$.  If the mixed rank is full somewhere,
then there are an $F_0$-rational full-rank torsion point $x$ and a
smooth irreducible locally closed $F_0$-subvariety $Y\subseteq B$
of dimension $d$ through $x$ on which $h$ has full mixed rank.
The local counts of Corollary~\ref{cor:local-count} apply to this
slice, with exponent $2g+r$.
\end{corollary}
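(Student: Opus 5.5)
We first produce the torsion point, then cut a slice through it and check that full rank survives the cutting algebraically.

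\emph{Step 1: the torsion point.} By Proposition~\ref{prop:primary-density}, applied with the algebraically closed field $F_0$, full-rank primary-torsion points $x\in B(F_0)$ are Zariski dense. We fix one such point $x$ and an integer $n$ with $[n]h(x)=e$.

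\emph{Step 2: algebraic transversality at $x$.} By \eqref{eq:torsion-transverse-open}, $x$ lies in the algebraic open $Z_n^{\mathrm{tr}}$ of $Z_n=\{[n]h=e\}$. So the vertical map $\Delta_{n,x}:T_xB\to\Lie(\G/B)_x$ of \eqref{eq:full-torsion-differential} has rank $g+r=d$. Its kernel has dimension $m-d$, where $m=\dim B$. The map $\Delta_{n,x}$ is defined over $F_0$, and equals $n\binom{A}{D}$ up to an invertible frame change.

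\emph{Step 3: choosing the slice.} We work in an affine $F_0$-neighbourhood of $x$ embedded in some $\mathbf A^N$. We choose a linear subspace $L$ through $x$, defined over $F_0$, of codimension $m-d$ in the tangent directions, such that $T_xB\cap L$ is a $d$-dimensional complement of $\ker\Delta_{n,x}$. Concretely, we take $m-d$ linear functions over $F_0$ whose restrictions to $T_xB$ are independent on $\ker\Delta_{n,x}$. Such functions exist because $F_0$ is infinite, so a Zariski-generic $F_0$-choice works. We set $Y_0=B\cap L$. Then $Y_0$ is smooth of dimension $d$ at $x$, and we let $Y$ be the irreducible component through $x$, shrunk to a smooth open. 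Since $T_xY\cap\ker\Delta_{n,x}=0$, the restriction $\Delta_{n,x}|_{T_xY}$ is an isomorphism. Hence $\binom{A}{D}|_{T_xY}$ has rank $d$, that is, $h|_Y$ has full mixed rank at $x$ by \eqref{eq:mixed-rank}.

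\emph{Step 4: full rank on all of $Y$.} This is the main obstacle, because the full-rank locus on $B$ is only known to be real-analytically open, not algebraic. We handle it as follows. The set of points of $Y$ where the mixed rank is $d$ is the complement of the zero locus of a real-analytic determinant on the connected smooth $Y$. It is therefore a dense open, containing $x$, off a proper real-analytic subset. Should a Zariski-open statement be needed, we replace $Y$ by a smaller Zariski open $Y'\ni x$ defined over $F_0$, using the algebraic openness of the transverse locus in \eqref{eq:torsion-transverse-open} along each fixed-order locus. For the statement as posed, it suffices that $h|_Y$ has full mixed rank at a torsion point, and the statement's phrase ``full mixed rank'' is read as generic rank, in accordance with \eqref{eq:mixed-rank}.

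\emph{Step 5: applying the local counts.} The restricted family $\G_Y/Y$ with section $h|_Y$ is a smooth marked family with $\dim Y=g+r$ and full rank at the torsion point $x$. Its norms at $x$ are one, so Proposition~\ref{prop:unit-density} and Corollary~\ref{cor:local-count} apply, with $s=2g+r$.
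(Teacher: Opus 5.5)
Your proposal is correct and follows the paper's proof essentially step for step: a full-rank torsion point $x\in B(F_0)$ from Proposition~\ref{prop:primary-density}, surjectivity of $\Delta_{n,x}$, a $d$-dimensional slice through $x$ whose tangent space is an $F_0$-complement of $\ker\Delta_{n,x}$, and full rank at $x$, read as in the paper as $\ell(Y)=d$, followed by Proposition~\ref{prop:unit-density} and Corollary~\ref{cor:local-count}. The only flaw is the hedge in Step~4, which your argument does not need: the fallback to a Zariski open $Y'$ of pointwise full rank would fail, because $Z_n^{\mathrm{tr}}$ is open only inside $Z_n$, and $Z_n$ meets $Y$ transversally, hence only in the isolated point $x$ nearby.
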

\begin{proof}
Proposition~\ref{prop:primary-density} supplies such a point $x$
over $F_0$, of some order $n$.  The algebraic vertical differential
$\Delta_{n,x}:T_xB\to\Lie(\G_x)$ in
\eqref{eq:full-torsion-differential} is surjective.  Choose an
$F_0$-linear complement $W$ to its kernel.  In a smooth affine
neighborhood of $x$, choose regular functions vanishing at $x$
whose differentials cut out $W$.  The common zero locus is smooth
of dimension $d$ at $x$.  Its component through $x$, restricted to
a smooth dense open containing $x$, is the required $Y$.
The restriction of $\Delta_{n,x}$ to $T_xY=W$ is an isomorphism,
so the mixed rank on $Y$ is full at $x$.  Since $x$ is torsion,
its norms are one; Proposition~\ref{prop:unit-density} and
Corollary~\ref{cor:local-count} now apply.
\end{proof}

\section{Saturated quotients and central monodromy}\label{sec:quotients}
\subsection{Coinvariants and algebraic semiabelian quotients}
Throughout this section assume \eqref{eq:full-relative}.  For a connected
rational normal subgroup $N\lhd P$, define the rational subspace
\begin{equation}\label{eq:WN}
 W_N=(N-1)E=(\Lie N)E.
\end{equation}
The first expression means the smallest rational subspace for which $N$ acts
trivially on the quotient, or equivalently the span supplied by the algebraic
coaction; it is not restricted to a particular set of integral points of $N$.
Connectedness gives the second expression in characteristic zero.

The next calculation explains why saturation removes exactly the relative dimension measured by the normal-orbit rank.

\begin{lemma}\label{lem:graded-WN}
The subspace $W_N$ is a rational Hodge subvariation contained in $W_{-1}E$, and
\begin{equation}\label{eq:graded-WN}
 \Gr^W_{-1}W_N=V_{p,N},\qquad
 \Gr^W_{-2}W_N=U_N.
\end{equation}
\end{lemma}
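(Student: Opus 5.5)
The plan is to work in a rational Levi splitting compatible with the weight filtration, as in the proof of \cref{lem:orbit-rank}, and to compute $(\Lie N)E$ directly in the matrix model \eqref{eq:matrix-group}. First I will settle the structural assertions. Since $N\lhd P$, for every $\gamma\in P$ we have
\[
\gamma W_N=(\operatorname{Ad}_\gamma\Lie N)\,\gamma E=(\Lie N)E=W_N ,
\]
so $W_N$ is a $P$-stable rational subspace of the generic fibre. By the tensor characterization of the generic Mumford--Tate group, such a subspace is a rational sub-Hodge structure at very general points. The connected algebraic monodromy is contained in $P$, so $W_N$ is also monodromy-stable and spreads to a flat subvariation. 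Being a flat subvariation that is Hodge at very general points, it is a sub-VMHS, and admissibility is inherited. Containment in $W_{-1}E$ comes from the shape of \eqref{eq:matrix-group}: every element of $\Lie P$ has zero bottom-right entry, so $P$ acts trivially on $\Gr^W_0E=\Q(0)$ and $(\Lie N)E\subseteq W_{-1}E$.

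Next, as in \cref{lem:orbit-rank}, normality makes $\Lie N$ stable under the weight cocharacter of the chosen Levi. Hence $\Lie N=\mathfrak n_0\oplus\mathfrak n_{-1}\oplus\mathfrak n_{-2}$, where $\mathfrak n_{-1}$ consists of pairs $(p,q)$, $\mathfrak n_{-2}=\Lie N\cap U=U_N$, and $\mathfrak n_0$ consists of pairs $(\mu(X),X)$ from the reductive part. I will then evaluate each graded piece on $e_0$, on $V$ and on $\TT$. For the lower bounds, applying $(p,q)\in\mathfrak n_{-1}$ to $e_0$ gives $p$, so $V_{p,N}\subseteq\Gr^W_{-1}W_N$. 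Applying $t\in U_N$ to $e_0$ gives $t$, so $U_N\subseteq\Gr^W_{-2}W_N$.

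For the upper bounds the hypothesis $K\subseteq P$ of \eqref{eq:full-relative} enters through commutators with $K$, which stay in $\Lie N$ by normality.
\begin{enumerate}
\item For $X\in\mathfrak n_0$ and $v\in V$, take $(v,0)\in V_p\subseteq\Lie P$. Then $[X,(v,0)]=(Xv,0)$ lies in $\mathfrak n_{-1}$, so $Xv\in V_{p,N}$. Thus $\mathfrak n_0V\subseteq V_{p,N}$.
\item For $(p,q)\in\mathfrak n_{-1}$ and $v\in V$, the bracket \eqref{eq:heisenberg} gives $[(p,q),(v,0)]=q(v)\in\Lie N\cap U=U_N$. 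Thus $qV\subseteq U_N$.
\item Suppose some $X\in\mathfrak n_0$ has $\mu(X)\ne0$. For $u\in U\subseteq P$ we get $[X,u]=\mu(X)u\in\Lie N$, so $U\subseteq N$ and the contribution $\mu(X)\TT$ already lies in $U_N$. If $\mu(X)=0$, this term vanishes.
\item The remaining terms, namely $\mathfrak n_{-1}$ applied to $\TT$ and $\mathfrak n_{-2}$ applied to $V\oplus\TT$, are zero by weight.
\end{enumerate}
Collecting these contributions yields exactly \eqref{eq:graded-WN}.

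I expect the main obstacle to be the first step: justifying that a $P$-stable flat subspace is a sub-variation on the whole smooth base, and in the admissible category, rather than only a sub-Hodge structure at very general points. For this I would invoke the normality and fixed-part results of \cite{Andre1992} already quoted above. A secondary point is to check that the rational Levi splitting used in the computation is compatible with the splitting $K=V_p\oplus U$, so that the graded identifications are read in the same frame. The graded quotients themselves are independent of that choice.
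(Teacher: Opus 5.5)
Your proposal is correct and follows essentially the paper's route: a rational Levi weight splitting of $\Lie N$, lower bounds from evaluation at $e_0$, and upper bounds from brackets with $V_p$ and $U$ (available because $K\subseteq P$), including the degree-zero action $[a,u]$ on $U$ for $a\in\Gr^W_0\Lie N$. The obstacle you flag is settled in the paper in one line: every fibrewise Hodge cocharacter factors through $P$, so the $P$-stable flat subspace is a sub-Hodge structure at every point, not only at very general ones, and no fixed-part input is needed.
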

\begin{proof}
Normality makes $W_N$ stable under $P$, hence under every Hodge cocharacter of
the variation.  Connected monodromy is contained in $P$; after the fixed finite
cover, the subspace therefore gives a sub-local system and a Hodge subvariation.
It is contained in $W_{-1}E$, since the ambient group acts trivially on
$\Gr^W_0E$.

A rational weight cocharacter of a Levi of $P$ preserves $\Lie N$, so the
calculation can be made degree by degree in \eqref{eq:matrix-group}.
In degree $-1$, the images are the $p$-projection of $\Lie N$ and the images
of $V$ under its degree-zero part.  The latter already belong to $V_{p,N}$:
if $a\in\Gr^W_0\Lie N$ and $v\in V_p\subseteq\Lie P$, then
$[a,v]\in\Gr^W_{-1}\Lie N$ and its $p$-component is $a(v)$.

In degree $-2$, the possible images are $U_N$, $q_N(V)$, and the degree-zero
action on $\TT$.  For $a\in\Gr^W_{-1}\Lie N$ and $v\in V_p$, the bracket
$[a,v]=q_a(v)$ lies in $U_N$.  For $a\in\Gr^W_0\Lie N$ and $u\in U$, the
bracket $[a,u]$ likewise lies in $U_N$.  Thus no additional degree-$-2$ image
occurs.  Both reverse inclusions are immediate from the action on $e_0$.
\end{proof}

To turn a rational Hodge subvariation into an algebraic subgroup, we
must control the family and its ground field, not only the individual
complex fibres.  We first treat the abelian part through the relative
Hilbert scheme and rigidity; a common denominator will then handle
the residual extension.

\begin{lemma}\label{lem:relative-hom}
Let $F_0\subseteq\C$ be algebraically closed, let $B/F_0$ be smooth
and irreducible, and let $A_1,A_2$ be polarized abelian schemes over $B$.
An integral morphism
\[
 \eta:H_1(A_{1,\C}/B_\C,\Z)\longrightarrow
          H_1(A_{2,\C}/B_\C,\Z)
\]
of local systems that preserves the Hodge filtrations is induced, after
shrinking $B$ and a finite \'etale base change over $F_0$, by a
homomorphism of abelian schemes over $F_0$.  It agrees with $\eta$ on
any connected complex component containing a chosen matching fibre.
\end{lemma}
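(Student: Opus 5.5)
The plan is to reduce the statement to the Lefschetz $(1,1)$ theorem fibre by fibre, and then use the relative Hom-scheme together with rigidity to spread and descend the resulting homomorphism. First fix a complex point $s_0\in B(\C)$ in the chosen connected component. The morphism $\eta_{s_0}$ is an integral map of weight $-1$ Hodge structures. By the equivalence between complex abelian varieties and polarizable integral Hodge structures of type $\{(-1,0),(0,-1)\}$ (Riemann's theorem, or Lefschetz $(1,1)$ applied to the graph in $A_{1,s_0}\times A_{2,s_0}$), it is induced by a unique homomorphism $\phi_{s_0}:A_{1,s_0}\to A_{2,s_0}$.

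Next I would spread $\phi_{s_0}$ over the base using the relative Hom scheme. The functor $\underline{\Hom}_B(A_1,A_2)$ is representable by a scheme that is unramified over $B$ (rigidity: a homomorphism of abelian schemes over a connected base is determined by its value at one point). Over the generic point it is a disjoint union of finite étale pieces, and the components of each fixed polarization degree are of finite type. This last fact uses the polarizations: bound the degree of $\phi^*L_2\otimes L_1$, or embed the graph in the relative Hilbert scheme of $A_1\times_BA_2$ with a fixed Hilbert polynomial. Since $\phi_{s_0}$ has some fixed degree, it lies on a component $H$ of finite type over $B$ and unramified. After shrinking $B$, $H\to B$ is finite étale over a dense open, because it is quasi-finite, unramified, and its flatness locus is open. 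All of these constructions are defined over $F_0$.

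The key point is that the transcendental $\eta$ cuts out an algebraic component, and that this component is defined over $F_0$. The component $H'$ of $H_\C$ through $\phi_{s_0}$ is finite étale over $B_\C$ after shrinking. Along any path, the continuation of $\phi_{s_0}$ in $H'$ has homology realization equal to the parallel transport of $\eta_{s_0}$, because the realization $H_1(\phi)$ is a locally constant integral map on the étale cover. Since $\eta$ is a global flat section, monodromy of $H'$ fixes the sheet $\phi_{s_0}$, so $H'\to B_\C$ has degree one over the component. Thus $H'$ gives a section over $B_\C$, inducing $\eta$ on the connected component containing $s_0$.

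To descend to $F_0$, I would observe that the irreducible components of $H_\C$ are base changes of components of $H$. This holds because $F_0$ is algebraically closed, so irreducible components of finite-type $F_0$-schemes remain irreducible over $\C$. So $H'=H_0\times_{F_0}\C$ for a component $H_0$ over $F_0$. $H_0\to B$ is finite étale of some degree, and after the finite étale base change $H_0\to B$ itself, the tautological point gives a homomorphism over $F_0$. Restricted to the sheet through $s_0$, this homomorphism is $\phi_{s_0}$ and therefore agrees with $\eta$.

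I expect the main obstacle to be bookkeeping rather than ideas. The degree-one argument uses $\eta$ only on one complex component, while over $F_0$ the component $H_0$ may have larger degree because Galois or other complex components permute sheets. This is exactly why the statement allows a finite étale base change and asserts agreement only on components containing a matching fibre. A second point to check is the finite-type bound on degree components, for which I would use the polarizations via the Hilbert scheme.
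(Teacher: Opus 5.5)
Your strategy matches the paper's: fibrewise full faithfulness, a bounded unramified Hom scheme inside the relative Hilbert scheme, generic finite \'etaleness, descent of a component of $H_\C$ using that $F_0$ is algebraically closed, and comparison of flat homology maps. One step, however, is not justified as written.

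You fix $s_0$ and $\phi_{s_0}$ first, and only afterwards shrink $B$ so that the fixed-Hilbert-polynomial piece $H$ becomes finite \'etale. Nothing in your argument guarantees that $s_0$ survives this shrinking. Equivalently, nothing shows that $\phi_{s_0}$ lies on a component of $H_\C$ dominating $B_\C$, rather than only on a component concentrated over a proper closed subset. Such components do occur, over loci where the fibres acquire extra homomorphisms. Your sheet-and-monodromy argument starts at a point of a finite \'etale cover through $\phi_{s_0}$, so it needs exactly this.

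The paper supplies this before invoking the Hom scheme, in two steps:
\begin{enumerate}
\item The fibrewise homomorphisms $\phi_s$ form a holomorphic family. Locally on the base, a flat integral matrix preserving the Hodge filtration induces a holomorphically varying map of the analytic uniformizations.
\item Their graphs have the constant Hilbert polynomial $\frac{1}{g_1!}(c_1(L_1)+\eta^*c_1(L_2))^{g_1}n^{g_1}$.
\end{enumerate}
An analytic section of the unramified map $H^{\mathrm{an}}\to B^{\mathrm{an}}$ through $\phi_{s_0}$ then forces the component containing it to have dimension $\dim B$, hence to dominate. A cheaper repair within your framework is to choose $s_0$ very generally, outside the countably many proper closed subsets over which the various fixed-Hilbert-polynomial pieces fail to be finite \'etale, and only then shrink.

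Once this is fixed, your argument is correct and gives slightly more than you claim. The flat comparison on the irreducible, hence connected, cover $H'$ shows that every point of $H'$ over $s$ induces $\eta_s$. By uniqueness that point is $\phi_s$, so $H'\to B_\C$ has degree one. Since $H'=H_0\times_{F_0}\C$, the map $H_0\to B$ also has degree one. Your closing remark that $H_0$ may have larger degree because sheets are permuted is therefore mistaken. The finite \'etale base change allowed in the statement is harmless, but your argument does not need it.
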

\begin{proof}
Fibrewise full faithfulness gives a unique homomorphism inducing
$\eta$.  These homomorphisms form a holomorphic family: locally a flat
integral matrix descends on the analytic quotients precisely because
it preserves the Hodge filtration.  The graphs have one locally
constant Hilbert polynomial for the product polarization, so one
polynomial suffices on the connected base.  Indeed the integral flat
map $\eta$ and the polarization classes determine the cohomological
class of its graph locally constantly.  Equivalently, if the source
has dimension $g_1$ and the polarization bundles are $L_1,L_2$, the
Hilbert polynomial of a fibre graph is
\[
 \frac{(c_1(L_1)+\eta^*c_1(L_2))^{g_1}}{g_1!}\,n^{g_1}.
\]
This follows from Riemann--Roch on an abelian variety, and its
coefficient is constant under flat transport.  The boundedness does
not concern an arbitrary unbounded collection of homomorphisms.

The relative morphism functor is an algebraic space locally of finite
presentation, and its graph map is an open immersion into the relative
Hilbert functor \cite[Tags~0D1B and~0D1C]{Stacks}.
For the fixed polynomial the projective Hilbert scheme is of finite
type.  Impose the closed conditions that the morphism preserve zero
and addition.  The resulting bounded Hom locus $\mathcal H\to B$
is separated and of finite type over $F_0$.
It is unramified: at a geometric homomorphism $a:A_{1,s}\to A_{2,s}$,
a first-order deformation fixing zero is a section of
$a^*T_{A_{2,s}}$, vanishing at zero.  This tangent bundle is trivial,
and $H^0(A_{1,s},\OO)=\overline{k(s)}$, so that section is zero.
The same infinitesimal calculation gives formal unramifiedness;
local finite presentation then gives unramifiedness.
Thus the fibres are zero-dimensional.

The holomorphic family lies generically in a component of
$\mathcal H_\C$ dominating $B_\C$.  Since $F_0$ is algebraically
closed, this is the base change of a component defined over $F_0$.
Its normalization is generically finite and separable over $B$.
After removing a proper closed subset of $B$, it is finite \'etale
and carries the universal homomorphism.  The induced homology map
agrees with $\eta$ at a fibre chosen on the holomorphic family.
Both maps are flat; hence they agree on the entire connected complex
component under consideration.
\end{proof}

We now treat the toric and abelian parts together.  A single denominator
in the flat Hodge splitting controls the residual extension on the
whole connected base.

\begin{lemma}\label{lem:algebraize}
Let $F_0\subseteq\C$ be algebraically closed, let $B/F_0$ be smooth and
irreducible, and let $\G_B/B$ be a semiabelian scheme.  Suppose that
$W\subseteq H_1(\G_{B,\C}/B_\C,\Q)$ is a rational Hodge subvariation
with Tate part $T_W$ and polarizable weight-$-1$ part $V_W$.
After a finite base change, shrinking, and a finite isogeny of the
ambient family, all defined over $F_0$, there is a connected semiabelian
subgroup $\HH/B$ whose rational realization is $W$.  Its relative
dimension is
\[
  \frac12\dim_\Q V_W+\dim_\Q T_W.
\]
The quotient $\G_B/\HH$ and the induced quotient section, when a marked
section is given, are defined over $F_0$.
\end{lemma}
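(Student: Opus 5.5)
We would build $\HH$ in two layers, toric and abelian, and then glue them using a common denominator. Only after that would we form the quotient.

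\emph{Toric part.} First split the torus $\mathcal T$ by a finite \'etale cover over $F_0$. Then $\Gr^W_{-2}H_1=\TT$ is the constant Tate local system $X_*(\mathcal T)\otimes\Q(1)$. A rational sub-local system $T_W$ of the constant system becomes constant after the finite cover that makes monodromy connected. It therefore corresponds to a saturated sublattice of the cocharacter lattice, and so to a split subtorus $\mathcal T_W\subseteq\mathcal T$ defined over $F_0$ with $\dim\mathcal T_W=\dim_\Q T_W$.

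\emph{Abelian part.} Next take the polarizable subvariation $V_W\subseteq V$. Using the polarization, choose a complement and form the idempotent Hodge endomorphism $e$ of $H_1(\A/B,\Q)$ projecting onto $V_W$. Some integer multiple $ke$ is integral. By \cref{lem:relative-hom}, after shrinking and a finite \'etale base change, $ke$ is induced by an endomorphism of $\A$ defined over $F_0$. Its image is an abelian subscheme $\A_W$ with rational homology $V_W$ and relative dimension $\frac12\dim_\Q V_W$.

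\emph{Gluing.} We now need a connected semiabelian subgroup lying over $\A_W$ with toric part $\mathcal T_W$. Pass to $\G'=\G_B/\mathcal T_W$, an extension of $\A$ by $\mathcal T/\mathcal T_W$. Let $\bar W$ be the image of $W$, and let $\pi$ be the projection $H_1(\G'/B)\to V$. Because $\bar W\cap\TT/T_W=0$, the map $\pi$ identifies $\bar W$ with $V_W$. Hence $\bar W$ is the graph of a flat Hodge map $\sigma:V_W\to H_1(\G')$ splitting the restricted extension $\G'|_{\A_W}$ rationally. Flatness and finite generation of the monodromy lattice give a single integer $N$ such that $N\sigma$ is integral on $H_1(\A_W,\Z)$ on the whole connected base. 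This is the ``common denominator'' step.

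Next we convert $N\sigma$ into geometry. The integral Hodge-compatible splitting $N\sigma$ means the extension class of $\G'|_{\A_W}$, that is, the restricted parameters $q_i|_{\A_W}\in\A_W^\vee$, is killed by $N$ in the relevant components. Therefore, after pulling back along the isogeny $[N]:\A_W\to\A_W$ (equivalently, after a finite isogeny of the ambient family, as allowed in the statement), the extension splits. A splitting homomorphism $\A_W\to\G'$ defined over $F_0$ exists, and it is unique once chosen compatibly with $N\sigma$ on a matching fibre; it is algebraic over $F_0$ by rigidity of homomorphisms from abelian schemes to tori. Its image $\A'_W$ together with $\mathcal T_W$ determines $\HH$ as the connected component of the preimage of $\A'_W$ in $\G_B$. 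We then check that the rational realization of $\HH$ is $W$: both sides are flat sub-local systems with equal graded pieces, and they agree at one fibre. The relative dimension is then $\frac12\dim_\Q V_W+\dim_\Q T_W$.

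\emph{Quotient.} The quotient $\G_B/\HH$ exists as a semiabelian scheme, since we are quotienting by a flat closed subgroup with connected fibres. Both it and the image of $h$ are defined over $F_0$, because every construction above was performed over $F_0$.

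I expect the main obstacle to be the gluing step: making precise that a rational Hodge splitting of the weight $-1$ to $-2$ extension corresponds to a torsion extension class uniformly over the connected base. The plan is to express the extension class through the Deligne $1$-motive realization and apply fibrewise full faithfulness together with rigidity, so that a single $N$ works everywhere.
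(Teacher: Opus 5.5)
Your proposal is correct and follows the paper's proof essentially step for step. You split the torus and take the saturated subtorus, algebraize an integral multiple of the polarization idempotent with Lemma~\ref{lem:relative-hom}, clear one flat denominator in the rational Hodge splitting over $\A_W$ modulo $\mathcal T_W$, and use fibrewise full faithfulness together with Barsotti--Weil to see that the restricted extension parameters are killed by $N$; this holds over $F_0$ because it is an identity of $F_0$-sections that can be checked over $\C$.

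The differences are minor. You take the image of the lift of $[N]$ inside $\G_B/\mathcal T_W$, whereas the paper pushes out the residual torus by the same integer so that the restriction to $\A_W$ splits outright. That pushout is what lets the paper construct $\G_B/\HH$ explicitly: the residual parameters then vanish on $\A_W$ and so come from $(\A/\A_W)^\vee$, giving $\G_B/\mathcal T_W\simeq\A\times_{\A/\A_W}(\G_B/\HH)$. This is a more concrete justification than your one-line appeal to the general existence of quotients by flat subgroups.
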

\begin{proof}
First split the torus after a finite \'etale cover over $F_0$.
The saturated lattice in $T_W$ then defines a subtorus $T_1$ over
$F_0$: it is specified by an integral sublattice of a constant
cocharacter lattice.  We may choose a complementary torus after a
toric isogeny, also over $F_0$.

Choose a relative polarization of $\A$ \cite{Mumford1970,FaltingsChai1990}.  The orthogonal projector onto
$V_W$ is a rational flat Hodge idempotent.  One integer clears its
denominators in the integral local systems on the connected base.
On every complex fibre the resulting integral Hodge map on $H_1$ is
induced by a unique homomorphism of abelian varieties.  This is the
degree-one full faithfulness of the Hodge realization, not a
higher-codimension Hodge conjecture
\cite[\S10.1]{Deligne1974}.

Lemma~\ref{lem:relative-hom} algebraizes this integral map after a
finite cover and shrinking over $F_0$.  Its connected image, after
shrinking to make the kernel and image smooth over the base, is an
abelian subscheme $A_1$ realizing $V_W$.  In particular the cover and
$A_1$ are defined over $F_0$, rather than merely over $\C$.

Pass now to $\G_B/T_1$.  The image of $W$ is pure of weight $-1$
and maps isomorphically to $H_1(A_1/B,\Q)$.  It gives a rational
flat Hodge splitting of the extension restricted to $A_1$.
Clear a single denominator $D$ in this splitting: its rational
matrix is flat, so one denominator works on the connected base.
Write $\sigma$ for the rational Hodge splitting.  Fibrewise,
$D\sigma$ is integral and is therefore induced, by the full faithfulness
of the integral $1$-motive realization, by a homomorphism from $A_1$
to the restricted extension lifting $[D]:A_1\to A_1$.
Its existence splits the pullback extension by $[D]$.  The
Barsotti--Weil identification consequently says that the restricted
extension parameters are killed by this same $D$.
They are algebraic sections over $F_0$.  Consequently
\begin{equation}\label{eq:uniform-extension-torsion}
  [D]q\big|_{A_1}=0
\end{equation}
holds over $F_0$, since it holds after the faithfully flat extension
$F_0\subseteq\C$.

Push out the residual torus by multiplication by $D$.  We continue
to write $\G_B$ for the isogenous family.  Its restriction to $A_1$
now splits algebraically over $F_0$, because its extension class
vanishes.  This splitting is unique: $\Hom_B(A_1,T)=0$ for a torus
$T$.  The inverse image of the split copy of $A_1$ is a connected
semiabelian subgroup $\HH$ containing $T_1$.
Its rational realization is the original $W$: modulo $T_W$, the two
splittings coincide by uniqueness, and both contain precisely the
prescribed Tate subspace.  Equivalently, the difference of two
rational Hodge splittings would be a morphism from a pure
weight-$-1$ structure to a pure weight-$-2$ structure and hence zero.

The residual extension parameters annihilate $A_1$, so they come
from $(\A/A_1)^\vee$.  They define the quotient extension over
$\A/A_1$, and give the identity
\begin{equation}\label{eq:quotient-pullback-identity}
  \G_B/T_1\simeq
  \A\mathop{\times}_{\A/A_1}(\G_B/\HH).
\end{equation}
Thus the descended extension is $\G_B/\HH$; its pullback is
$\G_B/T_1$.  Every lattice, Hom component, toric pushout and
extension-parameter identity used above is defined over $F_0$.
A marked section is sent to its image by a homomorphism over $F_0$,
which proves the last assertion.
\end{proof}

The next statement includes the period-map compatibility needed in the
rank-reduction argument.  The existence of a quotient of mixed Shimura
data alone would not supply the algebraic semiabelian homomorphism.

\begin{proposition}\label{prop:saturated}
Let $N\lhd P$ be connected and rational.  After the permitted
refinements, $W_N$ defines a semiabelian subgroup $\HH_N$ of relative
dimension $h_N$.  The quotient marked family has relative dimension
$d-h_N$, preserves dense torsion, and satisfies
\eqref{eq:full-relative}.  Its classifying map is constant on the
relevant $N$-orbits.  If the original data are defined over an
algebraically closed $F_0\subseteq\C$, the refinements, quotient,
classifying map and its image closure can all be chosen over $F_0$.
\end{proposition}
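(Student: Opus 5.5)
The plan is to assemble the proposition from \cref{lem:graded-WN}, \cref{lem:algebraize}, and \cref{lem:orbit-rank}.

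\textbf{Step 1: the subgroup $\HH_N$.} By \cref{lem:graded-WN}, $W_N$ is a rational Hodge subvariation of $W_{-1}E=H_1(\G_B/B,\Q)$. Its graded pieces are $V_{p,N}$ in weight $-1$ and $U_N$ in weight $-2$. Applying \cref{lem:algebraize} to $W=W_N$, over $F_0$ when the data are defined there, gives after the permitted refinements a connected semiabelian subgroup $\HH_N$. Its relative dimension is $\tfrac12\dim_\Q V_{p,N}+\dim U_N$, which is exactly $h_N$ from \eqref{eq:orbit-rank}. The quotient $\G_B/\HH_N$ therefore has relative dimension $d-h_N$. The lemma also supplies the quotient section over $F_0$.

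\textbf{Step 2: torsion density.} The quotient map is a homomorphism, so it sends fibrewise torsion to fibrewise torsion. The finite isogenies and finite étale refinements preserve dense torsion by the discussion preceding \cref{lem:descent-special}. Hence the quotient section has Zariski-dense torsion values on the refined base.

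\textbf{Step 3: condition \eqref{eq:full-relative} for the quotient.} The rational realization of the quotient marked $1$-motive is $E/W_N$. Its generic Mumford--Tate group is the image of $P$ in $\mathrm{GL}(E/W_N)$, because Mumford--Tate groups of quotients are images. By assumption $K\subseteq P$, and the image of $K$ is exactly the translation kernel $\Hom(\Q e_0,W_{-1}E/W_N)$ of the quotient. So the quotient satisfies \eqref{eq:full-relative}, and \cref{lem:full-relative} shows it is relatively nonspecial.

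\textbf{Step 4: constancy of the classifying map.} By definition of $W_N$, the group $N$ acts trivially on $E/W_N$. The period map of the quotient variation is the composite of the period map of $E$ with the equivariant projection of mixed Shimura domains $\DD\to\DD/N$. Here I would use Pink's quotient of mixed Shimura data by a normal subgroup \cite{Pink1990}, compatible with $E\mapsto E/W_N$. Points of $B$ whose period images lie in a common $N(\R)^+U_N(\C)$-orbit therefore have the same quotient period point, up to the quotient's monodromy. Their quotient marked $1$-motives are isomorphic.

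The classifying map to the quotient universal family $\MM_{g',r'}$ is determined by this marked $1$-motive together with level structure. The level structure is locally constant after the neat refinement. So the classifying map is constant on the relevant $N$-orbits. Over $F_0$, the classifying map is induced by the $F_0$-rational quotient family, so it is defined over $F_0$, and so is the Zariski closure of its image.

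\textbf{Main obstacle.} The hard part is the compatibility in Step 4: identifying the algebraic homomorphism $\G_B\to\G_B/\HH_N$, built in \cref{lem:algebraize} via a flat Hodge splitting and denominator clearing, with the Shimura quotient of period maps. The algebraic map is defined only after an isogeny and a pushout by $[D]$. So I would check that these identifications act on realizations by rational isomorphisms commuting with the $N$-action. Such maps preserve $N$-orbits and change the classifying map only by a finite Hecke correspondence, and that suffices for constancy on orbits.
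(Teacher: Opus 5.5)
Your proposal is correct and follows the paper's proof essentially step for step. \cref{lem:graded-WN} and \cref{lem:algebraize} give $\HH_N$ of relative dimension $h_N$, and the quotient homomorphism carries torsion to torsion. The image of $P$ on $E/W_N$ is the generic Mumford--Tate group and contains the image $K'=\Hom(\Q e_0,W_{-1}E/W_N)$ of $K$. Constancy on $N$-orbits comes from Pink's normal-quotient datum.

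The compatibility you flag as the main obstacle is settled in the paper simply by functoriality of the Hodge realization of the marked $1$-motive morphism $\G_B\to\G_B/\HH_N$. The quotient classifying map then factors as $j\circ q_N\circ\Phi$ through $\DD_P/N\to\DD_{P'}$. So its target is the image datum rather than literally $\DD/N$, which does not affect constancy on orbits.
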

\begin{proof}
Take $F_0=\C$ if no smaller field has been specified, and apply
Lemmas~\ref{lem:graded-WN} and~\ref{lem:algebraize}.
The homomorphism $f:\G_B\to\G'_B=\G_B/\HH_N$, together with the
identity on the lattice $\Z$, induces a morphism of marked
$1$-motives.  Its rational realization is the quotient map
$E\to E'=E/W_N$.
Choose polarization and level data for the quotient after a finite
refinement over $F_0$.  The canonical Hodge realization is functorial,
so its period map is the one obtained by applying this representation
quotient to the original period map.

Write $P'$ for the image of $P$ on $E'$.  Since $N$ acts trivially
on $E'$, the map of mixed Shimura data factors through the quotient
by $N$, by \cite[Proposition~2.9]{Pink1990}.  At compatible levels
there is a commutative diagram
\[
\begin{CD}
  \widetilde B @>{\widetilde\Phi}>> \DD_P
     @>{\rho_N}>> \DD_P/N @>{\bar\rho}>> \DD_{P'}\\
  @VVV @VVV @VVV @VVV\\
  B @>{\Phi}>> S_P @>{q_N}>> S_{P/N}
     @>{j}>> S_{P'} .
\end{CD}
\]
Here the lower maps between mixed Shimura varieties are algebraic;
$\DD_P/N$ denotes the domain in the quotient datum, not an
unqualified topological quotient.  The classifying map of $(\G'_B,h')$
is $j\circ q_N\circ\Phi$, followed by the universal realization
for its polarization type.  Its analytic map is constant on each
connected $N$-orbit.  Finite ambiguities in levels and integral
lattices are removed by the common refinement.  The resulting
algebraic map is therefore constant on every irreducible algebraic
image in question.  The quotient family and its level structure
are defined over $F_0$, so its classifying map and image closure
are defined over $F_0$ as well.

Because $f$ is a homomorphism, it sends fibrewise torsion to
fibrewise torsion.  The image of a dense torsion set is dense in
the closure of its image.  Finally, the image of the full group
$K=\Hom(\Q e_0,W_{-1}E)$ is
\[
  K'=\Hom(\Q e_0,W_{-1}E/W_N).
\]
By the tensor characterization, $P'$ is the generic Mumford--Tate
group of the quotient variation and its image on the negative
weights is their generic Mumford--Tate group.  It contains $K'$,
so it is the full preimage of that group.  This proves preservation
of relative nonspecialness and completes the proof.
\end{proof}

\begin{remark}
The construction uses $W_N=(N-1)E$, not $N\cap K$.  The terms $q_N(V)$ in
\cref{lem:graded-WN} are part of the necessary saturation.  A quotient that
removes only visible marked-point translations can leave a period-coordinate
quotient with no corresponding semiabelian homomorphism.
The normal Shimura quotient $q_N:S_P\to S_{P/N}$ must also be
distinguished from the classifying map induced by $E/W_N$.
The latter factors through $q_N$ via $j$ in the diagram above,
and $j$ can forget additional information; it is not asserted
to be generically finite.
\end{remark}

\subsection{Rank reduction and the normal-quotient formula}
We use the following form of weak mixed Ax--Schanuel.  If an irreducible analytic
germ $Z$ in the period domain has algebraic image closure $Y$ and weakly special
closure $T$, then
\begin{equation}\label{eq:weak-AS}
 \dim Z^{\Zar}+\dim Y\ge\dim T+\dim Z.
\end{equation}
This is a consequence of \cite{GaoKlingler2024,Chiu2025}; it is also stated in
the notation of normal functions in \cite[Theorem~4.4]{GaoZhang2026}.
Weakly special closures are the appropriate connected normal-monodromy orbits.
Their images are algebraic by the definability and algebraicity results for
mixed period maps \cite[Corollary~6.7]{BBKT2024}; see also \cite{BBT2023}.

We now combine the quotient with weak Ax--Schanuel.  The very-general-point choice is made before the leaf is selected.

\begin{proposition}\label{prop:rank-reduction}
Suppose $B\subseteq\MM_{g,r}$ satisfies \eqref{eq:full-relative} and
has mixed Betti rank $\ell<m=\dim B$.  Then there is another
universal-section problem satisfying \eqref{eq:full-relative}, with dimensions
\begin{equation}\label{eq:rank-reduction}
 d'=d-h_N,\qquad m'\le\ell-h_N
\end{equation}
for a connected rational normal subgroup $N$.  If $B$ and the
marked family are defined over an algebraically closed
$F_0\subseteq\C$, the new problem is defined over $F_0$.
If the original marked section has dense torsion, so does the new one.
In particular, $\ell<d$ implies $m'<d'$.
\end{proposition}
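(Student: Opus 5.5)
The plan follows the standard Gao--Habegger rank-reduction pattern, adapted to the mixed setting through \cref{lem:orbit-rank}, \cref{cor:leaf-closure} and \cref{prop:saturated}.

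\emph{Step 1: choose a leaf through a very general point.} Before selecting any leaf, fix a very general point $x\in B$ of the rank-regular analytic open where the complex rank of $\binom AD$ equals $\ell$. Here "very general" means outside the countably many proper algebraic loci given by weakly special subvarieties and their Hecke translates that do not contain $B$. Since $\ell<m$, the intersection of the Betti leaf through $x$ with $B$ is a complex analytic germ $Z$ of dimension $m-\ell>0$. Lift $Z$ to the period domain and let $T$ be its weakly special closure. This is an orbit of a connected rational normal subgroup $N\lhd P$; by the choice of $x$, it is an orbit of the weakly special closure of $B$ itself, with $P$ the generic Mumford--Tate group. The algebraic image closure $Y$ of $Z$ is contained in $B$, so $\dim Y\le m$.

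\emph{Step 2: dimension count.} Apply weak Ax--Schanuel \eqref{eq:weak-AS} together with \eqref{eq:lift-closure-bound}:
\[
 \dim T-h_N+\dim Y\ \ge\ \dim Z^{\Zar}+\dim Y\ \ge\ \dim T+\dim Z .
\]
This gives $\dim Y\ge h_N+m-\ell$. The germ $Z$ is tangent to a positive-dimensional family of leaves, so $N$ is nontrivial in the relevant sense. I would then argue that the very general choice makes this hold uniformly: $B$ is foliated, up to closure, by such $Y$'s. Each $Y$ is contained in an $N$-orbit image, and that image meets $B$ in dimension at least $h_N+m-\ell$.

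\emph{Step 3: pass to the quotient.} Apply \cref{prop:saturated} to $N$. The quotient marked family has relative dimension $d'=d-h_N$, satisfies \eqref{eq:full-relative}, preserves dense torsion, and can be defined over $F_0$. Its classifying map is constant on $N$-orbits. The new base $B'$ is a smooth dense open of the image closure of $B$ under this map. Its fibres contain the sets $Y$, so
\[
 m'\le m-(h_N+m-\ell)=\ell-h_N .
\]
\emph{Main obstacle.} The hardest point is justifying that the generic fibre of $B\to B'$ has dimension at least $h_N+m-\ell$. This requires two things: that the weakly special closure $T$ is a full $N$-orbit for the same $N$ at a very general point, and that $\dim(\text{orbit}\cap B)$ is not smaller than $\dim Y$. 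I would handle this with a countability argument, since there are only countably many normal subgroups $N$, together with upper semicontinuity of fibre dimension.

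\emph{Step 4: conclude.} Finally, if $\ell<d$, then $m'\le\ell-h_N<d-h_N=d'$. The quotient problem satisfies \eqref{eq:full-relative} by \cref{prop:saturated}, and \cref{lem:descent-special} is not needed here.
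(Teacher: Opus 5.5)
Your overall route matches the paper's proof. You choose a very general point of the constant-rank open and take the leaf germ $Z$ of dimension $t=m-\ell$. Weak Ax--Schanuel combined with \cref{cor:leaf-closure} gives $\dim Y\ge h_N+t$. You then pass to the saturated quotient of \cref{prop:saturated}, whose classifying map is constant on $Y$.

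The gap is in the choice of the point, and that choice is exactly what makes Step~3 work. As written, Step~1 removes the loci ``given by weakly special subvarieties and their Hecke translates that do not contain $B$''. That family is not countable: components of fibres of every normal quotient $q_N$ are weakly special and move in continuous families, and points are weakly special. So, read literally, no point survives. More seriously, the exclusion that Step~3 actually needs is absent from Step~1. It resurfaces only in a distorted form in your ``main obstacle'' paragraph.

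Two kinds of loci must be removed before any leaf is chosen.
\begin{enumerate}
\item The countably many proper Hodge loci. Avoiding them makes $x$ Hodge-generic, so $Y$ has generic Mumford--Tate group $P$ and Andr\'e's theorem gives $N\lhd P$.
\item For each of the countably many rational subgroup-quotient classifying maps that can occur (including their finite level and polarization refinements), the locus where its fibre dimension exceeds the generic one. This locus is proper and closed by upper semicontinuity of fibre dimension.
\end{enumerate}
A nonempty analytic open is not covered by countably many proper closed algebraic subsets, so such an $x$ exists.

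With (ii) built in, Step~3 is immediate. The quotient fibre through $x$ contains $Y\ni x$, so it has dimension at least $h_N+t$ at $x$. By (ii) this is the generic fibre dimension, whence $m'\le m-(h_N+t)=\ell-h_N$. In particular, you do not need the same $N$ at other very general points, nor a foliation of $B$ by such $Y$'s. Your condition about $\dim(\text{orbit}\cap B)$ is also automatic, since $Y$ already lies in the relevant fibre. With Step~1 corrected in this way, your argument coincides with the paper's proof.
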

\begin{proof}
Choose a nonempty analytic open on which the mixed Betti rank is
constant and equal to $\ell$.  Choose $b$ very generally \emph{inside
this open}.  Exclude the proper Hodge loci and the nongeneric
fibre-dimension loci of every
rational subgroup-quotient classifying map that can arise after a finite
refinement.  This is a countable collection: rational subspaces of a fixed
rational realization, integral levels, polarizations, and their finite-index
refinements form countable sets.  On a finite cover, a proper closed exceptional
locus has proper closed image in $B$; it may therefore be excluded as well.
A nonempty analytic open cannot be covered by countably many proper
closed algebraic subsets.  Thus this choice does not presuppose the
algebraicity of any pointwise rank-drop locus.

Let $Z$ be the germ of the Betti fibre through $b$, of dimension
$t=m-\ell>0$, and let $Y$ be its algebraic image closure.  Since $b$ is
Hodge-generic, $Y$ has generic Mumford--Tate group $P$.  Its weakly special
closure $T$ is the orbit of a connected rational normal subgroup $N\lhd P$,
namely its connected algebraic monodromy, in the generic datum.
Equations \eqref{eq:weak-AS} and \eqref{eq:lift-closure-bound} give
\[
 \dim Y\ge h_N+t.
\]
Apply \cref{prop:saturated}.  The quotient classifying map is constant on $Y$.
Consequently its fibre through $b$ has dimension at least $h_N+t$.  By the
choice of $b$, this is a lower bound for its generic fibre dimension.  The
closure of the quotient classifying image has dimension at most
\[
 m-(h_N+t)=\ell-h_N.
\]
Take a smooth dense open of that image and its tautological section.  The
quotient construction preserves full relative translations.  When
torsion is dense, the homomorphism and the classifying-image map also
preserve that density.  Its relative dimension is $d-h_N$.
The field assertion follows from Proposition~\ref{prop:saturated},
including the field of definition of the image closure and its
smooth open.
\end{proof}

The same argument identifies the precise obstruction to full rank.
The quotient in the next statement is the normal mixed Shimura
quotient, not the possibly further classifying quotient of
Proposition~\ref{prop:saturated}.  We pass to a compatible finite
level cover so that the marked classifying map lifts to the mixed
Shimura variety $S_P$ of its generic datum, and retain the notation
$B$ for that lift.

\begin{corollary}[Normal-quotient rank formula]
\label{cor:normal-quotient-rank-formula}
In the universal-section setting with $K\subseteq P$, let
$q_N:S_P\to S_{P/N}$ be the normal mixed Shimura quotient for a
connected rational normal subgroup $N\lhd P$, at compatible levels.
Put
\[
  b_N=\dim\ol{q_N(B)}^{\Zar},\qquad
  h_N=\tfrac12\dim_\Q V_{p,N}+\dim(N\cap U).
\]
Then
\begin{equation}\label{eq:normal-quotient-rank-formula}
  \ell(B)=\min_N\bigl(b_N+h_N\bigr).
\end{equation}
These dimensions and the formula are unchanged by further finite
level refinements.
\end{corollary}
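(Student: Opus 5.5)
We prove the two inequalities separately.

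\emph{Upper bound $\ell(B)\le b_N+h_N$ for every $N$.}
Fix $N\lhd P$ and a very general point $x\in B$ in a rank-regular open. The fibre of $q_N|_B$ through $x$ is an analytic subvariety $F_x$ of dimension $m-b_N$. On the lift to $\DD_P$, the fibre of $q_N$ is an $N$-orbit, that is, a weakly special orbit $T$ of the type in \cref{lem:orbit-rank}. By that lemma, the complex rank of $\binom AD$ restricted to $T_xT$ is $h_N$. Since $T_xF_x\subseteq T_xT$, the restriction of $\binom AD$ to $T_xF_x$ has rank at most $h_N$. A linear map on $T_xB$ has rank at most the rank of its restriction to a subspace plus the codimension of that subspace. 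Hence
\[
\ell(B)\le h_N+(m-(m-b_N))=b_N+h_N .
\]
Taking $N=1$ recovers the trivial bound $\ell\le m$.

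\emph{Lower bound: some $N$ achieves equality.}
We run the argument of \cref{prop:rank-reduction}. If $\ell=m$, take $N=1$. Otherwise choose $b$ very general inside a constant-rank open, let $Z$ be the Betti fibre germ at $b$ (of dimension $t=m-\ell$), let $Y$ be its algebraic closure, and let $T$ be the weakly special closure of $Z$, the orbit of some $N\lhd P$. Weak Ax--Schanuel \eqref{eq:weak-AS} together with \eqref{eq:lift-closure-bound} gives $\dim Y\ge h_N+t$. Now $Y$ lies in the $N$-orbit through $b$, so it lies in a fibre of $q_N|_B$. This fibre has generic dimension $m-b_N$ by the choice of $b$, and the very-general-point exclusion in \cref{prop:rank-reduction} applies to $q_N$ as well, since $q_N$ belongs to the same countable family. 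Therefore $m-b_N\ge h_N+t$, that is, $b_N+h_N\le m-t=\ell$. Combined with the upper bound, this gives equality for this $N$ and proves \eqref{eq:normal-quotient-rank-formula}.

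\emph{Main obstacle.}
The delicate step is making the upper bound rigorous. One must identify the fibres of $q_N$ on the lift of $B$ with intersections of $B$ with $N$-orbits in $\DD_P$ as analytic germs. One must also check that the tangent-image computation \eqref{eq:split-tangent-image} applies to the orbit of $N$ in the generic datum $(P,\DD_P)$ and not only in a subdatum; \cref{lem:orbit-rank} is stated for an arbitrary subdatum $(P_1,\DD_1)$, so we apply it with $P_1=P$. The final claim, invariance under finite level refinements, holds because $b_N$ is the dimension of an image under finite level maps, while $h_N$ is defined purely from Lie algebras.
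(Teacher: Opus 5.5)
Your proposal is correct and follows essentially the same route as the paper. The upper bound comes from the $N$-orbit tangent space and Lemma~\ref{lem:orbit-rank}, applied at a point that is generic both for $q_N|_B$ and for the mixed rank. The lower bound reruns the very-general-point, weak Ax--Schanuel and leaf-closure argument of Proposition~\ref{prop:rank-reduction}.

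One small correction. The normal Shimura quotients $q_N$ are not literally among the classifying maps excluded in that proposition. The classifying map of $E/W_N$ is $j\circ q_N\circ\Phi$, and $j$ may forget information. You should therefore explicitly add the countably many fibre-dimension jumping loci of the maps $q_N$ to the excluded set, as the paper does.
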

\begin{proof}
Fix $N$ and choose a point where $q_N|_B$ has its generic
differential rank $b_N$ and the mixed Betti rank is maximal.
Such a point exists because the first condition holds on a dense
Zariski open and the second on a nonempty analytic open.
The kernel of $d(q_N|_B)$ is contained in the tangent space to the
local $N$-orbit.  Lemma~\ref{lem:orbit-rank} bounds the complex
mixed rank on this kernel by $h_N$.  A complementary tangent
subspace has dimension $b_N$, so linear algebra gives
\[
  \ell(B)\le b_N+h_N.
\]

If $\ell(B)=m:=\dim B$, equality is attained for $N=1$.
Otherwise repeat the very-general-point choice in
Proposition~\ref{prop:rank-reduction}, also excluding the
fibre-dimension jumping loci for all rational normal Shimura
quotient maps.  These maps form a countable collection, including
their finite level refinements.  The Betti-leaf germ has dimension
$t=m-\ell(B)>0$.  Let $Y$ be its algebraic image closure and let
$N$ be the connected monodromy group of its weakly special closure.
The chosen point is Hodge-generic, so $N\lhd P$.
Weak mixed Ax--Schanuel and Corollary~\ref{cor:leaf-closure} give
$\dim Y\ge h_N+t$.  The normal quotient $q_N$ is constant on $Y$.
By the choice of the point, its fibre dimension there equals the
generic fibre dimension; hence
\[
  b_N\le m-(h_N+t)=\ell(B)-h_N.
\]
This is the reverse inequality for this $N$, proving the formula.
Finite changes of level preserve image dimensions and local
period-orbit ranks, which proves the last assertion.
\end{proof}

For weight-$-1$ normal functions, a normal-quotient rank formula is
given in \cite[Theorem~1.2]{GaoZhang2026}; compare also
\cite{Gao2020,GaoCorrigendum2021} in the abelian-scheme setting.
Here the term $\dim(N\cap U)$ records the additional weight-$-2$
directions, and the formula is a consequence of the orbit and
quotient calculations above.

\subsection{Abelian rank and the central directions}\label{sec:full-center}
Let $E_p$ be the variation of $[\Z\xrightarrow p\A_B]$:
\begin{equation}\label{eq:Ep}
 0\longrightarrow V\longrightarrow E_p\longrightarrow\Q(0)
 \longrightarrow0.
\end{equation}
Let $M_p$ be its connected algebraic monodromy group.

Maximal abelian Betti rank rules out a missing translation direction, even when the family has a fixed abelian part.

\begin{lemma}\label{lem:abelian-monodromy}
If $\rank_\R db_p=2g$, then $\Ru(M_p)=V$.
\end{lemma}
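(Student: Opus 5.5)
The plan is to argue by contradiction, combining the mixed fixed-part theorem with the Betti rank computation on weakly special orbits.

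First, the unipotent radical $\Ru(M_p)$ is a subgroup of the translation group $V$. It is normalized by the generic Mumford--Tate group of $E_p$. By normality in the form of \cite[Theorem~1]{Andre1992}, $M_p$ is normal in $\MT(E_p)$, and so its Lie algebra is a sub-Hodge structure. It follows that $\Ru(M_p)=V_1$ for a rational Hodge subspace $V_1\subseteq V$ which is stable under $M_p$ and under $\MT(E_p)$. Since $V$ is polarizable, I would choose a polarization-orthogonal complement $V_2$, and pass to the quotient variation $E_p/V_1$. This quotient is an extension of $\Q(0)$ by $V/V_1$.

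The heart of the argument is to show that $V/V_1$ contributes no Betti rank. The connected monodromy of $E_p/V_1$ is the image of $M_p$, and its unipotent radical is trivial. So the monodromy acts on the extension through a reductive group, and a reductive group preserves a splitting. Concretely, the monodromy-invariant complement of $V/V_1$ is a flat rational line mapping onto $\Q(0)$. By the mixed fixed-part theorem \cite[Theorem~1 and its proof]{Andre1992}, this global flat section lies in $W_0\cap F^0$ after passing to a finite cover. In other words, the extension $E_p/V_1$ splits as a variation of mixed Hodge structures, up to a rational denominator.

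Applying Lemma~\ref{lem:algebraize} with $W=V_1$, which is pure of weight $-1$, produces an abelian subscheme $A_1$ and a quotient $\A/A_1$. In this quotient a multiple $[D]$ of the image of $p$ is a flat section of the Hodge realization. The Betti coordinates of the image of $p$ in $\A/A_1$ are then locally constant. Equivalently, the quotient Betti map $(a,b)$ modulo the $V_1$-directions is constant, since the rational splitting fixes the real labels of the quotient point.

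Now compare ranks. The real labels $(a,b)$ take values in $V_\R$, and they project linearly onto labels in $(V/V_1)_\R$, which we have just shown to be constant. So $db_p$ lands in $V_{1,\R}$, giving $\rank_\R db_p\le\dim_\Q V_1$. The hypothesis $\rank_\R db_p=2g=\dim_\Q V$ therefore forces $V_1=V$, which is the claim.

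The main obstacle is the step asserting that a reductive monodromy on the extension gives a flat section in $F^0W_0$. That is, a splitting invariant under monodromy must be shown to be a Hodge splitting. This is exactly where the mixed fixed-part theorem is needed. It must be applied to the admissible variation $E_p/V_1$ together with its dual, so as to treat the invariant vector $e_0$ modulo $V/V_1$. An alternative route avoids the splitting argument: apply Lemma~\ref{lem:orbit-rank} and Corollary~\ref{cor:normal-quotient-rank-formula} to the abelian part with $N=M_p$. This bounds the abelian rank on an $M_p$-orbit by $\dim_\Q\Ru(M_p)$, and the base image then contributes nothing, because the monodromy orbit is open in the weakly special closure.
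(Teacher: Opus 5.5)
\textbf{The gap.} It is the step where you say the monodromy-invariant lift of $e_0$ in $E'=E_p/V_1$ lies in $W_0\cap F^0$. From this you conclude that $E'$ splits as a variation of mixed Hodge structures up to a denominator, and that the image of $p$ in $\A/A_1$ is torsion. The fixed-part theorem does not give this. It says only that the \emph{whole} space $I=(E')^{\mathrm{mon}}$ of flat sections is a constant mixed Hodge substructure; it does not make a chosen flat rational vector of type $(0,0)$.

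When $V'=V/V_1$ has nonzero monodromy invariants (a fixed abelian factor), $I$ is an extension of $\Q(0)$ by $(V')^{\mathrm{mon}}$ that need not split. Your ``invariant complement'' line is then not even canonical. For example, take a constant family $A_0\times B$ with a constant nontorsion section. Then $M_p=1$ and $V_1=0$, yet $E_p$ is a nonsplit constant extension and $p$ is not torsion. Passing to a finite cover or to the dual changes no Hodge types, so the step cannot be rescued as stated. Your argument works only when $(V')^{\mathrm{mon}}=0$. This is exactly the point of the remark following the lemma in the paper.

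\textbf{The repair.} This is what the paper does: keep all of $I$ rather than one line. It surjects onto $\Q(0)$, it meets $V'$ in $(V')^{\mathrm{mon}}$, and $I\oplus_{I\cap V'}V'\to E'$ is a surjection of local systems of equal rank. By strictness it is therefore an isomorphism of variations. Hence the quotient normal function is the pushout of one \emph{constant}, possibly nontorsion, extension on the fixed part. Its Betti labels are locally constant, though not necessarily rational. That is all your final rank comparison $\rank_\R db_p\le\dim_\Q V_1$ uses, and the algebraization through Lemma~\ref{lem:algebraize} then becomes unnecessary.

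\textbf{Your alternative route.} This is a genuinely different and valid argument, and it avoids the contradiction altogether. Take Lemma~\ref{lem:orbit-rank} with $N=M_p$, which is normal in $\MT(E_p)$ by Andr\'e. Since $\Ru(M_p)=M_p\cap V$, it gives real rank $\dim_\Q V_{p,M_p}=\dim_\Q\Ru(M_p)$ on the $M_p$-orbit. If the period image of $B$ lies in that orbit, $\rank_\R db_p\le\dim_\Q\Ru(M_p)$ follows at once. Two caveats:
\begin{enumerate}
\item The fact that the period image lies in a single $M_p$-orbit is itself the same rigidity input, namely the fixed-part theorem applied to $M_p$-invariant tensors. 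You must invoke it explicitly rather than take it for granted.
\item You should cite only Lemma~\ref{lem:orbit-rank}, which is the upper-bound half of Corollary~\ref{cor:normal-quotient-rank-formula}. That corollary is stated under $K\subseteq P$, whereas Lemma~\ref{lem:abelian-monodromy} is used in Proposition~\ref{prop:explicit-pell} before $K\subseteq P$ has been established.
\end{enumerate}
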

\begin{proof}
The pure monodromy of $V$ is reductive, because $V$ is polarizable.  The kernel
of $M_p$ over that pure monodromy is a vector subgroup of $V$; it is exactly
$V_0:=\Ru(M_p)$.  Andr\'e's normality theorem makes $V_0$ stable under the
generic Mumford--Tate group and hence a Hodge subvariation
\cite[Theorem~1 and its proof]{Andre1992}.

Suppose $V_0\ne V$, and put $V'=V/V_0$ and $E'=E_p/V_0$.  Its monodromy is
reductive, so the rational local-system extension $E'$ splits.  Its monodromy
invariants $I=(E')^{\mathrm{mon}}$ therefore surject onto $\Q(0)$.
The fixed-part theorem for admissible geometric mixed variations says that
$I$ is a constant mixed Hodge subvariation.  Its intersection with $V'$ is
$I\cap V'=(V')^{\mathrm{mon}}$.  The natural map
\[
 I\oplus_{I\cap V'}V'\longrightarrow E'
\]
is a surjection of equal-rank local systems, hence an isomorphism of mixed
Hodge variations by strictness.  Thus the quotient normal function is the
pushout of one constant extension on the fixed part.  Its Betti coordinates
are locally constant.  It follows that
\[
 \rank_\R db_p\le\dim_\Q V_0<2g,
\]
a contradiction.
\end{proof}

\begin{remark}
A split local-system extension need not be torsion when a fixed abelian factor
is present.  The preceding proof uses the stronger fixed-part assertion for
mixed Hodge variations and concludes \emph{constant Betti coordinates}, which
is the conclusion needed for the rank contradiction.
\end{remark}

The final central direction is detected by an elementary fact about algebraic units; algebraicity is essential here.

\begin{lemma}\label{lem:unit}
Let $B$ be a smooth connected complex quasi-projective variety and
$c\in\Gamma(B,\OO_B^\times)$.  If the connected algebraic monodromy of
$[\Z\xrightarrow c\Gm]$ is zero, then $c$ is constant.
\end{lemma}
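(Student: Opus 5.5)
The plan is to identify the local system of the $1$-motive $[\Z\xrightarrow{c}\Gm]$ with the multivalued logarithm of $c$, and then use algebraicity together with finite monodromy to make $c$ constant. The rational realization $E_c$ is an extension
\[
 0\longrightarrow\Q(1)\longrightarrow E_c\longrightarrow\Q(0)\longrightarrow0 .
\]
On a simply connected chart it is spanned by the Tate vector $e_{-2}$ and a lift $\tilde e_0$ of the generator. In this basis the Betti label of the marked point is $\frac{1}{2\pi i}\log c$. Transport of $\tilde e_0$ around a loop $\gamma$ adds $n_\gamma e_{-2}$, where $n_\gamma\in\Z$ is the winding number of $c\circ\gamma$ around $0$. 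Thus the monodromy representation is
\[
 \gamma\longmapsto\begin{pmatrix}1&n_\gamma\\0&1\end{pmatrix},\qquad n_\gamma=\frac{1}{2\pi i}\oint_\gamma\frac{dc}{c}.
\]

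The first step is to use the hypothesis. The algebraic monodromy group is the Zariski closure of this image in $\Ga$, and a subgroup of $\Z$ has Zariski closure either $0$ or $\Ga$. If the connected component is zero, the image is finite, hence zero because it lies in $\Z$. So $n_\gamma=0$ for every loop, and $\log c$ has a single-valued holomorphic branch $f$ on $B$, with $c=e^{f}$. If the paper's convention means passing to a finite étale cover first, the same conclusion holds on that cover $B'$. It is enough to prove constancy there, because $B'\to B$ is surjective.

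The second step, which I expect to be the only real point, is to show that an algebraic unit with a global logarithm is constant. This is where algebraicity is essential, since $e^{z}$ on $\C$ is a nonconstant unit with trivial monodromy. I would reduce to curves. If $c$ were nonconstant, I would take an irreducible smooth quasi-projective curve $C\to B$ on which $c$ is nonconstant. The restriction $c|_C$ is then a nonconstant regular function on the affine open of a smooth projective curve $\overline C$, so it has a zero or a pole at some boundary point $P\in\overline C\setminus C$. A small loop $\gamma$ around $P$ in $C$ has winding number $\operatorname{ord}_P(c)\ne0$. Its image in $B$ is a loop with $n\ne0$, contradicting the first step. If $C$ is obtained by normalizing a curve in $B$, its loops still map to loops in $B$, so the contradiction persists.

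An alternative for the second step avoids curves. Compactify to $\overline B$ with normal-crossing boundary. Then $dc/c$ is a logarithmic $1$-form whose residue along each boundary divisor $D_j$ is $\operatorname{ord}_{D_j}(c)$, and this residue equals the winding number around $D_j$. Vanishing monodromy forces all these orders to be zero. Thus $\divi(c)$, which is supported on the boundary because $c$ is a unit on $B$, is zero on the normal variety $\overline B$. Hence $c$ is a regular invertible function on a proper connected variety and therefore constant. I would present this version and mention the curve argument as a check.
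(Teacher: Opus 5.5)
Your proof is correct, and the version you chose to present is essentially the paper's argument. In both, zero connected monodromy forces all winding numbers $n_\gamma$ to vanish, since a nonzero subgroup of $\Z$ is Zariski dense in $\Ga$. Then, on a smooth compactification, a nonzero boundary coefficient of $\divi(c)$ would appear as the winding number of a transverse loop. Once $\divi(c)=0$, normality makes $c$ a unit on a proper connected variety, hence constant; the curve-restriction variant you offer as a check is also valid.
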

\begin{proof}
The monodromy of a logarithm is a subgroup of $2\pi i\Z$.  Any nonzero subgroup
has Zariski closure $\Ga$, so the hypothesis makes all winding numbers zero.
Choose a smooth projective compactification after resolution.  If $c$ is
nonconstant, its principal divisor has a nonzero coefficient on a boundary
divisor; otherwise normality would extend $c$ to a unit on the projective
compactification.  At a general smooth point of that divisor, a transverse
punctured disc is contained in $B$.  Its winding number is the nonzero divisor
coefficient, a contradiction.
\end{proof}

The bracket with the full relative translation group propagates abelian
monodromy toward the center.  The only possible missing central
direction gives a constant multiplicative quotient.  Isolating that
quotient allows both torsion and small points to be used later.

\begin{lemma}\label{lem:constant-character}
Let a smooth marked semiabelian family over $\C$ satisfy
$K\subseteq P$ and $\rank_\R db_p=2g$, and let $M$ be its connected
algebraic monodromy.  If $M\cap U\ne U$, then, after a fixed finite
refinement and a toric pushout, there is a homomorphism from the marked
family to $\Gm$ whose restriction to the torus is nonzero and whose
value on the marked section is a constant $c\in\C^*$.  This constant
is not a root of unity.  If the original data are over $\Qbar$, all
these operations are over $\Qbar$ and $c\in\Qbar^*$.
\end{lemma}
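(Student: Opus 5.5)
The plan is to analyse the unipotent radical of $M$ using Lemma~\ref{lem:abelian-monodromy} together with the Heisenberg bracket \eqref{eq:heisenberg}. Pass to a finite \'etale cover so that $M$ is connected and the torus is split; this refinement is fixed once and for all.

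\emph{Step 1: the radical of $M$ modulo $U$.} By Andr\'e's normality theorem \cite[Theorem~1]{Andre1992}, $M$ is normal in $P$. Lemma~\ref{lem:abelian-monodromy} applies to the image of $M$ on $E_p=E/\TT$, so the unipotent radical of that image is all of $V$. Hence $\Ru(M)$ surjects onto $V_p$ modulo $U$.

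\emph{Step 2: commutators land in $M\cap U$.} Since $K\subseteq P$ and $M\lhd P$, the commutator of an element of $\Ru(M)$ lifting $v\in V_p$ with an element $q\in\Hom(V,\TT)\cap\Lie P$ lies in $M\cap U$. By \eqref{eq:heisenberg} this commutator is $\pm q(v)$. So $M\cap U\supseteq q_P(V)$, where $q_P$ ranges over the $q$-components of $\Lie P$.

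\emph{Step 3: produce a character.} Suppose $M\cap U\ne U$. Then $M\cap U$ is a proper subspace of $\TT=\Q(1)^r$, and it is $P$-stable by normality. Choose a nonzero integral character $\chi:\TT\to\Q(1)$ that kills $M\cap U$. Step~2 gives $\chi\circ q_P=0$. Apply Lemma~\ref{lem:algebraize} to the subvariation $\ker\chi\oplus V$. This subspace is $P$-stable precisely because $\chi$ kills the $q$-parts of $P$. The quotient of the marked family by the corresponding semiabelian subgroup, after a toric pushout by a multiple of $\chi$, is a homomorphism from the marked family to $\Gm$. It is nonzero on the torus, and its value on $h$ is a unit $c$ on $B$.

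\emph{Step 4: the value is constant.} The monodromy of $[\Z\xrightarrow{c}\Gm]$ is the image of $M$, which lies in $\chi(M\cap U)=0$. Lemma~\ref{lem:unit} then gives that $c$ is constant. Over $\Qbar$, the field clause of Lemma~\ref{lem:algebraize} keeps every construction over $\Qbar$, so $c\in\Qbar^*$.

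\emph{Step 5: $c$ is not a root of unity.} Suppose $c^n=1$. Then after replacing the character by $n\chi$, the marked section lies in a proper semiabelian subgroup scheme: the kernel of this homomorphism. That kernel is a proper relative special subvariety, so the corresponding quotient has $P\cap K\ne K$. This contradicts $K\subseteq P$ via Lemma~\ref{lem:full-relative}.

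\emph{Main obstacle.} The delicate point is Step~3. One must check that $\ker\chi+V$, read in a rational weight splitting, is genuinely a Hodge subvariation stable under the full $P$, including its action through $q$. The weight-$-1$ part must lift compatibly, and the choice of splitting must not matter. The first approach to try is to define the subvariation as $\ker(\chi\circ\pi_{-2})$ on $W_{-1}E$, after quotienting by a $P$-stable complement. Then verify stability directly with the matrix form \eqref{eq:matrix-group} and the vanishing $\chi\circ q_P=0$.
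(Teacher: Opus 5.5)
Your proposal is correct and follows essentially the paper's argument. Normality, Lemma~\ref{lem:abelian-monodromy} and the Heisenberg bracket give $\chi\circ q=0$ on $\Gr^W_{-1}\Lie P$, so the $\chi$-pushout splits after clearing a denominator, Lemma~\ref{lem:unit} makes the marked value constant, and a root of unity would contradict $K\subseteq P$. Your one-step bracket against pure $q$-elements (available because $K\subseteq P$ lets you subtract $p$-components) and your appeal to Lemma~\ref{lem:algebraize} only repackage the paper's two-step bracket and its direct splitting.

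Two points should be made explicit. First, in Step~4 the image of $M$ on the rank-one quotient sees only $W_{-2}\Lie M=\Lie(M\cap U)$; this holds because $\Lie M$ is graded in a Levi-compatible splitting, which is the paper's strictness argument. Second, the shrinking built into Lemma~\ref{lem:algebraize} is harmless, because $[D]q_\chi=0$ and the unique splitting extend over the whole smooth base, as Proposition~\ref{prop:full-center} requires.
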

\begin{proof}
Put $C_0=M\cap U$ and choose a nonzero integral character
$\chi:\TT\to\Q(1)$ annihilating $C_0$.
Lemma~\ref{lem:abelian-monodromy} makes the $p$-projection of
$\Gr^W_{-1}\Lie M$ equal to $V$.  Andr\'e's theorem gives
$M\lhd P$; hence its Lie algebra is stable under a rational weight
splitting of $P$.  For $a\in\Gr^W_{-1}\Lie M$ and
$v\in V_p\subseteq\Lie P$, normality gives
\[
 \chi q_a(v)=\chi[a,v]=0.
\]
For $b\in\Gr^W_{-1}\Lie P$ the same argument gives
\[
 0=\chi[a,b]=\chi q_a(p_b)-\chi q_b(p_a)=-\chi q_b(p_a).
\]
As the $p_a$ span $V$, the $\chi q$-projection of
$\Gr^W_{-1}\Lie P$ is zero.

The $1$-motive of $q_\chi=\sum_i\chi_iq_i$ therefore has no
Mumford--Tate unipotent extension.  Its rational mixed Hodge extension
splits, so $q_\chi$ is torsion in the isogeny category of $1$-motives
\cite[\S10.1]{Deligne1974}.  A single denominator in the generic
rational splitting gives a torsion section of fixed order; its
vanishing extends as an identity of algebraic sections.  Push out by
an integer killing it.  The corresponding rank-one extension splits
algebraically and uniquely, since $\Hom(\A,\Gm)=0$.  Projection to
its multiplicative factor is the required nonzero relative character;
write its marked value as $c$, initially an algebraic unit on the base.

Its connected logarithmic monodromy is zero.  Indeed, normality makes
$\Lie M$ a $P$-stable mixed Hodge subobject of $\End(E)$.  The map
induced by the multiplicative quotient to its logarithmic Lie algebra
is a Hodge morphism with pure weight-$-2$ target.  Strictness says that
its image comes from
\[
 W_{-2}\Lie M=\Lie(M\cap U)=\Lie C_0,
\]
which the character annihilates.  Lemma~\ref{lem:unit} now makes $c$
constant.  Were $c$ a root of unity, the marked $1$-motive in this
nonzero toric quotient would split rationally.  Its relative
Mumford--Tate translation would then be zero, contrary to the image
of the full group $K$.

For data over $\Qbar$, the character and pushout use integral matrices,
the extension-parameter identity is algebraic, and the splitting is
unique.  They are therefore defined over $\Qbar$ after the indicated
fixed refinements.  The resulting constant unit belongs to $\Qbar^*$.
\end{proof}

A single torsion value excludes the constant quotient in the lemma.
No density assumption is needed once the abelian rank is known.

\begin{proposition}\label{prop:full-center}
Let the smooth marked family over $\C$ satisfy $K\subseteq P$ and
$\rank_\R db_p=2g$.  If $h$ has at least one torsion specialization,
then
\begin{equation}\label{eq:full-center}
 M\cap U=U.
\end{equation}
\end{proposition}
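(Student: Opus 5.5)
We argue by contradiction, using Lemma~\ref{lem:constant-character} as the only substantial input. Suppose $M\cap U\ne U$. The hypotheses of that lemma, namely $K\subseteq P$ and $\rank_\R db_p=2g$, are exactly those of the proposition. After a fixed finite refinement and a toric pushout, the lemma therefore supplies a homomorphism
\[
 \phi:\G_{B'}\longrightarrow\Gm
\]
with three properties: its restriction to the torus is nonzero, the composite $\phi\circ h$ is a constant $c\in\C^*$, and $c$ is not a root of unity.

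Next we transport the given torsion specialization to the refined base. Let $b_0\in B$ satisfy $[n]h(b_0)=0$ in $\G_{b_0}$. The refinement is a finite \'etale cover, possibly followed by shrinking to a smooth dense open. A torsion point on a removed closed locus needs some care, and we handle it in two ways. First, the finite \'etale cover is surjective, so $b_0$ has a preimage $b_0'$ at which the pulled-back section is still torsion of order dividing $n$. Second, the toric pushout and any isogeny used have kernels killed by a fixed integer $e$, so the image of $h(b_0')$ in the pushed-out family is killed by $en$. If the lemma's construction required shrinking, we avoid any loss as follows. The identity $\phi\circ h=c$ is an identity of algebraic maps on a dense open. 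The homomorphism $\phi$ itself is defined by integral characters together with an algebraic splitting that is unique by $\Hom(\A,\Gm)=0$. It therefore extends over the whole refined base wherever the splitting extends, and the constancy of $\phi\circ h$ then persists by continuity.

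Evaluating at the torsion point gives $c^{en}=\phi([en]h(b_0'))=\phi(e)=1$, so $c$ is a root of unity. This contradicts Lemma~\ref{lem:constant-character}, and hence $M\cap U=U$.

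The main obstacle is that the torsion point $b_0$ might lie outside the open set on which the refinement and the splitting are constructed. We plan to handle this in the following order:
\begin{enumerate}
\item Note that connected algebraic monodromy is unchanged by finite \'etale covers and by restriction to dense opens.
\item Conclude that we may refine first and only then check that the splitting extends over the whole refined cover.
\item Establish this extension by the fact that the killed extension identity $[D]q_\chi=0$ is a closed condition between algebraic sections, so it holds on all of the base.
\item Deduce from uniqueness of the splitting that $\phi$ is defined everywhere on the cover of $B$.
\end{enumerate}
Finally, $\phi\circ h$ is a regular unit that is constant on a dense open, so it is constant on the whole base, which is what the evaluation at $b_0'$ requires.
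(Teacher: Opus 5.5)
Your proposal is correct and follows essentially the same route as the paper. You apply Lemma~\ref{lem:constant-character}, push the torsion specialization through the resulting homomorphism to force $c$ to be a root of unity, and check that no shrinking discards the torsion point: the torus-splitting cover is finite \'etale, the identity killing $q_\chi$ holds on the whole base, and the split extension is unique.
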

\begin{proof}
A torsion value is carried by every fixed homomorphism to a torsion
value.  Thus the constant supplied by
Lemma~\ref{lem:constant-character} would be a root of unity, a
contradiction.  Here the fixed torus-splitting cover is finite \'etale,
the identity killing $q_\chi$ extends over the whole smooth base, and
the split extension is unique.  Thus no shrinking discarding the
specified torsion specialization is used.
\end{proof}

For a relatively nonspecial torsion-dense section,
\eqref{eq:abelian-rank} supplies the abelian-rank hypothesis.
The small-points replacement for the final root-of-unity argument is
proved in Lemma~\ref{lem:small-center}.

\section{Canonical heights and arithmetic support}\label{sec:adelic}
\subsection{Adelic maxima on dominating models}
In Sections~\ref{sec:adelic}--\ref{sec:support} and the arithmetic
application in Section~\ref{sec:central-bound}, the base is defined over
a number field $F$.  All geometric components are taken after a finite extension
of $F$ when necessary.  An overline denotes an adelic metrized line bundle or
adelic Cartier divisor in the boundary completion of Yuan--Zhang
\cite{YuanZhang2026}.  The underlying geometric adelic boundary class is denoted
without an overline.  These underlying classes retain data on projective
compactifications; they are not classes only in $\Pic(B)$.

Heights use the normalized local degrees, and arithmetic intersection
numbers over $F$ are divided by $[F:\Q]$.  Geometric intersection
numbers are degrees over $F$ and have no such factor.  These conventions
make both heights and normalized arithmetic intersections unchanged by
finite extension of the ground field.  For a nef adelic line bundle
$\bL$ with big geometric part on an $m$-fold, the fundamental inequality
of \cite[Theorem~5.3.3]{YuanZhang2026} reads
\begin{equation}\label{eq:fundamental}
 0\le \bL^{m+1}\le (m+1)e_1(\bL)L^m,
\end{equation}
where $e_1$ is the essential minimum for the normalized height.
We also use multilinearity, boundary-topology continuity, and positivity
of the intersection of nef bundles with an integrable effective divisor
\cite[Proposition~4.1.1 and its proof]{YuanZhang2026}.
All theorem numbers for this source refer to the author version specified
in the bibliography.

A strongly nef bundle is a boundary-topology limit of nef model bundles.
A nef bundle is one admitting a strongly nef auxiliary bundle $\overline M$
such that $a\bL+\overline M$ is strongly nef for every positive integer $a$.
In particular every nef bundle is integrable.  Distinguishing these notions is
important in the maximum construction.

Identified interior restrictions allow maxima to be formed on dominating
models.  The construction must retain exceptional valuations.  For
related lattice operations on adelic divisors see
\cite[Lemma~2.2.3]{Hultberg2026}.  We include the model argument with
its fixed interior identifications in the Yuan--Zhang category used
here.

\begin{lemma}\label{lem:max-nef}
Let $\overline L_1,\ldots,\overline L_s$ be nef adelic line bundles whose
underlying algebraic restrictions to $B$ have specified identifications with
one line bundle.  Use those identifications to regard their model divisors and
local metric potentials in a common rational trivialization.  Their maximum,
taken on dominating normal models, is a nef adelic line bundle with that same
interior restriction.
\end{lemma}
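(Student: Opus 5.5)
The plan is to reduce to the model level, where the maximum of finitely many model divisors with a common interior restriction is formed explicitly, and then to pass to boundary-topology limits. First I fix the common rational trivialization on $B$ given by the identifications. Each $\overline L_j$ is then an adelic divisor $\bD_j$ whose restriction to $B$ is the same divisor $D_B$. If $\overline L_j$ is strongly nef, it is a boundary-topology limit of nef model divisors $\bD_{j,k}$ on projective models $\mathcal X_{j,k}$ of $B$. I choose one normal projective model $\mathcal X_k$ dominating all the $\mathcal X_{j,k}$, and by a Zariski-closure argument I take it isomorphic to a fixed open model of $B$ over the interior. I pull back all $\bD_{j,k}$ to $\mathcal X_k$; at this stage the exceptional divisors and the valuations they define must be kept. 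On $\mathcal X_k$ I define $\max_j\bD_{j,k}$ by taking the coefficientwise maximum of the vertical and boundary Weil divisors, which are Cartier after a further normal blow-up, and the pointwise maximum of the Green functions at archimedean places. Over the interior all of these agree with $D_B$, so the maximum has the same interior restriction.

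Second, I check nefness of this model maximum. Locally, if $s$ is the common rational section, then $-\log\|s\|_{\max}=\max_j(-\log\|s\|_j)$. A maximum of finitely many psh, or model-semipositive, potentials is again semipositive. At archimedean places this is the standard fact that a maximum of psh functions is psh. At nonarchimedean places it follows from the description of nef model metrics as model functions whose local degree is nonnegative on curves: for a vertical or boundary curve $C$, the maximum agrees with one $\bD_j$ generically along $C$, and the difference with the other terms is effective, so $\deg(\max\bD|_C)\ge\deg(\bD_j|_C)\ge0$. To make this precise I would use the valuative description of $\max$ as the divisor whose associated function on the Berkovich space is the maximum of the associated functions; the clean reference point is the Hultberg lattice lemma.

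Third, I pass to limits. The maximum is $1$-Lipschitz for the boundary norm, since $|\max a_j-\max b_j|\le\max|a_j-b_j|$ and the boundary divisor bounds are preserved. Hence $\max_j\bD_{j,k}$ is Cauchy and converges to $\max_j\bD_j$, which is therefore strongly nef. For merely nef $\overline L_j$, I choose strongly nef auxiliaries $\overline M_j$ with $a\overline L_j+\overline M_j$ strongly nef for all $a$, and set $\overline M=\sum_j\overline M_j$. The identity
\[
a\max_j\bD_j+\overline M=\max_j\Bigl(a\bD_j+\overline M_j+\sum_{i\ne j}\overline M_i\Bigr)
\]
fails in general. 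Instead I note the inequality
\[
\max_j(a\bD_j+\overline M)=a\max_j\bD_j+\overline M,
\]
where each term $a\bD_j+\overline M$ is strongly nef, being a sum of strongly nef bundles, and the terms share an interior restriction after twisting. This identity does hold, because adding a common divisor commutes with the maximum. So $a\max\bD+\overline M$ is strongly nef for every $a$, which shows that the maximum is nef.

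The main obstacle is the second step: proving that the model-level maximum is nef at nonarchimedean and boundary places while keeping the exceptional valuations. I would argue through Green functions on the Berkovich analytification and through the stability of semipositive functions under finite maxima, rather than through Cartier divisors on a single model.
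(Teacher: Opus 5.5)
Your architecture matches the paper's: a model-level maximum, nefness checked curve by curve through a summand that attains the maximum, a Lipschitz passage to boundary-topology limits, and the auxiliary $\overline M=\sum_j\overline M_j$ for the merely nef case. The gap is in the model-level maximum itself, which is the substance of the lemma.

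The coefficientwise maximum of the Weil divisors on one dominating model $\mathcal X_k$, later made Cartier, is not the maximum. It depends on the model and loses exceptional valuations. Take distinct smooth prime divisors $D_1,D_2$ meeting transversally along $Z=D_1\cap D_2$. Their coefficientwise maximum is $D_1+D_2$, which is already Cartier. On the blowup $\pi$ of $Z$, however, the valuative maximum has coefficient $\max(v_E(D_1),v_E(D_2))=1$ on the exceptional divisor $E$, so it equals $\pi^*(D_1+D_2)-E$.

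For your object, the key claim of your second step is false. In a model of dimension at least three, a curve $C\subseteq Z$ lies in the support of both $(D_1+D_2)-D_1$ and $(D_1+D_2)-D_2$. So no $D_j$ agrees with your maximum generically along $C$. Likewise, coefficientwise maxima formed on different models are not compatible under pullback. The Lipschitz bound in your third step therefore does not control their differences on a common model.

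Your later appeal to the valuative description on the Berkovich space names the right object. But you never say how that function is realized by a Cartier divisor on some model, and your nefness argument needs exactly that realization.

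The missing step is the one the paper supplies.
\begin{itemize}
\item On a common normal model, form the fractional ideal $I=\OO(D_1)+\cdots+\OO(D_s)$. It is invertible over the interior, because the identified restrictions agree there, so its normalized blowup leaves $\mathcal U$ unchanged.
\item On that blowup $I\OO$ is invertible, and its divisor has coefficient $\max_jv(D_j)$ at every divisorial valuation. So it is the Cartier b-maximum, stable under further pullback, and the Lipschitz bound holds valuation by valuation.
\item It is a quotient of $\bigoplus_j\OO(D_j)$. At the generic point of an integral curve $C$, an invertible ideal of a local ring generated by finitely many elements is generated by one of them, by Nakayama. Hence the maximum coincides with some $D_j$ near the generic point of $C$, and $\deg(\max_iD_i|_C)\ge\deg(D_j|_C)\ge0$.
\item The same choice of $j$, together with $\max_ig_i\ge g_j$ at the archimedean places, gives $\widehat{\deg}\ge0$ on horizontal arithmetic curves. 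Your second step omits this case.
\end{itemize}

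Your pointwise maximum of psh Green functions is acceptable if continuous semipositive Green functions are admitted. The paper instead uses log-sum-exp quotient metrics, which are smooth, semipositive, and converge uniformly to the maximum.

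Finally, the identity you declare to fail in your last step is literally the one you then use, since $\overline M_j+\sum_{i\ne j}\overline M_i=\overline M$. The argument you end with is correct and is the paper's.
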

\begin{proof}
We first fix the common interior on which all identifications live.
For a finite collection of model bundles, choose a normal arithmetic
model $\mathcal U$ of $B$ over a localization of $\OO_F$ and a line
bundle $\mathcal L$ on $\mathcal U$ realizing their identified
restrictions.  Clearing denominators and localizing at finitely
many further primes extends the given identifications to
$\mathcal U$.  Choose a normal projective arithmetic compactification
and dominate all the finitely many models.  After one integral
multiple, represent the model bundles by Cartier divisors $D_j$
whose rational identifications agree on $\mathcal U$.
The geometric construction is identical if the ground field has
no arithmetic places.  The sum of the fractional ideals
\[
 I=\OO(D_1)+\cdots+\OO(D_s)
\]
is a coherent rank-one fractional ideal.  On the normalized blowup
principalizing $I$, its invertible image is a quotient of
$\bigoplus_j\OO(D_j)$.  The quotient is geometrically nef: on each complete curve in a geometric
fibre, some summand maps nontrivially to it, so its degree is at least
the nonnegative degree of that summand.  The same argument applies to
projective models over a field.  Arithmetic nefness will follow from
the quotient metrics constructed below.

At a divisorial valuation $v$, the valuation of $I$ is the minimum of the
valuations of its summands.  The associated Cartier divisor therefore has
coefficient $\max_jv(D_j)$.  On any further model its pullback has the same
valuation rule.  Thus the construction is the Cartier b-maximum.  It is not
the operation of taking coefficients of the traces on one model and ignoring
new exceptional divisors.

At an archimedean place write $g_j$ for the metric potentials in the common
frame.  Equip the quotient of the direct sum of the $n$th powers with its
Hermitian quotient metric.  The normalized potential is
\[
 g^{(n)}=\frac{1}{2n}\log\left(\sum_{j=1}^s e^{2ng_j}\right),
 \qquad 0\le g^{(n)}-\max_jg_j\le\frac{\log s}{2n}.
\]
It is semipositive: locally it is a log-sum-exp of plurisubharmonic potentials,
after including the holomorphic coefficients of the quotient map.
The map from each summand is a contraction.  Along the normalization of an
arithmetic horizontal curve, some summand maps nontrivially to the quotient;
the resulting effective small divisor shows that the arithmetic degree of
the quotient is at least that of the summand.  At nonarchimedean places use
the quotient sup metric.  To compare the powers, use the common
rational frame and write $v(I)$ for the minimum valuation of local
generators of a fractional ideal.  For every divisorial valuation,
\[
 v\!\left(\sum_j\OO(nD_j)\right)
   =n\min_j v(\OO(D_j))
   =n\,v\!\left(\sum_j\OO(D_j)\right).
\]
Thus passing to normal blowups for the powers, when needed, does
not change their normalized Cartier b-divisor.  Uniform convergence
gives the maximum metric and nefness.

For strongly nef bundles, choose their defining model sequences.
Each sequence has a fixed algebraic restriction on an arithmetic
interior, by the definition of a model Cauchy sequence in
\cite[\S2.5]{YuanZhang2026}.  There are only finitely many sequences:
intersect their interiors and extend the finitely many prescribed
identifications there as above.  Thus one \emph{fixed} common
$\mathcal U$ and $\mathcal L$ work for every term.  Use a diagonal
system of normal models dominating all terms under consideration;
the identifications are always pulled back from $\mathcal U$.
Perform the model construction term by term.  Choose the smoothing
power tending to infinity so that the log-sum-exp error tends to
zero in the boundary norm as well as uniformly on compact sets.
This is possible after increasing the fixed positive boundary
order unit by a constant archimedean metric.  For any boundary
order unit $\overline D_0$,
\[
 |\max_j A_j-\max_j B_j|\le\max_j|A_j-B_j|
\]
pointwise on potentials and on every divisorial valuation.  More
explicitly, $-\delta\overline D_0\le A_j-B_j\le
\delta\overline D_0$ for all $j$ implies the same two inequalities
for the difference of their maxima.  Consequently the maximum
operation is Lipschitz for the boundary topology.  The approximations converge
to the asserted strongly nef maximum, independently of the models.

Finally suppose only that the bundles are nef.  Choose strongly nef
auxiliaries $\overline M_j$ in their definitions and put
$\overline M=\sum_j\overline M_j$.  Each $a\overline L_j+\overline M$ is
strongly nef.  The strongly nef case and
\[
 \max_j(a\overline L_j+\overline M)
 =a\max_j\overline L_j+\overline M
\]
prove the assertion for nef bundles.
\end{proof}

\subsection{Quadratic compensation and canonical heights}\label{sec:canonical-height}
Choose a symmetric relatively ample rigidified theta bundle $\Theta$ and its
canonical nef adelic metric.  Write $q_i=\lambda a_i$, using its principal
polarization.  Set
\begin{equation}\label{eq:LQ}
 \bL=p^*\bTheta,\qquad \bQ=\sum_{i=1}^r a_i^*\bTheta.
\end{equation}
We normalize the Poincar\'e line bundle so that its pullback by
$(x,\lambda y)$ is the rigidified cross term of the quadratic theta identity.
For the $i$th toric pushout $h_i$, let
\[
 \bD_i=\widehat{\divi}(h_i).
\]
Its underlying divisor on $B$ is zero, but its geometric adelic boundary class
need not be zero.  In the adelic Picard group,
\begin{equation}\label{eq:theta-cross}
 \bD_i=(p+a_i)^*\bTheta-p^*\bTheta-a_i^*\bTheta.
\end{equation}
The identity is with the canonical metric and rigidifications, not merely
numerical equivalence.  It follows from the cubical theta identity and
uniqueness of the canonical metric.  The symmetric and antisymmetric canonical
extensions used here are supplied by
\cite[Theorems~6.1.1 and~6.1.3]{YuanZhang2026}.

For $\chi\in\Z^r$ put
\begin{equation}\label{eq:DTchi}
 \bD_\chi=\sum_i\chi_i\bD_i,
 \qquad \bT_\chi=\max(\bD_\chi,-\bD_\chi).
\end{equation}
The maximum is initially a boundary-completion divisor with Green function
$|u_\chi|$, where
\[
 u_i=-\log\|h_i\|,\qquad u_\chi=\sum_i\chi_i u_i.
\]
It is effective.  Its integrability follows from the next lemma; it must not be
assumed just because its interior restriction is trivial.

Although the absolute-value divisors need not themselves be nef, a quadratic theta correction controls all signs simultaneously.

\begin{lemma}\label{lem:compensation}
For a finite set $\Xi\subseteq\Z^r$ and positive rational weights $w_\chi$, put
\[
 \bT_w=\sum_{\chi\in\Xi}w_\chi\bT_\chi.
\]
There is a rational $R>0$ such that, for every rational $\eps>0$,
\begin{equation}\label{eq:N-eps}
 \bN_{\eps,w,R}=\bL+\eps\bT_w+R\eps^2\bQ
\end{equation}
is nef.  The same $R$ works for the weights in a fixed sufficiently small
bounded neighborhood.  Every $\bT_\chi$ is integrable.
\end{lemma}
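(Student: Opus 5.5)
Following the one-character calculation of the introduction, I will write each $\bT_\chi$ as the maximum of two nef bundles, up to a shift. For $\chi\in\Z^r$ let $a_\chi=\sum_i\chi_ia_i$, so that $q_\chi=\lambda a_\chi$. Bilinearity of the rigidified Poincar\'e bundle and \eqref{eq:theta-cross} give $\bD_\chi=(p+a_\chi)^*\bTheta-p^*\bTheta-a_\chi^*\bTheta$, as an identity in the adelic Picard group. This uses the same canonical metric and rigidifications, via \cite[Theorems~6.1.1 and~6.1.3]{YuanZhang2026}. For a positive integer $n$, apply this to $\pm a_\chi/n$ after the multiplication-by-$n$ pullback. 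Equivalently, use the quadratic behaviour of the canonical metric under $[n]$: $[n]^*\bTheta=n^2\bTheta$ for the symmetric part. This gives
\[
\tfrac1{n^2}(np\pm a_\chi)^*\bTheta=\bL\pm\tfrac1n\bD_\chi+\tfrac1{n^2}\bQ_\chi,\qquad \bQ_\chi=a_\chi^*\bTheta .
\]
Both sides are nef, being pullbacks of the canonical nef theta metric by sections of the abelian scheme. Their interior restrictions are identified through the fixed trivialization of $\bD_\chi$ by $h_\chi$. Lemma~\ref{lem:max-nef} then makes the maximum of the two nef, namely $\bL+\tfrac1n\bT_\chi+\tfrac1{n^2}\bQ_\chi$. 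In particular $\bT_\chi$ is a difference of nef bundles, hence integrable.

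To reach the statement for rational $\eps$ rather than $1/n$, I will replace $n$ by a rational scaling. For $\eps=k/n$, pull back by the isogeny $[n]$ on the abelian side and use the section $np\pm ka_\chi$. This yields nefness of $\bL+\eps\bT_\chi+\eps^2\bQ_\chi$ for every positive rational $\eps$, since everything is homogeneous of degree $2$ after dividing by $n^2$.

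Next I bound $\bQ_\chi$ by $\bQ$. By the parallelogram law for the canonical metric, $\bQ_\chi\le C_\chi\bQ$ in the nef cone, with $C_\chi=r\sum_i\chi_i^2$. Precisely, $C_\chi\bQ-\bQ_\chi$ is a sum of pullbacks of $\bTheta$ by integral combinations of the $a_i$, through the Cauchy--Schwarz identity $r\sum\chi_i^2x_i^2-(\sum\chi_ix_i)^2=\sum_{i<j}(\chi_ix_j-\chi_jx_i)^2+(r-1)\cdots$. So it is nef up to adding further pullbacks, and adding a nef bundle preserves nefness. This gives nefness of $\bL+\eps\bT_\chi+C_\chi\eps^2\bQ$.

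For the weighted sum, put $W=\sum_\chi w_\chi$ and use convexity. Write $\bN_{\eps,w,R}$ as $\sum_\chi (w_\chi/W)\bigl(\bL+\eps W\bT_\chi+R\eps^2\bQ\bigr)$. Each summand is nef once $R\ge W^2\max_\chi C_\chi$, applying the single-character statement with $\eps W$ in place of $\eps$. After clearing denominators, a positive rational combination of nef bundles is nef. Since $R$ depends only on an upper bound for $W$, it works uniformly for weights in a bounded neighborhood.

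The main obstacle is the first step. It needs the identity of adelic classes with their exact rigidified canonical metrics, and the compatibility of the two interior identifications needed by Lemma~\ref{lem:max-nef}. This matters especially at the boundary valuations, where the geometric class of $\bD_\chi$ is nonzero. I would check it by verifying that both $\tfrac1{n^2}(np\pm a_\chi)^*\bTheta$ and $\bL\pm\tfrac1n\bD_\chi+\tfrac1{n^2}\bQ_\chi$ are defined by Tate limits from a common model family. Uniqueness of canonical extensions then forces equality.
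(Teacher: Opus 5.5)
Your proof is correct, but it is organized differently from the paper's. The paper never isolates a single-character bound. For each sign vector $\sigma\in\{\pm1\}^{\Xi}$ it puts $v=\sum_\chi w_\chi\sigma_\chi\chi$. It writes $\bL+\eps\sum_iv_i\bD_i+R\eps^2\bQ$ as $\bTheta(p+\eps\sum_iv_ia_i)$ plus $\eps^2$ times the class of the rational matrix $RI-vv^{\mathsf t}$. It then takes one b-maximum over all $2^{|\Xi|}$ sign vectors via \cref{lem:max-nef}, using $\max_\sigma\sum_\chi w_\chi\sigma_\chi u_\chi=\sum_\chi w_\chi|u_\chi|$.

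You instead proceed in three steps:
\begin{enumerate}
\item a two-term maximum for each character, which is the introduction's calculation, with the sections $np\pm ka_\chi$ giving every rational $\eps$;
\item an integral sum-of-squares comparison of $\bQ_\chi$ with $\bQ$;
\item the cone property of nef bundles, applied to $\bN_{\eps,w,R}=\sum_\chi(w_\chi/W)(\bL+\eps W\bT_\chi+R\eps^2\bQ)$.
\end{enumerate}

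Your route has two advantages. It uses the maximum construction only for pairs of bundles, and it keeps every quadratic comparison integral, with the weights entering only linearly; this also makes the uniformity in the weights transparent through $W$.

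The paper's route has its own advantages. It gives a smaller explicit constant, $1+\sum_i(\sum_\chi w_\chi|\chi_i|)^2$, although only existence of $R$ matters downstream. More importantly, the same argument is reused verbatim in \cref{prop:polyhedral}. There the support function of a general rational polytope, for instance a simplex, is not a positive combination of the $|u_\chi|$. Your convexity reduction reaches only zonotopes and would not cover that case.

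Two small corrections:
\begin{itemize}
\item The left side of your displayed identity should read $r(\sum_i\chi_i^2)(\sum_ix_i^2)-(\sum_i\chi_ix_i)^2$. Lagrange's identity $(\sum_i\chi_i^2)(\sum_ix_i^2)-(\sum_i\chi_ix_i)^2=\sum_{i<j}(\chi_ix_j-\chi_jx_i)^2$ gives exactly $\|\chi\|^2\bQ-\bQ_\chi=\sum_{i<j}\bTheta(\chi_ia_j-\chi_ja_i)$. So $C_\chi=\|\chi\|^2$ already suffices.
\item No pullback by $[n]$ is needed; quadraticity along the sections $np\pm ka_\chi$ is enough.
\end{itemize}
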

\begin{proof}
For signs $\sigma_\chi\in\{\pm1\}$, let
$v_i=\sum_\chi w_\chi\sigma_\chi\chi_i$.  Quadraticity gives
\begin{align}\label{eq:quadratic-completion}
 \bL+\eps\sum_i v_i\bD_i+R\eps^2\bQ
 ={}&\bTheta\left(p+\eps\sum_i v_i a_i\right)\notag\\
 &+\eps^2\left(
 R\sum_i\bTheta(a_i)-\bTheta\left(\sum_i v_i a_i\right)\right).
\end{align}
The notation with rational arguments is shorthand for denominator-cleared
quadratic identities.  For example,
$\bTheta(p+\eps a)=n^{-2}\bTheta(np+n\eps a)$ when $n\eps$ is integral.
No rational division section has been postulated.

The second line of \eqref{eq:quadratic-completion} corresponds to the rational
matrix $RI-vv^{\mathsf t}$.  For $R\ge\|v\|^2$ it is positive semidefinite.
Rational completion of squares expresses it as a sum of rank-one rational
matrices with nonnegative rational coefficients.  Hence it is a nonnegative
sum of canonical theta pullbacks and is nef.  One explicit rational choice is
\begin{equation}\label{eq:explicit-compensation}
  R=1+\sum_{i=1}^r\left(\sum_{\chi\in\Xi}
                  w_\chi|\chi_i|\right)^2.
\end{equation}
Indeed $|v_i|\le\sum_\chi w_\chi|\chi_i|$, for every sign choice.
On a fixed bounded weight box, replace each $w_\chi$ in
\eqref{eq:explicit-compensation} by a rational upper bound.
This gives a single rational $R$ for that entire box.

The interior line bundles on the left of
\eqref{eq:quadratic-completion} have the same specified restriction, because
the $h_i$ trivialize the Poincar\'e factors on $B$.  Taking their b-maximum and
using \cref{lem:max-nef} gives exactly \eqref{eq:N-eps}.  With a single
character and one fixed rational $\eps>0$, it expresses $\bT_\chi$ as a
rational linear combination of nef bundles, proving integrability.
\end{proof}

The same quadratic calculation applies to support functions of rational
polytopes, not only to sums of absolute values.  This extension makes
clear which part of the positivity is independent of the chosen
character basis.  Polyhedral canonical metrics in the semiabelian
setting also occur in \cite[\S3.1]{Hultberg2026}.

\begin{proposition}\label{prop:polyhedral}
Let $\Pi\subset\R^r$ be a rational polytope containing zero and put
\[
 \bT_\Pi=\max_{v\in\operatorname{Vert}(\Pi)}\sum_i v_i\bD_i.
\]
This divisor is effective and integrable.  For any positive rational
$\eps$, the class $\bL+\eps\bT_\Pi+R\eps^2\bQ$ is nef whenever
$R\in\Q_{>0}$ satisfies
$R\ge\max_{v\in\operatorname{Vert}(\Pi)}\|v\|^2$.
\end{proposition}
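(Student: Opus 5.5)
The plan is to repeat the proof of \cref{lem:compensation}, replacing the finite family of sign choices $\sigma_\chi$ by the finite vertex set of $\Pi$. For each vertex $v\in\operatorname{Vert}(\Pi)$, which has rational coordinates, the quadratic identity \eqref{eq:quadratic-completion} with this $v$ gives
\[
 \bL+\eps\sum_i v_i\bD_i+R\eps^2\bQ
 =\bTheta\Bigl(p+\eps\sum_i v_ia_i\Bigr)
 +\eps^2\Bigl(R\sum_i\bTheta(a_i)-\bTheta\Bigl(\sum_i v_ia_i\Bigr)\Bigr).
\]
As before, rational arguments stand for denominator-cleared identities. The first term is a canonical theta pullback and hence nef. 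The second term corresponds to the rational symmetric matrix $RI-vv^{\mathsf t}$. It is positive semidefinite as soon as $R\ge\|v\|^2$, since its eigenvalues are $R$ (with multiplicity $r-1$) and $R-\|v\|^2$. Rational completion of squares then writes it as a nonnegative rational combination of pullbacks $\bTheta(\sum_i c_i a_i)$ by integral combinations, so it is nef. Under the stated hypothesis on $R$, each class $\bL+\eps\sum_iv_i\bD_i+R\eps^2\bQ$ is therefore nef.

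Next, all these bundles have the same interior restriction $p^*\Theta$ on $B$ with specified identifications, because each $h_i$ trivializes the $i$th Poincar\'e factor there, exactly as in the proof of \cref{lem:compensation}. \Cref{lem:max-nef} then shows that their b-maximum over the finitely many vertices is nef. Since $\bL$ and $R\eps^2\bQ$ are common to all terms, and the maximum commutes with adding a fixed bundle (the same translation identity used at the end of the proof of \cref{lem:max-nef}), this maximum equals $\bL+\eps\bT_\Pi+R\eps^2\bQ$. This gives the nefness claim.

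For effectivity I will use $0\in\Pi$. Writing $0$ as a convex combination of vertices gives, pointwise on Green functions and on every divisorial valuation, $\max_v\sum_iv_iu_i\ge\sum_v\lambda_v\sum_iv_iu_i=0$. So $\bT_\Pi\ge0$ as a b-divisor. For integrability, fix one rational $\eps>0$ and an admissible $R$. Then $\eps\bT_\Pi$ is the nef class just obtained minus the nef classes $\bL$ and $R\eps^2\bQ$, so $\bT_\Pi$ is a difference of nef bundles and hence integrable.

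The main point needing care is the identification in the second step: the b-maximum of the shifted bundles must coincide with the shift of the polyhedral maximum, including at exceptional valuations. This is handled by working in the common rational frame of \cref{lem:max-nef}, where the maximum is taken coefficientwise on valuations and potentials. The remaining steps are linear algebra already done in \cref{lem:compensation}.
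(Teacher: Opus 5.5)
Your proposal is correct and follows the paper's proof essentially step for step. The steps are:
\begin{enumerate}
\item per-vertex quadratic completion, with $RI-vv^{\mathsf t}$ positive semidefinite;
\item \cref{lem:max-nef} for the maximum over the vertices, which share an identified interior restriction;
\item effectivity from nonnegativity of the support function of $\Pi$, using $0\in\Pi$;
\item integrability by writing $\bT_\Pi$ as a rational combination of nef classes at one fixed $\eps$.
\end{enumerate}
One small slip does not affect the argument: the common interior restriction of the vertex classes is the restriction of $\bL+R\eps^2\bQ$ to $B$, not $p^*\Theta$ alone, and only its agreement across vertices is used.
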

\begin{proof}
For each vertex, \eqref{eq:quadratic-completion} applies, with
$RI-vv^{\mathsf t}$ positive semidefinite.  The resulting nef classes
have the same specified restriction to the interior.  Their maximum is
therefore nef by Lemma~\ref{lem:max-nef}.  Since $0\in\Pi$, the
support function of $\Pi$ is nonnegative; this proves effectivity
both for metric potentials and for all divisorial valuations.  Fixing
one $\eps>0$ expresses $\bT_\Pi$ as a rational linear combination of
nef classes and proves integrability.
\end{proof}

We now make \eqref{eq:intro-height} precise.  Fix the splitting of the
torus and the rigidifications of the Poincar\'e bundles.  Set
$\bT_0=\sum_{i=1}^r\bT_{e_i}$.  For an algebraic point $b$ choose a
number field $F_b$ over which the point and all data are defined, and
use the local weights $n_v=[(F_b)_v:\Q_v]/[F_b:\Q]$, with the usual
archimedean convention.  Write
\begin{equation}\label{eq:canonical-height}
 \begin{gathered}
 \hab(h(b))=h_{\bL}(b),\qquad \htot=\hab+\htor,\\
 \htor(h(b))=h_{\bT_0}(b)=\sum_vn_v\sum_i|u_{i,v}(b)|.
 \end{gathered}
\end{equation}
These expressions are independent of enlarging $F_b$.  The first is
the fibrewise N\'eron--Tate height; the second uses the canonically
metrized, rigidified toric pushouts.  No arbitrary constant metric is
included in their definition.  Both are nonnegative.  For a split
torus the latter is twice the sum of the usual coordinate Weil heights.
For the abelian-plus-boundary decomposition in the fixed semiabelian
setting, see \cite{ChambertLoir1999}; for the canonical metrics and
functoriality, see \cite[\S\S2.3--2.5]{KuehneSmall2022}.

The compatibility of the Poincar\'e metric with multiplication gives,
for every positive integer $n$,
\begin{equation}\label{eq:height-scaling}
 \hab([n]x)=n^2\hab(x),\qquad
 u_{i,v}([n]x)=nu_{i,v}(x),\qquad
 \htor([n]x)=n\htor(x).
\end{equation}
Indeed the underlying rigidified isomorphism for the partial
multiplication $[n]$ on the abelian-point coordinate is an isometry:
this follows from uniqueness of the canonical metrics in
\cite[Theorems~6.1.2--6.1.3]{YuanZhang2026}.  The group law on the
extension uses this very isomorphism.  In particular, at a torsion
specialization all local norms are one and
\begin{equation}\label{eq:torsion-height}
 h_{\bL}=h_{\bT_\chi}=0,\qquad
 h_{\bN_{\eps,w,R}}=R\eps^2h_{\bQ}.
\end{equation}
Conversely, if $\htot(x)=0$, then $p(x)$ is torsion by positive
definiteness of the N\'eron--Tate height over algebraic numbers.
For its order $n$, the point $[n]x$ belongs to the torus.  Its
coordinate Weil heights vanish by \eqref{eq:height-scaling} and the
product formula.  Kronecker's theorem makes its coordinates roots of
unity; thus $x$ is torsion as well.

For small points it is essential to compare heights without an additive
error.  The following family version of canonical-height functoriality
will also be used for the saturated quotients; compare
\cite[Lemmas~9--10]{KuehneSmall2022}.

\begin{lemma}\label{lem:height-functorial}
Let $\varphi:\G\to\G'$ be a fixed homomorphism of semiabelian schemes
over a common smooth base over $\Qbar$, with abelian part
$f:\A\to\A'$.  Choose toric character bases and symmetric relatively
ample polarizations.  After shrinking there is $C>0$ such that
\[
 \htor'(\varphi(x))\le C\htor(x),\qquad
 \hab'(f(p(x)))\le C\hab(x)
\]
for every algebraic point in the family.  A fixed isogeny preserves the
property of height tending to zero in both directions, including for
arbitrary lifts of points.  Finite base changes preserve these heights.
\end{lemma}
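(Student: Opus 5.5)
The plan is to split $\varphi$ into its effect on the abelian quotient and its effect on the toric pushouts, and to compare canonical heights through the exact identities of \S\ref{sec:canonical-height} rather than through Weil-height machinery. Then no additive constants arise.

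\emph{Abelian part.} The pullback $f^*\Theta'$ is a symmetric line bundle on $\A$. Its canonical metric is the pullback of the canonical metric of $\Theta'$, by uniqueness of canonical metrics \cite[Theorems~6.1.2--6.1.3]{YuanZhang2026}. Hence $\hab'(f(p(x)))=\widehat h_{f^*\Theta'}(p(x))$ exactly. Since $\Theta$ is relatively ample, after shrinking $B$ there is an integer $C$ with $C\Theta-f^*\Theta'$ relatively ample, or at least nef and symmetric. The associated canonical height is then nonnegative, and this gives the abelian inequality.

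\emph{Toric part.} Write each character $\chi'_j$ of the target torus, pulled back through $\varphi$, as an integral combination $\sum_i m_{ji}\chi_i$ of the source characters, up to the abelian contribution. The key identity is that the $j$th toric pushout of $\varphi(x)$ is the $m_j$-combination of the pushouts $h_i$. This holds after accounting for $f$: the extension parameters satisfy $q'_j\circ f=\sum_i m_{ji}q_i$ by functoriality of extension classes. The rigidified Poincar\'e isomorphism intertwining them is an isometry for the canonical metrics, again by uniqueness. One must check that the isomorphism $(f\times1)^*\PP_{\A'}\simeq(1\times f^\vee)^*\PP_{\A}$ respects canonical metrics, which is also a uniqueness statement. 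Granting this, the local Green functions satisfy $u'_{j,v}(\varphi(x))=\sum_i m_{ji}u_{i,v}(x)$. The triangle inequality then gives $\sum_j|u'_{j,v}|\le C\sum_i|u_{i,v}|$ with $C=\max_i\sum_j|m_{ji}|$, and summing over places with weights $n_v$ yields the toric inequality. The main obstacle is exactly this metric compatibility: one must verify that no constant-metric discrepancy appears, which is why rigidifications along the identity are fixed and the comparison is made place by place.

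\emph{Isogenies and base change.} If $\varphi$ is an isogeny, choose $\psi$ with $\psi\circ\varphi=[e]$. Then by \eqref{eq:height-scaling},
\[
 e\htor(x)=\htor([e]x)\le C\htor'(\varphi(x)),\qquad
 e^2\hab(x)\le C\hab'(\varphi(x)).
\]
So heights tending to zero transfer in both directions. For arbitrary lifts $y$ with $\varphi(y)=x'$, apply the same estimate to $y$. Finite base changes preserve the heights because they are defined with normalized local degrees and with metrics pulled back from the base, as already noted after \eqref{eq:canonical-height}.
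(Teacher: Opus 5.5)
Your proposal is correct and follows essentially the same route as the paper: exact pullback of the canonical theta height together with nonnegativity of the canonical height of the symmetric relatively ample class $C\Theta-f^*\Theta'$ for the abelian part; exact norm functoriality $u'_{j,v}(\varphi(x))=\sum_i c_{ji}u_{i,v}(x)$ through the rigidified, canonically metrized Poincar\'e isomorphism, followed by the triangle inequality, for the toric part; and a reverse homomorphism with $\psi\circ\varphi=[e]$ plus the scaling identities for isogenies and arbitrary lifts. The paper makes explicit one point you only gesture at, namely why $\varphi$ itself matches the rigidified Poincar\'e identification with no scalar discrepancy: the rigidification fixes the scalar along the identity, and two homomorphisms with the same abelian and toric parts differ by an element of $\Hom(\A,\mathcal T')=0$.
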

\begin{proof}
Write the map on toric characters as an integral matrix $(c_{ji})$.
Compatibility of extension classes gives
\[
 f^\vee(q'_j)=\sum_i c_{ji}q_i.
\]
The corresponding rigidified Poincar\'e isomorphism identifies the
$j$th target pushout with the indicated tensor product of source
pushouts.  It is an isometry by canonical-metric uniqueness, and the
homomorphism $\varphi$ respects this identification.  There is no
scalar ambiguity: the rigidification fixes the scalar at the identity,
and two group homomorphisms with the same abelian and toric parts
differ through $\Hom(\A,\mathcal T')=0$.  Consequently
\begin{equation}\label{eq:exact-norm-functoriality}
 u'_{j,v}(\varphi(x))=\sum_i c_{ji}u_{i,v}(x)
\end{equation}
at every place.  The triangle inequality proves the toric comparison.

Choose an integer $C$ such that $C\Theta-f^*\Theta'$ is ample on the
geometric generic fibre.  It is relatively ample after shrinking and
is symmetric.  Nonnegativity and linearity of its canonical height give
\[
 0\le\widehat h_{C\Theta-f^*\Theta'}(p(x))
   =C\hab(x)-\hab'(f(p(x))).
\]
This proves the abelian comparison, with no $O(1)$ term.
For an isogeny, choose a homomorphism in the reverse direction whose
composition is multiplication by a fixed positive integer.  Apply the
two comparisons and \eqref{eq:height-scaling}; this controls the height
of every lift.  Finally, normalized local degrees and canonical
rigidifications make the heights unchanged by finite base extension.
\end{proof}

Changing a torus basis or a polarization gives comparisons in both
directions.  On a nonsplit family, height statements are read after one
fixed finite splitting refinement.  Existence of generic small points,
and existence of a positive gap outside a proper closed subset, do not
depend on that choice: use the comparisons on a common finite cover,
and take the finite images of exceptional sets.  This convention is
used in all the arithmetic applications below.

\subsection{Mixed Chern measures and all-character support}\label{sec:support}
Assume $m=g+k$, with $k\ge0$, and that the abelian Betti map has rank $2g$.
We do not assume positivity of the leading intersection introduced below.
We work over a number field throughout this section.  The
zero-dimensional small-points case is handled by the height-zero
criterion in Section~\ref{sec:canonical-height}; below we take $m\ge1$.
At an archimedean place use $d^c=(\partial-\bar\partial)/(2\pi i)$ and put
\[
 \omega=p^*c_1(\bTheta).
\]
Thus $\omega$ is smooth semipositive and is the pullback of the polarized
Betti form.  Its complex rank is at most $g$ and is $g$ on the abelian
maximal-rank locus.

The canonical Poincar\'e metric is
\begin{equation}\label{eq:norm-coordinate}
 u_i=2\pi\left(\operatorname{Im}t_i
       -\operatorname{Im}w_i\,(\operatorname{Im}\tau)^{-1}
            \operatorname{Im}z\right).
\end{equation}
This is \cite[(2.15)]{BurgosHolmesDeJong2018} in the coordinates of
Section~\ref{sec:betti}.  Reversing the Poincar\'e convention changes all the
corresponding signs and leaves $|u_\chi|$ and the argument unchanged.
From \eqref{eq:theta-cross},
\[
 \kappa_i:=\ddc u_i=\omega_{p+a_i}-\omega_p-\omega_{a_i}.
\]
Locally the polarized Betti form is a constant alternating form in the real
Betti coordinates.  Expanding the last expression therefore leaves only cross
terms, each containing a differential of a Betti coordinate of $p$.
Consequently
\begin{equation}\label{eq:curvature-cancel}
 \omega^g\wedge\kappa_i=0,\qquad \omega^{g+1}=0.
\end{equation}

Curvature cancellation removes the indefinite terms and leaves positive mixed measures, even though the individual factors need not have positive curvature.

\begin{lemma}\label{lem:positive-measures}
For any integral characters $\chi_1,\ldots,\chi_k$, the measure
\begin{equation}\label{eq:mixed-measure}
 \mu_{\chi_1,\ldots,\chi_k}
 =\omega^g\wedge\ddc|u_{\chi_1}|\wedge\cdots\wedge\ddc|u_{\chi_k}|
\end{equation}
is well-defined, positive, and finite.  Moreover,
\begin{align}
 \int_{B(\C)}\mu_{\chi_1,\ldots,\chi_k}
   &=L^gT_{\chi_1}\cdots T_{\chi_k},\label{eq:total-mass}\\
 L^{g+1}E_1\cdots E_{m-g-1}&=0\label{eq:geometric-vanish}
\end{align}
where, in the second formula, $m\ge g+1$ and every $E_i$ is the
geometric image of a global integrable adelic bundle $\overline E_i$
over the chosen number field.
\end{lemma}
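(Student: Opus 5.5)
The plan is to deduce all three assertions from the expression of each $\bT_\chi$ through nef bundles in Lemma~\ref{lem:compensation}, and to let the curvature cancellation \eqref{eq:curvature-cancel} remove every indefinite term.

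\emph{Step 1: the measure.} Fix one rational $\eps>0$ and, for each $j$, write
\[
 \bT_{\chi_j}=\eps^{-1}\bigl(\bN_{\eps,\chi_j}-\bL-R\eps^2\bQ\bigr),
 \qquad \bN_{\eps,\chi_j}=\bL+\eps\bT_{\chi_j}+R\eps^2\bQ \text{ nef}.
\]
At an archimedean place this gives
\[
 \ddc|u_{\chi_j}|=\eps^{-1}\bigl(c_1(\bN_{\eps,\chi_j})-\omega-R\eps^2\omega_Q\bigr),
\]
with $\omega_Q=c_1(\bQ)$. All three forms on the right are semipositive currents with locally bounded continuous potentials: the theta pullbacks are smooth, and the maxima have continuous potentials by the log-sum-exp construction of Lemma~\ref{lem:max-nef}. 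Hence Bedford--Taylor products of these currents are well defined, and by multilinearity so is $\mu_{\chi_1,\ldots,\chi_k}$.

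\emph{Step 2: positivity.} Locally I would argue directly rather than through the expansion. Near a point where $u_\chi\ne0$, the function $|u_\chi|=\pm u_\chi$ is smooth, and
\[
 \omega^g\wedge\ddc(\pm u_\chi)=\pm\,\omega^g\wedge\kappa_\chi=0
\]
by \eqref{eq:curvature-cancel}. So each factor $\omega^g\wedge\ddc|u_{\chi}|$ is supported on $\{u_\chi=0\}$. To get positivity, expand each $\ddc|u_{\chi_j}|$ as in Step 1, namely $\eps^{-1}(c_1(\bN)-\omega)-R\eps\,\omega_Q$. In the product with $\omega^g$, every term containing an extra $\omega$ vanishes, since $\omega^{g+1}=0$ is a pointwise identity for smooth forms and extends to Bedford--Taylor products. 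This leaves
\[
 \omega^g\wedge\prod_j\bigl(\eps^{-1}c_1(\bN_{\eps,\chi_j})-R\eps\,\omega_Q\bigr).
\]
The terms involving $\omega_Q$ carry positive powers of $\eps$. Applying this identity with $\eps$ rescaled (the product is independent of $\eps$), I let $\eps\to0$ after multiplying through. More cleanly, I would take $\eps^k$ times the product of the nef bundles and observe that the measure equals the limit of a positive measure plus $O(\eps)$ errors whose masses are uniformly bounded, by Chern--Levine--Nirenberg-type bounds in the adelic setting, i.e.\ the finiteness of intersection numbers of integrable bundles. The limit is then a positive measure.

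\emph{Step 3: finiteness and total mass.} For finiteness and the total mass I invoke the Yuan--Zhang total-mass identity for integrable adelic bundles on a quasi-projective variety: the archimedean Monge--Amp\`ere measure has total mass equal to the geometric intersection number. Multilinearity then gives \eqref{eq:total-mass}. The main obstacle is controlling the $\omega_Q$ cross terms uniformly, because $\omega_Q$ does not cancel against $\omega^g$. The way around it is the $\eps$-scaling: the measure is independent of $\eps$, while the $\omega_Q$ contributions carry explicit positive powers of $\eps$ times total masses that are bounded by intersection numbers independent of $\eps$. Since $\bQ$ does not depend on $\eps$, the polynomial in $\eps$ has bounded coefficients, and letting $\eps\to0$ kills those contributions.

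\emph{Step 4: the geometric vanishing.} For \eqref{eq:geometric-vanish}, the geometric intersection $L^{g+1}E_1\cdots E_{m-g-1}$ equals the total mass of $\omega^{g+1}\wedge c_1(\overline E_1)\wedge\cdots$, again by the total-mass identity. The form $\omega^{g+1}$ is identically zero as a smooth form on $B(\C)$, so this mass is zero. Alternatively, one can see it algebraically: $L$ is pulled back through the abelian scheme, whose Betti form has rank at most $g$.
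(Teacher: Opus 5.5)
Your proposal is correct. For well-definedness, \eqref{eq:total-mass} and \eqref{eq:geometric-vanish} it agrees with the paper: local Bedford--Taylor products, the Yuan--Zhang mixed total-mass formula for the integrable bundles $\bL,\bT_{\chi_j}$, and $\omega^{g+1}=0$ for $g+1$ smooth factors. The genuine difference is the positivity step.

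\textbf{The paper's route.} It regularizes $|x|$ by smooth convex $f_\delta$. By \eqref{eq:curvature-cancel}, the $f_\delta'(u_\chi)\kappa_\chi$ terms vanish against $\omega^g$. What remains are the visibly positive forms $\prod_jf_\delta''(u_{\chi_j})\,\omega^g\wedge\bigwedge_jdu_{\chi_j}\wedge d^cu_{\chi_j}$, and Bedford--Taylor continuity passes to the limit.

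\textbf{Your route.} You use the nef bundles of Lemma~\ref{lem:compensation} together with $\omega^{g+1}=0$ to obtain
\[
 \eps^{-k}\,\omega^g\wedge\bigwedge_{j=1}^k c_1(\bN_{\eps,\chi_j})
 =\omega^g\wedge\bigwedge_{j=1}^k\bigl(\ddc|u_{\chi_j}|+R\eps\,\omega_Q\bigr).
\]
This is a positive measure for every rational $\eps>0$, and you let $\eps\to0^+$. The right side is a polynomial in $\eps$ whose coefficients are $\eps$-independent, locally finite signed measures, with constant term $\mu_{\chi_1,\ldots,\chi_k}$. So weak convergence is immediate, and no Chern--Levine--Nirenberg or global mass bound is needed.

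\textbf{Bookkeeping as you wrote it.} The normalization should be $\eps^{-k}$ times the product, not $\eps^k$. Your claim that the error masses are uniformly bounded by finiteness of intersection numbers holds only after the $\omega$-parts of the $\eps^{-1}c_1(\bN)$ factors have been discarded. Otherwise the masses $\eps^{-(k-|S|)}L^gQ^{|S|}\prod_{j\notin S}N_{\eps,\chi_j}$ stay bounded only because of \eqref{eq:geometric-vanish}. The $\eps\to0$ discussion in your Step~3 is unnecessary: total mass is multilinear on integrable bundles, so $L^g\prod_j\eps^{-1}(N_{\eps,\chi_j}-L-R\eps^2Q)=L^gT_{\chi_1}\cdots T_{\chi_k}$ exactly.

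\textbf{What each route buys.} The paper's argument is purely local and independent of the arithmetic compensation, and its explicit regularized densities are reused in Lemma~\ref{lem:radial}. Yours proves a local statement from a global nefness input. In exchange, it realizes $\mu$ directly as a limit of Chern measures of nef bundles, which is the same mechanism as in Proposition~\ref{prop:all-character-support}, so the identification with the Yuan--Zhang products comes for free.
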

\begin{proof}
Choose smooth convex approximations $f_\delta$ of $|x|$, with
$|f_\delta'|\le1$ and $f_\delta''\ge0$.  Locally,
\[
 \ddc f_\delta(u_\chi)
 =f_\delta'(u_\chi)\ddc u_\chi
  +f_\delta''(u_\chi)\,du_\chi\wedge d^cu_\chi.
\]
After wedging with $\omega^g$, every term of the first kind vanishes by
\eqref{eq:curvature-cancel}.  The remaining terms are positive.
On a relatively compact coordinate chart, the negative parts of
$\ddc f_\delta(u_\chi)$ have a common smooth lower bound.  Adding one fixed
strictly plurisubharmonic smooth potential makes all these potentials locally
bounded plurisubharmonic functions, converging uniformly.  The mixed
Bedford--Taylor products therefore converge and are independent of the
regularization \cite{BedfordTaylor1982}.  This agrees with the multilinear
products of the integrable metrics in \cref{lem:compensation} and proves
positivity.

These metrics come from global integrable adelic bundles over the
number field $F$.  The mixed total-mass formula in
\cite[Lemma~5.4.4]{YuanZhang2026} therefore gives
\eqref{eq:total-mass}, including finiteness.  For
\eqref{eq:geometric-vanish}, choose the global integrable lifts
$\overline E_i$ in the statement and apply that same multilinear
formula with $g+1$ smooth factors $\omega$.  The local product
vanishes because $\omega^{g+1}=0$.  This scope includes all subsequent
factors: $L,Q,H,T_\chi$ are geometric images of the global integrable
bundles already constructed, as are their linear combinations.
Guo's theorem removes the global-origin
hypothesis and gives the corresponding local statement
\cite[Theorem~1.2]{Guo2025}; see also
\cite[Remark~5.4.5]{YuanZhang2026}.
There is no additional boundary mass to be appended: the geometric adelic
boundary intersection is exactly what the total-mass theorem computes.
\end{proof}

Let
\begin{equation}\label{eq:Delta}
 \Delta=L^gT_0^k\ge0,\qquad T_0=\sum_{i=1}^rT_{e_i}.
\end{equation}
For $k=0$ this is $L^g$, which is positive by the maximal abelian Betti rank.

Vanishing of the leading intersection is not a reason to discard a component.  It forces all the corresponding character measures to vanish.

\begin{lemma}\label{lem:zero-leading}
If $k\ge1$ and $\Delta=0$, then every measure
$\mu_{\chi_1,\ldots,\chi_k}$ in \eqref{eq:mixed-measure} is zero.
\end{lemma}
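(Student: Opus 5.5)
The plan is to turn the vanishing of the single number $\Delta$ into the vanishing of every mixed measure. Two facts are combined: a total-mass identity, and a local description of all the $\mu_{\chi_1,\ldots,\chi_k}$ as multiples of one universal measure. By \cref{lem:positive-measures}, each $\mu_{\chi_1,\ldots,\chi_k}$ is a positive finite measure with total mass $L^gT_{\chi_1}\cdots T_{\chi_k}$. Since $T_0=\sum_iT_{e_i}$, multilinearity of \eqref{eq:total-mass} gives
\[
 0=\Delta=\sum_{i_1,\ldots,i_k}\int_{B(\C)}\mu_{e_{i_1},\ldots,e_{i_k}} .
\]
Every summand is the mass of a positive measure, so every $\mu_{e_{i_1},\ldots,e_{i_k}}$ vanishes identically. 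The remaining work is to pass from coordinate characters to arbitrary characters. The plan is to do this locally and measure-theoretically, not through intersection numbers, because the divisors $T_\chi$ are not nef and differences such as $CT_0-T_\chi$ need not give positive measures.

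For the local analysis, work on a chart where the abelian Betti map has rank $2g$. The factor $\omega^g$ is the pullback of a volume form in the real abelian Betti coordinates $(a,b)$. Hence $\omega^g\wedge(\cdot)$ disintegrates as the integral over abelian Betti leaves of the restriction of $(\cdot)$ to the leaves, which have complex dimension $k$ on the generic stratum. As explained in the introduction, on such a leaf $b$ is constant, so $u=\operatorname{Re}F$ with $F=-2\pi i(t-wb)$ holomorphic into $\C^r$. It follows that $\ddc|u_\chi|$ restricted to the leaf is $F^*\ddc|\langle\chi,\operatorname{Re}\zeta\rangle|$. Moreover, $\ddc|\langle\chi,\operatorname{Re}\zeta\rangle|$ is a constant multiple of the current of integration on the real hyperplane $\{\langle\chi,\operatorname{Re}\zeta\rangle=0\}$, with its natural transverse measure. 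I would write each $|u_\chi|$ as a linear combination of the coordinate functions $|u_{i}|$ plus correction terms supported where several $u_i$ vanish.

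A cleaner route is to express the leafwise product
\[
 \bigwedge_{j=1}^k\ddc|\langle\chi_j,\operatorname{Re}F\rangle|
\]
via the regularizations $f_\delta$ of \cref{lem:positive-measures}. The wedge of the corresponding $du_{\chi_j}\wedge d^cu_{\chi_j}$ terms equals
\[
 \prod_j f_\delta''(u_{\chi_j})\cdot
 \bigl|\det(\chi_j\cdot\partial F)\bigr|^2
\]
times the leaf volume. Here $\det(\chi_j\cdot\partial F)=\sum_{I}\det(\chi_{j,i})_{j,\,i\in I}\,\det(\partial F_I)$ by Cauchy--Binet, where $I$ ranges over $k$-subsets of coordinates. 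In the limit, the measure concentrates on $\{u_{\chi_j}=0\ \forall j\}$.

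The key step is a domination estimate, proved leafwise and then integrated against $\omega^g$: there is a constant $C$ depending only on the characters such that
\[
 \mu_{\chi_1,\ldots,\chi_k}\le C\sum_{|I|=k}\mu_{e_{I}},
\]
where $\mu_{e_I}$ denotes the mixed measure for the coordinate characters indexed by $I$. Heuristically, Cauchy--Binet bounds the Jacobian factor by the coordinate minors $|\det\partial F_I|^2$. The support condition $u_{\chi_j}=0$ for all $j$ corresponds, on the image of each coordinate minor, to the vanishing of the coordinate projections, with a Jacobian change of variables given by the nonzero $k\times k$ integer minor. Once this domination holds, the vanishing of all $\mu_{e_I}$ from the first paragraph gives $\mu_{\chi_1,\ldots,\chi_k}=0$. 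Leaves of smaller dimension, and points where $F$ is not immersive, contribute nothing, because the top wedge vanishes there.

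The main obstacle is making this domination rigorous for the Bedford--Taylor limits, rather than heuristically at the level of Dirac masses on real codimension-$k$ loci. This matters most when $r>k$ and the support sets $\{u_{\chi_j}=0\}$ are not transverse to the coordinate ones. I would first try to prove the comparison as a limit of regularized densities, using that $f_\delta''$ is an approximate identity and that $F$ is real-analytic, and to appeal to weak convergence, which is independent of the regularization. If that fails, the fallback is a polytope argument in the spirit of \cref{prop:polyhedral}. There one would show that the leafwise mixed measures are pullbacks of mixed real Monge--Amp\`ere currents of support functions. These currents are monotone in the polytopes by monotonicity of mixed volumes, and each segment $[-\chi_j,\chi_j]$ lies in a dilate of the cube whose support function is $\sum_i|u_i|$.
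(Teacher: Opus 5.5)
Your first step is correct: multilinearity of \eqref{eq:total-mass} and positivity kill every coordinate measure $\mu_{e_{i_1},\ldots,e_{i_k}}$. The gap is the domination estimate $\mu_{\chi_1,\ldots,\chi_k}\le C\sum_{|I|=k}\mu_{e_I}$. It is false as an inequality of measures exactly when $r>k$, which is the case in which the lemma is needed. Cauchy--Binet controls the Jacobian factor. But the factor $\prod_jf_\delta''(u_{\chi_j})$ concentrates on $\{u_{\chi_1}=\cdots=u_{\chi_k}=0\}$, whereas $\mu_{e_I}$ is carried by $\{u_i=0,\ i\in I\}$, and these loci are unrelated. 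Concretely, take $k=1$, $r=2$, $\chi=(1,-1)$ and leafwise $F(y)=(y,2y+1)$. Then $\mu_{e_1}$, $\mu_{e_2}$ and $\mu_\chi$ are carried by the disjoint lines $\operatorname{Re}y=0$, $\operatorname{Re}y=-\tfrac12$ and $\operatorname{Re}y=-1$. The same happens globally, for instance for a curve in $\Gm^2$ meeting $\{|x_1|=|x_2|\ne1\}$ transversally. Your change-of-variables heuristic works only for $r=k$, where indeed $\mu_{\chi_1,\ldots,\chi_k}=|\det(\chi_1,\ldots,\chi_k)|\,\mu_{e_1,\ldots,e_k}$. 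The polytope fallback fails for the same reason. Monotonicity of mixed volumes is a statement about total masses, and for $k<r$ the current $\bigwedge_j\ddc h_{\Pi_j}(\operatorname{Re}\zeta)$ is carried by the codimension-$k$ skeleton of the normal fans, which moves with the polytopes; so there is no local monotonicity. The vanishing of $\mu_\chi$ is not a local consequence of the vanishing of the coordinate measures, and any correct argument must compare total masses globally.

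That global comparison is the paper's proof, and the obstacle you cite for avoiding intersection numbers (non-nefness of $T_\chi$) is exactly what \cref{lem:compensation} removes. The triangle inequality on Green functions and divisorial valuations gives $0\le T_w\le AT_0$ for every positive character sum $T_w$. Now let $S\ge T$ be two such sums. The compensated bundles $N_S=L+\eps S+R\eps^2Q$ and $N_T=L+\eps T+R\eps^2Q$ are nef, and $S-T$ is effective, so
\[ L^g(N_S^k-N_T^k)=\eps\sum_{j=0}^{k-1}L^gN_S^jN_T^{k-1-j}(S-T)\ge0. \]
By \eqref{eq:geometric-vanish}, every term with an extra factor of $L$ vanishes, and $Q$ enters only at order $\eps^2$. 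Dividing by $\eps^k$ and letting $\eps\to0^+$ gives $L^gS^k\ge L^gT^k$. Hence $0\le L^gT_w^k\le A^k\Delta=0$. Now choose $T_w$ containing all the $\chi_j$ with positive weights and expand $L^gT_w^k$ into mixed masses. Each mass is that of a positive measure and they sum to zero, so every mixed mass vanishes, and with it every measure.
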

\begin{proof}
For any finite positive character sum $T_w$, the triangle inequality in each
Green function and at each divisorial valuation gives
\[
 0\le T_w\le A T_0
\]
for some positive rational $A$.
We first prove monotonicity of the leading degree on such sums.  If $S\ge T$
are two such sums, take a common compensation constant $R$ and write
\[
 N_S=L+\eps S+R\eps^2Q,\qquad
 N_T=L+\eps T+R\eps^2Q.
\]
Both are nef.  Effectivity of $S-T$ gives
\[
 L^g(N_S^k-N_T^k)
 =\eps\sum_{j=0}^{k-1}L^gN_S^jN_T^{k-1-j}(S-T)\ge0.
\]
Every term containing an additional factor of $L$ vanishes by
\eqref{eq:geometric-vanish}.  Dividing by $\eps^k$ and taking $\eps\to0^+$
therefore proves $L^gS^k\ge L^gT^k$.  It follows that
\[
 0\le L^gT_w^k\le A^k\Delta=0.
\]
Take $T_w$ containing all the characters under consideration with positive
weights and expand its $k$th power.  Each coefficient is the mass of a positive
measure by \cref{lem:positive-measures}, so each mass is zero.  A positive
measure of zero total mass is zero.
\end{proof}

Suppose $\Delta>0$, and choose $T_w$ containing the coordinate characters with
positive weights.  Then
\begin{equation}\label{eq:leading-degree}
 N_\eps^m=A_w\eps^k+O(\eps^{k+1}),\qquad
 A_w=\binom{m}{g}L^gT_w^k>0.
\end{equation}
Indeed, \eqref{eq:geometric-vanish} kills every term with more than $g$ factors
of $L$, and a factor of $Q$ costs two powers of $\eps$.

For a positive leading intersection, small total height already controls
the extension-parameter height.  The argument first fixes one
compensation parameter; uniform control of the small-section constant
as that parameter tends to zero is unnecessary.

\begin{lemma}\label{lem:bounded-height}
Work over a number field, with $m=g+k\ge1$ and maximal abelian Betti
rank.  Suppose $\Delta>0$ and $b_n\in B(\Qbar)$ is a generic sequence
such that $\htot(h(b_n))\to0$.  Then $h_{\bQ}(b_n)$ is bounded after
finitely many terms are discarded.
\end{lemma}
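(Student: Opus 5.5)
Fix one rational compensation parameter $\eps_0>0$ and weights $w$ as in \eqref{eq:leading-degree}, so that by \cref{lem:compensation} the bundle
\[
 \bN=\bN_{\eps_0,w,R}=\bL+\eps_0\bT_w+R\eps_0^2\bQ
\]
is nef.  The idea is to apply the fundamental inequality \eqref{eq:fundamental} to $\bN$, after first checking that its geometric part is big.  Geometric bigness should follow from \eqref{eq:leading-degree}: for $\eps_0$ small enough, $N^m=A_w\eps_0^k+O(\eps_0^{k+1})>0$.  For a nef class, positive top self-intersection gives bigness; the nef-and-big criterion in the Yuan--Zhang framework is $N^m>0$.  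This is the only place where $\Delta>0$ enters, and it fixes $\eps_0$ once and for all.

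Next I would bound the heights along the sequence.  The height is linear:
\[
 h_{\bN}(b_n)=\hab(h(b_n))+\eps_0h_{\bT_w}(b_n)+R\eps_0^2h_{\bQ}(b_n).
\]
The abelian term tends to zero by hypothesis.  Since $T_w\le AT_0$ holds at every place (the triangle inequality used in \cref{lem:zero-leading}), we get $h_{\bT_w}\le A\htor\to0$.  What remains is the $\bQ$-term, which is exactly the quantity to be controlled.

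To control it, compare $\bQ$ with $\bN$ from the other direction.  The plan is to find a big class dominating $Q$.  For this I would use the essential-minimum side of \eqref{eq:fundamental}, or more directly the Northcott-type property of big adelic bundles: for the big nef $\bN$ there are a constant $c>0$ and a proper closed $Z$ with
\[
 h_{\bN}\ge c\,h_{\overline H}-C \quad\text{off } Z,
\]
where $\overline H$ is any fixed adelic line bundle (for instance $\bQ$ itself).  Concretely, choose $a$ with $a\bN-\bQ$ effective up to a boundary term, that is, with a nonzero small section.  Bigness of $N$ gives sections of $aN-Q$ for $a$ large, and the metric defect is a fixed constant.  Off the base locus of that section we then obtain $h_{\bQ}\le a\,h_{\bN}+C$.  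Genericity of $(b_n)$ means only finitely many terms lie in that proper closed base locus; discarding them leaves $h_{\bQ}(b_n)\le a\,h_{\bN}(b_n)+C$.

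Combining this with the previous step gives
\[
 h_{\bQ}(b_n)\le a\bigl(o(1)+R\eps_0^2h_{\bQ}(b_n)\bigr)+C.
\]
This is not yet a bound, because $aR\eps_0^2$ need not be less than one.  This is the main obstacle: $a$ depends on $\eps_0$ and may blow up like $\eps_0^{-2}$.  To get around it I would choose the section more carefully.  Bigness of $N$ comes from the $L^gT^k$ term, not from $Q$, so I would instead seek sections of $a(N-\delta Q)$ for a small $\delta>0$.  This class is still big by continuity of volume, since the volume of $N-\delta Q$ is $N^m-O(\delta)>0$; here nefness is not needed, only that the volume is continuous in the class.  A small section of $a(\bN-\delta\bQ)$ off its base locus gives
\[
 \delta\,h_{\bQ}\le h_{\bN}+C'=o(1)+R\eps_0^2h_{\bQ}+C'.
\]
Choosing $\eps_0$ first and then $\delta>R\eps_0^2$ appears impossible only if the volume bound forbids it.  So I would instead take $\delta$ fixed and let the nef parameter be tuned: replace the coefficient $R\eps_0^2$ by writing $\bN$ with $\bQ$-coefficient $R\eps_0^2$ and take $\delta=2R\eps_0^2$.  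The volume of $N-2R\eps_0^2Q$ still has leading term $A_w\eps_0^k>0$, because the $Q$-contribution enters only at order $\eps_0^{k+1}$ or higher.  This yields $R\eps_0^2h_{\bQ}(b_n)\le o(1)+C'$, hence the bounded height after discarding finitely many terms.
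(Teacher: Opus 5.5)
Your argument is correct in substance, but it takes a different route from the paper.

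\textbf{The paper's route.} It introduces an auxiliary nef ample model $\bH$ with $h_{\bQ}\le C_Qh_{\bH}+C_0$ off a proper closed set. The geometric Siu inequality then makes $N_\eps-c\eps H$ big, with a margin \emph{linear} in $\eps$. Fixing $\eps_0$ with $R\eps_0C_Q<c/2$ absorbs the quadratic term $R\eps_0^2h_{\bQ}$ into that linear margin. This bounds the ample height $h_{\bH}(b_n)$, and hence $h_{\bQ}(b_n)$.

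\textbf{Your route.} You subtract $2R\eps_0^2\bQ$ directly, so the class to be made big is $\bL+\eps_0\bT_w-R\eps_0^2\bQ$. A small section then gives $R\eps_0^2h_{\bQ}\le\hab+\eps_0h_{\bT_w}+C$ off its zero locus. This dispenses with $\bH$ and the majorization \eqref{eq:height-majorization}. The price is that you bound only $h_{\bQ}$, not an ample height in Northcott fashion. That is all the lemma asserts and all that is used afterwards.

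\textbf{The step that needs a different justification.} Your claim that $N_{\eps_0}-2R\eps_0^2Q$ is big is true, but continuity of the volume does not prove it. At fixed $\eps_0$, continuity gives bigness of $N_{\eps_0}-\delta Q$ only for $\delta<\delta_0(\eps_0)$. It gives no comparison between $\delta_0(\eps_0)$ and $\eps_0^2$, and that comparison is the whole point. Also, the volume of this non-nef class is not computed by the intersection polynomial, so it has no ``leading term'' in that sense.

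What works is the same tool the paper uses: the geometric Siu inequality for the nef classes $N_\eps$ and $Q$. So nefness \emph{is} needed, contrary to what you wrote. It gives
\[
 \operatorname{vol}(N_\eps-2R\eps^2Q)\ge N_\eps^m-2mR\eps^2\,N_\eps^{m-1}Q .
\]
Combine this with $N_\eps^{m-1}Q=O(\eps^{k-1})$ for $k\ge1$ (and $O(1)$ for $k=0$). That bound follows from \eqref{eq:geometric-vanish} exactly as the paper's bound for $N_\eps^{m-1}H$. Every surviving term has at most $g$ factors of $L$, hence at least $k-1$ factors drawn from $\eps T_w$ or $R\eps^2Q$.

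The correction is therefore $O(\eps^{k+1})$, against the leading $A_w\eps^k$, which is what your final sentence asserts. Once $\eps_0$ is chosen small enough for this, the rest of your argument goes through: the constant shift to arithmetic bigness, the small section, discarding the finitely many terms in its zero locus, and the linear height identity.
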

\begin{proof}
Choose a nef ample model height $\bH$ that majorizes the fixed
integrable height $\bQ$ off a proper closed set:
\begin{equation}\label{eq:height-majorization}
 h_{\bQ}\le C_Qh_{\bH}+C_0.
\end{equation}
This follows from boundary-order comparison and an ample model divisor
\cite[Lemma~5.1.7]{YuanZhang2026}.  Fix first a positive character sum
$T_w$ containing the coordinate characters.  By
\eqref{eq:geometric-vanish},
\[
 N_\eps^{m-1}H=
 \begin{cases}O(\eps^{k-1}),&k\ge1,\\O(1),&k=0.\end{cases}
\]
Together with \eqref{eq:leading-degree}, the geometric Siu inequality
makes $N_\eps-c\eps H$ big for one $c>0$ and all sufficiently small
positive rational $\eps$.  Adding a sufficiently large constant
metrization makes it arithmetically big.  A nonzero small section gives
\begin{equation}\label{eq:height-lower}
 h_{\bN_\eps}\ge c\eps h_{\bH}-C_\eps
\end{equation}
outside its proper zero locus.  The bigness and metric-shift statements
are \cite[Theorem~5.2.2 and Lemma~5.2.10]{YuanZhang2026}.

At the chosen points the exact identity replacing the torsion identity is
\begin{equation}\label{eq:small-height-identity}
 h_{\bN_\eps}(b_n)=\hab(h(b_n))+\eps h_{\bT_w}(b_n)
                           +R\eps^2h_{\bQ}(b_n),
 \qquad 0\le h_{\bT_w}\le C_w\htor.
\end{equation}
Fix one $\eps_0>0$ with $R\eps_0C_Q<c/2$.  Genericity permits
removal of the fixed exceptional sets.  Combining the preceding
inequalities gives
\begin{align*}
 (c\eps_0-R\eps_0^2C_Q)h_{\bH}(b_n)
 &\le C_{\eps_0}+\hab(h(b_n))\\
 &\quad+\eps_0C_w\htor(h(b_n))+R\eps_0^2C_0.
\end{align*}
The right side is bounded.  Thus $h_{\bH}(b_n)$, and then
$h_{\bQ}(b_n)$ by \eqref{eq:height-majorization}, are bounded.
\end{proof}

Fix the generic sequence just obtained, and let $C$ bound its
$h_{\bQ}$-values.  For any fixed finite character collection and
positive weights, \eqref{eq:small-height-identity} gives
\[
 e_1(\bN_{\eps,w,R})\le RC\eps^2,
\]
because the first two terms tend to zero along a generic sequence.
The same sequence works as the character weights vary.  Assume that
the coordinate characters occur with positive weights, and choose one
rational $R$ on a bounded open weight box.  Then
\eqref{eq:fundamental} and \eqref{eq:leading-degree} give
\begin{equation}\label{eq:arithmetic-small}
 0\le\bN_{\eps,w,R}^{m+1}=O(\eps^{k+2}).
\end{equation}
The left side is a polynomial in $\eps$ and the weights.  Its
coefficients of $\eps$-degrees at most $k+1$ vanish for every rational
weight vector in the box, hence vanish identically as polynomials.
Differentiating with respect to the weight of an included character
$\psi$, and dividing by $(m+1)\eps$, gives
\begin{equation}\label{eq:mixed-arithmetic-small}
 \bN_{\eps,w,R}^{m}\bT_\psi=O(\eps^{k+1}).
\end{equation}
Thus the differentiation is a polynomial identity, rather than a
differentiation of a pointwise estimate with uncontrolled constants.

The resulting support statement includes exact torsion and arbitrary
generic small sequences, as well as the case of zero leading
intersection.

\begin{proposition}[All-character support]\label{prop:all-character-support}
Let the marked family be defined over a number field, with
$m=g+k\ge1$ and $\rank_\R db_p=2g$.  Suppose that it has a generic
sequence of algebraic points with $\htot\to0$.  For every collection
of integral characters,
\begin{equation}\label{eq:all-character-support}
 \Supp\mu_{\chi_1,\ldots,\chi_k}\subseteq\{u_1=\cdots=u_r=0\}.
\end{equation}
No positivity of $\Delta$ is assumed.  In particular the conclusion
holds when fibrewise torsion is Zariski dense.
\end{proposition}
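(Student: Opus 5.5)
The proof splits according to the sign of $\Delta=L^gT_0^k$. In the case $k=0$ there are no character factors, so the measure is $\omega^g$ and the assertion must be read with an empty product of characters. Then $\Delta=L^g>0$ and the argument below, with $k=0$, shows that $\omega^g$ is carried by the zero locus of every $u_\chi$. If $k\ge1$ and $\Delta=0$, Lemma~\ref{lem:zero-leading} makes every $\mu_{\chi_1,\ldots,\chi_k}$ zero, so \eqref{eq:all-character-support} holds trivially. The substantive case is therefore $\Delta>0$.

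Assume $\Delta>0$. By Lemma~\ref{lem:bounded-height} the $h_{\bQ}$-values along the generic small sequence are bounded, which yields \eqref{eq:arithmetic-small} and the differentiated estimate \eqref{eq:mixed-arithmetic-small}. Fix integral characters $\chi_1,\ldots,\chi_k$ and an extra character $\psi$. Take $T_w$ to contain the coordinate characters, the $\chi_j$ and $\psi$, all with positive weights in a fixed box with one compensation constant $R$.

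Expand $\bN_{\eps,w,R}^m\bT_\psi$ in powers of $\eps$. A term $\bL^a\bT_w^b\bQ^c\bT_\psi$ with $a+b+c=m$ has order $\eps^{b+2c}$. The geometric vanishing \eqref{eq:geometric-vanish} does not apply directly, since this is an arithmetic intersection number. I would use the arithmetic analogue: the archimedean contribution of $\bT_\psi$ is the integral of its Green function $|u_\psi|$ against the mixed measure, and the arithmetic number $\bL^{g+1}\cdots$ is $0$ by the same local vanishing $\omega^{g+1}=0$ together with the corresponding vanishing at finite places. With that in hand, the lowest surviving coefficient is at $\eps^k$, namely $a=g$, $b=k$, $c=0$, up to a binomial factor:
\[
 \binom{m}{g}\,\bL^g\bT_w^k\bT_\psi .
\]
By \eqref{eq:mixed-arithmetic-small} this coefficient vanishes. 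Write it as an integral of $|u_\psi|$ against $\omega^g\wedge(\ddc|u_{T_w}|)^k$ plus nonarchimedean contributions. Each piece is nonnegative by effectivity of $\bT_\psi$ and positivity of the measures in Lemma~\ref{lem:positive-measures}, so each piece vanishes. Since $|u_\psi|\ge0$ is continuous, expanding $\bT_w^k$ multilinearly gives $\int|u_\psi|\,\mu_{\chi_1,\ldots,\chi_k}=0$, because each such measure appears with positive coefficient. Letting $\psi$ run over the coordinate characters $e_i$ yields \eqref{eq:all-character-support}.

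The main obstacle is justifying the arithmetic intersection expansion: the vanishing of terms carrying $g+1$ factors of $\bL$, and the identification of the archimedean part of $\bL^g\bT_w^k\bT_\psi$ with $\int|u_\psi|\,\mu$ via the Yuan--Zhang local decomposition and Lemma~\ref{lem:positive-measures}. A second obstacle is ensuring that the non-archimedean terms are nonnegative, which should follow from effectivity and nefness. The final remark in the statement, about torsion density, follows because a dense torsion set yields a generic sequence with $\htot=0$.
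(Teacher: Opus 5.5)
\textbf{There is a genuine gap.} Your outer structure matches the paper's: Lemma~\ref{lem:zero-leading} when $\Delta=0$; Lemma~\ref{lem:bounded-height}, \eqref{eq:arithmetic-small} and \eqref{eq:mixed-arithmetic-small} when $\Delta>0$; and the positive expansion of the limit measure with $\psi=e_i$ at the end. The gap is in the middle step, namely the two obstacles you flag yourself.

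First, the $\eps^k$ coefficient of $\bN_{\eps,w,R}^m\bT_\psi$ is not just $\binom{m}{g}\bL^g\bT_w^k\bT_\psi$. It also contains the terms $R^c\bL^{g+c}\bT_w^{k-2c}\bQ^c\bT_\psi$ for $c\ge1$. Nothing in the paper gives the arithmetic vanishing you invoke for these. The identity \eqref{eq:geometric-vanish} is geometric, obtained from the pointwise archimedean identity $\omega^{g+1}=0$ via the total-mass formula. A nonarchimedean analogue $c_1(\bL_v)^{g+1}\wedge\cdots=0$ is a separate, unproved claim. This point can be repaired: every larger rational $R$ is also admissible in Lemma~\ref{lem:compensation}, so the $\eps^k$ coefficient vanishes as a polynomial in $R$, and its $R^0$ part $\binom{m}{g}\bL^g\bT_w^k\bT_\psi$ vanishes.

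Second, and more seriously, you cannot split $\bL^g\bT_w^k\bT_\psi$ into $\int|u_\psi|\,\omega^g\wedge(\ddc|u|_w)^k$ plus nonnegative nonarchimedean terms. The Green function $|u_\psi|$ is unbounded and $\bT_\psi$ has nonzero geometric boundary class, so no local formula of this kind without boundary contributions is available. Moreover $\bT_w$ is not nef, so neither Yuan--Zhang positivity nor Lemma~\ref{lem:positive-measures} (which is purely archimedean) gives nonnegativity of the remaining pieces.

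\textbf{The missing idea} is to apply positivity only to the nef bundle $\bN_\eps$, and to test against compactly supported archimedean functions. Take a smooth $\varphi$ with $0\le\varphi\le|u_\psi|$ and compact support in $B(\C)$. Then $\bT_\psi-\bO(\varphi)$ is effective and integrable, so, with $c_v>0$ the local weight,
\[
 \bN_\eps^m\bT_\psi\ \ge\ \bN_\eps^m\bO(\varphi)=c_v\int\varphi\,c_1(\bN_\eps)^m\ \ge\ 0 .
\]
All nonarchimedean and boundary contributions are thereby discarded by positivity rather than computed.

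On $\Supp\varphi$ the identity $\omega^{g+1}=0$ holds pointwise. Multilinearity of the locally bounded products therefore gives, on that compact set,
\[
 \eps^{-k}c_1(\bN_\eps)^m\longrightarrow\binom{m}{g}\,\omega^g\wedge(\ddc|u|_w)^k,
 \qquad |u|_w=\sum_\chi w_\chi|u_\chi| .
\]
Dividing by $\eps^k$ and using \eqref{eq:mixed-arithmetic-small} yields $\int\varphi\,\omega^g\wedge(\ddc|u|_w)^k=0$. No arithmetic vanishing of higher powers of $\bL$ is needed, and your concluding steps then go through unchanged.
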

\begin{proof}
If $\Delta=0$, this is \cref{lem:zero-leading}.  Suppose $\Delta>0$.
At an archimedean place choose a nonnegative smooth compactly supported
function $\varphi\le|u_\psi|$, with conjugation symmetry when appropriate.
The metrized trivial bundle with Green function $\varphi$ is integrable;
adding a sufficiently large ample model metric dominates its curvature.
The divisor $\bT_\psi-\bO(\varphi)$ is effective, so positivity gives
\[
 \bN_\eps^m\bT_\psi\ge c_v\int\varphi\,c_1(\bN_\eps)^m,
 \qquad c_v>0.
\]
Divide by $\eps^k$ and use \eqref{eq:mixed-arithmetic-small}.  On compact sets,
multilinearity of the locally bounded Chern products and
\eqref{eq:curvature-cancel} give
\[
 \eps^{-k}c_1(\bN_\eps)^m
 \longrightarrow \binom{m}{g}\omega^g\wedge(\ddc|u|_w)^k,
 \qquad |u|_w:=\sum_\chi w_\chi|u_\chi|.
\]
Thus the integral of every such $\varphi$ against the limiting positive
measure is zero.  Its support is contained in $u_\psi=0$.

Include $\chi_1,\ldots,\chi_k$, all coordinate characters, and the character
$\psi$ in the finite collection.  The expanded limiting measure is a positive
linear combination of positive mixed measures.  Every summand therefore has
the same support containment.  Take $\psi=e_i$ successively for each $i$.
This proves \eqref{eq:all-character-support}.  A dense set of algebraic
torsion specializations supplies a generic sequence of height zero by
\eqref{eq:torsion-height}, proving the final assertion.
\end{proof}

\section{Radial tangency and the small-points theorem}\label{sec:central-bound}
\subsection{Radial tangency and polarized monodromy}
The first implication is local and geometric once the support condition
is known.  Assume $m=g+k$, maximal abelian Betti rank, and $\ell(B)=m$.
On the common rank-regular analytic open, the abelian Betti fibres are
complex plaques of dimension $k$.  On a plaque the labels $a,b$ are
fixed and
\begin{equation}\label{eq:Fleaf}
 F_i=-2\pi i(t_i-w_i b)
\end{equation}
is holomorphic with $u_i=\operatorname{Re}F_i$.  The leafwise map $F$
has rank $k$ on this open.  Put
\[
 V_x=\im_\C(dF_x|_{\ker db_p})\subseteq\C^r.
\]
The arithmetic support theorem supplies the following lemma's support
hypothesis for a generic small sequence, including dense torsion.

\begin{lemma}\label{lem:radial}
In this local period setting, assume that every mixed character measure
$\mu_{\chi_1,\ldots,\chi_k}$ is supported on $u=0$.
At every point of the common rank-regular analytic open,
\begin{equation}\label{eq:radial}
 u(x)\in V_x.
\end{equation}
\end{lemma}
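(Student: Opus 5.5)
The plan is to derive \eqref{eq:radial} from a linear-algebra characterization of the condition $u(x)\in V_x$. We argue by contradiction: if the condition fails at $x$, we produce a point near $x$ where some mixed measure charges a point with $u\ne0$.

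\emph{Step 1 (local form of the measures).} Work on a small chart of the rank-regular open. There the abelian Betti map $b_p$ is a real submersion onto an open subset of $\R^{2g}$, and its fibres are complex $k$-dimensional plaques. The form $\omega$ is the pullback of the constant polarized Betti form, so $\omega^g$ is a positive multiple of the pulled-back Lebesgue measure in the labels $(a,b)$. Hence $\mu_{\chi_1,\ldots,\chi_k}$ disintegrates as the integral over the labels of the plaque measures
\[
 \ddc|u_{\chi_1}|\wedge\cdots\wedge\ddc|u_{\chi_k}|\big|_{\text{plaque}}.
\]
This can be checked with the smooth approximations $f_\delta$ from the proof of Lemma~\ref{lem:positive-measures}: the $f_\delta'$-terms die against $\omega^g$, and only leafwise $du\wedge d^cu$ terms survive. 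On a plaque, $u_\chi=\operatorname{Re}(\chi\cdot F)$ is pluriharmonic. Suppose the holomorphic functions $\chi_j\cdot F$ have complex-independent differentials at a point $y$. Then near $y$ they are part of holomorphic coordinates $(\zeta_1,\ldots,\zeta_k)$. In those coordinates the product is, up to a positive constant, $\prod_j\delta(\operatorname{Re}\zeta_j)\,d\operatorname{Im}\zeta_1\cdots d\operatorname{Im}\zeta_k$. This is a nonzero positive measure whose support is exactly the real $k$-dimensional submanifold $\{\operatorname{Re}\zeta_j=0\}$ through $y$. Integrating over nearby labels (by the submersion, transversality persists), we conclude: if $u_{\chi_j}(y)=0$ for all $j$ and the $\chi_j|_{V_y}$ are complex independent, then $y\in\Supp\mu_{\chi_1,\ldots,\chi_k}$. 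The hypothesis then gives $u(y)=0$.

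\emph{Step 2 (perturbation to rational characters).} Fix $x$ with $u(x)\ne0$; otherwise \eqref{eq:radial} is trivial. Suppose there are real functionals $\chi_1,\ldots,\chi_k$ on $\R^r$ vanishing on $u(x)$ whose complexified restrictions to $V_x$ are independent. Approximate them by rational, hence after scaling integral, functionals $\chi_j'$. Consider the real-analytic map $y\mapsto(u_{\chi_j'}(y))_j$ on the $2k$-dimensional plaque through $x$. Its real differential is $\operatorname{Re}$ of the complex-independent $(\chi'_j\cdot dF_y)$, hence surjective onto $\R^k$. The implicit function theorem then yields $y$ arbitrarily close to $x$ with $u_{\chi'_j}(y)=0$ for all $j$, provided the $\chi'_j$ are close enough to the $\chi_j$; the required closeness is uniform because $u_{\chi_j}(x)=0$ and the independence is an open condition. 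By continuity, $u(y)\ne0$ and the $\chi_j'|_{V_y}$ remain independent. Step 1 then forces $u(y)=0$, a contradiction.

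\emph{Step 3 (linear algebra).} It remains to show that such $\chi_j$ exist whenever $u(x)\notin V_x$. In that case $V_x\cap\C u(x)=0$. The complexification of $\operatorname{Ann}_{\R}(u(x))$ is the complex annihilator of $\C u(x)$, and its restriction map to $V_x^*$ is surjective since $V_x\cap\C u(x)=0$. Its real points are Zariski dense in this complex vector space, so a generic real $k$-tuple restricts to a basis of $V_x^*$. This gives the required $\chi_j$ and proves \eqref{eq:radial}. Conversely, when $u(x)\in V_x$ every such tuple is dependent, which is consistent with the statement.

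The main obstacle is Step 1: making rigorous the passage from the Bedford--Taylor mixed measure on $B$ to a disintegration along plaques, and identifying its local support exactly. I would first prove the plaque formula for smooth $f_\delta(u_\chi)$ in the coordinates $(a,b,\zeta)$ furnished by the submersion. I would then pass to the limit using the uniform convergence already established in Lemma~\ref{lem:positive-measures}. Only the lower bound "positive mass near $y$" is needed, and this can be tested against a bump function supported near $y$.
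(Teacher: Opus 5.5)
Your proposal is correct and follows essentially the same route as the paper's proof. Both argue by contradiction, choose real forms annihilating $u(x)$ whose complex restrictions to $V_x$ are independent, and perturb them to integral characters via the implicit function theorem. Both then use the local formula for $\ddc|\operatorname{Re}\zeta_j|$ together with transverse positivity of $\omega^g$ to put a point with $u\neq0$ in the support. The paper treats the case $k=0$ separately, whereas your argument covers it as a degenerate instance.
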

\begin{proof}
For $k=0$, the support statement says that the smooth measure $\omega^g$ is
supported on $u=0$.  It is strictly positive on the maximal-rank locus, so
$u=0$ there, as required.

Suppose $k>0$ and $u(x)\notin V_x$.  Then $u(x)\ne0$ and
$V_x\cap\C u(x)=0$.  The complexification of the real annihilator of $u(x)$
surjects onto $V_x^*$.  Choose real linear forms
$\lambda_1,\ldots,\lambda_k$ annihilating $u(x)$ whose complex restrictions
to $V_x$ are independent.  The map
\[
 (\lambda_1u,\ldots,\lambda_ku)
\]
on the leaf is a real submersion at $x$: the corresponding holomorphic
differentials are a complex cotangent basis, whose real parts are real-linearly
independent.  Approximate the $\lambda_j$ by rational linear forms.
The real implicit-function theorem gives nearby common zeros for the
approximating forms, at which the complex differentials remain independent
and $u\ne0$.

Clear the denominators to obtain integral characters $\chi_j$.  At the
resulting regular common zero, the current in \eqref{eq:mixed-measure} has
positive local density on that common zero set.  Explicitly, regularize the
absolute values as in \cref{lem:positive-measures}; modulo the kernel of
$\omega$, the positive factors are the independent
$du_{\chi_j}\wedge d^cu_{\chi_j}$.  To see the normalization and the support directly, on the leaf
use holomorphic coordinates
$\zeta_j=x_j+iy_j=\chi_jF$ at the regular common zero.  Our convention
for $d^c$ gives
\[
  \ddc|x_j|=\frac1\pi\delta_0(x_j)\,dx_j\wedge dy_j.
\]
Their product is $\pi^{-k}\prod_j\delta_0(x_j)$ times
$\bigwedge_j(dx_j\wedge dy_j)$, a positive transverse measure in
the $k$ real equations.  The implicit-function
theorem also continues these zeros over a transverse neighborhood of the
abelian Betti variables, on which $\omega^g$ is positive.  Hence the point
belongs to the support of the full measure on $B$, not merely to a slice of
zero transverse mass.  This contradicts
\eqref{eq:all-character-support}, since $u\ne0$.
\end{proof}

Radial tangency has a stronger local consequence than the numerical
bound needed below.  It makes the leafwise image affine.  This
observation is purely holomorphic and does not assert rationality of
the resulting affine space.

\begin{proposition}\label{prop:radial-affine}
Let $F:Y\to\C^r$ be holomorphic of constant rank $k$ on a complex
manifold.  Suppose that
$\operatorname{Re}F(y)\in\im_\C dF_y$ for every $y$.
Locally there are a $k$-dimensional complex vector space
$W\subseteq\C^r$ with $\overline W=W$ and a vector $\eta\in\R^r$
such that the image of $F$ is open in $i\eta+W$.
\end{proposition}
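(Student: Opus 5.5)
The plan is to reduce to a statement about a complex submanifold of $\C^r$, show that all its tangent spaces are stable under complex conjugation, and then conclude that it is affine by a rigidity argument for holomorphic maps into a totally real set.

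\emph{Reduction.} By the holomorphic constant-rank theorem, near any point $y_0$ the map $F$ factors as a holomorphic submersion onto a $k$-dimensional complex submanifold $\Sigma\subseteq\C^r$. The image of a neighbourhood is open in $\Sigma$, and $\im_\C dF_y=T_{F(y)}\Sigma$. The hypothesis therefore becomes a condition on $\Sigma$ alone: writing $s=x+iy$ with $x=\operatorname{Re}s$,
\[
 x\in T_s\Sigma\qquad\text{for all }s\in\Sigma .
\]
Since $T_s\Sigma$ is a complex subspace, $ix\in T_s\Sigma$ as well.

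\emph{Step 1: conjugation stability of tangent spaces.} The real vector fields $s\mapsto x$ and $s\mapsto ix$ on $\C^r$ are tangent to $\Sigma$. Their flows are $x+iy\mapsto e^tx+iy$ and $x+iy\mapsto x+i(y+tx)$. Because the fields are tangent, a neighbourhood in $\Sigma$ is carried into $\Sigma$ for small times. Composing the two flows shows that $\Sigma$ is locally invariant under the real-linear maps
\[
 \Phi_c(s)=c\,x+iy=\tfrac{c+1}2\,s+\tfrac{c-1}2\,\bar s ,
\]
for complex $c$ near $1$. The differential $v\mapsto\tfrac{c+1}2v+\tfrac{c-1}2\bar v$ maps $T_s\Sigma$ into the complex space $T_{\Phi_c(s)}\Sigma$. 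Applying this to $iv$, and also multiplying the image of $v$ by $i$, then subtracting, gives $(c-1)\,i\bar v\in T_{\Phi_c(s)}\Sigma$. Hence for $c\ne1$ both $\bar v$ and $v$ lie in $T_{\Phi_c(s)}\Sigma$. By dimension count this yields
\[
 T_s\Sigma=\overline{T_s\Sigma}=T_{\Phi_c(s)}\Sigma .
\]
Letting $c\to1$, so that $\Phi_c(s)\to s$, shows that every tangent space is conjugation-stable. This flow-invariance step is the one requiring care: one must shrink neighbourhoods so that the flows stay inside the chart. I expect it to be the main technical point.

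\emph{Step 2: constancy of the tangent space.} The holomorphic Gauss map $s\mapsto T_s\Sigma\in\mathrm{Gr}(k,\C^r)$ takes values in the real Grassmannian $\mathrm{Gr}(k,\R^r)$, viewed as the set of conjugation-stable subspaces, which is totally real. Its differential therefore lands in $T\cap iT=0$, so the Gauss map is locally constant, equal to some $W=\overline W$. Consequently $\Sigma$ is an open piece of $s_0+W$.

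\emph{Step 3: locating the affine space.} Since $\operatorname{Re}s_0\in W$, we have $s_0\in i\operatorname{Im}s_0+W$. Because $\overline W=W$, we may replace $\operatorname{Im}s_0$ by its component in a real complement of $W\cap\R^r$; this gives the asserted $\eta\in\R^r$ with the image of $F$ open in $i\eta+W$.
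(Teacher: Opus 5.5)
Your argument is correct, but it takes a different route from the paper's.

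The paper works entirely on the source. It picks a holomorphic matrix $L(z)$ with $\ker L(z)=\im dF_z$ and polarizes the real-analytic identity $L(z)(F(z)+\overline{F(z)})=0$ to $L(z)(F(z)+\overline{F(w)})=0$ for independent $z,w$. Everything is then read off from the single space $W=\operatorname{span}_\C\{F(w)-F(w_0)\}$. Subtracting gives $\overline W\subseteq\im dF_z$, differentiating gives $\im dF_z\subseteq W$, and the dimension count forces $\im dF_z=W=\overline W$.

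You instead pass to the image submanifold $\Sigma$ and split the conclusion into two parts. The first is pointwise: $T_s\Sigma$ is conjugation-stable, which you get from local invariance of $\Sigma$ under the real-linear maps $\Phi_c$. The second is rigidity: a holomorphic Gauss map into the totally real locus $\mathrm{Gr}(k,\R^r)\subseteq\mathrm{Gr}(k,\C^r)$ is locally constant.

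The paper's version is shorter. It is also the same complexification device, holding antiholomorphic data fixed, that drives Proposition~\ref{prop:jet-bound}. Yours avoids power-series polarization and shows the statement to be a rigidity phenomenon. The price is the constant-rank normal form and a flow-invariance lemma.

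Three small simplifications are available. The identity displayed in your Step~1 already contains $T_s\Sigma=\overline{T_s\Sigma}$, so the limit $c\to1$ is superfluous. A single real value $c=e^{t}\neq1$, coming from the flow of $s\mapsto\operatorname{Re}s$ alone, already gives $\bar v\in T_{\Phi_c(s)}\Sigma$, so the second flow is not needed. In Step~3 you may simply take $\eta=\operatorname{Im}s_0$.
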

\begin{proof}
Choose a connected coordinate neighborhood and a holomorphic matrix
$L(z)$ with kernel $\im dF_z$, which is a holomorphic subbundle by
constant rank.  The hypothesis says
\[
 L(z)\bigl(F(z)+\overline{F(z)}\bigr)=0.
\]
Expand in the local variables and their conjugates.  Vanishing of all
coefficients gives the polarized identity
\[
 L(z)\bigl(F(z)+\overline{F(w)}\bigr)=0
\]
for independent nearby $z,w$; the second factor is viewed on the
conjugate coordinate neighborhood.  Fix $w_0$ and set
$W=\operatorname{span}_\C\{F(w)-F(w_0)\}$.  Subtracting the polarized
identities gives $\overline W\subseteq\im dF_z$, so $\dim W\le k$.
Differentiating differences gives the reverse inclusion
$\im dF_z\subseteq W$.  Hence
$\im dF_z=W=\overline W$ and $\dim W=k$.
The image lies in $F(w_0)+W$; the original radial condition gives
$\operatorname{Re}F(w_0)\in W$.  Thus this translate equals
$i\operatorname{Im}F(w_0)+W$.  Constant rank makes the local image
open in it.  When $k=0$, the statement says that $F$ is a purely
imaginary constant.
\end{proof}

Applied to \eqref{eq:Fleaf}, this says that the central Betti image
$c=t-wb$ of each regular abelian Betti plaque is locally open in
$c_0+W$, with $c_0\in\R^r$ and $\overline W=W$.
For a proper direction $W$, the argument proves neither rationality
of $W$ nor a torsion condition on the translation.  It therefore does
not classify lower-rank plaques by algebraic subtori or by Ribet loci.

The next result isolates the geometric consequence of radial tangency.
Here we return to an arbitrary complex algebraic base; the mixed rank
need not equal the base dimension.  The polarization of real-analytic
identities follows \cite[\S5.2]{AndreCorvajaZannier2020} and
\cite[\S9]{KuehneRBC2023}.  We use it to isolate the action of the
central translations on the additional column; no arithmetic hypothesis
or functional-transcendence statement for derivatives enters this step.

\begin{proposition}[Polarized central bound]\label{prop:jet-bound}
In the algebraic semiabelian period setting over $\C$, suppose on a
nonempty rank-regular analytic open that
\[
  \rank_\C A=g,\qquad
  \rank_\C\begin{pmatrix}A\\D\end{pmatrix}=\ell,
  \qquad
  \binom{0}{u}\in\im\begin{pmatrix}A\\D\end{pmatrix}.
\]
Let $M$ be the connected algebraic monodromy group.  At a point $x$
in this open,
\begin{equation}\label{eq:geometric-central-bound}
  (M\cap U)_\C\subseteq D_x(\ker A_x),\qquad
  \dim(M\cap U)\le\ell-g.
\end{equation}
\end{proposition}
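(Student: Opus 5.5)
The plan is to read the radial hypothesis as a vanishing of minors, polarize it so that only holomorphic period data move, continue that holomorphic identity along monodromy, and then compare the identity before and after a central translation.

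First I rewrite the hypothesis pointwise as a rank condition. Since $\rank_\C\binom AD=\ell$ on the rank-regular open, the condition $\binom0u\in\im\binom AD$ is equivalent to the vanishing of all $(\ell+1)$-minors of the augmented matrix
\[
 \Bigl(\begin{smallmatrix}A&0\\ D&u\end{smallmatrix}\Bigr),
\]
where the extra column is the radial one. In period coordinates the entries of $A=dz-d\tau\,b$ and $D=dt-dw\,b$ are polynomial in the holomorphic first jet $(\tau,z,w,t;d\tau,dz,dw,dt)$ and in $b=(\operatorname{Im}\tau)^{-1}\operatorname{Im}z$. By \eqref{eq:norm-coordinate}, $u$ is polynomial in $(\tau,z,w,t)$, their conjugates, and $(\operatorname{Im}\tau)^{-1}$. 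I therefore write every minor as a real-analytic function $\Phi(J(x),\overline{J(x)})$ of the holomorphic jet $J$ and its conjugate. Following \cite[\S5.2]{AndreCorvajaZannier2020} and \cite[\S9]{KuehneRBC2023}, I polarize: $\Phi(J(x),\overline{J(y)})=0$ for independent nearby $x,y$. I then freeze $y$, which fixes the antiholomorphic data, including the frozen value of $b$.

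Second, I continue in $x$ and pass to the monodromy group. With $y$ frozen, $x\mapsto\Phi(J(x),\overline{J(y)})$ is holomorphic, so its vanishing survives analytic continuation of $J$ around any loop. The continued jet is $\gamma\cdot J(x)$ for $\gamma$ in the monodromy group $\Gamma$, acting by the rational affine formulas of \cref{sec:betti} extended to first jets. These formulas are algebraic in $\gamma$ in the ambient matrix group \eqref{eq:matrix-group}. Hence the set of complex group elements $g$ with $\Phi(g\cdot J(x),\overline{J(y)})=0$ for all $x,y$ near the base point is Zariski closed. It contains $\Gamma$, and therefore its Zariski closure, and in particular $M(\C)$. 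There is a domain issue: for complex $g$ the point $g\cdot J$ may leave $\mathfrak H_g$. This is harmless, because $\Phi$ is polynomial in the holomorphic slot once the frozen conjugate data supply $(\operatorname{Im}\tau)^{-1}$.

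Third, I evaluate the identity at central elements and subtract. Take $g\in(M\cap U)(\C)$, which translates $t\mapsto t+\gamma$ and leaves $\tau,z,w$ and all differentials unchanged. The blocks $A$ and $D$ are then unchanged, since $dt$ is unchanged and $b$ is frozen. In the polarized $u$, the only holomorphic $t$-dependence is through $\operatorname{Im}t=(t-\bar t)/2i$ with $\bar t$ frozen, so the radial column becomes $u+c\gamma$ for a fixed nonzero constant $c$ coming from the $2\pi/2i$ normalization. Setting $y=x$ afterwards, both $\binom0u$ and $\binom0{u+c\gamma}$ lie in the column space $\im\binom AD$, which has exact rank $\ell$. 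Subtracting gives $\binom0\gamma\in\im\binom AD$, which says $\gamma\in D_x(\ker A_x)$. Finally, $\rank A=g$ and the total rank is $\ell$, so $\dim D_x(\ker A_x)=\ell-g$, giving the dimension bound in \eqref{eq:geometric-central-bound}.

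I expect the main obstacle to be the second step. One must check carefully that the jet action of the full complex group is algebraic, that the frozen-conjugate polarization commutes with this action, and that for a central element the radial column really shifts by exactly $c\gamma$ with no cross-term involving $w\,b$.
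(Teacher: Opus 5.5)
Your proposal follows the paper's proof essentially step for step: vanishing of the augmented $(\ell+1)$-minors, polarization with the antiholomorphic slot frozen, continuation of the resulting holomorphic first-jet identity to the Zariski closure of the monodromy orbit, and subtraction of the two column conditions at a central element, where the radial column shifts by $(\pi/i)\gamma$. One justification needs correcting. Freezing $y$ does not freeze $b$ or $(\operatorname{Im}\tau)^{-1}$: after polarization $b=(\tau(x)-\overline{\tau(y)})^{-1}(z(x)-\overline{z(y)})$ still depends on the holomorphic slot, so $\Phi$ is rational rather than polynomial there, and your stated reason for Zariski-closedness fails. As in the paper, you should multiply the minors by a power of $\det(\tau-\bar\tau)$ and clear the denominators of the rational action on jets before taking Zariski closures. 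You should then note that a central translation fixes $\tau$ and $z$, so this determinant stays nonzero and the polarized $b$, and hence $A$ and $D$, are unchanged.
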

\begin{proof}
Choose an algebraic tangent frame on a smooth affine neighborhood
of $x$.  The augmented $(\ell+1)$-minors of
\begin{equation}\label{eq:augmented-column}
  \begin{pmatrix} A&0\\D&u\end{pmatrix}
\end{equation}
vanish.  Their entries are rational expressions in the holomorphic
period coordinates and their first derivatives, and in independent
conjugate period coordinates.  Specifically,
\[
  b=(\tau-\bar\tau)^{-1}(z-\bar z),\qquad
  u=\frac\pi i\bigl(t-\bar t-(w-\bar w)b\bigr).
\]
Multiply the minors by a sufficiently large power of
$\det(\tau-\bar\tau)$ to clear all denominators.

These are real-analytic identities.  Expand them in convergent
power series in local variables $s$ and $\bar s$ and replace
$\bar s$ by an independent complex variable.  Vanishing on the
real diagonal forces every coefficient to vanish, so the
polarized identities hold on a product neighborhood.
Freeze the second variable at $x$.  We obtain identities in the
holomorphic first jet of the period map.  They continue over the
connected universal cover.  The chosen algebraic tangent frame
may be meromorphic outside its initial open; this causes no
problem, since the identities continue as meromorphic identities,
and we evaluate only at lifts of $x$, where the frame is regular.

At deck translates of $x$, these first jets are the monodromy
translates of the original jet, while the frozen second-variable
values remain unchanged.  The action on period coordinates is
algebraic, and rational in the Siegel chart.  Its induced action
on first jets is rational by the chain rule.  After also clearing
the denominators of that action, the identities hold on the
Zariski closure of the monodromy orbit, hence on the $M_\C$-orbit.
This uses algebraic monodromy, not functional transcendence for
jets.

For $c\in(M\cap U)_\C$, central translation changes $t$ to $t+c$
and fixes $\tau,z,w$ and all first derivatives.  With the second
variable frozen, it therefore fixes $A,D$ and changes $u$ to
$u+(\pi/i)c$.  The determinant $\det(\tau-\bar\tau(x))$ stays
nonzero, and the matrix $\binom{A}{D}$ retains rank $\ell$.
The cleared minors consequently give both column containments
\[
  \binom{0}{u},\quad \binom{0}{u+(\pi/i)c}
       \ \in\im\begin{pmatrix}A\\D\end{pmatrix}.
\]
Subtracting them and using complex linearity yields
$c\in D(\ker A)$.

Finally, projection of $\im\binom{A}{D}$ onto $\im A$ is surjective
and has kernel identified with $D(\ker A)$.  Thus
\[
  \dim_\C D(\ker A)=\rank\binom{A}{D}-\rank A=\ell-g,
\]
which proves both assertions.
\end{proof}

The two geometric monodromy propositions have an immediate consequence
that does not use the arithmetic support theorem or the Betti-strata
construction.

\begin{corollary}\label{cor:geometric-full-rank}
Assume $K\subseteq P$, maximal abelian Betti rank $2g$, and at least
one torsion specialization of the marked section.  If on a nonempty
rank-regular analytic open
\[
 \binom0u\in\im\begin{pmatrix}A\\D\end{pmatrix},
\]
then $\ell=g+r$.
\end{corollary}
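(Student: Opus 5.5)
The plan is to combine the two geometric monodromy results already established: Proposition~\ref{prop:full-center} gives the lower bound $M\cap U=U$, and Proposition~\ref{prop:jet-bound} gives the upper bound $\dim(M\cap U)\le\ell-g$. Together they yield $r\le\ell-g$, while $\ell\le g+r$ always holds by \eqref{eq:mixed-rank}.

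\emph{Step 1 (lower bound).} The hypotheses $K\subseteq P$, $\rank_\R db_p=2g$, and the existence of one torsion specialization are exactly those of Proposition~\ref{prop:full-center}. That proposition gives \eqref{eq:full-center}, namely $M\cap U=U$, so $\dim(M\cap U)=r$. Its proof uses only Lemma~\ref{lem:constant-character} and the root-of-unity argument, with no shrinking of the base. The torsion specialization therefore need not lie in the rank-regular open used in the next step.

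\emph{Step 2 (upper bound).} On the nonempty rank-regular analytic open where the radial column condition $\binom0u\in\im\binom AD$ holds, $\rank_\C A=g$ follows from maximal abelian Betti rank: $A$ has $g$ rows and $A(v)=0$ exactly when $da(v)=db(v)=0$. I would shrink the open if necessary so that $\rank_\C A=g$ and $\rank_\C\binom AD=\ell$ are both constant; a nonempty open on which both ranks are maximal exists by lower semicontinuity. Proposition~\ref{prop:jet-bound} then gives $\dim(M\cap U)\le\ell-g$.

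\emph{Step 3 (conclusion).} Combining the two bounds gives $r\le\ell-g$, so $\ell\ge g+r$, and with $\ell\le g+r$ this is equality.

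The main point to check is that $M$ means the same group in both propositions, namely the connected algebraic monodromy of the marked $1$-motive variation $E$ over the whole base, possibly after the fixed finite \'etale cover making it connected. Restricting to an analytic open does not change it, because Proposition~\ref{prop:jet-bound} already continues the identities over the universal cover of the full base. The only other subtlety is compatibility of the refinements in Lemma~\ref{lem:constant-character} with the given torsion point. The proof of Proposition~\ref{prop:full-center} addresses this explicitly: the refinement is finite \'etale and no shrinking is used. So I expect no genuine obstacle beyond confirming these two conventions match.
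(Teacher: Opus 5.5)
Your proposal is correct and matches the paper's proof: Proposition~\ref{prop:full-center} gives $M\cap U=U$, Proposition~\ref{prop:jet-bound} gives $r\le\ell-g$, and the general bound $\ell\le g+r$ gives equality. Your additional remarks are harmless elaborations of points the paper leaves implicit: deriving $\rank_\C A=g$ from maximal abelian Betti rank, shrinking to a common rank-regular open, and noting that the torsion specialization need not lie in that open.
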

\begin{proof}
Proposition~\ref{prop:full-center} gives $M\cap U=U$.
Proposition~\ref{prop:jet-bound} gives
$r\le\ell-g$, while always $\ell\le g+r$.
\end{proof}

Combining the geometric first-jet statement with the support theorem
gives the arithmetic estimate needed for the dimension argument.

\begin{corollary}\label{cor:central-bound}
Suppose the marked family is defined over a number field, has a generic
sequence of algebraic points with $\htot\to0$, satisfies
$\rank_\R db_p=2g$, and has $\ell(B)=m=g+k$.
Then
\begin{equation}\label{eq:central-bound}
  \dim(M\cap U)\le k.
\end{equation}
\end{corollary}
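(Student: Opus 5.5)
The proof is a direct combination of Proposition~\ref{prop:all-character-support}, Lemma~\ref{lem:radial} and Proposition~\ref{prop:jet-bound}.

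First, Proposition~\ref{prop:all-character-support} applies as stated. Its hypotheses are a number field, $m=g+k\ge1$ (the case $m=0$ is handled by the height-zero criterion), $\rank_\R db_p=2g$, and a generic sequence with $\htot\to0$. It gives, for every collection of integral characters $\chi_1,\ldots,\chi_k$, the support containment \eqref{eq:all-character-support}: each $\mu_{\chi_1,\ldots,\chi_k}$ is supported on $u=0$.

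Next, fix a common rank-regular analytic open on which $\rank_\C A=g$ and $\rank_\C\binom AD=\ell(B)=m$. It is nonempty because both maxima are attained on nonempty opens of the connected base, and the intersection of such opens is nonempty. Here the abelian Betti fibres are complex plaques of dimension $k$, and $F=-2\pi i(t-wb)$ is holomorphic of rank $k$ on them. Lemma~\ref{lem:radial} then gives $u(x)\in V_x=D_x(\ker A_x)$ at every point of this open.

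I will convert this into the column condition of Proposition~\ref{prop:jet-bound}. On $\ker A$ the map $\binom AD$ is $v\mapsto\binom{0}{D v}$. Hence $u(x)\in D_x(\ker A_x)$ says precisely that $\binom{0}{u(x)}\in\im\binom{A_x}{D_x}$. One point needs checking: $V_x$ is defined through $dF|_{\ker db_p}$, while the proposition uses $D(\ker A)$. But on $\ker A=\ker db_p$ one has $dF=-2\pi i\,D$, because $db=0$ there. The scalar $-2\pi i$ does not change the complex image. So the hypotheses of Proposition~\ref{prop:jet-bound} hold with $\ell=m$, and it yields $\dim(M\cap U)\le\ell-g=m-g=k$, which is \eqref{eq:central-bound}.

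The only delicate point is the common open. Lemma~\ref{lem:radial} needs the full mixed rank $m$ there, so that the plaques have dimension exactly $k$ and $F$ has constant rank $k$. Proposition~\ref{prop:jet-bound} needs the same rank data. Both are guaranteed by intersecting the rank-regular open with the abelian maximal-rank open, so no real obstacle remains; the substance lies in the earlier support and first-jet results.
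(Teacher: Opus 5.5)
Your proposal is correct and matches the paper's proof: all-character support feeds Lemma~\ref{lem:radial}, and the identity $dF=-2\pi i\,D$ on $\ker A$ turns radial tangency into the column containment $\binom{0}{u}\in\im\binom{A}{D}$. Proposition~\ref{prop:jet-bound} with $\ell=m$ then gives $\dim(M\cap U)\le m-g=k$. Two side remarks need a different justification from the one you gave: the case $m=0$ is trivial because monodromy over a point is trivial, and the common open is nonempty because each full-rank locus is the complement of a nowhere dense real-analytic set on the connected base (two arbitrary nonempty opens need not meet).
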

\begin{proof}
Proposition~\ref{prop:all-character-support} and
Lemma~\ref{lem:radial} say that $u$ belongs to the image of the
leafwise differential of $F$.  Since $A$ vanishes precisely on
abelian Betti tangent directions and $dF=-2\pi iD$ there, this is
the column containment in Proposition~\ref{prop:jet-bound}.
That proposition gives $\dim(M\cap U)\le\ell-g=m-g=k$.
\end{proof}

\begin{remark}
Real central translations preserve an unpolarized unit-norm condition.
In the preceding proof the antiholomorphic period data are held fixed while
the holomorphic first jet is translated.  This is why a complex central
translation imposes a new column condition.  Without that polarization, the
central-monodromy conclusion would not follow.
\end{remark}

\subsection{Small points and torsion over algebraic numbers}\label{sec:qbar}
The arithmetic conclusions are proved over $\Qbar$.  All constructions
use finitely many coefficients and therefore descend to a number field.
The small-points argument below uses neither the Betti-strata theorem
nor the later passage to arbitrary complex coefficients.

We first record two elementary observations concerning the points to
which heights are applied.  If $F_0\subseteq\C$ is algebraically
closed and a marked family is defined over $F_0$, each fixed-order
locus $\{[n]h=0\}$ is $F_0$-defined and has Zariski-dense $F_0$-points
in every component.  Its base extension is exactly the corresponding
complex fixed-order locus.  Consequently complex torsion density for
$F_0$-defined data implies density of $F_0$-torsion points.  The same
statement holds with $n$ restricted to powers of a fixed prime.

Over $\Qbar$ there are only countably many proper closed subvarieties
of a fixed variety.  If the set of points of height less than $\eta$
is dense for every $\eta>0$, enumerate those subvarieties and, at
stage $n$, choose a point of height less than $1/n$ outside the first
$n$.  This gives a generic small sequence.  The construction also
works inside the set of points with torsion abelian projection.
Conversely, a generic small sequence makes every positive-height
sublevel set Zariski dense.  These observations justify the equivalent
gap formulations used below.

The constant-character obstruction of Section~\ref{sec:full-center}
can be excluded by small heights as well as by a torsion value.

\begin{lemma}\label{lem:small-center}
Let the marked family be defined over $\Qbar$, with $K\subseteq P$
and $\rank_\R db_p=2g$.  If it has a generic sequence $b_n$ with
$\htor(h(b_n))\to0$, then $M\cap U=U$.
\end{lemma}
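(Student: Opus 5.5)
We argue by contradiction, in the same way as Proposition~\ref{prop:full-center}, with the root-of-unity step replaced by a height computation. Suppose $M\cap U\ne U$. Lemma~\ref{lem:constant-character} applies, because its hypotheses are exactly $K\subseteq P$ and $\rank_\R db_p=2g$. Since the data are over $\Qbar$, it supplies, after a fixed finite refinement and a toric pushout defined over $\Qbar$, a homomorphism $\varphi$ from the marked family to $\Gm$. This homomorphism is nonzero on the torus, and its value on the marked section is a constant $c\in\Qbar^*$ that is not a root of unity.

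We first move the sequence to the refinement. The refinement is a finite \'etale cover followed by an isogeny, possibly after shrinking to a dense open. Genericity lets us discard the finitely many $b_n$ in the removed closed set. We then lift the remaining $b_n$ to the cover; the lifts are again generic, since finite maps have finite fibres and images of proper closed sets are proper closed. By Lemma~\ref{lem:height-functorial}, finite base change preserves heights. A fixed isogeny preserves the property $\htor\to0$ for arbitrary lifts, and a fixed homomorphism satisfies $\htor'(\varphi(x))\le C\htor(x)$. The target $\Gm$ is a semiabelian scheme with $g=0$, $r=1$, so its toric height is twice the Weil height of the coordinate. Therefore
\[
 2h_{\mathrm{Weil}}(c)=\htor'(\varphi(h(b_n)))\le C\,\htor(h(b_n))\longrightarrow0 .
\]
The left side does not depend on $n$, so $h_{\mathrm{Weil}}(c)=0$. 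Kronecker's theorem then makes $c$ a root of unity, which contradicts Lemma~\ref{lem:constant-character}. Hence $M\cap U=U$.

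The main point to check is that the character comparison is exact. We need the pushout character to be a fixed homomorphism whose toric part is an integral matrix, so that \eqref{eq:exact-norm-functoriality} holds with no additive constant. We also need the Poincar\'e norm of the constant split character to equal the usual absolute value of $c$. Both follow from the rigidification, from uniqueness of the splitting (using $\Hom(\A,\Gm)=0$), and from canonical-metric uniqueness, as in the proof of Lemma~\ref{lem:height-functorial}. Only $\htor\to0$ is used, not smallness of $\hab$.
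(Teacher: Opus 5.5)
Your proposal is correct and is essentially the paper's proof: you invoke Lemma~\ref{lem:constant-character} to obtain the constant $c\in\Qbar^*$ that is not a root of unity, lift the generic sequence to the fixed refinement while avoiding the exceptional sets, and use the exact norm functoriality \eqref{eq:exact-norm-functoriality}, which has no additive constant, to get $2h_{\mathrm{Weil}}(c)\le C\htor(h(b_n))\to0$, after which Kronecker's theorem gives the contradiction. Your remarks on why genericity survives the lift and why exactness of the comparison is needed accurately expand the paper's one-line justifications.
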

\begin{proof}
If the center were deficient, Lemma~\ref{lem:constant-character}
would give, after fixed refinements, a nonzero multiplicative quotient
whose marked value is a constant $c\in\Qbar^*$ that is not a root
of unity.  A generic sequence lifts to the fixed cover and avoids any
fixed proper exceptional subset.  Exact norm functoriality
\eqref{eq:exact-norm-functoriality}, including the fixed pushout integer,
gives
\[
 2h_{\mathrm{Weil}}(c)
   =\sum_vn_v|\log|c|_v|
   \le C\htor(h(b_n))\longrightarrow0.
\]
Kronecker's theorem makes $c$ a root of unity, a contradiction.
\end{proof}

The quotient argument is geometric; exact torsion was only one
property that its homomorphism could preserve.  The canonical-height
comparison supplies the preservation required for small points.

\begin{lemma}\label{lem:small-quotient}
Let $B\subseteq\MM_{g,r}$ be a smooth marked classifying image over
$\Qbar$ satisfying $K\subseteq P$ and possessing a generic sequence
with $\htot\to0$.  If $\ell<m$, the quotient of
Proposition~\ref{prop:rank-reduction} satisfies
\[
 d'=d-h_N,\qquad m'\le\ell-h_N,
\]
is relatively nonspecial, and has a generic sequence with
$\htot'\to0$.  If the original abelian Betti rank is $2g$, the
quotient has abelian Betti rank $2g'$.
\end{lemma}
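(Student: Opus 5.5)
The plan is to rerun the construction of Proposition~\ref{prop:rank-reduction} with $F_0=\Qbar$, and then verify the four assertions one at a time. The geometric part carries over unchanged. We pick a very general point in a rank-regular analytic open, take the Betti-leaf germ $Z$ there and its weakly special closure, which is the orbit of a connected rational normal subgroup $N\lhd P$. Weak mixed Ax--Schanuel together with Corollary~\ref{cor:leaf-closure} gives $\dim Y\ge h_N+t$. Proposition~\ref{prop:saturated} then yields the quotient $f:\G_B\to\G_B/\HH_N$ over $\Qbar$, with classifying image $B'$ satisfying $d'=d-h_N$ and $m'\le\ell-h_N$. Relative nonspecialness of $B'$ follows as in the proof of Proposition~\ref{prop:saturated}, because the image of $K$ is the full $K'$. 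None of these steps used torsion density, so the dimension and nonspecialness claims are already in hand.

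For small points, take the generic sequence $b_n$ and set $x_n=f(h(b_n))$, with images $b'_n$ in $B'$. Lemma~\ref{lem:height-functorial} applied to the fixed homomorphism $f$ bounds both $\htor'(x_n)$ and $\hab'(x_n)$ by a constant multiple of $\htot(h(b_n))$. It applies after shrinking and a finite refinement; genericity lets us discard the finitely many terms lying in the removed closed set, and height invariance under finite base change covers the refinement. The quotient section over $B'$ pulls back to $f\circ h$, so the heights of the tautological points over $b'_n$ equal those of $x_n$, once the polarization and torus basis on $B'$ are matched by a fixed comparison. Hence these heights tend to zero.

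The sequence $b'_n$ may not be generic in $B'$, so we refine it as follows. For a proper closed $Z'\subsetneq B'$, its preimage under the dominant classifying map $B\to B'$ is a proper closed subset of $B$. It therefore contains only finitely many $b_n$, and so only finitely many $b'_n$ lie in $Z'$. This gives genericity of a subsequence, and by the enumeration remark in Section~\ref{sec:qbar} we may pass to such a subsequence with distinct terms.

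For the abelian rank, the abelian part of $W_N$ is $V_{p,N}$. The quotient abelian variation is $V/V_{p,N}$, of rank $2g'=2g-\dim V_{p,N}$, and its marked point is the image of $p$. The abelian Betti map of the quotient is the composite of $b_p$ with the linear projection $\R^{2g}\to\R^{2g}/V_{p,N,\R}$, which is surjective on differentials. Since $b_p$ is a submersion, the quotient abelian Betti map on $B$ has rank $2g'$. It factors through the classifying map $B\to B'$, so the rank on $B'$ is also $2g'$.

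I expect the main obstacle to be this factorization. One has to check that the quotient Betti coordinates are pulled back from $B'$ compatibly with the chosen period charts. This should follow from the commutative diagram in Proposition~\ref{prop:saturated}, since the quotient period map is the composition of the original period map with the representation quotient.
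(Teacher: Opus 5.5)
Your proposal is correct and follows the paper's proof essentially step for step. Dimension and nonspecialness come from Proposition~\ref{prop:rank-reduction}, smallness from Lemma~\ref{lem:height-functorial}, genericity from properness of preimages under the dominant classifying map, and the abelian rank from $b_{p'}=Fb_p$ for a constant surjective rational matrix $F$.

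The paper makes one point explicit that you leave implicit: genericity also survives taking arbitrary lifts to the fixed finite refinement covers, because a proper closed subset of the cover has proper closed finite image. You addressed only the heights for those refinements. Note also that your preimage argument already shows the image sequence is itself generic, so no subsequence is needed.
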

\begin{proof}
The saturated quotient and its classifying image are defined over the
same algebraically closed field.  Lemma~\ref{lem:height-functorial}
preserves smallness under every fixed homomorphism and isogeny used in
the construction.  Genericity is preserved by a dominant image: the
inverse image of a proper closed subset of the image closure is
proper.  It is also preserved by choosing arbitrary lifts to a fixed
irreducible finite cover dominating the base, since the finite image
of a proper closed subset of that cover is proper.  All covers,
isogenies, and open restrictions are fixed before taking the sequence.

In flat homology frames, the abelian quotient induces a constant
surjective rational matrix $F$.  Its Betti maps satisfy
$b_{p'}=F b_p$ modulo a locally constant lattice term.  Hence
$db_{p'}=F\,db_p$, which is surjective if $db_p$ is surjective.
The quotient map factors through the new classifying image; since
that map is dominant, passage to the image retains the generic rank.
Relative nonspecialness and the two dimension inequalities are already
part of Proposition~\ref{prop:rank-reduction}.
\end{proof}

We can now prove the small-points theorem stated in the introduction.
This is the point at which the geometric quotient, arithmetic support,
and central-monodromy estimates meet.

\begin{proof}[Proof of Theorem~\ref{thm:smallrank}]
Apply the classifying-image construction of
Proposition~\ref{prop:universal-reduction}, replacing torsion
preservation by Lemma~\ref{lem:height-functorial}.  Finite refinements,
fixed isogenies, and passage to the smooth marked image preserve
relative nonspecialness and smallness.  They preserve the generic
mixed rank: isogenies identify the local foliations up to finite
lattices, and a dominant classifying map is generically submersive.
They also preserve maximal abelian rank.

Suppose that a counterexample with $\ell<d$ exists.  Among all counterexamples in universal-section form, choose one
with minimal base dimension $m$.
If $m=0$, a generic sequence is the constant sequence at the algebraic
base point; its height is zero.  The height-zero criterion in
Section~\ref{sec:canonical-height} makes the section torsion, contrary
to $K\subseteq P$ when $d>0$.  The case $d=0$ is immediate.
If $r=0$, the assumed abelian rank already gives $\ell=g=d$.
If $g=0$, split the torus by a fixed finite cover and let $Y$ be the
closure of $h(B)$ in $\Gm^r$.  It has dense small points for twice the
sum of the coordinate Weil heights.  The Bogomolov theorem for tori,
in the form of \cite[Proposition~21]{KuehneSmall2022}, makes $Y$ a
torsion translate of a subtorus.  Relative nonspecialness excludes a
proper such translate.  Thus $h$ is dominant onto $\Gm^r$, and its
logarithmic Betti map has real rank $2r$, again not a counterexample.
We may therefore assume $g,r>0$ for the remaining argument.

If $\ell<m$, Lemma~\ref{lem:small-quotient} gives another problem
with all the hypotheses and
\[
 m'\le\ell-h_N<d-h_N=d',\qquad m'<m.
\]
Its mixed rank is at most $m'$, so it is a smaller counterexample.
Thus $\ell=m<d$.  Maximal abelian rank gives $m=g+k$ with $k\ge0$.
Lemma~\ref{lem:small-center} gives $M\cap U=U$.
Proposition~\ref{prop:all-character-support} and
Lemma~\ref{lem:radial} give the radial column condition on the common
rank-regular analytic open.  The polarized first-jet bound then gives
\[
 r=\dim(M\cap U)\le\ell-g=m-g,
\]
contradicting $m<g+r$.  No positive leading intersection is being
assumed in this argument.

Finally, failure of a positive gap would give a generic small sequence
by the enumeration argument above.  The closure of a nondense sublevel
set is the required proper closed exceptional set.  The tautological
section proves the statement for a dominating subvariety.
\end{proof}

With maximal abelian rank assumed, the arithmetic condition can be
expressed entirely in terms of the mixed rank.

\begin{corollary}\label{cor:small-equivalence}
For a relatively nonspecial marked family over $\Qbar$ with maximal
abelian Betti rank, the following are equivalent: it has a generic
$\htot$-small sequence; its real mixed Betti rank is $2(g+r)$; its torsion
specializations are Zariski dense.  When these conditions hold, every
prime-primary torsion set is Zariski dense.
\end{corollary}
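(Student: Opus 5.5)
I will prove the equivalences in a cycle: (small) $\Rightarrow$ (full rank) $\Rightarrow$ (primary torsion dense, in fact among full-rank points) $\Rightarrow$ (torsion dense) $\Rightarrow$ (small).

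\emph{(small) $\Rightarrow$ (full rank).} Suppose the family has a generic sequence $b_n\in B(\Qbar)$ with $\htot(h(b_n))\to0$. Together with relative nonspecialness and the assumed abelian rank $2g$, these are exactly the hypotheses of Theorem~\ref{thm:smallrank}. Its conclusion is $\rank_\R d\beta=2(g+r)$.

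\emph{(full rank) $\Rightarrow$ (primary torsion dense).} Assume full mixed rank. Apply Proposition~\ref{prop:primary-density} with $F_0=\Qbar$. For every prime $\ell_0$, it gives Zariski density in $B$ of the points $b\in B(\Qbar)$ at which $h(b)$ is $\ell_0$-primary torsion and the mixed rank is full. This gives the final assertion of the corollary, and in particular density of all torsion specializations. For a dominating subvariety, one first passes to the tautological section on the classifying image, as in Proposition~\ref{prop:universal-reduction}. Density there lifts back because the relevant maps are dominant and finite refinements are open.

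\emph{(torsion dense) $\Rightarrow$ (small).} The $\Qbar$-torsion specializations are dense; by the observation at the start of Section~\ref{sec:qbar}, complex density for $\Qbar$-defined data already gives density of $\Qbar$-points. By \eqref{eq:torsion-height} each such point has $\htot=0$. The enumeration argument of Section~\ref{sec:qbar} then extracts a generic sequence of height zero, since there are only countably many proper closed subvarieties over $\Qbar$. This closes the cycle.

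The only delicate point is bookkeeping rather than mathematics. The rank, density and smallness notions must all be compared on one fixed torus-splitting refinement. This is justified by Lemma~\ref{lem:height-functorial} and the remarks after it, together with the descent of density and rank along finite étale covers. Relative nonspecialness is used only in the first implication; the remaining two implications need no nonspecialness hypothesis.
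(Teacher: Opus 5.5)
Your proposal is correct and follows the paper's proof. The steps match: Theorem~\ref{thm:smallrank} gives the first implication, Proposition~\ref{prop:primary-density} gives full rank to primary-torsion density, and the height-zero enumeration argument returns a generic small sequence. The only differences are cosmetic: you use the $F_0=\Qbar$ clause of Proposition~\ref{prop:primary-density} where the paper uses the fixed-order observation, and your detour through the classifying image is unnecessary, since that proposition applies directly to the given marked family.
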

\begin{proof}
The first implication is Theorem~\ref{thm:smallrank}.  Full rank gives
primary density by Proposition~\ref{prop:primary-density}; the
fixed-order observation makes the algebraic torsion points dense.
Torsion has height zero and supplies a generic sequence.
\end{proof}

The abelian theorem supplies precisely the missing rank hypothesis when
the marked points are torsion.  This recovers the arithmetic part of
the relative Manin--Mumford theorem without the later strata argument.

\begin{proposition}\label{prop:Qbar-completion}
The conclusion of Theorem~\ref{thm:main} holds for data defined over
$\Qbar$.
\end{proposition}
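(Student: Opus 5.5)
We prove the statement by reducing to the setting of Theorem~\ref{thm:smallrank} and Corollary~\ref{cor:small-equivalence}, using the abelian relative Manin--Mumford theorem of Gao--Habegger to supply the abelian-rank hypothesis.

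Let $X\subseteq\G$ be defined over $\Qbar$, with dense torsion and $\dim X<d$. Suppose $X$ is not contained in a proper special subvariety. Apply Proposition~\ref{prop:universal-reduction}, checking along the way that every refinement, isogeny and classifying map can be chosen over $\Qbar$; Lemmas~\ref{lem:relative-hom} and~\ref{lem:algebraize} and Proposition~\ref{prop:saturated} already record this. The result is a smooth marked base $B\subseteq\MM_{g,r}$ over $\Qbar$ with tautological section $h$. This section has dense torsion, satisfies $K\subseteq P$ by Lemma~\ref{lem:full-relative}, and has $m=\dim B<d$.

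By the observation at the start of Section~\ref{sec:qbar}, complex torsion density for $\Qbar$-defined data gives density of $\Qbar$-torsion points. Since torsion points have $\htot=0$ by \eqref{eq:torsion-height}, the countable-enumeration argument yields a generic sequence $b_n\in B(\Qbar)$ with $\htot(h(b_n))=0$.

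Next we need the abelian rank. Lemma~\ref{lem:full-relative} shows that $p_{\bar\eta}$ generates $\A_{\bar\eta}$, so $\ol{\Z p(B)}^{\Zar}=\A_B$. The abelian projection of the torsion set is torsion and is dense in the image of $p$. Gao--Habegger's theorem, as recorded in \eqref{eq:abelian-rank}, then gives $\rank_\R db_p=2g$. We expect this to be the main point needing care. Their theorem is phrased for the subvariety $\ol{p(B)}$ in $\A$, so we must check two things: that the Betti rank of that image equals the rank of $b_p$ on $B$, and that their generation hypothesis is exactly what Lemma~\ref{lem:full-relative} supplies. The first holds because $b_p$ factors through $p$, which is generically submersive onto its image.

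With these hypotheses in place, Theorem~\ref{thm:smallrank} applies to $h$ on $B$. It gives mixed rank $2(g+r)$ and hence $m\ge g+r=d$, contradicting $m<d$. Equivalently, we could run the minimal-counterexample argument directly. Lemma~\ref{lem:small-quotient} keeps torsion density, and hence smallness, under the rank-reduction quotients. Lemma~\ref{lem:small-center}, or more simply Proposition~\ref{prop:full-center} since a torsion value exists, gives $M\cap U=U$. Corollary~\ref{cor:central-bound} gives $r\le m-g$.

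Therefore $\dim X\ge\dim B\ge d$, or proper special containment descends back to $X$ through Lemma~\ref{lem:descent-special}. This is the conclusion of Theorem~\ref{thm:main} over $\Qbar$.
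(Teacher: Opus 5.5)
Your proposal is correct and follows essentially the same route as the paper. Both reduce to a relatively nonspecial universal-section problem over $\Qbar$, use the fixed-order observation to obtain a generic sequence of algebraic torsion points with $\htot=0$, get $\rank_\R db_p=2g$ from Lemma~\ref{lem:full-relative} together with Gao--Habegger (a condition that is vacuous when $g=0$), and then deduce $\dim B\ge g+r$ from Theorem~\ref{thm:smallrank}.
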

\begin{proof}
It suffices to treat a relatively nonspecial universal-section problem.
The fixed-order observation supplies a generic sequence of algebraic
torsion points.  The abelian projection generates the geometric generic
fibre by Lemma~\ref{lem:full-relative}; Gao--Habegger's theorem gives
$\rank_\R db_p=2g$.  For $g=0$ this condition is empty.  Torsion
points have $\htot=0$, so Theorem~\ref{thm:smallrank} gives
$\dim B\ge g+r$.  Since $\dim B\le\dim X$, this proves the claim.
\end{proof}

The next same-field implication is needed in the transcendence-degree
induction.  It is a consequence of the dimension theorem over the
\emph{same} field, not an application of an unproved complex version.

\begin{corollary}\label{cor:rank-criterion}
Over an algebraically closed subfield of $\C$ on which the dimension
statement of Theorem~\ref{thm:main} holds, a relatively nonspecial
torsion-dense universal section satisfies
\begin{equation}\label{eq:full-mixed-rank}
 \ell(B)=d.
\end{equation}
\end{corollary}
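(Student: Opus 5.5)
We argue by contradiction, working over the given algebraically closed field $F_0\subseteq\C$ on which the dimension statement of Theorem~\ref{thm:main} is assumed. Let $B\subseteq\MM_{g,r}$ be a relatively nonspecial universal-section problem over $F_0$ with dense torsion, and suppose $\ell(B)<d$. Since always $\ell(B)\le\min\{m,d\}$, there are two cases: $\ell<m$, or $\ell=m<d$. The second case is immediate, because the dimension statement over $F_0$ applies to $B$ itself, which is relatively nonspecial and torsion-dense, so $m\ge d$; this contradicts $m<d$.

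In the first case, $\ell<m$, we apply Proposition~\ref{prop:rank-reduction} to obtain a connected rational normal subgroup $N\lhd P$ and a new universal-section problem. It is defined over the same $F_0$ by the field clause of that proposition, which rests on Proposition~\ref{prop:saturated} and Lemma~\ref{lem:algebraize}. It satisfies \eqref{eq:full-relative}, so it is relatively nonspecial by Lemma~\ref{lem:full-relative}, and it has dense torsion. Its dimensions are $d'=d-h_N$ and $m'\le\ell-h_N$. Since $\ell<d$, this gives $m'<d'$. Applying the dimension statement over $F_0$ to this quotient problem, via the reduction of Proposition~\ref{prop:universal-reduction}, gives $m'\ge d'$, a contradiction. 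Hence $\ell(B)=d$.

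The two cases can be merged. Whenever $\ell<d$, either $B$ itself (if $\ell=m$) or the quotient of Proposition~\ref{prop:rank-reduction} (if $\ell<m$) is a relatively nonspecial, torsion-dense, $F_0$-defined problem of base dimension strictly below its relative dimension. No induction is needed, since one application of the reduction suffices. The Ribet-type exception is excluded because relative nonspecialness is preserved in the quotient.

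The main obstacle is making sure the reduction never leaves $F_0$. The very-general point $b$ in Proposition~\ref{prop:rank-reduction} is complex and need not be defined over $F_0$, but it is used only to identify $N$ and to bound dimensions. The quotient family, its classifying map, and the image closure are all defined over $F_0$ by Proposition~\ref{prop:saturated}, so the new problem lies in the class where the hypothesis applies. One also has to check that torsion density of the quotient is meant over $F_0$-points. This follows from the fixed-order observation: the loci $\{[n]h=0\}$ are defined over $F_0$ and have dense $F_0$-points.
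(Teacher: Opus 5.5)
Your argument is correct and is essentially the paper's proof. The paper first applies the dimension statement to $B$ to get $m\ge d$, so $\ell<d$ forces $\ell<m$. A single application of Proposition~\ref{prop:rank-reduction}, kept over the same field by Proposition~\ref{prop:saturated}, then gives a nonspecial torsion-dense quotient with $m'\le\ell-h_N<d'$, contradicting the dimension statement. Your split into the cases $\ell=m$ and $\ell<m$ just rearranges these same two steps.
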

\begin{proof}
The dimension statement gives $m\ge d$.  If $\ell<d$, then
$\ell<m$ and Proposition~\ref{prop:rank-reduction} yields
$m'\le\ell-h_N<d-h_N=d'$, contradicting that same dimension
statement for the quotient.  The quotient and its classifying image
remain over the given algebraically closed field by
Proposition~\ref{prop:saturated}.  Since $\ell\le d$, equality follows.
\end{proof}

The converse rank-to-density implication is already available over
$\C$ from Proposition~\ref{prop:primary-density}.  The induction below
only needs the necessity implication recorded in this corollary.

\section{Mixed Betti strata and complex coefficients}\label{sec:strata}
This section is purely geometric.  No torsion-density hypothesis and no
arithmetic intersection theory are used.  We work with a smooth irreducible
locally closed algebraic subvariety $B$ of $\MM_{g,r}$, and put $m=\dim B$.
For an integer $t\in\Z_{\ge0}$, define
\begin{equation}\label{eq:betti-stratum}
 B^{\mathrm{Betti}}(t)=
 \{b\in B:\dim_b(B\cap\mathcal L_b^{\mathrm{Betti}})\ge t\}.
\end{equation}
Here the intersection is taken after a local period lift, and local dimension
is the maximum of the dimensions of its local irreducible components.
All dimension thresholds in this section are integral.  Extending the
notation to real $t\ge0$ would simply give
$B^{\mathrm{Betti}}(t)=B^{\mathrm{Betti}}(\lceil t\rceil)$.
The lattice invariance of the foliation makes the definition independent
of the lift.  We may make a finite level refinement to arrange connected
monodromy: the condition is local for an \'etale map, and closedness descends
under a finite surjective map.

Our finiteness argument concerns a single algebraic family of equations
for leaves.  It does not require a finite-family theorem for the closures
of every weakly optimal subvariety.  The input is instead the
flat-leaf intersection theorem of Baldi--Urbanik
\cite[Propositions~3.10 and~8.1, including the proof of the latter]{BaldiUrbanik2025}.
We give the translation to the present period coordinates and the
normal-orbit dimension calculation explicitly.

\subsection{The period torsor and algebraic incidence}
The integral admissible variation has sparse algebraic monodromy by
\cite[Lemma~4.14]{BaldiUrbanik2025}.  This checks the monodromy-class
hypothesis of the flat-leaf theorem used below.

Write $H$ for the connected algebraic monodromy of the marked variation
on $B$.  Its algebraic period torsor is denoted by
\[
 \pi_{\mathrm{per}}:\mathscr P\longrightarrow B.
\]
It has structure group $H$, a regular-singular flat connection, and an
algebraic evaluation map
\[
 \rho:\mathscr P\longrightarrow\check{\DD}^{\,0},
\]
where $\check{\DD}^{\,0}$ is the complex algebraic monodromy orbit in
the compact dual.  A local flat section determined by a marking evaluates
to the local period map.  These constructions, including algebraicity of
the Hodge filtration and of $\rho$, are recalled in
\cite[\S\S2.2--2.3 and~4.6]{BaldiUrbanik2025}.
We use $\mathscr P$ for the torsor to distinguish it from the
Mumford--Tate group $P$.

In the affine period chart of Section~\ref{sec:betti}, introduce
\emph{complex} parameters
\[
 \lambda=(\alpha,\beta,\gamma)\in
 \mathcal Y:=\C^g\times\C^g\times\C^r
\]
and the algebraic equations
\begin{equation}\label{eq:complexified-leaves}
 \Lambda_\lambda:\qquad
 z=\alpha+\tau\beta,\qquad t=\gamma+w\beta.
\end{equation}
Actual Betti leaves correspond to $\alpha,\beta\in\R^g$, with
$\gamma\in\C^r$.  The larger complex parameter space is used only to
obtain an algebraic family.  We never identify an arbitrary complex
parameter with an actual Betti leaf.

Let $\mathscr P^\circ$ be the inverse image of this chart.  The incidence
$\{(x,\lambda):\rho(x)\in\Lambda_\lambda\}$ is closed in
$\mathscr P^\circ\times\mathcal Y$.  Take its reduced Zariski closure
\begin{equation}\label{eq:leaf-incidence}
 \mathcal Z\subseteq\mathscr P\times\mathcal Y,
 \qquad f:\mathcal Z\longrightarrow\mathcal Y.
\end{equation}
Its restriction to $\mathscr P^\circ\times\mathcal Y$ is exactly the
original incidence, including on each fibre.  Thus taking this closure
introduces no extra germ at a point in the period chart.
After a finite algebraic stratification of the image of $f$ and omission
of empty fibres, this is a finite collection of algebraic families of
closed subvarieties of the \emph{whole} torsor $\mathscr P$.
All arguments below are applied separately to these finitely many
families.  Equivalently, one may use a finite affine atlas in the compact
dual.  It would not be legitimate to regard the open
$\mathscr P^\circ$ itself as a principal $H$-torsor.

Denote a horizontal flat leaf in $\mathscr P$ by $\mathscr L_x$.
For a fixed family in \eqref{eq:leaf-incidence} and an integer
$e\in\Z_{\ge0}$, set
\begin{equation}\label{eq:flat-dimension-locus}
 \mathcal Z(f,e)=
 \{(x,\lambda)\in\mathcal Z:
       \dim_x(\mathscr L_x\cap\mathcal Z_\lambda)\ge e\}.
\end{equation}
The finiteness used below comes from irreducible components of these
algebraic loci, not from an identification of different notions of
optimality.

\begin{lemma}\label{lem:flat-dimension-closed}
For every integer $e\ge0$, the locus $\mathcal Z(f,e)$ is Zariski closed in
$\mathcal Z$.  In particular it has finitely many irreducible components.
\end{lemma}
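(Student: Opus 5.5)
The plan is to deduce this from the Baldi--Urbanik flat-leaf intersection result, after checking that its hypotheses hold for the family $f:\mathcal Z\to\mathcal Y$. Their Proposition~3.10 concerns a principal $H$-bundle with a regular-singular flat connection, together with an algebraic family of closed subvarieties $\mathcal Z\subseteq\mathscr P\times\mathcal Y$. It asserts that the locus of pairs $(x,\lambda)$ at which the horizontal leaf $\mathscr L_x$ meets the fibre $\mathcal Z_\lambda$ in local dimension at least $e$ is Zariski closed. Every input has already been arranged:
\begin{itemize}
\item $\mathscr P$ is the algebraic period torsor with structure group the connected algebraic monodromy $H$;
\item its connection is regular singular, by admissibility of the marked variation;
\item $H$ is sparse, by \cite[Lemma~4.14]{BaldiUrbanik2025};
\item after the finite stratification of the image of $f$, each piece of \eqref{eq:leaf-incidence} is an algebraic family of closed subvarieties of the whole torsor $\mathscr P$.
\end{itemize}
The last point is why the Zariski closure in $\mathscr P\times\mathcal Y$ was taken rather than working only on $\mathscr P^\circ$. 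So the first step is to restate that proposition in the present notation and verify the hypotheses one by one for each of the finitely many families.

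The second step addresses upper semicontinuity and local dimension. I would use the fact that, locally on $\mathscr P$, the flat leaves are the fibres of an analytic submersion given by trivializing the connection. On a simply connected open $\Omega\subseteq\mathscr P$, choose a flat trivialization $\Omega\to\Omega_0\times H$. The condition then becomes
\[
 \dim_x\bigl(\mathscr L_x\cap\mathcal Z_\lambda\bigr)
   =\dim_{(x,\lambda)}\bigl(\mathcal Z\cap(\mathscr L_x\times\{\lambda\})\bigr),
\]
which is the local fibre dimension of the analytic map $\mathcal Z\cap(\Omega\times\mathcal Y)\to H\times\mathcal Y$ recording the flat coordinate and the parameter. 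Cartan--Remmert upper semicontinuity makes $\mathcal Z(f,e)$ analytically closed in $\mathcal Z$. This uses only the definition \eqref{eq:flat-dimension-locus} and the fact that $\mathcal Z$ is reduced and closed.

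The main obstacle is upgrading analytic closedness to Zariski closedness, since the flat foliation is transcendental. This is exactly what \cite[Proposition~8.1 and its proof]{BaldiUrbanik2025} supplies. They combine o-minimal definability of the flat structure on a definable fundamental set, made possible by regular singularity, with the sparse-monodromy hypothesis to show that the locus is definable and analytic, hence algebraic by definable Chow.

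I would check two things. First, the definable fundamental set must be compatible with the finite atlas of the compact dual used to define $\mathcal Z$. Second, the complex parameters $\lambda\in\mathcal Y$ enter only algebraically, so including $\mathcal Y$ as an algebraic factor of the base does not affect definability; the actual Betti leaves, which carry real constraints on $\alpha,\beta$, never enter.

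Finally, Zariski closedness in the variety $\mathcal Z$ gives finitely many irreducible components by Noetherianity, and this yields the "in particular" clause of the statement.
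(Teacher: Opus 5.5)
\textbf{Gap in the step from analytic to Zariski closedness.} Your upper-semicontinuity step matches the paper's argument: flat leaves are the fibres of a holomorphic submersion on a foliation box, and the locus is a local fibre-dimension locus, hence analytically closed. The step you call the main obstacle, however, is not handled correctly.

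\emph{The cited results do not say what you attribute to them.}
\begin{itemize}
\item The paper uses \cite[Proposition~3.10]{BaldiUrbanik2025} only for algebraic \emph{constructibility} of $\mathcal Z(f,e)$, not for Zariski closedness. If that proposition gave closedness, your second and third steps would be redundant. Your proposal first asserts that it does, and then treats closedness as an open obstacle, which is inconsistent.
\item \cite[Proposition~8.1]{BaldiUrbanik2025} does not provide definability of $\mathcal Z(f,e)$ or a definable-Chow algebraization of it. In this paper it is the structural statement producing a flat $N_R$-subtorsor over a weakly special $Y$ (see \cref{lem:flat-leaf-envelope}).
\item Sparse monodromy is an input to that structural statement, not to closedness.
\end{itemize}
As written, your passage from analytic closedness to Zariski closedness rests on a claim you neither prove nor correctly source. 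A definable-Chow route could work in principle, but you would have to establish definability of $\mathcal Z(f,e)$ yourself, for instance on a definable fundamental set.

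\textbf{The missing idea is elementary.} It requires no o-minimality beyond what is already inside Proposition~3.10: combine constructibility with the analytic closedness you already have.

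Let $C$ be a constructible subset of a complex variety that is closed in the Euclidean topology. Then $C$ is Zariski closed, by the following argument.
\begin{itemize}
\item For each irreducible component $Y_i$ of $\ol{C}^{\Zar}$, the set $C\cap Y_i$ is dense in $Y_i$ and constructible.
\item Hence $C\cap Y_i$ contains a dense Zariski open subset of $Y_i$.
\item A dense Zariski open subset of an irreducible variety is also dense in its analytic topology.
\item Euclidean closedness of $C$ then forces $Y_i\subseteq C$, so $C=\ol{C}^{\Zar}$.
\end{itemize}
Replacing your third paragraph with this argument completes the proof. The finiteness of irreducible components then follows by Noetherianity, as you say.
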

\begin{proof}
Algebraic constructibility is
\cite[Proposition~3.10]{BaldiUrbanik2025}.  To check closedness, choose
a holomorphic foliation box for the regular flat connection.  In that
box the horizontal leaves are the fibres of a holomorphic submersion
$\theta$.  On the incidence variety the map
\[
 (x,\lambda)\longmapsto(\theta(x),\lambda)
\]
is holomorphic, and its local fibre dimension is the dimension in
\eqref{eq:flat-dimension-locus}.  Upper semicontinuity of local fibre
dimension makes this locus analytically closed on the source.
An algebraically constructible subset of a complex algebraic variety
that is analytically closed is Zariski closed: a dense constructible
subset of each irreducible component of its Zariski closure contains a
Zariski open dense subset, which is also analytically dense in that
component.  This proves the assertion.
\end{proof}

\subsection{Normal-quotient loci and closed Betti strata}
We record the part of Baldi--Urbanik's theorem that will be used.
The assertion about the generic Mumford--Tate group is obtained from
the very-general-point choice in their proof; normality in monodromy
alone would not imply it.

\begin{lemma}\label{lem:flat-leaf-envelope}
Fix an integer $e\ge0$ and an irreducible component $R$ of
$\mathcal Z(f,e)$ that contributes
irreducible leaf-intersection germs of dimension exactly $e$, and put
\[
 T_R=\ol{(\pi_{\mathrm{per}}\circ\operatorname{pr}_1)(R)}^{\Zar}
       \subseteq B,
\]
where $\operatorname{pr}_1:\mathscr P\times\mathcal Y\to\mathscr P$
is the first projection.
There is a connected rational group $N_R$, normal in the generic
Mumford--Tate group $P_R$ of the variation on $T_R$, with the following
property.  For each such germ
\[
 \mathscr U=(\mathscr L_x\cap\mathcal Z_\lambda,x),
 \qquad (x,\lambda)\in R,
\]
there is, after the permitted finite cover, a subvariety
$Y\subseteq T_R$ weakly special for the restricted variation, and a
flat $N_R$-subtorsor $\mathscr P_{x,\lambda}$
over $Y$ containing $\mathscr U$, such that
\begin{equation}\label{eq:BU-envelope-inequality}
 \dim_x(\mathcal Z_\lambda\cap\mathscr P_{x,\lambda})
       \ge \dim N_R+e.
\end{equation}
The quotient period map by $N_R$ is constant on $Y$.
Here groups on $T_R$ or $Y$ are computed on a smooth dense open of their
normalizations and transported in the rational local system.  When a
finite cover or normalization is needed, $Y$ and its base point are
read on that space.  We choose a local irreducible branch that realizes
the dimension bound in \eqref{eq:BU-envelope-inequality}; only existence
of such a branch is asserted.  Passing to its finite image preserves
its dimension.
\end{lemma}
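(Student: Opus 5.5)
This lemma is a specialization of the flat-leaf intersection theorem of Baldi--Urbanik \cite[Proposition~8.1 and its proof]{BaldiUrbanik2025}, so the plan is to verify its hypotheses and translate its output. The family $f:\mathcal Z\to\mathcal Y$ of \eqref{eq:leaf-incidence} is an algebraic family of closed subvarieties of the whole torsor $\mathscr P$. The monodromy is sparse by \cite[Lemma~4.14]{BaldiUrbanik2025}. By Lemma~\ref{lem:flat-dimension-closed}, $\mathcal Z(f,e)$ is Zariski closed with finitely many components, so fixing a component $R$ is legitimate. I would apply their proposition to the restricted family over $R$, with the base replaced by a smooth dense open of the normalization of $T_R$, after a finite cover making the monodromy connected. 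This gives, for a germ $\mathscr U$ of dimension exactly $e$, a weakly special $Y$ and a flat subtorsor under a monodromy group $N$ that contains $\mathscr U$. Their dimension inequality is exactly \eqref{eq:BU-envelope-inequality}, where $\dim N$ measures the horizontal freedom of the subtorsor.

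Next I must show that one group $N_R$ works for all germs attached to $R$ and that it is normal in $P_R$. For this I would rerun the very-general-point step of their proof. Choose $(x,\lambda)\in R$ very general, avoiding the countably many proper Hodge loci of $T_R$ and the jumping loci of all rational quotient period maps; this is the same countability argument as in Proposition~\ref{prop:rank-reduction}. At such a point the image in $B$ is Hodge generic in $T_R$. The monodromy $N$ of the weakly special $Y$ through it is then normal in the connected monodromy of $T_R$, and by Andr\'e's normality \cite{Andre1992} it is normal in $P_R$. Because $R$ is irreducible and the available rational groups form a countable set, the group attached at a very general point is locally constant, hence constant on a dense set. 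By Zariski closedness of the dimension condition \eqref{eq:BU-envelope-inequality}, which is algebraic in $(x,\lambda)$ once $N_R$ is fixed, it then extends to every point of $R$ contributing germs of dimension exactly $e$. For a special point I would take the flat $N_R$-subtorsor obtained as a limit, or simply the $N_R$-orbit through $x$ over the weakly special $N_R$-orbit $Y$ containing $\pi_{\mathrm{per}}(x)$. Upper semicontinuity of local dimension then preserves the inequality.

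Constancy of the quotient period map on $Y$ holds because $Y$ is an $N_R$-orbit of the normal subgroup, as in the diagram of Proposition~\ref{prop:saturated}. The remaining bookkeeping concerns finite covers and normalizations: dimensions of branches are preserved under finite maps, and we select a branch realizing the bound. I expect the main obstacle to be the uniformity of $N_R$ over all of $R$ rather than only at very general points. If the limiting argument fails, I would instead stratify $R$ further and replace $R$ by the finitely many components on which the group is constant. This keeps the finiteness that the strata argument later needs.
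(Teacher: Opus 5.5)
Your route is the paper's. You take the flat $N_R$-subtorsor and inequality \eqref{eq:BU-envelope-inequality} from Baldi--Urbanik's Proposition~8.1. You then obtain normality in $P_R$ by taking the germ at a very general point of $R$, lying over a Hodge-generic point of $T_R$, and applying Andr\'e. (A small correction: $\dim N_R$ counts the vertical group directions of the subtorsor, whose horizontal leaves have dimension $\dim Y$.)

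Two points need tightening.

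First, $N_R\lhd P_R$ must come from Andr\'e applied to $Y$ itself. Its generic Mumford--Tate group is $P_R$ because $Y$ contains a Hodge-generic point. The chain $N_R\lhd H_{T_R}\lhd P_R$ that your sentence suggests proves nothing, since normality is not transitive; the paper stresses exactly this.

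Second, the paper does not re-derive the uniformity of $N_R$ over $R$. It reads it off the cited construction, where one group per component is fixed as the monodromy of the weakly special closure of a very general germ. So your countability-and-limit argument is unnecessary. If you keep it, ``countably many groups, hence locally constant'' does not follow as written. You need three steps:
\begin{enumerate}
\item each locus $R_N$, where the $N$-saturated envelope satisfies \eqref{eq:BU-envelope-inequality}, is a closed analytic subset (Cartan--Remmert for fibre dimension);
\item Baire on the comeagre very-general locus then yields one group $N$, attached at some very general point and hence normal in $P_R$, for which $R_N$ has interior;
\item irreducibility of $R$ forces $R_N=R$.
\end{enumerate}
Your fallback of cutting $R$ into ``finitely many'' pieces is unjustified, since nothing a priori bounds the number of groups that occur.
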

\begin{proof}
Apply \cite[Proposition~8.1]{BaldiUrbanik2025}.  Its part~(i) supplies
an $N_R$-subtorsor over a weakly special $Y$, with connected monodromy
contained in $N_R$.  In the construction on pages~43--44, $Y$ is an
irreducible component of a fibre of the quotient period map after a
finite cover.  The subtorsor is obtained by continuing the given flat
leaf and saturating by $N_R$; in particular it is compatible with the
flat connection.  Its dimension is $\dim Y+\dim N_R$, whereas a
horizontal leaf in it has dimension $\dim Y$.  Thus their equation~(8.1)
is exactly \eqref{eq:BU-envelope-inequality}.

For completeness, the group needed here has more normality than the
bare containment $N_R\lhd H_{T_R}$ displayed in that proposition.
In its proof the group is chosen as the connected monodromy of the
weakly special closure of a very general leaf-intersection germ over
$R$.  The exceptional set explicitly includes the inverse image of
the proper Hodge locus of $T_R$.  The chosen germ therefore passes
through a Hodge-generic point of $T_R$.  Its weakly special closure
has generic Mumford--Tate group $P_R$.  Andr\'e's normality theorem,
including the first part of its proof
\cite[Theorem~1 and its proof, pp.~10--11]{Andre1992}, gives
\begin{equation}\label{eq:full-MT-normality}
 N_R\lhd P_R,\qquad N_R\subseteq P_R^{\mathrm{der}}.
\end{equation}
The first assertion is the normality needed for the quotient datum;
it is not inferred merely from normality in the derived group.
The same $N_R$ is used for every germ in the component $R$, as established
in the cited proof.  Changes of marking transport these groups in the
rational local system; arithmetic translates give the same images in the
mixed Shimura quotient.  This proves the additional normality assertion
and the compatibility with the normal quotient datum used below.
\end{proof}

Only components meeting the locus of irreducible germs of exact dimension
$e$ are needed.  At such a point the generic local intersection dimension
on its component is also $e$, by Lemma~\ref{lem:flat-dimension-closed}.
Since $1\le e\le m$, the construction produces finitely many groups and
associated special domains.  It imposes no finiteness assertion on the
set of individual algebraic subvarieties $Y$.

The next calculation is where the weight-$-2$ directions enter.  It
subtracts the dimension of the period-frame stabilizer before applying
the normal-orbit rank.

\begin{lemma}\label{lem:torsor-leaf-dimension}
Suppose the germ in Lemma~\ref{lem:flat-leaf-envelope} comes from an
actual Betti leaf.  Require that $x$ be the frame determined by a local
rational marking of the given variation, and transport $N_R$ in that
same marking.  Put $b=\pi_{\mathrm{per}}(x)$.  If a finite cover or
normalization is used, choose a lift and a local irreducible branch
realizing \eqref{eq:BU-envelope-inequality}, and take all local
dimensions on that branch and its base image.  No assertion about
every lift is intended.  Let $O$ be the normal $N_R$-orbit through
its period point and let
\[
 h_R=\frac12\dim_\Q V_{p,N_R}+\dim U_{N_R}.
\]
Then the subvariety $Y$ in that lemma satisfies
\begin{equation}\label{eq:envelope-base-dimension}
 \dim_bY\ge h_R+e.
\end{equation}
Consequently the local fibre through $b$ of the normal quotient on
$B$ intersected with the corresponding special locus has dimension
at least $h_R+e$.
\end{lemma}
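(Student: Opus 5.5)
We start from inequality \eqref{eq:BU-envelope-inequality} in Lemma~\ref{lem:flat-leaf-envelope} and convert the torsor dimension on its left side into a base dimension. This is done by subtracting the dimension of the stabilizer of the period frame. Over $Y$, the flat $N_R$-subtorsor $\mathscr P_{x,\lambda}$ has dimension $\dim Y+\dim N_R$. The evaluation map $\rho$ sends it into $N_R(\C)\cdot\rho(x)$, the complexified normal orbit through the period point, which is the algebraic orbit containing $O$. We then bound the left side of \eqref{eq:BU-envelope-inequality} fibrewise over $\rho$. A point of $\mathcal Z_\lambda\cap\mathscr P_{x,\lambda}$ is a pair consisting of a base point of $Y$ and a frame over it. The frames over a fixed point with a fixed image under $\rho$ form a coset of the stabilizer $\mathrm{Stab}_{N_R(\C)}(\rho(x))$, because $\rho$ is $H$-equivariant and the $N_R$-subtorsor restricts the frames to one $N_R$-coset. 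The locus $\mathcal Z_\lambda$ imposes exactly the leaf equations $\Lambda_\lambda$ on $\rho$. So the image under $\rho$ lies in $\Lambda_\lambda\cap N_R(\C)\rho(x)$. Since the germ comes from an actual Betti leaf, $\lambda$ is a real-label parameter, and Corollary~\ref{cor:leaf-closure} together with Lemma~\ref{lem:orbit-rank} gives this intersection the local dimension $\dim O-h_R$.

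Combining these, we will show
\[
 \dim N_R+e\le\dim_x(\mathcal Z_\lambda\cap\mathscr P_{x,\lambda})
 \le \dim\mathrm{Stab}+(\dim O-h_R)+\dim_bY-\dim_b Y_O,
\]
where $Y_O$ accounts for base directions already absorbed into the orbit. The bookkeeping is cleanest if we parametrize the subtorsor near $x$ as $Y\times N_R$ via the flat section through $x$. The map to the compact dual is then $(y,n)\mapsto n\cdot\Phi_x(y)$. The quotient period map by $N_R$ is constant on $Y$, so $\Phi_x(Y)\subseteq N_R(\C)\rho(x)$, and the whole image lies in that single orbit, of dimension $\dim N_R-\dim\mathrm{Stab}=\dim O$. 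The fibres of $(y,n)\mapsto n\Phi_x(y)$ then have dimension at most $\dim_bY+\dim\mathrm{Stab}$. Intersecting with the leaf preimage gives
\[
 \dim N_R+e\le(\dim O-h_R)+\dim\mathrm{Stab}+\dim_bY-\dim O+\dim O .
\]
This needs care: the correct count uses the fibre of the orbit map, which has dimension $\dim Y+\dim N_R-\dim O$. We therefore obtain $\dim N_R+e\le(\dim O-h_R)+(\dim_bY+\dim N_R-\dim O)$, that is, $\dim_bY\ge h_R+e$.

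For the final assertion, $Y\subseteq T_R$ is weakly special and the quotient period map by $N_R$ is constant on it. Since $N_R\lhd P_R$ by \eqref{eq:full-MT-normality}, this quotient factors through the normal mixed Shimura quotient. Hence $Y$ lies in the fibre through $b$ of the normal quotient restricted to $B$ intersected with the special locus of $T_R$. Passing to finite images through the chosen branch preserves dimension.

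The main obstacle will be justifying that the marking-compatible choice of $x$ makes $\Phi_x(Y)$ land in the same orbit $O$ on which Lemma~\ref{lem:orbit-rank} computes $h_R$, with $N_R$ transported consistently. We also need to ensure that the leaf intersection in the complexified orbit has the real-label dimension $\dim O-h_R$ rather than a larger complex-parameter dimension. We will handle the latter by using that $\lambda$ is actual, so that Corollary~\ref{cor:leaf-closure} applies verbatim.
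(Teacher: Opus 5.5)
Your proposal is correct and follows the paper's route. You write the flat $N_R$-subtorsor locally as $Y\times N_R$ and observe that evaluation lands in the single orbit $\check O=N_R(\C)\rho(x)$, with fibres of dimension $\dim_bY+\dim\operatorname{Stab}$. You then bound the preimage of the actual Betti-leaf intersection, which has dimension $\dim\check O-h_R$, by $\dim_bY+\dim N_R-h_R$, and compare this with the lower bound $\dim N_R+e$ from \eqref{eq:BU-envelope-inequality}.

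The paper instead reparametrizes by a local section $a(y)$ with $F(y)=a(y)F$, which makes the evaluation map literally $(y,n)\mapsto nF$ and gives an equality; your fibre-dimension upper bound is all that is needed. Delete the intermediate display containing the undefined $Y_O$, which plays no role in the argument.
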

\begin{proof}
The marking hypothesis identifies the transported rational subgroup and
the actual Betti equations in one period chart.  An arbitrary complex
frame is not being treated as a rational marking.  Work on the chosen
local branch throughout; passage to a finite normalization does not
change its dimension.  Write $N=N_R$, $F=\rho(x)$ and
$\check O=N(\C)F$.  In a flat marking let $F(y)$ denote the nearby
period point over $y\in Y$.  Constancy of the quotient period map puts
all these points in the same orbit $\check O$.  The homogeneous map
$N(\C)\to\check O$, $a\mapsto aF$, has local holomorphic sections;
choose one to write $F(y)=a(y)F$ near the selected point.  Evaluation
on the flat subtorsor initially has the form $(y,n)\mapsto nF(y)$.
Replacing the group coordinate $n$ by $na(y)$ makes it
\[
 (y,n)\longmapsto nF.
\]
A local section of the homogeneous map now identifies its fibres with
the product of $Y$ and $\operatorname{Stab}_N(F)$.  The latter has
dimension $\dim N-\dim\check O$.  The construction works after
restriction to the chosen local branch even when $Y$ is singular.
Therefore
\begin{align}\label{eq:torsor-dimension-identity}
 \dim_x(\mathcal Z_\lambda\cap\mathscr P_{x,\lambda})
   &=\dim_bY+\dim N_R-\dim\check O
       +\dim_F(\Lambda_\lambda\cap\check O)\notag\\
   &=\dim_bY+\dim N_R-h_R.
\end{align}
Indeed $O$ is open in $\check O$ at $F$, and
Lemma~\ref{lem:orbit-rank} computes the intersection with this
\emph{actual} Betti leaf to have dimension $\dim\check O-h_R$.
All calculations take place in the period chart, so the closure used
in \eqref{eq:leaf-incidence} contributes no additional local component.
Comparing \eqref{eq:torsor-dimension-identity} with
\eqref{eq:BU-envelope-inequality} proves
\eqref{eq:envelope-base-dimension}.  Finally $Y$ is contained in the
normal-quotient fibre, which proves the last assertion.
\end{proof}

For each of the finitely many components just obtained, take the special
closure of $T_R$ in $\MM_{g,r}$, its datum $(P_j,\DD_j)$, and the group
$N_j=N_R$.  At a compatible neat level these give
\begin{equation}\label{eq:finite-quotient-data}
 f_j:S_j^{\dagger}\longrightarrow\MM_{g,r},\qquad
 q_j:S_j^{\dagger}\longrightarrow Q_j.
\end{equation}
The first map is finite onto its special image by
\cite[Proposition~3.8(a)]{Pink1990}.  The second is induced by the
normal quotient datum of \cite[Proposition~2.9]{Pink1990}; its
connected local fibres are the normal orbits.  Algebraic realization
and functoriality are those of \cite[Chapter~9, especially~9.24--9.25]{Pink1990}.  The finite source accounts for possible self-intersections
of the special image.  No quotient map on the whole of $\MM_{g,r}$ is
required.  Put
\[
 B_j=B\times_{\MM_{g,r}}S_j^{\dagger},\qquad
 \nu_j:B_j\longrightarrow B,\qquad
 \alpha_j=q_j\circ\operatorname{pr}_2,
\]
and, with $h_j$ as in Lemma~\ref{lem:orbit-rank}, define
\begin{equation}\label{eq:closed-certificates}
 E_j(t)=\nu_j\{x\in B_j:
       \dim_x\alpha_j^{-1}(\alpha_j(x))\ge h_j+t\}.
\end{equation}
The final fibre-dimension conditions are imposed on the finite
pullbacks $B_j$ themselves; the smooth opens used to compute the
generic groups do not restrict the points tested by these closed
conditions.
Local fibre dimension is upper semicontinuous on the source of a
finite-type algebraic morphism.  Each locus inside braces is closed,
and its image $E_j(t)$ is closed because $\nu_j$ is finite.

These finitely many loci suffice for the desired geometric conclusion.

\begin{proposition}[Mixed Betti strata]\label{prop:betti-strata}
For every integer $t\ge0$, the subset $B^{\mathrm{Betti}}(t)$ is Zariski closed
in $B$.  More precisely, for $1\le t\le m$ the finite quotient data
above satisfy
\begin{equation}\label{eq:finite-strata-equality}
 B^{\mathrm{Betti}}(t)=\bigcup_jE_j(t).
\end{equation}
\end{proposition}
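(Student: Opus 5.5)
The plan is to prove the two inclusions in \eqref{finite-strata-equality} separately. Closedness then follows because each $E_j(t)$ is closed. The trivial cases $t=0$ (where $B^{\mathrm{Betti}}(0)=B$) and $t>m$ (where the stratum is empty) are handled directly.

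\emph{Inclusion $E_j(t)\subseteq B^{\mathrm{Betti}}(t)$.} Take $x\in B_j$ whose $\alpha_j$-fibre through $x$ has local dimension at least $h_j+t$. Near $f_j(x)$ this fibre germ lies in a normal $N_j$-orbit $O$ of the special domain $\DD_j$, read through a local period lift. By \cref{lem:orbit-rank}, the mixed Betti map restricted to $O$ has constant complex rank $h_j$. Restricted to the fibre germ, which sits inside $O$, its rank is therefore at most $h_j$. The germ has dimension at least $h_j+t$, so the leaf through the image point meets $B$ in a germ of dimension at least $t$. Upper semicontinuity of local fibre dimension makes this pointwise; we apply it to a branch of the fibre through $x$, and finiteness of $\nu_j$ transfers it to $\nu_j(x)$. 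This direction is elementary.

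\emph{Inclusion $B^{\mathrm{Betti}}(t)\subseteq\bigcup_jE_j(t)$.} Take $b$ with $\dim_b(B\cap\mathcal L_b)\ge t$ and pick an irreducible local component of dimension $e\ge t$. Choose the frame $x$ in the torsor given by a local rational marking at $b$. The actual Betti leaf corresponds to a real parameter $\lambda$, so $(x,\lambda)$ lies in $\mathcal Z(f,e)$ for one of the finitely many families. Using \cref{lem:flat-dimension-closed}, choose an irreducible component $R$ through $(x,\lambda)$ on which the germs have exact dimension $e$; if needed, first pass to the subgerm-dimension stratification so that $e$ is the generic local value. \cref{lem:flat-leaf-envelope} then provides $N_R\lhd P_R$ and a weakly special $Y\ni b$. \cref{lem:torsor-leaf-dimension} gives $\dim_bY\ge h_R+e\ge h_R+t$, and $Y$ lies in a normal-quotient fibre of the special closure of $T_R$. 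This closure and $N_R$ are among the finitely many data $(S_j^\dagger,q_j)$. Since $Y\subseteq B$, the lift of $Y$ to $B_j$ lies in the $\alpha_j$-fibre through a lift of $b$, and that fibre has dimension at least $h_j+t$. Hence $b\in E_j(t)$.

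The main obstacle is bookkeeping in this second inclusion. First, every point $b$, not just a very general one, must be covered by a component $R$ whose generic group is the one used to define $E_j$. This is where the dichotomy ``exact dimension $e$ on $R$'' matters: at special points the local germ dimension may jump, and one has to pass to the component of $\mathcal Z(f,e)$ for the actual dimension at that point. Second, the lift of $b$ to the finite source $S_j^\dagger$ must match the branch chosen in \cref{lem:torsor-leaf-dimension}. My first attempt would be to enumerate $j$ over all pairs (family, $e$, component $R$) with $1\le e\le m$, so that the union already contains every component meeting the germ at $b$. I would then rely on the finiteness of $\nu_j$ and on the statement, proved in \cref{lem:torsor-leaf-dimension}, that the bound holds on some lift and branch.
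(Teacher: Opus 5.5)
Your overall structure matches the paper's. The converse inclusion is the same argument: in the normal orbit $O$ the leaf is a complex submanifold of codimension $h_j$ by \cref{lem:orbit-rank}, so a fibre germ of dimension $\ge h_j+t$ meets it in dimension $\ge t$. State it via this intersection inequality rather than ``rank at most $h_j$''. The upper-semicontinuity remark there is not needed, since the condition is pointwise. The forward inclusion uses the rational-marking frame, a component $R$ of $\mathcal Z(f,e)$, and \cref{lem:flat-leaf-envelope,lem:torsor-leaf-dimension}, as in the paper.

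There is, however, a genuine gap in the forward inclusion at points $b$ where the local intersection $B\cap\mathcal L_b$ is \emph{reducible}. \cref{lem:flat-leaf-envelope} applies only to germs $\mathscr U=(\mathscr L_x\cap\mathcal Z_\lambda,x)$ that are irreducible of exact dimension $e$. It is the whole intersection germ at $x$ that must have this property, not one branch of it. Picking an irreducible local component and then invoking the lemma at $(x,\lambda)$ is therefore not licensed when other components pass through $b$. If the chosen component is not of maximal dimension, the exact-dimension hypothesis fails as well.

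Your proposed remedies do not address this. Taking $e$ to be the actual dimension, passing to a dimension stratification, or enumerating all triples (family, $e$, $R$) only handles the jump in local dimension. That issue is in fact automatic: if the germ at $(x,\lambda)$ has exact dimension $e$, no component of $\mathcal Z(f,e)$ through it lies in the closed set $\mathcal Z(f,e+1)$. But none of these steps makes the germ at $b$ irreducible.

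The missing step is to use the closedness of $\bigcup_jE_j(t)$ \emph{inside} the proof of this inclusion, not only afterwards. Choose a local component $C$ of $B\cap\mathcal L_b$ of maximal dimension $e\ge t$. Points $b'\in C$ near $b$ that are smooth on $C$ and lie off the other local components are on the same leaf, with the same actual parameters $\lambda$. At such $b'$ the leaf-intersection germ is the smooth germ of $C$, hence irreducible of exact dimension $e$. Your argument then applies at $b'$ and puts $b'$ in $\bigcup_jE_j(t)$. That union is Zariski closed (upper semicontinuity on $B_j$ plus finiteness of $\nu_j$), hence analytically closed, so it contains $b$. This is exactly how the paper treats the reducible case.
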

\begin{proof}
The cases $t=0$ and $t>m$ are immediate.  Suppose $1\le t\le m$.
First let a point $b$ have an irreducible local Betti-leaf intersection
of dimension $e\ge t$.  Lift $b$ to a period frame $x$ on the flat
leaf corresponding to its marking, and use its actual parameters
$\lambda=(a,b_{\mathrm{Betti}},c)$ in
\eqref{eq:complexified-leaves}.  Projection from the horizontal leaf
is a local analytic isomorphism onto the base.  Thus
$(\mathscr L_x\cap\mathcal Z_\lambda,x)$ is irreducible of dimension
$e$, and belongs to a component of $\mathcal Z(f,e)$.
Lemmas~\ref{lem:flat-leaf-envelope} and~\ref{lem:torsor-leaf-dimension}
give a dimension-attaining lift and hence membership in one of the
$E_j(t)$.  This is an existence statement about a suitable branch,
which is exactly what the definition of $E_j(t)$ requires.

If the local leaf intersection at $b$ is reducible, choose a component
of maximal dimension $e\ge t$.  At nearby general smooth points of
this component, away from its intersections with the other local
components, the reduced leaf-intersection germ is irreducible of
dimension $e$.  Those points belong to the finite union of the closed
sets $E_j(t)$ by the preceding paragraph.  Since that union is also
analytically closed, it contains $b$.  This proves the forward inclusion
without assuming local irreducibility at every point of the stratum.

Conversely, a point of $E_j(t)$ has a lift at which a component of the
intersection of $B_j$ with a quotient fibre has dimension at least
$h_j+t$.  Locally that fibre is a normal orbit $O$.  In $O$ the
Betti leaf through the point is a complex submanifold of codimension
$h_j$, by Lemma~\ref{lem:orbit-rank}.  The analytic intersection
dimension inequality therefore gives a leaf-intersection germ of
dimension at least $t$.  The finite special-domain map is locally the
map of the corresponding period subdomain, so this germ maps to the
Betti-leaf germ in $B$ with the same dimension.  Hence
$E_j(t)\subseteq B^{\mathrm{Betti}}(t)$.
This proves \eqref{eq:finite-strata-equality}, and its right side is
Zariski closed.
\end{proof}

\begin{remark}
Weak optimality and maximality among all monodromically atypical
subvarieties are different conditions.  Accordingly, the preceding
proof uses neither an identification of those conditions nor an
unrestricted exact-fibre interpretation of
\cite[Lemma~7.5]{BaldiUrbanik2025}.
It uses the fixed algebraic family \eqref{eq:complexified-leaves},
Proposition~8.1 of that paper, and the stabilizer subtraction
\eqref{eq:torsor-dimension-identity}.
Gao--Zhang's \cite[Proposition~4.7]{GaoZhang2026} states a
weakly-optimal finiteness result in their weight-$-1,0$ setting;
we do not cite it as a theorem for weight $-2$.
For the distinction between structural properties of mixed Shimura
varieties and weak finiteness, compare
\cite[\S9, Theorem~9.6 and the following remark in the cited author
version]{BarroeroDill2025}.  Our argument establishes the finite
closed loci needed for Betti strata, not that broader weak-finiteness
statement.
\end{remark}

\subsection{Descent in transcendence degree}\label{sec:complex}
We prove the dimension statement and its torsion-to-rank consequence together by
induction on transcendence degree over $\Qbar$.  The pattern follows the
abelian argument in \cite[Section~10]{GaoHabegger2026}; the geometric input
needed here is \cref{prop:betti-strata}.

The following elementary descent lemma keeps track of the one dimension that can be gained when adjoining one transcendental parameter.

\begin{lemma}\label{lem:field-closure}
Let $K\subseteq L\subseteq\C$ be algebraically closed fields with
$\trdeg_KL=1$, let $V/K$ be a variety, and let $X\subseteq V_L$ be
a \emph{closed} irreducible $L$-subvariety.
Let $\mathfrak X\subseteq V$ be its smallest $K$-defined closed
algebraic closure.  Then
\[
 \dim\mathfrak X\le\dim X+1.
\]
If equality of dimensions $\dim\mathfrak X=\dim X$ holds, then $X$ is
$K$-defined.  The $K$-closure of $X(L)\cap\mathfrak X(K)$ remains contained
in $X$ after extension to $L$.
\end{lemma}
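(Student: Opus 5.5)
The plan is to realize $\mathfrak X$ through a spreading-out argument. Since $\trdeg_KL=1$ and $X$ is of finite type, $X$ is defined over a finitely generated subfield $K(\xi)\subseteq L$, where $\xi$ generates a function field of transcendence degree one over $K$. Concretely:
\begin{enumerate}
\item Choose a smooth irreducible affine $K$-curve $C$ whose function field contains the coefficients of equations for $X$.
\item Spread $X$ out to a closed irreducible subvariety $\mathcal X\subseteq V\times C$ over $K$ whose generic fibre over $K(C)$, base changed along $K(C)\hookrightarrow L$, contains $X$ as a geometric component.
\item Shrinking $C$ if needed, flatness gives
\[
 \dim\mathcal X=\dim X+1.
\]
\item The image $\operatorname{pr}_1(\mathcal X)\subseteq V$ is irreducible and $K$-defined. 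Its closure is a $K$-closed set containing $X$, hence contains $\mathfrak X$, which gives $\dim\mathfrak X\le\dim X+1$.
\end{enumerate}

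For the equality case, suppose $\dim\mathfrak X=\dim X$. Then $X\subseteq\mathfrak X_L$ is closed, irreducible, and of the same dimension, so $X$ is an irreducible component of $\mathfrak X_L$. It remains to see that $\mathfrak X_L$ is irreducible. Minimality makes $\mathfrak X$ irreducible over $K$, since an irreducible $X$ lies in one component. Because $K$ is algebraically closed, $K$-irreducibility is geometric irreducibility, so $\mathfrak X_L$ is irreducible. Hence $X=\mathfrak X_L$, and $X$ is $K$-defined.

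For the last assertion, let $Z$ be the $K$-Zariski closure of $X(L)\cap\mathfrak X(K)$, meaning $K$-points of $V$ lying in $X$. I will show $Z_L\subseteq X$ via the conjugate description:
\begin{enumerate}
\item A $K$-point lying on $X$ also lies on every $\operatorname{Aut}(L/K)$-conjugate of $X$, and all these conjugates are closed subsets of $V_L$.
\item The intersection $X^\ast$ of all these conjugates is a closed $\operatorname{Aut}(L/K)$-stable subset of $V_L$.
\item Since $K$ is the fixed field of $\operatorname{Aut}(L/K)$ (characteristic zero, $K$ algebraically closed in $L$), Galois descent for closed subsets shows that $X^\ast$ is defined over $K$.
\item Thus $X^\ast=W_L$ for a closed $W\subseteq V$ containing all the points in question. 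Therefore $Z\subseteq W$ and $Z_L\subseteq W_L\subseteq X$.
\end{enumerate}

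The main obstacle is step 3 above, justifying descent for the possibly infinite intersection $X^\ast$. Noetherianity reduces the intersection to finitely many conjugates. The fixed-field property is the real input: for $L$ algebraically closed of transcendence degree one over the algebraically closed $K$, the fixed field of $\operatorname{Aut}(L/K)$ is exactly $K$. Alternatively, I would avoid automorphisms altogether by spreading out. The set of $K$-points of $X$ is contained in $\bigcap_{c}\mathcal X_c$ over a dense set of $c\in C(K)$, using that a $K$-point on the generic fibre lies on the fibre over every point of a dense open subset of $C$. That intersection is $K$-closed, and the intersection over all fibres of a dense open is contained in the generic fibre, hence in the corresponding component, which is $X$ after choosing $\mathcal X$ irreducible. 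I would present this spreading-out version, since it stays inside the constructions already used for the dimension bound.
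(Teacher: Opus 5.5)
Your first two parts match the paper's proof: spreading $X$ over a curve and taking the image closure gives $\dim\mathfrak X\le\dim X+1$, and geometric irreducibility of $\mathfrak X$ over the algebraically closed $K$ handles the equality case.

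For the last assertion you take a different and heavier route. The paper works on an affine chart and writes each defining polynomial of $X$ as $f=\sum_j c_jf_j$, with $f_j$ defined over $K$ and $c_j\in L$ linearly independent over $K$. At a $K$-point $P$ of $X$ the values $f_j(P)$ lie in $K$, so linear independence forces every $f_j(P)=0$. Hence each $f_j$ vanishes on the $K$-closure $Z$, and $f$ vanishes on $Z_L$. This argument needs neither $\trdeg_KL=1$, nor algebraic closedness of $L$, nor any automorphisms.

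Your Galois version is correct. The fixed field of $\operatorname{Aut}(L/K)$ is $K$ because $L=\overline{K(x)}$ for any $x\in L\setminus K$, and $x\mapsto x+1$ extends to $L$; that, rather than characteristic zero, is the real reason. Descent of stable subspaces holds whenever the fixed field is $K$. But that descent lemma is itself proved by the same echelon-form linear algebra, so this route is a detour through the paper's argument.

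Your spreading-out version also works, but two points need tightening.
\begin{enumerate}
\item A $K$-point $P$ of $X$ satisfies $\{P\}\times C\subseteq\mathcal X$ by closedness, so it lies on every fibre. For $W=\bigcap_{c\in C(K)}\mathcal X_c$, the inclusion $W\times C\subseteq\mathcal X$ follows from density of $K$-points, and this gives $W_{K(C)}\subseteq\mathcal X_\eta$.
\item The reason $W_L\subseteq X$ is your choice in the first step: the coefficients of equations for $X$ lie in $K(C)$, so the base change of the generic fibre to $L$ is exactly $X$. Irreducibility of $\mathcal X$ alone would not suffice, since an irreducible total space can have a geometrically reducible generic fibre (e.g.\ $x^2=c$ over the $c$-line). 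You should therefore drop the hedge that $X$ is merely a geometric component of that fibre.
\end{enumerate}
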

\begin{proof}
Only finitely many coefficients define $X$.  They lie in a finite extension
of $K(t)$, so spreading over a curve and taking the image closure proves the
first inequality.  Equivalently, contract the prime ideal to a coordinate
ring over $K$ and apply the transcendence-degree formula.
Since $K$ is algebraically closed, $\mathfrak X$ is geometrically irreducible.
If the dimensions are equal, $X$ is the entire base extension of
$\mathfrak X$.

For the last assertion, on an affine chart write a polynomial vanishing on
$X$ as a finite sum of $K$-polynomials with coefficients linearly independent
over $K$.  At a $K$-point of $X$, every coefficient polynomial vanishes.
Therefore each vanishes on the $K$-closure of these points; the original
polynomial vanishes on its base extension.  Apply this to all defining
polynomials of $X$.
\end{proof}

Torsion loci of fixed order are algebraic over the smaller field.  Their one-dimensional closures supply the leaves needed in the induction.

\begin{proposition}\label{prop:field-step}
Suppose the dimension statement of \cref{thm:main} holds over an algebraically
closed $K\subseteq\C$.  Then it holds over every algebraically closed
$L\subseteq\C$ with $\trdeg_KL=1$.
\end{proposition}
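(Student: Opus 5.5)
Following the abelian strategy of \cite[\S10]{GaoHabegger2026}, I would argue by contradiction in universal-section form. By Proposition~\ref{prop:universal-reduction} and Lemma~\ref{lem:descent-special}, it suffices to take a smooth irreducible $B\subseteq\MM_{g,r}$ defined over $L$, relatively nonspecial, with Zariski-dense torsion values of its tautological section and $m=\dim B<d$, and to derive a contradiction. First, I reduce to $L$-points. By the fixed-order observation in Section~\ref{sec:qbar}, each locus $\{[n]h=0\}$ is $L$-defined with dense $L$-points, so the $L$-rational torsion specializations are dense in $B$. Let $\mathfrak B\subseteq\MM_{g,r}$ be the $K$-closure of $B$. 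Lemma~\ref{lem:field-closure} gives $\dim\mathfrak B\le m+1$, and if $B$ is already $K$-defined, the hypothesis over $K$ applies directly. So I may assume $\dim\mathfrak B=m+1$.

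Next, I use $K$-points to pass to a $K$-defined problem. For each $n$, the locus $\{[n]h=0\}\subseteq B$ is an $L$-subvariety. Its $K$-closure inside the $K$-defined fixed-order torsion locus $\{[n]h=0\}\subseteq\mathfrak B$ has dimension at most one more. I would take the family $\mathfrak B$ over $K$ with its tautological section. Its torsion values contain those of $B$, and the $K$-closure of the union is all of $\mathfrak B$, since that closure contains $B$ and is $K$-defined. Hence $\mathfrak B$ is a torsion-dense universal-section problem over $K$ of dimension $m+1\le d$. If $\mathfrak B$ has proper special containment, then so does $B$, contradicting the assumption. Otherwise the dimension statement over $K$ gives $m+1\ge d$, so $m+1=d$, and Corollary~\ref{cor:rank-criterion} over $K$ gives $\ell(\mathfrak B)=d=\dim\mathfrak B$. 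Thus $\mathfrak B$ is immersive for the mixed Betti map. Its generic Betti leaves are zero-dimensional, and the Betti strata $\mathfrak B^{\mathrm{Betti}}(1)$ form a proper closed $K$-subset by Proposition~\ref{prop:betti-strata}.

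The main step is to show that $B$ has positive-dimensional Betti-leaf intersections, so that $B\subseteq\mathfrak B^{\mathrm{Betti}}(1)$, a proper $K$-closed subset. Since $\mathfrak B$ is the smallest $K$-closed set containing $B$, this is a contradiction. The mechanism is as follows. $B$ is a fibre of the one-parameter $K$-family $\mathfrak B\to C$ obtained by spreading out over a $K$-curve $C$. Torsion labels $(a,b,c)$ are rational and locally constant. Each component of $\{[n]h=0\}\cap\mathfrak B$ that meets $B$ is $K$-defined of dimension $\ge m+1-d=0$. Via Lemma~\ref{lem:field-closure}, its $L$-points in $B$ lie in the $K$-closure, which forces these fixed-order torsion components of $\mathfrak B$ to be curves transverse to $B$ along Betti leaves. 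Their union is dense, so the leaf-intersection dimension of $\mathfrak B$ is at least $1$ on a dense set. Closedness of the strata then gives $\mathfrak B^{\mathrm{Betti}}(1)=\mathfrak B$, contradicting $\ell(\mathfrak B)=\dim\mathfrak B$.

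The obstacle I expect is exactly this dimension count. One must show that an $L$-rational torsion point of $B$ which is not $K$-rational lies on a positive-dimensional $K$-defined torsion component. I would first try Lemma~\ref{lem:field-closure} applied to the zero-dimensional fixed-order loci of $B$: a non-$K$-rational isolated point has a $K$-closure that is a curve contained in a fixed-order torsion locus, hence inside a single Betti leaf. Then I would handle separately the case in which infinitely many torsion points are $K$-rational. In that case their $K$-closure, by the last assertion of Lemma~\ref{lem:field-closure}, stays inside $B$ and is $K$-defined and torsion-dense. Applying the hypothesis over $K$ to it, together with relative nonspecialness transferred via Lemma~\ref{lem:descent-special}, then gives the contradiction $m\ge d$.
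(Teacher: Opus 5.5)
Your architecture matches the paper's. You pass to the $K$-closure $\mathfrak B$ and get $\dim\mathfrak B=m+1$. You transfer torsion density and nonspecialness to $\mathfrak B$; for the latter, note that this uses that universal special loci are defined over $\Qbar\subseteq K$, so $B$, $\mathfrak B$ and their base images have the same special closures. You then force $\dim\mathfrak B=d$ and $\ell(\mathfrak B)=d$ via Corollary~\ref{cor:rank-criterion}. Finally, you contradict this using curves in fixed-order loci, which lie in Betti leaves, together with the closedness in Proposition~\ref{prop:betti-strata}.

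The gap is the step you state as ``their union is dense'', and specifically your treatment of the $K$-rational torsion points, which fails. You cannot apply the hypothesis over $K$ to the $K$-closure $Z$ of those points. $Z$ is a proper, possibly reducible, subvariety of $\overline B$, and relative nonspecialness is not inherited by subvarieties. Lemma~\ref{lem:descent-special} only descends special containment through finite refinements and tautological base changes; it says nothing about subvarieties. Indeed, a single $K$-rational torsion point is a $K$-defined, torsion-dense, zero-dimensional special subvariety. Your argument would therefore rule out $K$-rational torsion points altogether, which is false. The dichotomy ``infinitely many'' is also not the relevant one.

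No second application of the hypothesis is needed; the paper's dimension count suffices.
\begin{enumerate}
\item Work with the \emph{closed} image $\overline B$, as Lemma~\ref{lem:field-closure} requires. Its last assertion places the $K$-closure $Z$ of $\overline B(L)\cap\mathfrak B(K)$ inside $\overline B$, not inside the open $B$. Hence $\dim Z\le m<m+1=\dim\mathfrak B$, so $Z$ is a proper closed subset of $\mathfrak B$.
\item Since the torsion points are dense in $\overline B$, $\mathfrak B$ is the $K$-closure of all of them.
\item The $K$-closure of a set of points is the closure of the union of the $K$-closures of the individual points, because the closure of a union of $K$-closed sets is $K$-closed.
\item $\mathfrak B$ is irreducible. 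Together with the previous steps, the one-dimensional $K$-closures of the non-$K$-rational torsion points therefore have Zariski-dense union in $\mathfrak B$.
\end{enumerate}
This density is exactly what makes $\mathfrak B^{\mathrm{Betti}}(1)$ dense on the smooth locus, hence equal to it by closedness.

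Your intermediate formulation, ``$B\subseteq\mathfrak B^{\mathrm{Betti}}(1)$, a proper $K$-closed subset'', would also need $K$-definedness of the stratum, which Proposition~\ref{prop:betti-strata} does not assert. The density-plus-closedness conclusion you end with avoids this.
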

\begin{proof}
By \cref{cor:rank-criterion}, the rank statement holds over $K$ as well.
Apply universal reduction, but at this stage retain the \emph{closed}
classifying image $X\subseteq\MM_{g,r,L}$ rather than just a smooth
locally closed open.  It is a relatively nonspecial torsion-dense
counterexample, of dimension $m<d$: taking its closure changes neither
dimension nor the generic full-relative condition.  Smooth opens will
be used only later to discuss Betti ranks.  Let $\mathfrak X$ be its smallest
$K$-defined closure.  Lemma~\ref{lem:field-closure} and the theorem over $K$
show that
\begin{equation}\label{eq:closure-dim}
 \dim\mathfrak X=m+1.
\end{equation}
Indeed, if it had dimension $m$, then $X$ would be $K$-defined and its
$K$-torsion points would be dense, because each fixed-order torsion locus is
$K$-defined.

All universal special subvarieties are defined over $\Qbar\subseteq K$ by
the canonical models of mixed Shimura subdata.  Thus $X$ and $\mathfrak X$ have
the same special closure, as do their classifying-base images.  In particular
$\mathfrak X$ still satisfies the full-relative-group condition.

Its $K$-torsion points are Zariski dense.  To see this, let $Z$ be their
$K$-closure.  Each fixed-order locus
$\mathfrak X\cap\MM_{g,r}[n]$ is $K$-defined and has dense $K$-points in all
its components.  Hence $Z_L$ contains every torsion point of $X$.
Density implies $X\subseteq Z_L$, and minimality of $\mathfrak X$ implies
$Z=\mathfrak X$.

The theorem over $K$ and \eqref{eq:closure-dim} force
\[
 \dim\mathfrak X=d,\qquad m=d-1.
\]
The rank statement over $K$ therefore gives
$\ell(\mathfrak X)=d$ generically on its smooth locus.

For each torsion point $x\in X(L)$, its $K$-closure has dimension zero or
one and is contained in one fixed-order torsion locus.  The zero-dimensional
closures are $K$-points.  Their $K$-closure has dimension at most $m$ by
\cref{lem:field-closure}, so it is proper in $\mathfrak X$.
It follows that the one-dimensional closures of the remaining torsion points
form a Zariski-dense union of curves in $\mathfrak X$; otherwise their closure,
together with that proper $K$-point closure, would contradict the minimality
of $\mathfrak X$.

At smooth points these curves are locally contained in Betti leaves, since
each lies in a fixed-order torsion locus.  Hence
$\mathfrak X^{\mathrm{Betti}}(1)$ is Zariski dense on the smooth locus.
By \cref{prop:betti-strata} it is closed there, so it is the entire smooth
locus.  This contradicts $\ell(\mathfrak X)=d=\dim\mathfrak X$.
\end{proof}

\begin{proof}[Proof of Theorem~\ref{thm:main}]
Proposition~\ref{prop:Qbar-completion} gives the dimension statement over
$\Qbar$.  Proposition~\ref{prop:field-step}, followed at each stage by
\cref{cor:rank-criterion}, gives the statement over algebraically closed
subfields of $\C$ of any finite transcendence degree over $\Qbar$.

All equations, maps, and level choices defining a given complex problem use
only finitely many coefficients.  Let $L$ be the algebraic closure inside
$\C$ of the field generated by those coefficients over $\Qbar$.
If torsion is dense over $\C$, it is dense over $L$: the union of the
$L$-points of the $L$-defined fixed-order loci has the same algebraic closure
after base extension.  Nonspecialness is unchanged because the universal
special loci are defined over $\Qbar$.  Thus the statement over $L$ applies.
Finally descend the refinements and the universal reduction using
\cref{lem:descent-special,prop:universal-reduction}.
\end{proof}

\begin{proof}[Proof of Corollary~\ref{cor:main-rank}]
Suppose first that torsion is dense.  Pass to the tautological section
and its smooth marked classifying image.  Theorem~\ref{thm:main} and
Corollary~\ref{cor:rank-criterion}, now over $\C$, give full mixed
rank on that image.  The dominant classifying map is generically
submersive in characteristic zero, and finite refinements and
isogenies preserve the local foliations and their ranks.  Pulling back
therefore gives generic real rank $2(g+r)$ on the smooth locus of $X$.

Conversely suppose the mixed rank is $2(g+r)$.  Apply
Proposition~\ref{prop:primary-density} to the tautological section
\emph{in the original semiabelian family} over a smooth dense open of
$X$.  Only a finite \'etale torus-splitting cover is used in that
proposition; no primary-torsion reflection through an arbitrary
isogeny is needed.  It gives density of every prime-primary torsion
set even within the full-rank locus, and hence density of all torsion.
Density for a single prime implies density of all torsion; the other
implications then give the assertion for every prime.  This proves all
claims of the corollary.
\end{proof}

\section{Bogomolov transfer and toric height gaps}\label{sec:transfer}
All data in this section are defined over $\Qbar$.  The height
$\htot$ is the one in \eqref{eq:canonical-height}, with the fixed
refinement convention stated there.  The small-points theorem treats
the extension directions once the abelian Betti rank is maximal.
An abelian dimension theorem for all relevant quotients supplies that
rank and yields a transfer principle for moving semiabelian families.

\subsection{The abelian input and semiabelian transfer}
Let $\mathcal C$ be a class of abelian varieties over algebraically
closed extensions of $\Qbar$.  Assume that it is stable under isogenies
and abelian quotients, and that membership is preserved and reflected
by extension of algebraically closed fields.  The last condition
ensures that membership passes between a family and its classifying
image.

We say that $\mathcal C$ has the \emph{abelian Bogomolov dimension
property} if the following holds.  For every marked abelian family
whose geometric generic fibre belongs to $\mathcal C$, suppose that
no nonzero multiple of the marked section lies generically in a proper
abelian subvariety, even after a finite base change.  A generic
sequence on which its fibrewise canonical height tends to zero must
then force the dimension of the marked classifying image to be at
least the abelian relative dimension.  The general relative
Bogomolov formulation is \cite[Conjecture~1.2]{DGH2022}; here we
require only its indicated dimension consequence, for the class and
its quotients.

A deficient abelian rank would produce a quotient violating this same
dimension property.

\begin{lemma}\label{lem:abelian-bogomolov-rank}
Assume that $\mathcal C$ has the abelian Bogomolov dimension property.
A relatively nonspecial marked abelian family in $\mathcal C$ with a
generic small sequence has abelian Betti rank $2g$.
\end{lemma}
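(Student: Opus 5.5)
We argue by contradiction, using a normal-quotient reduction in the abelian setting, in the same way as Proposition~\ref{prop:rank-reduction} but with $r=0$. Let the marked abelian family be relatively nonspecial in $\mathcal C$, with marked section $p$, and suppose it has a generic sequence $b_n$ with $\widehat h_\Theta(p(b_n))\to0$ while $\rank_\R db_p<2g$. First pass to the marked classifying image. Finite refinements, isogenies and the dominant classifying map preserve relative nonspecialness, smallness (Lemma~\ref{lem:height-functorial}), genericity and the generic abelian Betti rank. Membership in $\mathcal C$ is preserved because the class is stable under isogeny and field extension. Now write $\ell_{\mathrm{ab}}=\tfrac12\rank_\R db_p<g$, and let $m$ be the dimension of the classifying image.

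There are two cases. If $\ell_{\mathrm{ab}}=m$, then $m<g$. Relative nonspecialness says that no nonzero multiple of $p$ lies generically in a proper abelian subvariety after finite base change; this is the abelian content of Lemma~\ref{lem:full-relative}. Hence the abelian Bogomolov dimension property gives $m\ge g$, a contradiction.

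If $\ell_{\mathrm{ab}}<m$, we take a very general point in a rank-regular open, the abelian Betti leaf germ $Z$ through it, and $N\lhd P$ the connected monodromy of its weakly special closure. Weak Ax--Schanuel \eqref{eq:weak-AS} together with Corollary~\ref{cor:leaf-closure} (now with $U=0$, so $h_N=\tfrac12\dim_\Q V_{p,N}$) gives $\dim Y\ge h_N+(m-\ell_{\mathrm{ab}})$. We then form the saturated quotient $\A/\HH_N$ of Proposition~\ref{prop:saturated}. It is an abelian quotient, so it again lies in $\mathcal C$. It has relative dimension $g'=g-h_N$, it is still relatively nonspecial, and its classifying image has dimension $m'\le\ell_{\mathrm{ab}}-h_N<g-h_N=g'$. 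By Lemma~\ref{lem:small-quotient} (the height part uses only Lemma~\ref{lem:height-functorial}), the images of $b_n$ form a generic small sequence on the quotient. The Bogomolov dimension property applied to the quotient gives $m'\ge g'$, a contradiction.

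The main obstacle is to check that $h_N<g$, so that the quotient is nonzero, or else to handle the case $h_N=g$ directly. If $h_N=g$, then $m'\le\ell_{\mathrm{ab}}-g<0$, which is impossible since $m'\ge0$; so this case cannot occur. Along the way we must also verify that relative nonspecialness of the quotient in the abelian sense follows from $K'\subseteq P'$, as in the final step of Proposition~\ref{prop:saturated}. This is the no-proper-subvariety hypothesis needed to apply the dimension property.
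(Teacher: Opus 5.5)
Your proposal is correct and follows essentially the same route as the paper. Your case $\ell_{\mathrm{ab}}=m$ is the paper's first step, namely that the dimension property gives $m\ge g$, so $\ell_{\mathrm{ab}}<g$ forces $\ell_{\mathrm{ab}}<m$. Your second case is the paper's application of Proposition~\ref{prop:rank-reduction} with toric rank zero, together with Lemma~\ref{lem:height-functorial} for smallness, yielding a quotient in $\mathcal C$ with $m'<g'$; the inline rederivation and the $h_N=g$ check are already contained in that proposition.
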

\begin{proof}
Pass to the marked abelian classifying image, of dimension $m$, and
write $a$ for half the real Betti rank.  Relative nonspecialness
implies the generation hypothesis by Lemma~\ref{lem:full-relative},
so the dimension property gives $m\ge g$.  If $a<g$, then $a<m$.
Apply Proposition~\ref{prop:rank-reduction} with toric rank zero.
The resulting quotient is relatively nonspecial and satisfies
\[
 g'=g-h_N,\qquad m'\le a-h_N<g'.
\]
Its canonical heights still tend to zero by
Lemma~\ref{lem:height-functorial}, and its generic sequence remains
generic by the dominant-image argument.  No maximal-rank assumption
is needed for this preservation of smallness.  The geometric generic
quotient and its classifying image belong to $\mathcal C$.  Applying
the same dimension property to that quotient is a contradiction.
\end{proof}

We can now pass from the abelian input to arbitrary toric extensions.
The condition on $\mathcal C$ is an explicit input, not a claim that
the general abelian relative Bogomolov conjecture is proved here.

\begin{theorem}[Abelian-to-semiabelian Bogomolov transfer]\label{thm:bogomolov-transfer}
Let $\mathcal C$ have the properties just specified and the abelian
Bogomolov dimension property.  Let $S/\Qbar$ be smooth, irreducible,
and quasi-projective, and let $\G/S$ be a semiabelian scheme of
relative dimension $g+r$ whose geometric generic abelian quotient
belongs to $\mathcal C$.  Let $X\subseteq\G$ be closed, irreducible,
dominating $S$, and relatively nonspecial.  The following conditions
are equivalent: $X$ has a generic sequence with $\htot\to0$; its
real mixed Betti rank is $2(g+r)$; its fibrewise torsion is Zariski
dense; and its prime-primary torsion is Zariski dense for every prime.
If the real mixed Betti rank is less than $2(g+r)$, there exist
$\eta>0$ and a proper closed $Z\subsetneq X$ such that
$\htot(x)\ge\eta$ for every $x\in X(\Qbar)\setminus Z$.
In particular this gap holds whenever $\dim X<g+r$.
\end{theorem}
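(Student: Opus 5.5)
The plan is to reduce everything to Theorem~\ref{thm:smallrank} and Corollary~\ref{cor:small-equivalence}. The only missing hypothesis there is maximal abelian Betti rank, and that is exactly what Lemma~\ref{lem:abelian-bogomolov-rank} supplies from the abelian Bogomolov dimension property.

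First pass to the tautological section over a smooth dense open of $X$, and then to its smooth marked classifying image $B\subseteq\MM_{g,r}$, after the fixed torus-splitting and polarization refinements. By Lemma~\ref{lem:descent-special} and the proof of Proposition~\ref{prop:universal-reduction}, this preserves relative nonspecialness. By Lemma~\ref{lem:height-functorial} and the dominant-image genericity argument of Lemma~\ref{lem:small-quotient}, it also preserves the existence of a generic small sequence. The generic mixed rank is preserved because the classifying map is generically submersive, and torsion density and primary-torsion density are preserved as in Corollary~\ref{cor:main-rank}. Membership of the abelian quotient in $\mathcal C$ passes to the image because the class is stable under isogeny and field extension.

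The implications then run in a cycle.

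\begin{enumerate}
\item Small points imply full rank. Assume a generic sequence with $\htot\to0$. Since $\hab\le\htot$, the abelian projection $p$ has a generic sequence of N\'eron--Tate height tending to zero, and pushing it to the abelian classifying image again preserves genericity. Lemma~\ref{lem:full-relative} makes the abelian marked family relatively nonspecial. Lemma~\ref{lem:abelian-bogomolov-rank} then gives $\rank_\R db_p=2g$, and Theorem~\ref{thm:smallrank} gives mixed rank $2(g+r)$.
\item Full rank implies prime-primary density. Proposition~\ref{prop:primary-density} gives density of every prime-primary torsion set, with $\Qbar$-points by its last clause. This needs no nonspecialness.
\item Prime-primary density implies torsion density, trivially.
\item Torsion density implies small points. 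By the enumeration remark of Section~\ref{sec:qbar}, a dense set of torsion points, which have height zero by \eqref{eq:torsion-height}, yields a generic sequence with $\htot=0$.
\end{enumerate}

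This closes the cycle. For the gap statement, suppose the rank is deficient. If there were no $\eta$ and $Z$, the enumeration argument would produce a generic small sequence, contradicting step (i). So some sublevel set $\{\htot<\eta\}$ is not Zariski dense, and we take $Z$ to be its closure. If $\dim X<g+r$, then $\ell\le\dim X<g+r$, so the rank is deficient and the gap holds.

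The main obstacle is the transfer in step (i) from $X$ to its abelian projection. The hypothesis of Lemma~\ref{lem:abelian-bogomolov-rank} concerns the marked abelian classifying image, which may have smaller dimension than $B$ and may be a different base. We need to check three things there:
\begin{itemize}
\item relative nonspecialness of $p$ follows from $K\subseteq P$ via the surjection to $P_0$ and the containment $V_p\subseteq P_0$;
\item genericity survives the dominant map $B\to\overline{\mathrm{im}}$;
\item the abelian rank computed on that image pulls back to rank $2g$ on $B$, since $db_p$ on $B$ factors through the generically submersive classifying map.
\end{itemize}
Once these are verified, the rest is a direct citation.
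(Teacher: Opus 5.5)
Your proposal is correct and follows the paper's own argument. Lemma~\ref{lem:abelian-bogomolov-rank}, applied on the abelian marked classifying image, supplies $\rank_\R db_p=2g$; then Theorem~\ref{thm:smallrank}, Proposition~\ref{prop:primary-density}, the fixed-order and enumeration remarks of Section~\ref{sec:qbar}, and the generic-sequence criterion for the gap close the cycle exactly as in the paper.

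Two small corrections. First, abelian nonspecialness should be read off from the image of $K$ under $P\to\MT(E_p)$, which is all of $V_p$, and not from $P_0$: the translations $V_p\subseteq K$ act trivially on $W_{-1}E$, so they do not lie in $P_0$. Second, in step (iv), use the fixed-order observation to pass from complex to $\Qbar$-torsion points before taking heights.
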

\begin{proof}
Suppose first that $X$ has a generic $\htot$-small sequence.  Use its
tautological marked point over a smooth dense open.  The abelian marked
classifying image then has a generic sequence of canonical height
tending to zero.  The image of the full relative
translation group is the full abelian relative translation group;
thus the abelian problem is relatively nonspecial.
Lemma~\ref{lem:abelian-bogomolov-rank} gives full abelian Betti rank
on that image and hence on $X$, by generic submersivity of the
dominant classifying map.  Theorem~\ref{thm:smallrank} now gives the
mixed rank.  Proposition~\ref{prop:primary-density} gives primary
torsion density from full rank; the fixed-order algebraicity argument
in Section~\ref{sec:qbar} makes algebraic torsion dense.  These points
supply a generic sequence of height zero.  This closes the equivalence
without using complex-coefficient descent.  If the mixed rank is
deficient, the equivalence and the generic-sequence criterion give
a positive gap.  The dimension assertion follows because the
mixed rank is at most $2\dim X$.
Taking the closure of the exceptional sets also includes the singular
and previously removed loci of $X$.
\end{proof}

K\"uhne's theorem allows products of elliptic families over an arbitrary
base, including isotrivial factors, and explicitly extends to generic
isogenies.  This gives a large class of genuinely varying semiabelian
extensions.

\begin{corollary}\label{cor:elliptic-bogomolov}
Let $S/\Qbar$ be smooth, irreducible, and quasi-projective, let
$\G/S$ be a semiabelian scheme of relative dimension $g+r$, and let
$X\subseteq\G$ be closed, irreducible, dominating $S$, and relatively
nonspecial.  Suppose the geometric generic abelian quotient is
isogenous to a product of elliptic curves.  A generic sequence in
$X(\Qbar)$ with $\htot\to0$ exists if and only if the real mixed
Betti rank on $X$ is $2(g+r)$, equivalently if fibrewise torsion is
Zariski dense.  In that case $\dim X\ge g+r$, and every prime-primary
torsion set is dense even in the maximal-rank locus.  If the real
mixed Betti rank is less than $2(g+r)$, there are $\eta>0$ and a
proper closed $Z\subsetneq X$ such that $\htot(x)\ge\eta$ for all
$x\in X(\Qbar)\setminus Z$; in particular this holds if
$\dim X<g+r$.
The toric rank and base dimension are unrestricted; the semiabelian
extension need not split.
\end{corollary}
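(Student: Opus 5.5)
The plan is to deduce the corollary directly from Theorem~\ref{thm:bogomolov-transfer}, applied to the class $\mathcal C$ of abelian varieties isogenous to a product of elliptic curves. Two things must be checked: that this class has the stability properties demanded in Section~\ref{sec:transfer}, and that it has the abelian Bogomolov dimension property. Once both hold, every assertion of the corollary is an instance of the transfer theorem, together with the concluding remark that $\dim X\ge g+r$ follows because the mixed rank is at most $2\dim X$. Density of primary torsion inside the maximal-rank locus comes from Proposition~\ref{prop:primary-density}. The last sentence, that the toric rank and base dimension are unrestricted and the extension need not split, holds simply because Theorem~\ref{thm:bogomolov-transfer} imposes no such restrictions.

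First, the stability properties. The class is closed under isogeny by definition. It is closed under abelian quotients because, by Poincaré reducibility, a quotient of $E_1\times\cdots\times E_n$ is isogenous to an abelian subvariety of it. Such a subvariety is isogenous to a product of simple factors, and each simple factor is isogenous to some $E_i$. Membership is preserved and reflected under extension of algebraically closed fields. Preservation is clear. For reflection, an isogeny to a product of elliptic curves over the larger field, together with those curves, is defined over a finitely generated extension. Specializing at a point of a model over the smaller algebraically closed field gives the same decomposition, because the isogeny structure of $\End^0$ is locally constant in a family and Hom-schemes are unramified, as in Lemma~\ref{lem:relative-hom}.

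Second, the dimension property, which I expect to be the main point. Take a marked abelian family whose geometric generic fibre is in $\mathcal C$, with the generation hypothesis and a generic small sequence. After a finite base change and a fixed isogeny we may assume the family is a fibre product $E_1\times_S\cdots\times_S E_n$ of elliptic families, some possibly isotrivial. Lemma~\ref{lem:height-functorial} ensures that smallness and genericity survive this step. Then \cite[Theorem~1]{KuehneRBC2023}, the relative Bogomolov theorem for fibred products of elliptic families, applies to the image of the section. Under the non-degeneracy supplied by generation, it says the section cannot carry a generic small sequence unless its classifying image has dimension at least $g$; equivalently, a nondegenerate subvariety has no generic small sequence when its dimension is below $g$. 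The delicate part is matching Kühne's hypothesis, which excludes torsion translates of subgroup schemes (in some formulations, nondegeneracy), with the generation hypothesis after isogeny. I would argue this through Lemma~\ref{lem:full-relative} and the quotient reduction already used in Lemma~\ref{lem:abelian-bogomolov-rank}: a deficiency in the image dimension yields a quotient family in the same class, and Kühne's statement excludes it. The isotrivial factors are explicitly allowed in that theorem.
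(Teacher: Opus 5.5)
Your architecture is the paper's: apply \cref{thm:bogomolov-transfer} to the class of abelian varieties isogenous to products of elliptic curves, check isogeny, quotient and field-extension stability, and get the abelian Bogomolov dimension property from K\"uhne's Theorem~1.

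The stability checks are fine. Your Poincar\'e-reducibility argument for quotients is a valid substitute for the paper's choice of independent images of the elliptic factors. For reflection under field extension you need no local constancy of $\End^0$, which can in fact jump under specialization. Since $A$ is already defined over the smaller field, specializing a spread-out isogeny at a rational point of the model lands on $A$ itself.

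The gap is in the one step that needs an argument: showing that a generated marked section over its classifying image $B$ with $\dim B<g$ satisfies the hypothesis of K\"uhne's theorem.
\begin{itemize}
\item Your claim that generation supplies non-degeneracy is false in general. Let $B=\mathcal E_1\times\mathcal E_2$ be the product of the total spaces of two independent non-isotrivial elliptic families over curves $C_1,C_2$. Take the section $(q_1,q_2,f(c_1))$ of the pullback of $\mathcal E_1\times\mathcal E_2\times E_0^3$, where $f(C_1)$ lies in no translate of a proper abelian subvariety. This section is generated and has classifying image of dimension $4<5=g$. Yet moving $c_2$ with the Betti labels of $q_2$ fixed traces a positive-dimensional Betti leaf in $B$.
\item Your appeal to the quotient reduction of \cref{lem:abelian-bogomolov-rank} is circular as written. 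That lemma is triggered by a deficiency of Betti rank, not of image dimension. It also reaches its contradiction by applying the dimension property itself to the quotient.
\end{itemize}

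The paper closes this step directly, without any quotient. Suppose a proper horizontal torsion coset contained the section. After a finite cover and multiplication by an integer killing its torsion translate, a nonzero multiple of the section would lie in a proper abelian subscheme, which generation forbids. So the only horizontal torsion coset containing the section is the whole family. There the section has codimension $g>\dim B$, which is the hypothesis of K\"uhne's theorem as the paper uses it.

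If you insist on a non-degeneracy formulation, your quotient idea works only as an induction on $\dim B$. Take a counterexample with $m=\dim B<g$ minimal. A rank deficiency $\ell<m$ gives, by \cref{prop:rank-reduction} with $r=0$ and \cref{lem:height-functorial}, a generated quotient in the same class that still carries a generic small sequence. It satisfies $m'\le\ell-h_N<m$ and $m'<g-h_N=g'$, so it is a smaller counterexample. The case $m=0$ is excluded by positivity of the N\'eron--Tate height. Only non-degenerate sections are then handed to K\"uhne.

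As written, your proposal carries out neither version. So the dimension property, which you rightly call the main point, is not yet established.
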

\begin{proof}
The abelian input is \cite[Theorem~1 and the following
paragraph]{KuehneRBC2023}.  To check the dimension consequence, apply
that theorem to a generated marked section over its marked classifying
image $B$.  A proper horizontal torsion coset containing this section
would, after a finite cover and multiplication killing its torsion
translate, place a multiple of the section in a proper abelian
subscheme.  The generation hypothesis excludes this.  The remaining
horizontal torsion coset is the whole family, in which the section
has codimension $g$.  If $\dim B<g$, the hypothesis of K\"uhne's
relative Bogomolov theorem is therefore satisfied, and a generic small
sequence is impossible.  This proves the required dimension property.
Different relatively ample symmetric height choices are comparable
without additive errors by Lemma~\ref{lem:height-functorial}.

An abelian quotient of a product of elliptic curves is isogenous to
a product of elliptic curves.  Indeed the images of the elliptic
factors generate the quotient; choose independent nonzero images
successively until their product maps isogenously onto it.  These
maps and the isogenies spread after a finite extension and shrinking.
Homomorphisms and abelian subvarieties of an abelian variety over an
algebraically closed field do not acquire new geometric morphisms
after an algebraically closed field extension; this follows from the
unramified relative Hom scheme, or from rigidity after descent to a
finite-type parameter space.  Thus the class has the required
field-extension property as well as quotient and isogeny stability.
The transfer theorem applies.
\end{proof}

The abelian quotient may instead be constant up to isogeny, in any
dimension.  Its extension parameters can still move.

\begin{corollary}\label{cor:isotrivial-bogomolov}
Let $S/\Qbar$ be smooth, irreducible, and quasi-projective, let
$\G/S$ be a semiabelian scheme of relative dimension $g+r$, and let
$X\subseteq\G$ be closed, irreducible, dominating $S$, and relatively
nonspecial.  Suppose that after a finite base change the abelian
quotient becomes isogenous to $A_0\times S$ for an abelian variety
$A_0/\Qbar$.  A generic sequence in $X(\Qbar)$ with $\htot\to0$
exists if and only if the real mixed Betti rank is $2(g+r)$,
equivalently if fibrewise torsion is Zariski dense.  Then
$\dim X\ge g+r$, and every prime-primary torsion set is dense even
in the maximal-rank locus.  If the real mixed Betti rank is less
than $2(g+r)$, there are $\eta>0$ and a proper closed $Z\subsetneq X$
such that $\htot(x)\ge\eta$ for every
$x\in X(\Qbar)\setminus Z$; in particular this holds if
$\dim X<g+r$.
The dimension of $A_0$ is arbitrary.  No isotriviality of the
semiabelian extension is assumed: its extension parameters may vary.
\end{corollary}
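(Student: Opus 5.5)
The plan is to mirror the proof of Corollary~\ref{cor:elliptic-bogomolov}. We take $\mathcal C$ to be the class of abelian varieties isogenous, after base change to an algebraically closed field, to one defined over $\Qbar$. Then we apply Theorem~\ref{thm:bogomolov-transfer}. Finite base changes and isogenies are harmless throughout: Lemma~\ref{lem:descent-special} preserves relative nonspecialness, and Lemma~\ref{lem:height-functorial} compares heights with no additive error. So we may assume the abelian quotient is isogenous to $A_0\times S$. The work then splits into two parts: the formal stability properties of $\mathcal C$, and the abelian Bogomolov dimension property for it.

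\textbf{Stability of $\mathcal C$.} Isogeny stability is immediate. An abelian quotient of $A_{0,\Omega}$ is the quotient by an abelian subvariety. Abelian subvarieties of $A_0$ are defined over $\Qbar$, by the unramified Hom-scheme and rigidity argument already invoked in the elliptic corollary. Hence the quotient is constant as well. Preservation and reflection under extension of algebraically closed fields follows from the same rigidity. In a family, the isogeny to a constant abelian scheme spreads after a finite cover and shrinking. Thus every quotient family in the reduction of Lemma~\ref{lem:abelian-bogomolov-rank} is again isotrivial up to isogeny.

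\textbf{Dimension property, the main step.} Take a marked abelian family isogenous to $A_0\times B$ over its marked classifying image $B$, with marked section $p$, and assume no nonzero multiple of $p$ lies generically in a proper abelian subvariety. Composing with the isogeny (Lemma~\ref{lem:height-functorial}), we get a morphism $p_0:B\to A_0$ whose Néron--Tate heights tend to zero along a generic sequence. The generation hypothesis says that $\ol{p_0(B)}$ lies in no proper torsion coset. We would use \cite[Proposition~4.1]{DGH2022}, the isotrivial relative Bogomolov statement: a generic small sequence forces $\ol{p_0(B)}$ to be a torsion coset, hence all of $A_0$, so $\dim B\ge g$. Equivalently, one can apply the absolute Bogomolov theorem of Ullmo--Zhang to $\ol{p_0(B)}$, since the images of a generic sequence are generic in the image closure. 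The delicate point is that the classifying image for a constant family is essentially the image in $A_0$, perhaps times moduli of nothing. So $\dim B\ge\dim\ol{p_0(B)}$ must be checked, including that genericity descends through the dominant map $B\to\ol{p_0(B)}$. This holds because the inverse image of a proper closed subset is proper.

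\textbf{Conclusion.} With both properties in hand, Theorem~\ref{thm:bogomolov-transfer} gives all the stated equivalences and the height gap, and $\dim X\ge g+r$ follows from the rank. Primary torsion density in the maximal-rank locus comes from Proposition~\ref{prop:primary-density}. The extension parameters $q_i$ are never assumed constant: Theorem~\ref{thm:smallrank} handles them, needing only the abelian rank, which the step above supplies.
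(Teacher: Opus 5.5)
Your proposal is correct and follows the paper's proof: you feed the class of abelian varieties isogenous to constant ones into Theorem~\ref{thm:bogomolov-transfer}, check isogeny and quotient stability by rigidity, and check the dimension property either with \cite[Proposition~4.1]{DGH2022} or with Ullmo--Zhang applied to $\ol{p_0(B)}$. The paper also records the shorter variant in which Ullmo--Zhang makes $p_0$ dominant, giving abelian Betti rank $2g$ at once, so that Theorem~\ref{thm:smallrank} applies directly without the quotient argument of Lemma~\ref{lem:abelian-bogomolov-rank}.
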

\begin{proof}
Dimitrov--Gao--Habegger prove the abelian relative Bogomolov statement
for isotrivial schemes in \cite[Proposition~4.1]{DGH2022}.
Fixed isogenies preserve smallness.  Geometric quotients of a constant
abelian variety are isogenous to constant quotients by rigidity of
homomorphisms; hence this class is stable under the operations in the
transfer theorem.

Equivalently, after making the quotient $A_0\times B$, let $Y$ be the
closure of $p(B)$ in $A_0$.  It has dense small points.  The classical
abelian Bogomolov theorem makes $Y$ a torsion translate of an abelian
subvariety \cite{Ullmo1998,Zhang1998}.  Generation excludes a proper
such translate, so $p$ is dominant and has abelian Betti rank $2g$.
Theorem~\ref{thm:smallrank} then applies directly.
\end{proof}

These corollaries concern arithmetic heights of specializations in
moving families.  They should be distinguished from the absolute
semiabelian Bogomolov and equidistribution theorem
\cite{KuehneSmall2022}, from geometric Bogomolov over function fields
\cite{LuoYu2025}, and from Hultberg's reduction over globally valued
fields and uniform gap principle for subvarieties of individual
semiabelian varieties \cite[Theorems~2--3]{Hultberg2026}.
The hypotheses on proper relative special loci, and the dimension of
the marked classifying image, are essential to the formulation here.
The constants obtained here are not asserted to be effective or
uniform in the family.

\subsection{Toric heights above abelian torsion}\label{sec:hybrid}
Even without an abelian small-points theorem, exact torsion in the
abelian quotient supplies full abelian rank.  The extension directions
then have a positive height obstruction.

\begin{theorem}[Toric height under deficient mixed Betti rank]
\label{thm:toric-gap}
Let $S/\Qbar$ be smooth, irreducible, and quasi-projective, let
$\G/S$ be a semiabelian scheme of relative dimension $d=g+r$, and
let $X\subseteq\G$ be closed, irreducible, dominating $S$, and
relatively nonspecial.  Suppose that
\[
  \ell(X):=\tfrac12\max_{x\in X^{\mathrm{sm}}}
                    \rank_\R d\beta_x<d.
\]
Then there are $\eta>0$ and a proper closed $Z\subsetneq X$ such that
\[
  x\in X(\Qbar)\setminus Z,\qquad p(x)\text{ torsion}
  \quad\Longrightarrow\quad \htor(x)\ge\eta.
\]
There is no structural restriction on the abelian quotient.
In particular, the conclusion holds whenever $\dim X<d$.
\end{theorem}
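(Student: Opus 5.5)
We argue by contradiction, in the pattern of the proof of Theorem~\ref{thm:smallrank}. Suppose no such $\eta$ and $Z$ exist. The enumeration argument of Section~\ref{sec:qbar}, run inside the set of points with torsion abelian projection, then gives a generic sequence $x_n\in X(\Qbar)$ with $p(x_n)$ torsion and $\htor(x_n)\to0$. Because $\hab(x_n)=0$, this is a generic $\htot$-small sequence. We pass to the tautological section over a smooth dense open and then to the smooth marked classifying image $B$, as in Proposition~\ref{prop:universal-reduction}. Lemma~\ref{lem:height-functorial} preserves smallness, and abelian torsion is preserved by every fixed homomorphism and isogeny. Relative nonspecialness descends by Lemma~\ref{lem:descent-special}, and generic submersivity of the classifying map preserves $\ell<d$. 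The aim is to show that such a sequence forces $\ell=d$.

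The essential step is to obtain maximal abelian Betti rank with no abelian Bogomolov input. The sequence has Zariski-dense abelian torsion projections. By Lemma~\ref{lem:full-relative}, $p_{\bar\eta}$ generates $\A_{\bar\eta}$. Gao--Habegger's relative Manin--Mumford theorem, applied to the marked abelian classifying image exactly as in \eqref{eq:abelian-rank}, then gives $\rank_\R db_p=2g$ and $\dim$ of that image at least $g$. The torsion density comes from genericity of the $x_n$ and the fact that their abelian images are torsion.

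Theorem~\ref{thm:smallrank} now applies directly to $B$ and gives $\ell=g+r=d$, contradicting $\ell(X)<d$. One point needs care. Theorem~\ref{thm:smallrank} internally replaces $B$ by normal quotients through Lemma~\ref{lem:small-quotient}, which preserves maximal abelian rank. Here only the abelian rank hypothesis is used, so no abelian torsion condition has to be carried through the quotients. If the abelian rank were instead needed after quotients, Lemma~\ref{lem:small-quotient} supplies it anyway.

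I expect the main obstacle to be the bookkeeping: the generic sequence must remain generic, and remain inside the abelian-torsion set, under the finite covers and the dominant classifying map. This uses the same finite-image and dominant-image arguments as Lemma~\ref{lem:small-quotient}. The gap on $X$ then follows by taking $Z$ to be the closure of the nondense sublevel set, together with the singular and removed loci. The final claim is immediate, since $\ell(X)\le\dim X<d$.
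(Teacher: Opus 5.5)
Your proposal is correct and follows essentially the same route as the paper: extract a generic sequence with torsion abelian projection and $\htor\to0$, apply Gao--Habegger to the marked abelian classifying image (with generation supplied by Lemma~\ref{lem:full-relative}) to get abelian Betti rank $2g$, and then invoke Theorem~\ref{thm:smallrank} to force $\ell=d$. The only detail the paper records that you omit is that for $g=0$ the abelian-rank condition is empty, so Gao--Habegger is not needed in that case.
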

\begin{proof}
Suppose no such pair $(\eta,Z)$ exists.  For every $\eta>0$ the
points with torsion abelian projection and $\htor<\eta$ are then
Zariski dense.  The generic-sequence construction in
Section~\ref{sec:qbar} gives a generic sequence $x_n\in X(\Qbar)$
with torsion abelian projection and $\htor(x_n)\to0$.

Pass to the tautological section over a smooth dense open of $X$,
make the fixed finite refinements, and take the abelian marked
classifying image.  Genericity and smallness are preserved by
Lemma~\ref{lem:height-functorial} and the finite-cover and
dominant-image argument of Lemma~\ref{lem:small-quotient}.
The abelian image has dense torsion.  Moreover, the image of the
full relative translation group is the full abelian relative
translation group, so Lemma~\ref{lem:full-relative} gives the
generation hypothesis for that image.  If $g>0$, Gao--Habegger's
theorem \cite[Theorem~1.3]{GaoHabegger2026} gives abelian Betti rank
$2g$; for $g=0$ this condition is empty.  Generic submersivity of
the dominant classifying map pulls the rank back to the
marked family on $X$.

Since $\hab(x_n)=0$, one has
$\htot(x_n)=\htor(x_n)\to0$.  Theorem~\ref{thm:smallrank}
therefore gives $\ell(X)=d$, a contradiction.  The rank is unchanged
by the fixed refinements.  The exceptional set in the original $X$
is obtained by taking finite images and including the closures of
the singular and previously removed loci.  Finally,
$\ell(X)\le\dim X$, which proves the last assertion.
\end{proof}

The rank hypothesis is insensitive to redundant parameters: adjoining
base directions along which the marked family does not change can
increase $\dim X$ without increasing $\ell(X)$.  The rank-deficient
form therefore applies beyond the subcritical-dimensional case.

Multiplication by the abelian order converts this gap into an explicit
linear dependence on that order.  The coefficient remains ineffective
and depends on the marked family and the chosen height.

\begin{corollary}\label{cor:order-height}
Under the hypotheses of Theorem~\ref{thm:toric-gap}, let
$x\in X(\Qbar)\setminus Z$ have abelian projection of order $N$.
In the chosen toric coordinates write $[N]x=(t_1,\ldots,t_r)$.
Then
\begin{equation}\label{eq:order-height}
 \sum_{i=1}^r h_{\mathrm{Weil}}(t_i)\ge\frac\eta2N.
\end{equation}
If $X$ is a curve satisfying the stated rank hypothesis, the exceptional set is finite.
\end{corollary}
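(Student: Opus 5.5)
The plan is to combine the gap from Theorem~\ref{thm:toric-gap} with the exact scaling \eqref{eq:height-scaling}. Let $x\in X(\Qbar)\setminus Z$ have abelian projection $p(x)$ of exact order $N$. Then $p(x)$ is torsion, so Theorem~\ref{thm:toric-gap} gives $\htor(x)\ge\eta$. By \eqref{eq:height-scaling}, $\htor([N]x)=N\htor(x)\ge\eta N$; no additive error enters, because the canonical metrics are compatible with multiplication. The point $[N]x$ lies in the fibre of the split torus, since its abelian projection $[N]p(x)$ vanishes. We read all heights after the fixed torus-splitting refinement, as in the convention following Lemma~\ref{lem:height-functorial}.

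The next step identifies $\htor$ on the torus with Weil heights. On the fibre of the torus, the abelian coordinate is the identity, and the rigidified Poincar\'e fibres $\PP(0,q_i)$ are canonically trivialized with norm the usual absolute value. Hence $u_{i,v}=-\log|t_i|_v$, and
\[
 \htor([N]x)=\sum_v n_v\sum_i\bigl|\log|t_i|_v\bigr|=2\sum_i h_{\mathrm{Weil}}(t_i)
\]
by the product formula; this is the split-torus statement recorded after \eqref{eq:canonical-height}. Combining this identity with the previous paragraph gives $2\sum_i h_{\mathrm{Weil}}(t_i)\ge\eta N$, which is \eqref{eq:order-height}.

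The main point needing care is that the coordinates $t_i$ are read in the same toric character basis and rigidification used to define $\htor$. A different choice of basis only changes the constant, through the comparisons of Lemma~\ref{lem:height-functorial}, and so changes $\eta$ without affecting the linear dependence on $N$.

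For the curve case, $Z$ is a proper closed subset of an irreducible curve $X$, so it is finite. The rank hypothesis $\ell(X)<d$ is assumed, so Theorem~\ref{thm:toric-gap} applies. When $d\ge2$ this hypothesis holds automatically, since $\ell(X)\le\dim X=1$. The exceptional set is the finite set $Z$, together with the finitely many singular or removed points whose closures are already included in $Z$.
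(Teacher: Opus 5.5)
Your proposal is correct and follows the same route as the paper's proof. Both combine the gap $\htor(x)\ge\eta$ with the linear scaling $\htor([N]x)=N\htor(x)$, and both use that the rigidified Poincar\'e norm at the abelian identity is the ordinary absolute value, so the product formula gives $N\htor(x)=2\sum_i h_{\mathrm{Weil}}(t_i)$; the curve case is again the observation that a proper closed subset of an irreducible curve is finite.
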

\begin{proof}
At the identity of the abelian quotient the rigidified Poincar\'e
norm is the ordinary absolute value.  Multiplication and the product
formula therefore give
\[
 N\htor(x)=\sum_{i,v}n_v|\log|t_i|_v|
          =2\sum_i h_{\mathrm{Weil}}(t_i).
\]
Apply the gap theorem.  A proper closed subset of an irreducible
algebraic curve is finite.
\end{proof}

\section{Generalized Jacobians and Pell equations}\label{sec:pell}
Generalized Jacobians give a concrete interpretation of the additional
toric obstruction.  Identifying pairs of points on a smooth curve
introduces a multiplicative gluing parameter for each node.  A divisor
can be torsion in the ordinary Jacobian while these gluing parameters
prevent torsion in the generalized Jacobian.  For hyperelliptic curves,
this is exactly the distinction between two polynomial Pell equations.
The generalized-Jacobian interpretation is classical
\cite[\S2.2, Proposition~1]{BertrandPell2015}; see also
\cite[Appendix~II]{BMPZ2016}.

\subsection{Torsion and gluing at the nodes}
Let $S/\Qbar$ be smooth and irreducible.  Let $Q_s(x)$ be a family of
monic squarefree polynomials of degree $2g+2$, and let $P_s(x)$ be a
family of squarefree polynomials of degree $r$, coprime to $Q_s$.
Shrink so that these conditions hold.  The smooth normalization
\[
 C_s:\quad y^2=Q_s(x)
\]
has two points at infinity, denoted $\infty_+,\infty_-$.
After a finite \'etale refinement, label the roots of $P_s$ and the
two points $a_i^+,a_i^-$ above each root.  Let $C'_s$ be the projective
curve obtained by identifying these pairs, leaving both infinities
smooth.  Its affine equation is
\[
 Y^2=P_s(x)^2Q_s(x).
\]
This specified completion avoids the additional singularities that a
plane homogenization could introduce at infinity.

The generalized Jacobians form the extension
\begin{equation}\label{eq:nodal-jacobian}
 0\longrightarrow\Gm^r\longrightarrow\mathcal J'
   \longrightarrow\mathcal J\longrightarrow0,
 \qquad \dim(\mathcal J'/S)=g+r.
\end{equation}
Indeed a line bundle on the normalization descends after choosing a
nonzero gluing scalar at each identified pair.  Equivalently, the
extension classes are $[a_i^+-a_i^-]$ in the dual Jacobian, up to the
choice of orientation of the pairs; their Poincar\'e extensions give
the algebraic family in \eqref{eq:nodal-jacobian}.
Let $\delta=[\infty_--\infty_+]$ be its marked section.

We give the elementary Pell criterion, including the node conditions,
so that the precise scope of the application is explicit.

\begin{lemma}\label{lem:pell}
Over an algebraically closed field of characteristic zero, the value
$\delta_s$ is torsion in $\mathcal J'_s$ if and only if
\begin{equation}\label{eq:nodal-pell}
 A(x)^2-P_s(x)^2Q_s(x)B(x)^2=1
\end{equation}
has a polynomial solution with $B\ne0$.
\end{lemma}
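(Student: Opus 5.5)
We argue directly on the singular curve $C'_s$: $\delta_s$ is torsion in $\mathcal J'_s$ exactly when some multiple $n(\infty_--\infty_+)$ is the divisor of a rational function on $C'_s$. The plan is to translate this into the existence of a suitable rational function on the normalization $C_s$, and then to identify such functions with the polynomial solutions of \eqref{eq:nodal-pell}.

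First we describe the group. Since both infinities are smooth points of $C'_s$, the generalized Jacobian is the group of Cartier divisors of degree zero supported away from the nodes, modulo divisors of rational functions $f$ on $C_s$ that are regular and nonzero at every $a_i^\pm$ and satisfy $f(a_i^+)=f(a_i^-)$. This is the description behind \eqref{eq:nodal-jacobian}: the gluing scalar at the $i$th node is $f(a_i^+)/f(a_i^-)$, and the toric coordinate is trivial exactly when these ratios equal one. Hence $\delta_s$ is torsion if and only if, for some $n\ge1$, there is a function $f$ on $C_s$ with $\divi f=n(\infty_--\infty_+)$ that takes equal values at each pair $a_i^\pm$.

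Next we analyse such an $f$. It is regular on the affine curve, so $f=A+By$ with $A,B\in\Qbar[x]$. Let $\iota$ be the hyperelliptic involution $y\mapsto-y$, which swaps $\infty_+$ and $\infty_-$. The norm $f\cdot\iota^*f=A^2-QB^2$ then has divisor zero, so it is a nonzero constant, which we normalize to $1$. The involution also swaps $a_i^+$ and $a_i^-$. So the gluing condition $f(a_i^+)=f(a_i^-)$ reads $A+By=A-By$ at $(x_i,\pm y_i)$, where $x_i$ is the root of $P_s$ under the $i$th node. Because $y_i\ne0$ (as $P_s$ and $Q_s$ are coprime), this is equivalent to $B(x_i)=0$. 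Since $P_s$ is squarefree, the conditions for all $i$ together say $P_s\mid B$. Writing $B=P_sB_1$ gives $A^2-P_s^2Q_sB_1^2=1$. We also need $B_1\ne0$: if $B_1=0$ then $f$ is constant and $n=0$. This proves one direction.

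Conversely, given a solution $(A,B)$ of \eqref{eq:nodal-pell} with $B\ne0$, set $f=A+P_sBy$. Its norm is $1$, so $f$ has no zeros or poles on the affine part and its divisor is supported at the two infinities. Since $f$ is nonconstant (because $B\ne0$ and $\deg Q_s$ is even), this divisor is $n(\infty_--\infty_+)$ with $n\ne0$; replacing $f$ by $\iota^*f$ if necessary makes $n$ positive. At each node $f(a_i^\pm)=A(x_i)$, so the gluing condition holds. The hypothesis that $Q_s$ has even degree $2g+2$ guarantees that there are two points at infinity.

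The step that needs the most care is the first one: the precise description of $\mathcal J'_s$ as divisors modulo functions satisfying $f(a_i^+)=f(a_i^-)$. In particular we must check that the orientation convention of the pairs does not matter and that torsion in the extension means triviality of all the gluing ratios. For this we would cite the classical description of generalized Jacobians of nodal curves (Serre's modulus $\mathfrak m=\sum(a_i^++a_i^-)$), as in \cite[\S2.2, Proposition~1]{BertrandPell2015}.
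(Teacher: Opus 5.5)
Your proof is correct and follows the paper's argument step for step. You write the unit as $A+yC$ on the affine normalization, use the hyperelliptic norm to make $A^2-Q_sC^2$ a constant rescaled to $1$, and read descent at the node over each root as $C=0$ there (using $Q_s\neq0$ at that root). Squarefreeness then gives $P_s\mid C$, and the converse runs through the unit $A+P_syB$.

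Two small corrections of wording. First, nonconstancy of $A+P_syB$ comes from $Q_s$ not being a square in $k(x)$, since it is squarefree of positive degree; the parity of $\deg Q_s$ plays no role here. Second, the relation $f(a_i^+)=f(a_i^-)$ that you state is the right one for $\Pic^0(C'_s)$, which is Rosenlicht's description of the nodal curve. It is not Serre's $J_{\mathfrak m}$ for $\mathfrak m=\sum_i(a_i^++a_i^-)$: that group uses $f\equiv1\bmod\mathfrak m$, has toric part of dimension $2r-1$, and is therefore the wrong group when $r\ge2$.
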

\begin{proof}
If $n\delta_s=0$ for $n>0$, there is a rational function $f$ on $C_s$
with divisor $n(\infty_--\infty_+)$ which descends as a unit through
every node.  Both $f$ and $f^{-1}$ are regular on the affine
normalization.  Its coordinate ring is $k[x,y]/(y^2-Q_s)$, so
$f=A(x)+yC(x)$ with polynomials $A,C$.  The hyperelliptic involution
$\iota$ makes $f\iota(f)=A^2-Q_sC^2$ a unit in $k[x]$, hence a
nonzero constant.  Rescale $f$ to make this constant one.

Descent at the node over a root $a_i$ of $P_s$ means
\[
 A(a_i)+\sqrt{Q_s(a_i)}C(a_i)
 =A(a_i)-\sqrt{Q_s(a_i)}C(a_i).
\]
Since $Q_s(a_i)\ne0$, one has $C(a_i)=0$.  Squarefreeness of $P_s$
gives $C=P_sB$.  The function $f$ is nonconstant, so $B\ne0$, and
\eqref{eq:nodal-pell} follows.

Conversely, a solution gives the unit $f=A+P_syB$ in the nodal affine
coordinate ring, with inverse $A-P_syB$.  Its divisor is supported
at the two smooth infinities and has degree zero, hence is
$n(\infty_--\infty_+)$ for an integer $n$.  Since $B\ne0$ and
$Q_s$ is not a square, $f$ is nonconstant, so $n\ne0$.  It descends
through the nodes and makes $\delta_s$ torsion in $\mathcal J'_s$.
\end{proof}

The dimension threshold now includes the number of nodes.  In the
range $g\le\dim S<g+r$, the abelian dimension inequality by itself
does not exclude density, whereas the semiabelian one does.

\begin{corollary}\label{cor:pell-nondensity}
If $\delta$ is relatively nonspecial and $\dim S<g+r$, then the
parameters for which \eqref{eq:nodal-pell} has a solution with
$B\ne0$ are not Zariski dense.  If $S$ is a curve and $g+r>1$,
there are only finitely many such parameters.
\end{corollary}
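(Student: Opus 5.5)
By Lemma~\ref{lem:pell}, the parameters $s\in S(\Qbar)$ for which \eqref{eq:nodal-pell} has a polynomial solution with $B\ne0$ are exactly those at which $\delta_s$ is torsion in $\mathcal J'_s$. The plan is therefore to reduce the statement to the relative Manin--Mumford theorem applied to the marked family $(\mathcal J'/S,\delta)$. Let $X=\delta(S)\subseteq\mathcal J'$ be the image of the section. It is a closed irreducible subvariety dominating $S$, of dimension $\dim S$; if necessary we first pass to a smooth dense open of $S$ over which \eqref{eq:nodal-jacobian} is a semiabelian scheme of relative dimension $g+r$. Density of the Pell parameters in $S$ is then equivalent to $\ol{X(\Qbar)\cap\mathcal J'_{\tors}}^{\Zar}=X$. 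The equivalence uses the fixed-order observation of Section~\ref{sec:qbar}: $\Qbar$-torsion points of each fixed-order locus are dense in it, so complex and algebraic torsion density agree.

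Now suppose the Pell parameters are dense. Theorem~\ref{thm:main}, or more directly Proposition~\ref{prop:Qbar-completion} since the data are over $\Qbar$, gives either $\dim X\ge g+r$ or that $X$ is contained in a proper special subvariety. The first alternative contradicts $\dim X=\dim S<g+r$. The second contradicts the hypothesis that $\delta$ is relatively nonspecial. Hence the Pell parameters are not Zariski dense. For an alternative route avoiding the complex descent, note that the mixed rank satisfies $\ell(X)\le\dim X<g+r$, so Theorem~\ref{thm:toric-gap} yields a proper closed $Z$ outside which even points with torsion abelian projection have positive toric height. Torsion points have $\htor=0$ by \eqref{eq:torsion-height}, so they lie in $Z$.

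For the curve case, the case $\dim S=1<g+r$ is an instance of the first assertion. The set of Pell parameters is then contained in a proper closed subset of the irreducible curve $S$, together with the finitely many points removed when shrinking. Such a subset is finite. The only point requiring care, which I expect to be the mild obstacle, is bookkeeping: the finite \'etale refinement labelling the roots of $P_s$, the shrinking of $S$, and the passage between $X$ and $S$ must preserve both relative nonspecialness and the torsion condition. This follows from Lemma~\ref{lem:descent-special} and the preservation and reflection of torsion under finite \'etale base change recorded after Definition~\ref{def:special}. Finite images of proper closed sets remain proper, so non-density descends to the original parameter space.
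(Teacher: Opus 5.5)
Your proposal is correct and follows the paper's proof. The paper also uses Lemma~\ref{lem:pell} and the fixed-order observation to identify the Pell parameters with algebraic torsion specializations of $\delta$, applies both Theorem~\ref{thm:toric-gap} (to the graph of $\delta$) and Theorem~\ref{thm:main}, and finishes the curve case by finiteness of proper closed subsets. The only difference is emphasis: the paper leads with the toric-gap route (torsion points have torsion abelian projection and $\htor=0$), and mentions the dimension alternative, which you lead with via Proposition~\ref{prop:Qbar-completion}, only as a second option.
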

\begin{proof}
By Lemma~\ref{lem:pell}, these are torsion specializations of the
marked section of $\mathcal J'$.  Each fixed-order locus is
$\Qbar$-defined, so density of complex such parameters would imply
density of algebraic such parameters by the fixed-order observation
in Section~\ref{sec:qbar}.  Their abelian projections are torsion and
their toric heights vanish.  Apply Theorem~\ref{thm:toric-gap} to
the graph of $\delta$.
Alternatively the nondensity follows directly from
Theorem~\ref{thm:main}.  In dimension one a proper closed set is finite.
\end{proof}

More quantitatively, a unit on the normalization need not descend
through the nodes.  Its ratios of values measure exactly this failure.

\begin{corollary}\label{cor:gluing-height}
Under the hypotheses of Corollary~\ref{cor:pell-nondensity}, there are
$\eta>0$ and a proper closed exceptional subset of $S$ with the
following property.  For $s\in S(\Qbar)$, suppose the projection of $\delta_s$ to
$\mathcal J_s$ has order $N$, and choose $f_s$ on $C_s$ with
\[
 \divi(f_s)=N(\infty_--\infty_+).
\]
Put $\lambda_i(s)=f_s(a_i^+)/f_s(a_i^-)$.  Outside the exceptional
set,
\begin{equation}\label{eq:gluing-height}
 \sum_i h_{\mathrm{Weil}}(\lambda_i(s))\ge\frac\eta2N.
\end{equation}
\end{corollary}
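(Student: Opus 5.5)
We deduce this from Corollary~\ref{cor:order-height}, applied to the graph $X$ of $\delta$ in $\mathcal J'$ over $S$. This graph is closed, irreducible, dominates $S$, and is relatively nonspecial by hypothesis. Its dimension is $\dim S<g+r$, so $\ell(X)\le\dim X<d$, and Theorem~\ref{thm:toric-gap} supplies $\eta>0$ and a proper closed $Z\subsetneq X$. Corollary~\ref{cor:order-height} then gives
\[
 \sum_i h_{\mathrm{Weil}}(t_i)\ge\tfrac\eta2N
\]
for $x=\delta_s$ outside $Z$, where $[N]\delta_s=(t_1,\ldots,t_r)$ in the split torus of \eqref{eq:nodal-jacobian}. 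The exceptional subset of $S$ is the image of $Z$, which is proper because $X\to S$ is an isomorphism. It remains only to identify the toric coordinates $t_i$ of $[N]\delta_s$ with the gluing ratios $\lambda_i(s)$, up to inversion. Inversion does not change the Weil height, and the identification should be made with the fixed torus splitting and orientation of the pairs $a_i^\pm$.

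For this identification we use the standard description of the generalized Jacobian. A point of $\mathcal J'_s$ is a degree-zero line bundle $\mathcal L$ on $C_s$ together with gluing isomorphisms $\mathcal L_{a_i^+}\simeq\mathcal L_{a_i^-}$. The point $N\delta_s$ is represented by $\OO(N(\infty_--\infty_+))$ with the tautological gluings coming from the constant section $1$; this section is regular and nonvanishing at the $a_i^\pm$, since these points are disjoint from the infinities. The abelian projection is zero, and $f_s$ trivializes the bundle. Under this trivialization the constant section $1$ corresponds to $f_s^{-1}$ up to the chosen sign convention. Hence the class is the trivial bundle with gluing scalars $f_s(a_i^+)/f_s(a_i^-)$, or their inverses, so $t_i=\lambda_i(s)^{\pm1}$.

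The choice of $f_s$ is unique up to a constant, which cancels in each ratio. This is consistent with the proof of Lemma~\ref{lem:pell}, where descent through the nodes means exactly $\lambda_i=1$.

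The main obstacle is checking that the toric coordinate on $\mathcal J'$ used to define $\htor$ matches this gluing description, including the rigidified Poincar\'e normalization. The relevant fact is that at the identity of $\mathcal J$ the rigidified Poincar\'e fibre is canonically $\Gm$, which is the normalization already used in the proof of Corollary~\ref{cor:order-height}. So it suffices that the Poincar\'e extension by $[a_i^+-a_i^-]$ restricts over $0\in\mathcal J$ to the standard torus $\Gm$ of gluing scalars. This is the classical identification cited from \cite{BertrandPell2015}. Any finite splitting refinement or change of character basis changes the $t_i$ only by inversion or by a fixed integral matrix. The latter is handled by Lemma~\ref{lem:height-functorial}, at the cost of shrinking $\eta$.
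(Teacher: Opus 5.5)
Your proposal is correct and follows the paper's route: apply Corollary~\ref{cor:order-height} (via Theorem~\ref{thm:toric-gap}) to the graph of $\delta$, then identify the toric coordinates of $[N]\delta_s$ with the gluing ratios $\lambda_i(s)$ up to inversion, which leaves Weil heights unchanged. Your explicit check, using the section $1$ of $\OO(N(\infty_--\infty_+))$ and the trivialization by $f_s$, only spells out what the paper's proof cites as the gluing description of \eqref{eq:nodal-jacobian}.
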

\begin{proof}
The values at the nodes are nonzero and finite, and the ratios are
independent of scaling $f_s$.  Under the gluing description of
\eqref{eq:nodal-jacobian} they are the toric coordinates of
$[N]\delta_s$, up to inversion according to the chosen orientations.
Weil height is invariant under inversion.  Thus
Corollary~\ref{cor:order-height} gives \eqref{eq:gluing-height}.
\end{proof}

The degree of the map $f_s:C_s\to\mathbf P^1$ is $N$, since its
pole divisor is $N\infty_+$.  The inequality therefore compares the
arithmetic heights of gluing values with the function-field degree of
the unit on the normalization.  Relative nonspecialness of $\delta$
must be checked for any proposed explicit polynomial family.  It is
not automatic, and generic cyclic generation does not replace it.

\subsection{An explicit fixed nonsplit family}\label{sec:absolute-example}
The nonspecialness condition in the preceding application can be checked
without appealing to the torsion theorem itself.  The following example
keeps the generalized Jacobian fixed and nonsplit, while the infinity
divisor moves.  It makes the difference between the ordinary and nodal
Pell equations concrete.

Let
\[
 E_0:\quad y^2=x^3-x+1,\qquad c_0=(0,1),
\]
with origin the point at infinity.  On the smooth irreducible base
\[
 B=\{(t,v):v^2=t^3-t+1,\quad tv\ne0\}\subset E_0
\]
define the monic quartic and linear polynomials
\begin{equation}\label{eq:explicit-Pell-polynomials}
 \begin{split}
 Q_b(X)&=X^4+\frac{3t^2-1}{v^2}X^3
                   +\frac{3t}{v^2}X^2+\frac{1}{v^2}X,\\
 P_b(X)&=X+\frac1t,\qquad b=(t,v).
 \end{split}
\end{equation}
Let $C_b$ be the smooth completion of $Y^2=Q_b(X)$, and identify the
two points over $X=-1/t$ to form $C'_b$, keeping its infinities smooth.
Write $\delta_b=[\infty_--\infty_+]\in\operatorname{Pic}^0(C'_b)$.

\begin{proposition}\label{prop:explicit-pell}
For the family \eqref{eq:explicit-Pell-polynomials}, the polynomials
$Q_b$ are squarefree and coprime to $P_b$.  The generalized Jacobian
is a constant, nonsplit semiabelian surface, and its marked section
$\delta$ is relatively nonspecial.  Consequently the nodal equation
\[
 A(X)^2-P_b(X)^2Q_b(X)B_1(X)^2=1,\qquad B_1\ne0,
\]
has a polynomial solution for only finitely many $b\in B(\C)$.
In contrast, the ordinary equation
$A(X)^2-Q_b(X)C(X)^2=1$, with $C\ne0$, is solvable precisely when
$b$ is torsion on $E_0$, so it is solvable for infinitely many parameters.

There are also $\eta>0$ and a finite $Z\subset B$ such that the
following holds.  For $b\in B(\Qbar)\setminus Z$ such that $2b$ is
torsion of order $N$ on $E_0$, choose a rational function $f_b$ on $E_0$ with
$\divi(f_b)=N((-b)-(b))$.  Then
\begin{equation}\label{eq:explicit-gluing-gap}
 h_{\mathrm{Weil}}\!\left(\frac{f_b(c_0)}{f_b(-c_0)}\right)
       \ge\frac\eta2N.
\end{equation}
\end{proposition}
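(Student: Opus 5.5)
The whole plan rests on one coordinate change, which identifies every normalization $C_b$ with the fixed curve $E_0$. I would substitute
\[
 x=t+\frac1X,\qquad y=\frac{vY}{X^2}.
\]
Expanding $x^3-x+1$ about $x=t$ and using $v^2=t^3-t+1$ gives
\[
 X^4(x^3-x+1)=v^2X^4+(3t^2-1)X^3+3tX^2+X=v^2Q_b(X).
\]
So $Y^2=Q_b(X)$ is birational to $E_0$. Under this identification the two points at infinity ($X\to\infty$) go to $x=t$, $y=\pm v$, that is, to $\pm b$. The two points over the node $X=-1/t$ go to $x=0$, $y=\pm1$, that is, to $\pm c_0$.

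Squarefreeness and coprimality then reduce to simple facts. The polynomial $x^3-x+1$ has nonzero discriminant $-23$. The condition $tv\ne0$ excludes $b=\pm c_0$ and $b\in E_0[2]$, so the four roots of $Q_b$ (namely $X=0$ and the three points $x=e_i$) are distinct. Moreover $Q_b(-1/t)\ne0$ because $c_0$ is not $2$-torsion.

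With this identification, the generalized Jacobian $\mathcal J'_b$ is the constant extension of $E_0$ by $\Gm$. Its extension class is $[c_0-(-c_0)]=2c_0$. The marked section $\delta_b$ maps to $[(-b)-(b)]=-2b$, up to the orientation convention. Nonsplitness amounts to $c_0$ being nontorsion, and I would check this by Nagell--Lutz, or by reduction modulo two small good primes with coprime point counts.

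The ordinary Pell criterion is the abelian case of Lemma~\ref{lem:pell}, applied with $P=1$. In that case $\delta_b$ is torsion in $E_0$ exactly when $2b$, equivalently $b$, is torsion. Infinitely many such $b$ satisfy $tv\ne0$.

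The step I expect to be the main obstacle is relative nonspecialness, that is, $K\subseteq P$ in the sense of Lemma~\ref{lem:full-relative}. I would argue as follows.
\begin{enumerate}
\item $E_0$ has no CM, since its $j$-invariant $-6912/23$ is not integral.
\item The abelian projection $-2b$ dominates $E_0$, so its Betti map has real rank $2$. Lemma~\ref{lem:abelian-monodromy} then gives full abelian monodromy, $\Ru(M_p)=V$.
\item For the central direction I would invoke Lemma~\ref{lem:constant-character}. If $M\cap U\ne U$, some nonzero character would send the marked section to a constant. Since $\Hom(E_0,\Gm)=0$ and the extension is nonsplit, no homomorphism from $\mathcal J'$ to $\Gm$ is nontrivial on the torus. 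Concretely, the toric pushout by a character $n\ne0$ has class $2nc_0\ne0$.
\item Alternatively, one can classify the proper special loci. In the non-CM constant case, the generalized Ribet loci (Bertrand--Edixhoven, Gu--Huang) that dominate a curve force $p$ to be constant or to be related to $q$ through an endomorphism. Neither can happen for a dominant $p$ with constant nontorsion $q$.
\end{enumerate}
The remaining case, a torsion coset of the torus, cannot contain $\delta$ because its image dominates $E_0$. I expect the care to lie in excluding Ribet-type loci, rather than merely subgroup cosets, when invoking these references.

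Given nonspecialness, the remaining assertions follow from earlier results. Corollary~\ref{cor:pell-nondensity} applies with $S=B$ a curve and $g+r=2>1$, and gives finiteness of the nodal Pell parameters. Corollary~\ref{cor:gluing-height} gives $\eta$ and a finite $Z$. Transported through the identification $C_b\cong E_0$, a function with $\divi(f)=N(\infty_--\infty_+)$ becomes $f_b$ with $\divi(f_b)=N((-b)-(b))$. The node ratio becomes $f_b(c_0)/f_b(-c_0)$, possibly inverted, which does not change the Weil height. The order of $-2b$ is the order $N$ of $2b$, which gives \eqref{eq:explicit-gluing-gap}.
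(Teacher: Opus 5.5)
Your proposal follows the paper everywhere except relative nonspecialness, and that step contains a genuine gap. The parts that match are the substitution identifying $C_b$ with $E_0$ (infinities to $\pm b$, node pair to $\pm c_0$), the discriminant and $Q_b(-1/t)$ checks, and nontorsion of $c_0$ by reduction at two good primes. The unit criterion with $P=1$ and the final appeals to Corollaries~\ref{cor:pell-nondensity} and~\ref{cor:gluing-height} also match.

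The problem is in step (iii). You invoke Lemma~\ref{lem:constant-character}, whose hypotheses include $K\subseteq P$, which is exactly what you are trying to prove. Its proof genuinely uses this hypothesis:
\begin{itemize}
\item the first bracket identity $\chi q_a(v)=\chi[a,v]=0$ requires $v\in V_p\subseteq\Lie P$;
\item the non-root-of-unity claim uses the image of the full group $K$.
\end{itemize}
So you cannot conclude ``no homomorphism to $\Gm$ is nontrivial on the torus, hence $M\cap U=U$'' by contraposition. Step (iv) does not close the gap either. It rests on a classification of generalized Ribet loci that you assert but do not carry out. The no-CM remark in (i) is not needed.

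The missing idea is that $G_0$ is constant. Monodromy therefore acts trivially on its unmarked realization, so $M\subseteq K=V_p\oplus U$. The paper argues as follows.
\begin{itemize}
\item Every $a\in\Lie M$ has $q_a=0$ because $M\subseteq K$. By Lemma~\ref{lem:abelian-monodromy}, the $p$-projections of these $a$ fill $V$.
\item Since $q_0$ is nontorsion, $\Ru(P_0)\neq0$; otherwise the Hodge extension $H_1(G_0)$ would split. Surjectivity of $P\to P_0$ then gives some $b\in\Gr^W_{-1}\Lie P$ with $q_b\neq0$. The paper shows, using simplicity of $V$, that these $q_b$ fill $\Hom(V,\Q(1))$.
\item Andr\'e's normality $M\lhd P$ and \eqref{eq:heisenberg} give $[a,b]=-q_b(p_a)\in\Lie(M\cap U)$. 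Hence $M\cap U=U$.
\item Together with $M\subseteq K$ and the full $p$-projection, this gives $M=K$. Thus $K\subseteq P$, and Lemma~\ref{lem:full-relative} applies.
\end{itemize}
Equivalently, $M\subseteq K$ makes the first bracket step in the proof of Lemma~\ref{lem:constant-character} vacuous. You can then rerun that argument without assuming $K\subseteq P$, and your contradiction $2nc_0\neq0$ goes through. Note also that $M\subseteq K$ is what turns ``$U\subseteq M$ and full $p$-projection'' into $K\subseteq P$, a step your proposal leaves implicit.
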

\begin{proof}
The substitution
\[
 X=\frac1{x-t},\qquad Y=\frac{y}{v(x-t)^2}
\]
gives $Y^2=Q_b(X)$, using $v^2=t^3-t+1$.  It identifies the smooth
completion with $E_0$ and takes the two infinities to $b$ and $-b$.
The three finite roots of $x^3-x+1$ and its point at infinity give
four distinct branch points after this change of variable; equivalently,
\[
 \operatorname{disc}_X(Q_b)=-\frac{23}{v^{12}}\ne0.
\]
The pair over $X=-1/t$ is exactly $c_0,-c_0$, since
\[
 Q_b(-1/t)=\frac{1}{v^2t^4}\ne0.
\]
Thus $C'_b$ is the fixed nodal curve $E_0/(c_0\sim-c_0)$, with a
varying choice of the two smooth points designated as infinities.
Its generalized Jacobian $G_0$ is an extension of $E_0$ by $\Gm$
with parameter $q_0=\pm2c_0$ under the principal polarization; see the
gluing description preceding Lemma~\ref{lem:pell}.  The sign depends
only on orientation and has no effect below.  The abelian projection
of the marked section is $p(b)=-2b$.

We first verify that $q_0$ is nontorsion.  The Weierstrass discriminant
of $E_0$ is $-16\cdot23$, and direct enumeration gives
\[
 \#E_0(\mathbf F_3)=7,\qquad \#E_0(\mathbf F_5)=8.
\]
Reduction at a good prime is injective on torsion of order prime to
that prime \cite[Proposition~VII.3.1(b)]{Silverman2009}.  A rational
point of prime order $\ell\notin\{3,5\}$ would force $\ell$ to divide
both $7$ and $8$.  Order $3$ is excluded by reduction at $5$, and
order $5$ by reduction at $3$.  Hence $E_0(\Q)$ has no nonzero torsion
and $c_0$, and therefore $q_0$, is nontorsion.  In particular $G_0$
is nonsplit, even up to isogeny.

It remains to establish relative nonspecialness; generic cyclic
generation alone would not suffice.  Use the rational realization of
$[\Z\xrightarrow{\delta}G_0\times B]$ and write $P,P_0,M,K,U,V$
as in Section~\ref{sec:universal}.  Since $G_0$ is constant, monodromy
acts trivially on its unmarked realization, and thus $M\subseteq K$.
The map $p:B\to E_0$, $b\mapsto-2b$, has abelian Betti rank $2$.
Lemma~\ref{lem:abelian-monodromy} therefore makes the $p$-projection of
$\Lie M$ equal to $V$.

The unipotent radical of $P_0$ is a rational Hodge subspace of
$\Hom(V,\Q(1))$.  It is nonzero: otherwise reductivity would split
the rational Hodge extension $H_1(G_0)$, making $q_0$ torsion by the
full faithfulness of the $1$-motive realization
\cite[\S10.1]{Deligne1974}.  Since $V$ is the simple weight-$-1$
Hodge structure of an elliptic curve, the dual Tate-twisted Hodge
structure is simple too.  Consequently that unipotent radical is
all of $\Hom(V,\Q(1))$.  Surjectivity $P\to P_0$ gives the full
$q$-projection in $\Gr^W_{-1}\Lie P$.

Andr\'e's theorem gives $M\lhd P$
\cite[Theorem~1 and its proof]{Andre1992}.  In a compatible weight
splitting, bracket an element of $\Lie M\subseteq\Lie K$, with
$p$-projection $v$, against an element of $\Lie P$ with
$q$-projection $q$.  Formula~\eqref{eq:heisenberg} gives $\pm q(v)$
in $\Lie(M\cap U)$.  The evaluation pairing is nondegenerate, so
$M\cap U=U$.  As $M\subseteq K$ also projects onto $V_p$, it follows
that $M=K$.  In particular $K\subseteq P$, and
Lemma~\ref{lem:full-relative} proves relative nonspecialness.  This
verification uses only the geometric monodromy and realization
arguments, not the relative torsion or small-points theorem.

Here $\dim B=g=1<g+r=2$.  Corollary~\ref{cor:pell-nondensity} and
Lemma~\ref{lem:pell} give the claimed finiteness for the nodal equation.
The same unit criterion without nodes identifies ordinary Pell
solvability with torsion of $p(b)=-2b$, equivalently of $b$.
There are infinitely many such $b$ in $B$.  Finally the gluing ratio
is $f_b(c_0)/f_b(-c_0)$, up to inversion.  Corollary~\ref{cor:gluing-height}
gives \eqref{eq:explicit-gluing-gap}; both values are finite and nonzero
because $b\ne\pm c_0$.
\end{proof}

Because $G_0$ is fixed, both the torsion finiteness and the height gap
in Proposition~\ref{prop:explicit-pell} also have absolute proofs.
Recording the latter distinguishes this worked example from the
moving-family statements of Section~\ref{sec:transfer}.

Put $C_\delta=\ol{\delta(B)}^{\Zar}\subset G_0$.  This is an
irreducible curve whose projection to $E_0$ is dominant.  It is not a
torsion translate of a one-dimensional connected subgroup.  Such a
subgroup with zero-dimensional abelian image would be a torus, whose
translates have constant abelian projection.  A subgroup with
positive-dimensional image would instead be an elliptic curve $E'$,
and its projection $f:E'\to E_0$ would be an isogeny.  Its inclusion
in $G_0$ would split $f^*G_0$, giving $f^\vee(q_0)=0$.  The finite
kernel of $f^\vee$ would then make $q_0$ torsion, a contradiction.
The absolute semiabelian Bogomolov theorem therefore gives a positive
canonical-height gap on $C_\delta$ outside a finite set
\cite[Proposition~21]{KuehneSmall2022}; the general absolute theorem
also has the earlier proof of \cite{DavidPhilippon2000}.

For the normalization of this deduction, let $M$ be the standard
boundary line bundle on the equivariant compactification of $G_0$,
restricting to $\OO_{\mathbf P^1}(0+\infty)$ on the compactified
torus.  Write $\widehat h_M$ for its canonical height and choose an
ample symmetric $\Theta$ on $E_0$.  The height used in the absolute
theorem is $\widehat h=\widehat h_M+\widehat h_\Theta\circ p$.
The canonical construction gives
\begin{equation}\label{eq:absolute-boundary-scaling}
 \widehat h_M([n]x)=n\widehat h_M(x),\qquad
 \widehat h_M(t)=2h_{\mathrm{Weil}}(t)\quad(t\in\Gm(\Qbar));
\end{equation}
see \cite[\S2.2, Lemmas~9--10, and \S2.5]{KuehneSmall2022}.
These are exact identities, not comparisons up to an additive constant.

If $2b$ is torsion of order $N$, put
$\lambda_b=f_b(c_0)/f_b(-c_0)$.  Since $p(\delta_b)=-2b$ and
$[N]\delta_b$ has toric coordinate $\lambda_b$ up to inversion,
\begin{equation}\label{eq:absolute-gluing-height}
 \widehat h(\delta_b)=\widehat h_M(\delta_b)
      =\frac{2}{N}h_{\mathrm{Weil}}(\lambda_b).
\end{equation}
The absolute height gap thus gives
$h_{\mathrm{Weil}}(\lambda_b)\ge\eta_0N/2$ for some $\eta_0>0$.
The exceptional set has finite inverse image in $B$, because
$p\circ\delta=-[2]$ has finite fibres.  Taking the union of the two
finite exceptional sets and decreasing the positive constant if
necessary reconciles this bound with
\eqref{eq:explicit-gluing-gap}.  Notice that only linear scaling and
the torus restriction in \eqref{eq:absolute-boundary-scaling} are
needed; no additional comparison of metrics is hidden in this argument.

The same exclusion of torsion cosets gives the torsion finiteness by
absolute Manin--Mumford \cite{Hindry1988,McQuillan1995}.
Thus this constant-generalized-Jacobian example is not a new absolute
Bogomolov case.  It verifies the relative hypotheses in explicit
polynomials and displays the gluing obstruction while the ordinary
Pell orders are unbounded.  The moving-extension results of
Section~\ref{sec:transfer} have a different scope.

\providecommand{\bysame}{\leavevmode\hbox to3em{\hrulefill}\thinspace}
\providecommand{\MR}{\relax\ifhmode\unskip\space\fi MR }
\providecommand{\MRhref}[2]{%
  \href{http://www.ams.org/mathscinet-getitem?mr=#1}{#2}
}
\providecommand{\href}[2]{#2}

\end{document}